\numberwithin{equation}{section}
\numberwithin{equation}{section}
\newtheorem{lemma}{Lemma}
\newtheorem{proposition}[lemma]{Proposition}
\newtheorem{theorem}[lemma]{Theorem}
\numberwithin{lemma}{section}
\newtheorem{corollary}[lemma]{Corollary}
\newtheorem{definition}[lemma]{Definition}
\theoremstyle{definition}
\newtheorem{remark}[lemma]{Remark}
\theoremstyle{definition}
\theoremstyle{definition}
\theoremstyle{definition}
\newcommand*{\MA}[1]{{\color{magenta}#1}}
\begin{document}
\RestyleAlgo{boxruled}

\title[Computing geometric features of spectra]{On the computation of geometric features of spectra of linear operators on Hilbert spaces}

\author{Matthew J. Colbrook}%\blfootnote{Communicated by Felipe Cucker} 
\address{DAMTP, Centre for Mathematical Sciences, University of Cambridge CB3 0WA,
United Kingdom}
\email{m.colbrook@damtp.cam.ac.uk}

\begin{abstract}
Computing spectra is a central problem in computational mathematics with an abundance of applications throughout the sciences. However, in many applications gaining an approximation of the spectrum is not enough. Often it is vital to determine geometric features of spectra such as Lebesgue measure, capacity or fractal dimensions, different types of spectral radii and numerical ranges, or to detect essential spectral gaps and the corresponding failure of the finite section method. Despite new results on computing spectra and the substantial interest in these geometric problems, there remain no general methods able to compute such geometric features of spectra of infinite-dimensional operators. We provide the first algorithms for the computation of many of these longstanding problems (including the above). As demonstrated with computational examples, the new algorithms yield a library of new methods. Recent progress in computational spectral problems in infinite dimensions has led to the Solvability Complexity Index (SCI) hierarchy, which classifies the difficulty of computational problems. These results reveal that infinite-dimensional spectral problems yield an intricate infinite classification theory determining which spectral problems can be solved and with which type of algorithm. This is very much related to S. Smale's comprehensive program on the foundations of computational mathematics initiated in the 1980s. We classify the computation of geometric features of spectra in the SCI hierarchy, allowing us to precisely determine the boundaries of what computers can achieve (in any model of computation) and prove that our algorithms are optimal. We also provide a new universal technique for establishing lower bounds in the SCI hierarchy, which both greatly simplifies previous SCI arguments and allows new, formerly unattainable, classifications.
\end{abstract}

\keywords{Computational spectral problems, Solvability Complexity Index hierarchy, Smale's program on the foundations of computational mathematics, Spectral radii, Spectral capacity, Spectral gaps, Spectral pollution, Measure, Fractal dimensions.}

%Solvability Complexity Index \and Algorithm \and Spectrum \and Measure \and Fractal Dimension \and Discrete Spectrum \and Error Control}
% \PACS{PACS code1 \and PACS code2 \and more}
%\subclass{68Q15 \and 47A10 \and 28A78 \and 28A12 \and 47N50 \and 52C23 \and 35P15}

\subjclass[2010]{65J10, 65L15, 65F99, 47A10, 46N40, 47A12, 47N50, 15A60, 28A12, 28A78}

\maketitle
%\vspace{0.5pc}\noindent
%{\it Keywords:} spectrum, computation, infinite-dimensional Hilbert spaces, hierarchies of computational problems, numerical methods

%\vspace{0.5pc}\noindent
%{\it Mathematics Subject Classification (2010):} 	47A10, 65J10, 46N40, 03D55
%\vspace{-10mm}
%\tableofcontents
%\vspace{-8mm}

\tableofcontents

\newpage
\section{Introduction}
\label{intro}

This paper resolves open computational spectral problems related to geometric features of spectra of operators. In other words, we consider the following problem:

\vspace{2mm}

\begin{adjustwidth}{+5mm}{+5mm}
\noindent{}\textit{Are there algorithms that given a bounded}\footnote{Many of our algorithms can also be extended to unbounded operators.} \textit{operator $A\in\mathcal{B}(l^2(\mathbb{N}))$, approximate key geometric features (e.g., spectral gaps, notions of sizes and capacity, measures, topological features such as fractal dimensions, etc.) of the set $\mathrm{Sp}(A)$ from a matrix representation of $A$?}
\end{adjustwidth}

\vspace{2mm}

\noindent{}To answer this question, we use the newly established Solvability Complexity Index (SCI) hierarchy \cite{ben2015can,hansen2011solvability,colbrook2020foundations}, a classification tool that determines the boundaries of what is computationally possible. Classifying spectral problems and providing a library of optimal algorithms\footnote{For precise notions of algorithm, see \S \ref{bigHth}.} remains largely uncharted territory in the foundations of computational mathematics. In exploring this territory, there will, necessarily, have to be many different types of algorithms, as different structures on the various classes of operators and different spectral properties require different techniques.

A famous example of the above question is the almost Mathieu operator on $l^2(\mathbb{Z})$ (see \S \ref{jhnwagtwqrfg}):
$$
(H_{\alpha}x)_n=x_{n-1}+x_{n+1}+2\lambda\cos(2\pi n\alpha)x_n,
$$
which induces the Hofstadter butterfly \cite{hofstadter1976energy}. The almost Mathieu operator plays an important role in physics \cite{last2005spectral}, arising in the study of the quantum Hall effect \cite{thouless1982quantized}, and has become a laboratory for exploring the spectral properties of ergodic Schr\"odinger operators \cite{jitomirskayacritical}. When $\alpha$ is irrational, the Lebesgue measure of the spectrum is $4\left|1-\left|\lambda\right|\right|$. This formula was conjectured based on the numerical work of S. Aubry \& G. Andr{\'e} \cite{aubry1980analyticity} and became one of B. Simon's problems for the 21st century \cite{simon2000schrodinger}. It was later proven by A. Avila \& R. Krikorian \cite{avila2006}. Similarly, M. Kac's ``Ten Martini Problem'', that the spectrum is a Cantor set for all irrational $\alpha$ and $\lambda>0$, was conjectured by M. Azbel \cite{azbel1964energy} and also became one of B. Simon's problems. This problem attracted a host of numerical and analytical work (see the summary in \cite{last2005spectral}), before being proven by A. Avila \& S. Jitomirskaya \cite{avila2009ten}. In both of these examples, we see a crucial interplay between computation, conjecture, and mathematical proof. The above geometric features of spectra play an important role in the physics of the underlying quantum system \cite{han1994critical,ketzmerick1992slow,ketzmerick1997determines,sire1989electronic}. The almost Mathieu operator is by no means unique in this regard and there is a growing literature on computational studies of geometric features of spectra in diverse areas of physics \cite{dean2013hofstadter,geim2013van,hunt2013massive,ponomarenko2013cloning,naumis2017electronic,roman2014spectral,bandres2016topological,tanese2014fractal,kohmoto1987critical,levi2011disorder,rivera2018fractal,new2001diffractive,luitz2017ergodic,torres2015dynamics}.

However, there is a current lack of \textit{rigorous computational theory} and \textit{convergence analysis}, and no known algorithms can tackle general cases. Moreover, the foundations of computation (i.e., what is and what is not computationally possible) for computing geometric features of spectra are almost entirely unexplored. We solve these open problems and others by providing algorithms that compute geometric features of spectra and by classifying the computational problems in the SCI hierarchy.

\subsubsection*{\textbf{The SCI hierarchy:}}
The SCI hierarchy has recently been used to resolve the problem of computing spectra of general bounded operators in infinite dimensions \cite{ben2015can,hansen2011solvability}, and is now being used to explore the foundations of computation in many diverse areas of mathematics \cite{opt_big,boche2019solvability,webb2021spectra,ben2021universal_periodic,Jonathan_res,benartzi2020computing,ben2022universal_inverse,rosler2019solvability,rosler2021computing_rough,colbrook2021PNAS,becker2020computing,colbrook2021computing,colb1,colb2,colbrookPSEUDO,colbrook3,firenet_SIAM_NEWS,colbrook2022computing,CRAS_SCI}.\footnote{For related work on practical infinite-dimensional numerical linear algebra, see \cite{webb2021spectra,colbrook2021computingSIREV,Olver_Townsend_Proceedings, Olver_SIAM_Rev, Olver_code1, Olver_code2,gilles2019continuous,olver2009gmres,horning2019feast}, and for rigorous data-driven algorithms for spectral properties of Koopman operators (operators on infinite-dimensional spaces that globally linearise non-linear dynamical systems), see \cite{colbrook2021rigorous,ResDMD_JFM,colbrook2022mpedmd}.} Whilst for some classes of operators one can compute spectra with error control \cite{colb1,colbrook3,colbrook_IMA_LT}, a potentially surprising consequence is that, for general operators, one needs several successive limits to compute the spectrum. Since traditional approaches are dominated by techniques based on one limit, this explains why many computational spectral problems remain unsolved and opens the door to an infinite classification theory. Moreover, this phenomenon is not just restricted to spectral problems but is shared by other areas of computational mathematics. An example is S. Smale's problem of root-finding of polynomials with rational maps \cite{smale_question}, which also requires several successive limits as established by C. McMullen \cite{McMullen1, mcmullen1988braiding} and P. Doyle \& C. McMullen \cite{Doyle_McMullen}. These results can be expressed in terms of the SCI hierarchy \cite{ben2015can}, which generalises S. Smale's seminal work \cite{Smale2, Smale_Acta_Numerica}  with L. Blum, F. Cucker, M. Shub \cite{BSS_Machine,BCSS,Cucker_AH_real}, and his program on the foundations of scientific computing and the existence of algorithms. Many other problems in the foundations of computations, such as the work by S. Weinberger \cite{Weinberger}, can also be viewed in the context of the SCI hierarchy.

The SCI hierarchy is further motivated by computer-assisted proofs. Computer-assisted proofs are rapidly becoming an essential part of modern mathematics \cite{gowers} and, perhaps surprisingly, non-computable problems can be used in computer-assisted proofs. Examples include the recent proof of Kepler's conjecture (Hilbert's 18th problem) \cite{Hales_Annals, hales_Pi} on optimal packings of $3$-spheres, led by T. Hales, and the Dirac--Schwinger conjecture on the asymptotic behaviour of ground states of certain Schr\"{o}dinger operators, proven in a series of papers by C. Fefferman and L. Seco \cite{fefferman1990, fefferman1992, fefferman1993aperiodicity,  fefferman1994, fefferman1994_2, fefferman1995, fefferman1996interval, fefferman1996, fefferman1997}. Both of these proofs rely on computing non-computable problems. This apparent paradox can be explained by the SCI hierarchy (the $\Sigma^A_1$ and $\Pi_1^A$ classes described below become available for computer-assisted proofs); Hales, Fefferman and Seco implicitly prove $\Sigma^A_1$ classifications in the SCI hierarchy in their papers. Some of the problems we consider also lie in $\Sigma^A_1\cup\Pi_1^A$, meaning that they can be used for computer-assisted proofs.

\subsubsection*{\textbf{The problems addressed in this paper:}} The algorithms we provide are sharp in the SCI hierarchy, meaning that they realise the boundaries of what computers can achieve. Table \ref{result_table} provides a summary of the main SCI classifications of this paper. The main theorems are contained in \S \ref{main_results_section}, including further motivations and classifications for different classes of operators. We provide resolutions to the following problems:

\begin{enumerate}[leftmargin=0.4cm,rightmargin=0cm]
  \item[(i)] \textit{Computing spectral radii, essential spectral radii, polynomial operator norms and capacity of spectra.} The spectral radius is perhaps the most basic geometric property of spectra and arises in stability analysis. We show that computing the spectral radius is high up in the SCI hierarchy for non-normal operators. In fact, it has the same classification in the SCI hierarchy for general bounded operators as that of computing the spectrum itself. Classifications are given for different types of operators (e.g., known column decay, control on resolvent norms) and also for the essential spectral radius. In many cases, the problem of computing polynomial operator norms is easier in the sense of SCI hierarchy. We also consider the problem of computing the logarithmic capacity of the spectrum, following the work of P. Halmos \cite{halmos1971capacity}, which has applications in orthogonal polynomials, approximation theory and when studying the convergence of Krylov methods (see, for example, the work of O. Nevanlinna \cite{nevanlinna2012convergence,nevanlinna1995hessenberg,nevanlinna1990linear} and  U. Miekkala \& O. Nevanlinna \cite{miekkala1996iterative}).
	\item[(ii)] \textit{Computing essential numerical ranges, gaps in essential spectra, and determining whether spectral pollution occurs on sets}. We provide classification results for the essential numerical range, which also hold in the case of unbounded operators. In connection with computing spectra, there has been a substantial effort in studying the finite section method and locating gaps in essential spectra of operators (see the discussion in \S \ref{fs_fails}). When using the finite section method to approximate spectra of self-adjoint operators, spurious eigenvalues, known as spectral pollution, can occur anywhere within these gaps. Paradoxically, we show that determining if spectral pollution occurs on a given set is strictly harder in the sense of the SCI hierarchy than computing the spectrum itself. Hence, computing a failure flag for the finite section method is, in a certain sense, strictly harder than solving the original problem for which it was designed. Moreover, we establish the SCI of detecting gaps in essential spectra of self-adjoint operators, a problem that arises in areas such as perturbation theory and defect models.
	\item[(iii)] \textit{Computing Lebesgue measure of spectra and pseudospectra, and determining if the spectrum is Lebesgue null}. An important property of the spectrum is its Lebesgue measure, with recent progress in the field of Schr{\"o}dinger operators with random or almost periodic potentials \cite{puig2004cantor,avila2006,avila2009ten,avila2007simplicity,beckus2013spectrum}. If the spectrum of an operator is  Lebesgue null, then this implies the absence of absolutely continuous spectra\footnote{For algorithms that compute spectral measures and decompositions, see \cite{colbrook2021computing,colbrook2021computingSIREV,colbrook2022specsolve} and their recent physical applications in \cite{johnstone2021bulk,colbrook2021computingTI}.}, which is related to transport properties if the operator represents a Hamiltonian. Whilst results are known for specific one-dimensional examples such as the almost Mathieu operator \cite{avila2006} or the Fibonacci Hamiltonian \cite{sutHo1989singular}, very little is known in the general case or higher dimensions. This is reflected by the difficulty of performing rigorous numerical studies, despite many examples studied in the physics literature (see the references in \cite{avila2017spectral,benza1991band,sire1989electronic}). We provide the first algorithms for computing the Lebesgue measure of spectra and pseudospectra, and determining if the spectrum is Lebesgue null, for many different classes of operators.
\item[(iv)] \textit{Computing fractal dimensions of spectra.} Fractal dimensions of spectra are important in many applications. For example, in quantum mechanics, they lead to upper bounds on the spreading of wavepackets and are related to time-dependent quantities associated with wave functions \cite{han1994critical,ketzmerick1992slow,ketzmerick1997determines}. Fractal spectra appear in a wide variety of contexts, such as exciting new results in multilayer materials (e.g., bilayer graphene) \cite{dean2013hofstadter,geim2013van,hunt2013massive,ponomarenko2013cloning}, strained materials \cite{naumis2017electronic,roman2014spectral} or quasicrystals \cite{bandres2016topological,tanese2014fractal,kohmoto1987critical,levi2011disorder}. Another well-studied area where fractal spectral properties appear is optics \cite{rivera2018fractal,new2001diffractive}, following the analytical and numerical work of M. Berry and coauthors \cite{berry2001fractal,berry2001theory,berry2004physics}. Despite the physical importance of fractal dimensions, analytical results are known only for a limited number of specific models. Moreover, there are currently no algorithms for computing fractal dimensions of spectra for general operators, or even tridiagonal self-adjoint operators. We provide the first algorithms for computing the box-counting and Hausdorff dimensions of spectra for many different classes of operators.
\end{enumerate}

\subsubsection*{\textbf{Contributions to the SCI hierarchy itself:}}

Our final contribution is a new tool to prove lower bounds (impossibility results) in the SCI hierarchy. This is crucial for some of the classifications of the above problems, and holds regardless of the model of computation. We show that for a certain special class of combinatorial problems, the SCI hierarchy is equivalent to the Baire hierarchy from descriptive set theory (this equivalence does not hold in general). By embedding these combinatorial problems into spectral problems\footnote{This technique is not restricted to spectral problems - it can be adapted to other scenarios.}, we provide the first technique for dealing with problems that have SCI greater than three, and also greatly simplify the proofs of results lower down in the SCI hierarchy. However, it should be stressed that this is not a paper on descriptive set theory or mathematical logic. Our discussion is entirely self-contained and written for a wide audience from a primarily computational background.

\subsubsection*{\textbf{Outline of paper:}}

In \S \ref{sec:rev_new_sec2}, we provide a brief summary of the SCI hierarchy and define the classes of operators for the interpretation of Table \ref{result_table} and the main results. A detailed discussion of the SCI hierarchy is delayed until \S \ref{SCI_Hierarchy}. In \S \ref{main_results_section}, we summarise our main results on the classification of computational spectral problems. Computational examples are then given in \S \ref{num_test}. For example, we provide numerical evidence that a portion of the spectrum of the graphical Laplacian on an infinite Penrose tile is Lebesgue null and fractal, with a fractal dimension of approximately 0.8, and that the whole spectrum has a logarithmic capacity of approximately $2.26$. Mathematical preliminaries, including definitions of the SCI hierarchy and the new tool to provide lower bounds in the SCI hierarchy, are presented in \S \ref{bigHth}. Proofs of our results are given in \S \ref{first_set_proofsdfhedh}--\S\ref{pf_frac}. To make the paper self-contained, we include a short appendix on the results/algorithms of \cite{colb1}, which are used in some of our proofs. Pseudocode for many of the new algorithms is provided in Appendix \ref{compute_rout_appendix}. We use \hspace{-3mm}$\qed$ to denote the end of a proof and $\boxtimes$ to denote the end of a remark.

\begin{table}
\begin{center}
\addtolength{\leftskip} {-1.6cm}
    \addtolength{\rightskip}{-2cm}

\hspace{-0.5cm}\begin{tabular}{|p{7.2cm}|p{6.7cm}|l|}
\hline
 \rule{0pt}{1.1em}\textbf{Description of Problem}&\textbf{SCI Hierarchy Classification}&\textbf{Theorem}\\
 \hline
 \hline

\rule{0pt}{1.1em}Computing the spectral radius.&\makecell[l]{Varies. e.g., Normal operators: $\in\Sigma_1^A$, $\not\in\Delta_1^G$,\\Controlled resolvent: $\in\Sigma_2^A$, $\not\in\Delta_2^G$,\\General bounded operators: $\in\Pi_3^A$, $\not\in\Delta_3^G$}&\ref{spec_rad_thm}\\

\hline

\rule{0pt}{1.1em}Computing the essential spectral radius.&\makecell[l]{Varies. e.g., Most classes: $\in\Pi_2^A$, $\not\in\Delta_2^G$,\\General bounded operators: $\in\Pi_3^A$, $\not\in\Delta_3^G$}&\ref{spec_rad_thm2}\\

\hline

\rule{0pt}{1.1em}Computing polynomial operator norms.&\makecell[l]{With bounded dispersion: $\in\Sigma_1^A$, $\not\in\Delta_1^G$\\
Without bounded dispersion: $\in\Sigma_2^A$, $\not\in\Delta_2^G$}&\ref{spec_rad_thm3}\\

\hline

\rule{0pt}{1.1em}Computing the capacity of the spectrum.&\makecell[l]{With bounded dispersion: $\in\Pi_2^A$, $\not\in\Delta_2^G$\\
Without bounded dispersion: $\in\Pi_3^A$, $\not\in\Delta_3^G$}&\ref{spec_rad_thm3}\\

\hline

\rule{0pt}{1.1em}Computing gaps in the essential spectrum.&\makecell[l]{$\in\Sigma_3^A$, $\not\in\Delta_3^G$}&\ref{spec_poll_hard}\\

\hline
\rule{0pt}{1.1em}Computing the essential numerical range.&\makecell[l]{$\in\Pi_2^A$, $\not\in\Delta_2^G$}&\ref{spec_poll_hard}\\

\hline
\rule{0pt}{1.1em}Determining if spectral pollution can occur on a set
(i.e., failure of finite section method).&\makecell[l]{$\in\Sigma_3^A$, $\not\in\Delta_3^G$}&\ref{spec_poll_hard}\\

\hline

\rule{0pt}{1.1em}Computing the Lebesgue measure of the spectrum.&\makecell[l]{Varies. e.g.,\\Bounded dispersion: $\in\Pi_2^A$, $\notin\Delta_2^G$,\\Self-adjoint and general bounded: $\in\Pi_3^A$, $\notin\Delta_3^G$}&\ref{Leb_1}\\

\hline

\rule{0pt}{1.1em}Computing the Lebesgue measure of the pseudospectrum.&\makecell[l]{Varies. e.g.,\\Bounded dispersion: $\in\Sigma_1^A$, $\notin\Delta_1^G$,\\Self-adjoint and general bounded: $\in\Sigma_2^A$, $\notin\Delta_2^G$}&  \ref{Leb_2}\\

\hline

\rule{0pt}{1.1em}Determining if the Lebesgue measure of the spectrum is zero.&\makecell[l]{Varies. e.g.,\\Bounded dispersion: $\in\Pi_3^A$, $\notin\Delta_3^G$,\\Self-adjoint and general bounded: $\in\Pi_4^A$, $\notin\Delta_4^G$}&\ref{Leb_3}\\

\hline

\rule{0pt}{1.1em}Computing the box-counting dimension of the spectrum (when it exists).&\makecell[l]{Varies. e.g.,\\Bounded dispersion:$\in\Pi_2^A$, $\notin\Delta_2^G$,\\Self-adjoint: $\in\Pi_3^A$, $\notin\Delta_3^G$}&\ref{fractal_theoremhhhjh}\\

\hline

\rule{0pt}{1.1em}Computing the Hausdorff dimension of the spectrum.&\makecell[l]{Varies. e.g.,\\Bounded dispersion:$\in\Sigma_3^A$, $\notin\Delta_3^G$,\\Self-adjoint: $\in\Sigma_4^A$, $\notin\Delta_4^G$}&\ref{fractal_theoremhhhjh}\\

\hline

\end{tabular}
\vspace{3mm}
\caption{Summary of the main results for the readable information $\Lambda_1$ consisting of matrix values. The main theorems contain classifications for different classes of operators (see Table \ref{definition_rev_table2} for the operator classes).}
\vspace{-3mm}
\label{result_table}
\end{center}
\end{table}

\section{Essentials of the SCI Hierarchy and Preliminary Definitions}
\label{sec:rev_new_sec2}

\subsection{A brief introduction to the SCI Hierarchy}
\label{sec:SCI_brief_intro_FF}

\subsubsection{Description of the SCI Hierarchy}

First, we define a computational problem. The basic objects of a computational problem are:
\begin{itemize}
	\item $\Omega$, called the \emph{domain},
	\item $\Lambda$, a set of complex-valued functions on $\Omega$, called the \emph{evaluation set},
	\item $(\mathcal{M},d)$, a metric space,
	\item $\Xi:\Omega\to \mathcal{M}$ the \emph{problem function}.
\end{itemize}
The set $\Omega$ is the set of objects that give rise to our computational problems, the goal being to compute the problem function $\Xi : \Omega\to \mathcal{M}$. The set $\Lambda$ is the collection of functions that provide us with the information we are allowed to read as input to the algorithm. This leads to the following definition:

\begin{definition}[Computational problem]\label{def:comp_prob}
Given a domain $\Omega$; an evaluation set $\Lambda$, such that for any $A_1, A_2 \in \Omega$, $A_1 = A_2$ if and only if $f(A_1) = f(A_2)$ for all $f \in \Lambda$; a metric space $\mathcal{M}$; and a problem function $\Xi:\Omega\to\mathcal{M}$, we call the collection $\{\Xi,\Omega,\mathcal{M},\Lambda\}$ a computational problem.
\end{definition}

The definition of a computational problem is deliberately general. The SCI of a computational problem is the smallest number of successive limits needed to compute the solution to the problem. We call a corresponding suitably indexed family of algorithms a `\textit{tower of algorithms}'. In addition, we will use finer notions of error control. For example, consider the case that $(\mathcal{M},d)$ is the space of non-empty compact subsets of $\mathbb{C}$, equipped with the Hausdorff metric. Then, the SCI hierarchy \cite{colbrook2020foundations,ben2015can} can be described as follows.

\vspace{1mm}

{\bf The SCI hierarchy:}
Given a collection $\mathcal{C}$ of computational problems,
\begin{itemize}
\item[(i)] $\Delta^{\alpha}_0 = \Pi^{\alpha}_0 = \Sigma^{\alpha}_0$ is the set of problems that can be computed in finite time (the SCI $=0$). In other words, there exists an algorithm $\Gamma$ such that $\Gamma(A)=\Xi(A)$ for all $A\in\Omega$.
\item[(ii)] $\Delta^{\alpha}_1$ is the set of problems that can be computed using one limit (the SCI $=1$) with control of the error, i.e., $\exists$ a sequence of algorithms $\{\Gamma_n\}$ such that $d(\Gamma_n(A), \Xi(A)) \leq 2^{-n}, \, \forall A \in \Omega$.
\item[(iii)] $\Sigma^{\alpha}_1$: We have $\Delta^{\alpha}_1 \subset \Sigma^{\alpha}_1 \subset \Delta^{\alpha}_2 $ and $\Sigma^{\alpha}_1$ is the set of problems for which there exists a sequence of algorithms $\{\Gamma_n\}$ such that for every $A \in \Omega$ we have $\Gamma_n(A) \rightarrow \Xi(A)$ as $n \rightarrow \infty$. Moreover, $\sup_{z\in\Gamma_n(A)}\mathrm{dist}(z,\Xi(A))\leq 2^{-n}$, where $\mathrm{dist}(x,S)$ denotes the Euclidean distance of $x$ to $S$.
\item[(iv)] $\Pi^{\alpha}_1$: We have $\Delta^{\alpha}_1 \subset \Pi^{\alpha}_1 \subset \Delta^{\alpha}_2 $ and $\Pi^{\alpha}_1$ is the set of problems for which there exists a sequence of algorithms $\{\Gamma_n\}$ such that for every $A \in \Omega$ we have $\Gamma_n(A) \rightarrow \Xi(A)$ as $n \rightarrow \infty$. Moreover, $\sup_{z\in\Xi(A)}\mathrm{dist}(z,\Gamma_n(A))\leq 2^{-n}$.
\item[(v)] $\Delta^{\alpha}_2$ is the set of problems that can be computed using one limit (SCI $=1$) without error control, i.e., $\exists$ a sequence of algorithms $\{\Gamma_n\}$ such that $\lim_{n\rightarrow \infty}\Gamma_n(A) = \Xi(A), \, \forall A \in \Omega$.
\item[(vi)] $\Delta^{\alpha}_{m+1}$, for $m \in \mathbb{N}$, is the set of problems that can be computed by using $m$ successive limits, (SCI $\leq m$), i.e., $\exists$ a family of algorithms $\{\Gamma_{n_m, \ldots, n_1}\}$ such that 
$$
\lim_{n_m \rightarrow\infty}\cdots \lim_{n_1\rightarrow\infty}\Gamma_{n_m,\ldots, n_1}(A) = \Xi(A), \quad\, \forall A \in \Omega.
$$
\item[(vii)] $\Sigma^{\alpha}_{m}$ is the set of problems in $\Delta^{\alpha}_{m+1}$ such that, letting $\Gamma_{n_m}(A)=\lim_{n_{m-1} \rightarrow\infty}\cdots \lim_{n_1\rightarrow\infty}\Gamma_{n_m,\ldots, n_1}(A)$, $\sup_{z\in \Gamma_{n_m}(A)}\mathrm{dist}(z,\Xi(A))\leq 2^{-n_m}$. In other words, computing the $m$th limit is a $\Sigma^{\alpha}_1$ problem.
\item[(viii)] $\Pi^{\alpha}_{m}$ is the set of problems in $\Delta^{\alpha}_{m+1}$ such that $\sup_{z\in \Xi(A)}\mathrm{dist}(z,\Gamma_{n_m}(A))\leq 2^{-n_m}$. In other words, computing the $m$th limit is a $\Pi^{\alpha}_1$ problem.
\end{itemize}
 
Schematically, the SCI hierarchy can be viewed in the following way:

\begin{equation}\label{SCI_hierarchy_hfhf}
\begin{tikzpicture}[baseline=(current  bounding  box.center)]
  \matrix (m) [matrix of math nodes,row sep=1.2em,column sep=1.5em] {
  \Pi_0^{\alpha}   &                    & \MA{\Pi_1^{\alpha}} &    &  \Pi_2^{\alpha}&  & {}\\
  \Delta_0^{\alpha}&  \Delta_1^{\alpha} & \Sigma_1^{\alpha}\cup\Pi_1^{\alpha} & \Delta_2^{\alpha}&      \Sigma_2^{\alpha}\cup\Pi_2^{\alpha} & \Delta_3^{\alpha}& \cdots\\
	\Sigma_0^{\alpha}&                    & \MA{\Sigma_1^{\alpha}} & &  \Sigma_2^{\alpha}&  &{} \\
  };
 \path[-stealth, auto] (m-1-1) edge[draw=none]
                                    node [sloped, auto=false,
                                     allow upside down] {$=$} (m-2-1)
																		(m-3-1) edge[draw=none]
                                    node [sloped, auto=false,
                                     allow upside down] {$=$} (m-2-1)
																		
																		(m-2-2) edge[draw=none]
                                    node [sloped, auto=false,
                                     allow upside down] {$\subsetneq$} (m-2-3)
																		(m-2-3) edge[draw=none]
                                    node [sloped, auto=false,
                                     allow upside down] {$\subsetneq$} (m-2-4)
																		(m-2-4) edge[draw=none]
                                    node [sloped, auto=false,
                                     allow upside down] {$\subsetneq$} (m-2-5)
																		(m-2-5) edge[draw=none]
                                    node [sloped, auto=false,
                                     allow upside down] {$\subsetneq$} (m-2-6)
																		(m-2-6) edge[draw=none]
                                    node [sloped, auto=false,
                                     allow upside down] {$\subsetneq$} (m-2-7)

												(m-2-1) edge[draw=none]
                                    node [sloped, auto=false,
                                     allow upside down] {$\subsetneq$} (m-2-2)
											 (m-2-2) edge[draw=none]
                                    node [sloped, auto=false,
                                     allow upside down] {$\subsetneq$} (m-1-3)
											(m-2-2) edge[draw=none]
                                    node [sloped, auto=false,
                                     allow upside down] {$\subsetneq$} (m-3-3)
											 (m-1-3) edge[draw=none]
                                    node [sloped, auto=false,
                                     allow upside down] {$\subsetneq$} (m-2-4)
																		(m-3-3) edge[draw=none]
                                    node [sloped, auto=false,
                                     allow upside down] {$\subsetneq$} (m-2-4)
																		(m-2-4) edge[draw=none]
                                    node [sloped, auto=false,
                                     allow upside down] {$\subsetneq$} (m-1-5)
											(m-2-4) edge[draw=none]
                                    node [sloped, auto=false,
                                     allow upside down] {$\subsetneq$} (m-3-5)
																		(m-1-5) edge[draw=none]
                                    node [sloped, auto=false,
                                     allow upside down] {$\subsetneq$} (m-2-6)
																		(m-3-5) edge[draw=none]
                                    node [sloped, auto=false,
                                     allow upside down] {$\subsetneq$} (m-2-6)
											(m-2-6) edge[draw=none]
                                    node [sloped, auto=false,
                                     allow upside down] {$\subsetneq$} (m-1-7)
																		(m-2-6) edge[draw=none]
                                    node [sloped, auto=false,
                                     allow upside down] {$\subsetneq$} (m-3-7);
																		
\end{tikzpicture}
\end{equation}
A visual demonstration of these classes is shown in Figure \ref{sigma_meaning}. For the description for decision problems, see \S \ref{SCI_Hierarchy}. The $\Sigma_1^{\alpha}$ and $\Pi_1^{\alpha}$ classes become crucial in computer-assisted proofs (see below).

\begin{figure}
\centering
\includegraphics[width=0.7\textwidth,trim={0mm 1mm 0mm 1mm},clip]{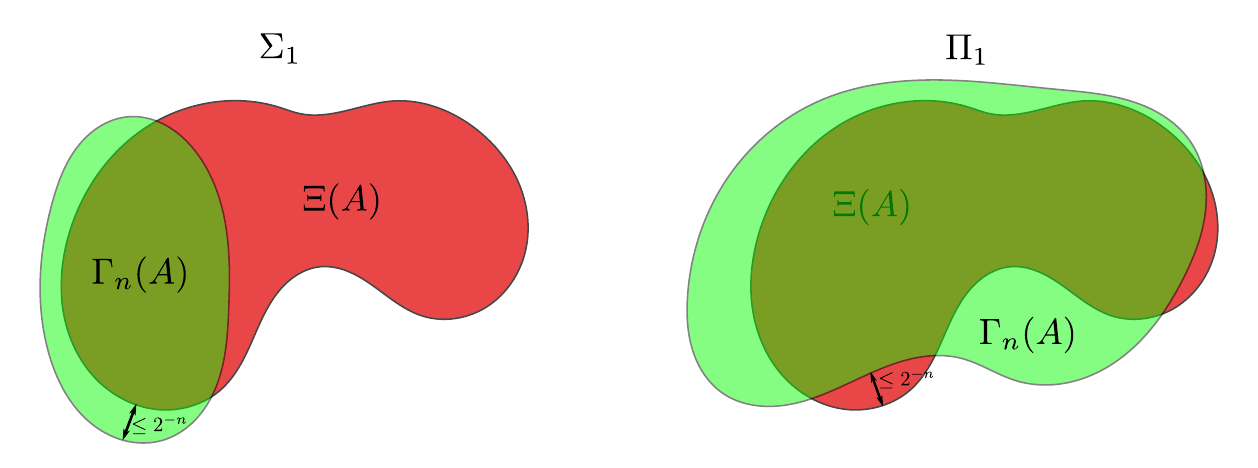}
\caption{Meaning of $\Sigma_1$ and $\Pi_1$ convergence for problem function $\Xi$ computed in the Hausdorff metric. The red areas represent $\Xi(A)$, whereas the green areas represent the output of the algorithm $\Gamma_n(A)$. $\Sigma_1$ convergence means convergence as $n\rightarrow\infty$ but each output point in $\Gamma_n(A)$ is at most distance $2^{-n}$ from $\Xi(A)$. Similarly, in the case of $\Pi_1$, we have convergence as $n\rightarrow\infty$ but any point in $\Xi(A)$ is at most distance $2^{-n}$ from $\Gamma_n(A)$.}
\label{sigma_meaning}
\end{figure}

\begin{remark}[Computability, not complexity]
It is important to note that (despite its name) the SCI hierarchy is a hierarchy for classifying computability, not complexity. Most computational spectral problems of interest are $\notin\Delta_1$ in the SCI hierarchy, and complexity theory only makes sense for problems in $\Delta_1$. Hence, it is impossible to build a complexity theory for most infinite-dimensional spectral problems. The scientific community computes with non-computable problems ($\notin\Delta_1$) on a daily basis (e.g., in quantum mechanics). This also happens in high-profile computer-assisted proofs (see below). \hfill$\boxtimes$
\end{remark}

\subsubsection{The model of computation $\alpha$}
The $\alpha$ in the superscript indicates the model of computation, which is described in \S \ref{SCI_Hierarchy}. For $\alpha = G$, the underlying algorithm is general (see Definition \ref{Gen_alg}) and can use any tools at its disposal. The reader may think of a Blum--Shub--Smale (BSS) machine or a Turing machine with access to any oracle, although a general algorithm is even more powerful. However, for $\alpha = A$ this means that only arithmetic operations and comparisons are allowed. In particular, if rational inputs are considered, the algorithm is a Turing machine, and in the case of real inputs, a BSS machine. Hence, a result of the form
$$
\notin \Delta_k^G \text{ is stronger than } \notin \Delta_k^A.
$$  
Indeed, a $\notin \Delta_k^G$ result is universal and holds for any model of computation. Moreover, 
$$
\in \Delta_k^A \text{ is stronger than } \in \Delta_k^G,
$$  
and similarly for the $\Pi_k$ and $\Sigma_k$ classes. In this paper we prove lower bounds for $\alpha = G$ and upper bounds for $\alpha = A$, thus obtaining the strongest results. Remark \ref{fdjojosr} discusses further how the model of computation is of less importance in infinite dimensions.

\subsubsection{Computer-assisted proofs}

The class of problems $\Delta_1^A$ are precisely those that are computable according to Turing's definition of computability (i.e., there exists an algorithm such that for any $\epsilon > 0$ the algorithm can produce an $\epsilon$-accurate output). However, most infinite-dimensional spectral problems are $\notin \Delta_1^A.$ The simplest example is the problem of computing spectra of infinite diagonal matrices. Very few interesting infinite-dimensional spectral problems are actually in $\Delta_1^A$, and most of the literature on spectral computations provides algorithms that yield $\Delta_2^A$ classification results. Such algorithms converge, but may not provide error control. In many cases, error control is impossible.

Problems not in $\Delta_1^A$ are a daily occurrence in the sciences due to suggestive numerical simulations or evidence based on experiments. However, in the field of computer-assisted proofs, this is not possible, since only $100\%$ rigour is accepted. Nevertheless, there are many examples of famous conjectures that have been proven using computational problems that do not lie in $\Delta_1^A$. For example, the proof of Kepler's conjecture (Hilbert's 18th problem) \cite{Hales_Annals, hales_Pi} relies on decision problems that are not in $\Delta_1^A$ \cite{opt_big}. Another example is C. Fefferman and L. Seco's proof of the Dirac--Schwinger conjecture on the asymptotics of ground states of certain Schr\"odinger operators \cite{fefferman1990, fefferman1992, fefferman1993aperiodicity,  fefferman1994, fefferman1994_2, fefferman1995, fefferman1996interval, fefferman1996, fefferman1997}. The reason for this apparent paradox is that the $\Sigma^A_1$ and $\Pi^A_1$ classes are larger than $\Delta^A_1$, but can still be used in computer-assisted proofs. Both of the above examples implicitly prove $\Sigma^A_1$ classifications. For example, suppose we have a computational spectral problem that lies in $\Sigma^A_1$. This means that there is an algorithm that will converge and never provide incorrect output, up to a user-specified error bound. Thus, conjectures about operators never having spectra in a certain area (a common problem in many problems of stability analysis, for example) could be disproved by a computer-assisted proof. Recent results using computer-assisted proofs in spectral theory include \cite{brown2010,bogli2014guaranteed}.

\subsection{Evaluation sets and domains}

Throughout this paper, unless otherwise specified, $A$ will be a bounded operator acting on the canonical Hilbert space $l^2(\mathbb{N})$ (we define $\Omega_{\mathrm{B}}:=\mathcal{B}(l^2(\mathbb{N}))$), and realised as a matrix with respect to the canonical basis. However, the results of this paper extend to general separable Hilbert spaces $\mathcal{H}$ through a choice of orthonormal basis $e_1,e_2,\ldots$ if one can compute the matrix values of the operators with respect to this basis (see the discussion of the evaluation sets below). For example, we can treat operators naturally defined on lattices such as $\mathbb{Z}^d$, or more generally on graphs. Such operators are abundant in mathematical physics. Below we give the evaluation sets and classes of operators treated in this paper. For convenience, this information is summarised in Tables \ref{definition_rev_table} and \ref{definition_rev_table2}.

\begin{table}
\begin{center}
\addtolength{\leftskip} {-1.6cm}
    \addtolength{\rightskip}{-2cm}
\hspace{-0.5cm}\begin{tabular}{|l|l|l|}
\hline
 \rule{0pt}{1.1em}\textbf{Evaluation set}&\textbf{Information available to algorithm}&\textbf{Meaning}\\
 \hline
 \hline
\rule{0pt}{1.1em}$\Lambda_1$&$f^1_{i,j}: A\mapsto \langle Ae_j,e_i\rangle$ for $i,j\in\mathbb{N}$& matrix entries of $A$\\
\hline
\rule{0pt}{1.1em}$\Lambda_2$&\makecell[l]{$f^1_{i,j}: A\mapsto \langle Ae_j,e_i\rangle$ for $i,j\in\mathbb{N}$\\
$f^2_{i,j}: A\mapsto \langle Ae_j,Ae_i\rangle$ for $i,j\in\mathbb{N}$\\
$f^3_{i,j}: A\mapsto \langle A^*e_j,A^*e_i\rangle$ for $i,j\in\mathbb{N}$}&matrix entries of $A$, $A^*A$, and $AA^*$\\
\hline
\end{tabular}
\vspace{3mm}
\caption{Summary of evaluation sets used in this paper.}
\vspace{-3mm}
\label{definition_rev_table}
\end{center}
\end{table}

\begin{table}
\begin{center}
\addtolength{\leftskip} {-1.6cm}
    \addtolength{\rightskip}{-2cm}

\hspace{-0.5cm}\begin{tabular}{|l|l|}
\hline
 \rule{0pt}{1.1em}\textbf{Domain (class of operators)}&\textbf{Meaning}\\
 \hline
 \hline

\rule{0pt}{1.1em}$\Omega_{\mathrm{B}}$&bounded operators\\
\hline

\rule{0pt}{1.1em}$\Omega_\mathrm{N}$&bounded normal operators\\
\hline

\rule{0pt}{1.1em}$\Omega_{\mathrm{SA}}$&bounded self-adjoint operators\\
\hline

\rule{0pt}{1.1em}$\Omega_\mathrm{D}$&bounded self-adjoint diagonal operators\\
\hline

\rule{0pt}{1.1em}$\Omega_f$&bounded operators with known $f$ satisfying \eqref{bd_disp2}\\
\hline

\rule{0pt}{1.1em}$\Omega_g$&bounded operators with known $g$ satisfying \eqref{control_resolvent}\\
\hline

\end{tabular}

\vspace{3mm}
\caption{Summary of classes of operators treated in this paper. Note that all considered classes lie in $\Omega_\mathrm{B}$, and that $\Omega_\mathrm{D}\subset\Omega_\mathrm{SA}\subset\Omega_\mathrm{N}$, $\Omega_\mathrm{D}\subset \Omega_f$, and $\Omega_\mathrm{N}\subset\Omega_{g:x\mapsto x}$.}
\vspace{-3mm}
\label{definition_rev_table2}
\end{center}
\end{table}

\textbf{Evaluation sets:} We consider two natural sets of information that our algorithms can read. The first, $\Lambda_1$, provides the entries of the matrix representation of $A$ with respect to the canonical basis $\{e_i\}_{i\in \mathbb{N}}$:
$$
\Lambda_1=\{f^1_{i,j}: A\mapsto \langle Ae_j,e_i\rangle| i,j\in\mathbb{N}\}.
$$
The second, $\Lambda_2$, appends $\Lambda_1$ with the entries of the matrix representations of $A^*A$ and $AA^*$ with respect to the canonical basis $\{e_i\}_{i\in \mathbb{N}}$:
$$
\Lambda_2=\Lambda_1\cup\{f^2_{i,j}: A\mapsto \langle Ae_j,Ae_i\rangle, f^3_{i,j}: A\mapsto \langle A^*e_j,A^*e_i\rangle| i,j\in\mathbb{N}\}.
$$
We include $\Lambda_2$ since it is natural for problems posed in variational form, and can often be evaluated through numerical integration. When considering classes with functions $f$ (and $\{c_n\}$) and $g$ as in \eqref{bd_disp2} and \eqref{control_resolvent} below, we will add these to the relevant evaluation set (evaluating $g$ at rational points) and with an abuse of notation still use the notation $\Lambda_i$. A small selection of the problems also require additional information, such as when testing if a set intersects a spectral set, but any changes to $\Lambda_i$ will be pointed out where appropriate. 

\textbf{Classes of operators:} Let $\Omega_\mathrm{N}$ denote the class of normal operators in $\Omega_{\mathrm{B}}$, $\Omega_{\mathrm{SA}}$ denote the class of self-adjoint operators in $\Omega_\mathrm{N}$, and $\Omega_\mathrm{D}$ denote the class of self-adjoint diagonal operators in $\Omega_{\mathrm{SA}}$. For $f:\mathbb{N}\rightarrow\mathbb{N}$, $f(n)\geq n+1$ define
\begin{equation}
D_{f,n}(A) :=\max\left\{{\left\|(I-P_{f(n)})AP_n\right\|},{\left\|P_nA(I-P_{f(n)})\right\|}\right\},
\label{bd_disp2}
\end{equation}
where $P_m$ is the orthogonal projection onto $\mathrm{span}\{e_1,\ldots, e_m\}$. Given such an $f$, we assume access to an estimate $D_{f,n}(A) \leq c_n(A)\in\mathbb{Q}_{\geq0}$, where $c_n \rightarrow 0$ as $n \rightarrow \infty$. We let $\Omega_f$ denote the class of bounded operators with known function $f$ and $\{c_n\}$.\footnote{Sometimes the sequence $\{c_n\}$ is not needed and we will explicitly mention when this is the case.} As a special case, if we know our matrix is sparse with finitely many non-zero entries in each column and row (and we know the positions of the non-zero entries) then we know an $f$ with $c_n=0$. Let $g:\mathbb{R}_{+}\rightarrow\mathbb{R}_{+}$ be a strictly increasing, continuous function that vanishes only at $0$ with $\lim_{x\rightarrow\infty}g(x)=\infty$. Let $\Omega_g$ be the class of bounded operators with
\begin{equation}
\label{control_resolvent}
{g(\mathrm{dist}(z,\mathrm{Sp}(A)))}\leq\left\|R(z,A)\right\|^{-1},
\end{equation}
for $z\in\mathbb{C}$, where $R(z,A)=(A-zI)^{-1}$. A simple compactness argument shows that such a $g$ always exists for any given $A\in\Omega_{\mathrm{B}}$. However, the classification of spectral problems in the SCI hierarchy generally depends on whether one knows an estimate for $g$ or not. For example, in the self-adjoint and normal cases, $g(x) = x$ is the trivial choice of $g$. Operators with $g(x) = x$ are known as $G_1$ and include the well studied class of hyponormal operators (operators with $A^*A-AA^*\geq0$) \cite{putnam1979operators}. A common assumption is that
$$
\|R(z,A)\|\leq \frac{C}{\mathrm{dist}(z,\mathrm{Sp}(A))} \quad \forall z\notin\mathrm{Sp}(A),
$$
for some constant $C$, which is equivalent to $A\in\Omega_g$ with $g(x)=x/C$. For example, if $A$ is similar to a normal operator with a similarity transformation $S$ that has bounded condition number $\kappa(S)$, we can take $C=\kappa(S)$. Other examples with non-linear $g$ include perturbations of self-adjoint operators \cite[e.g., Theorem 7.7.1]{gil2003operator}. More generally, one can view the function $g$ as a measure of stability of the spectrum of $A$ through the formula
\begin{equation}
\mathrm{Sp}_{\epsilon}(A):=\mathrm{Sp}(A)\cup\{z\notin\mathrm{Sp}(A):\left\|R(z,A)\right\|\geq1/\epsilon\}=\bigcup_{B\in\Omega_{\mathrm{B}},\left\|B\right\|\leq\epsilon}\mathrm{Sp}(A+B),
\end{equation}
where $\mathrm{Sp}_{\epsilon}(A)$ denotes the ($\epsilon$-)pseudospectrum of $A$ \cite{trefethen2005spectra}. The function $g$ is held fixed for a given class $\Omega_g$ and a smaller $g$ leads to a larger class of operators $\Omega_g$.

\section{Main Results: The Foundations of Computing Geometric Features of Spectra}
\label{main_results_section}

Our results classify computing geometric features of spectra in the SCI hierarchy. In other words, we are concerned with the foundations of computation for geometric features of spectra. There are two aspects of this classification: proving impossibility results (lower bounds), where we make use of the tools developed in \S \ref{bigHth} and Theorem \ref{DST_main}, and proving upper bounds through the construction of algorithms. This ensures that our algorithms realise the boundary of what computers can achieve in spectral computations. We have included routines for some of the main algorithms in Appendix \ref{compute_rout_appendix} and computational examples in \S \ref{num_test}.

\begin{remark}[Bounding the operator norm] The proofs of lower bounds make clear that all classifications still hold if we replace the respective sub-class $\Omega\subset\Omega_{\mathrm{B}}$ by the restriction to operators in $\Omega$ having operator norm at most $M\in\mathbb{R}_{>0}$, adding such a value $M$ (constant function) to the evaluation set $\Lambda$.\hfill$\boxtimes$
\end{remark}

\begin{remark}[Computing the resolvent norm]
Some of the algorithms are built on the local approximation of the functions (or similar functions) defined by
$$
\gamma_{n}(z;A)=\min\{\sigma_{\mathrm{inf}}((A-zI){|_{P_n\mathcal{H}}}),\sigma_{\mathrm{inf}}((A^*-\bar{z}I){|_{P_n\mathcal{H}}})\},
$$
where $\sigma_{\mathrm{inf}}$ denotes the smallest singular value or injection modulus:
$$
\sigma_{\mathrm{inf}}(T)=\inf\{\|Tv\|:\|v\|=1\}.
$$
The functions $\gamma_{n}$ converge to the resolvent norm $\|R(z,A)\|^{-1}$ uniformly on compact subsets of $\mathbb{C}$ from above as $n\rightarrow\infty$. This idea was crucial in \cite{colb1,colbrook3} to compute spectra with $\Sigma_1^A$ error control for a large class of operators. A theme of some of our proofs, especially those concerning Lebesgue measure and fractal dimensions, is the extension of these ideas to compute geometric properties of the spectrum.\hfill$\boxtimes$
\end{remark}

\subsection{Spectral radii}
\label{sec:rev:spec_rad}

We begin with a very simple geometric feature of the spectrum. The spectral radius, $r(A)$, of a bounded operator $A$ is the supremum of the absolute values of members of the spectrum, which is attained. Spectral radii commonly appear in applications involving stability analysis. We set $\Xi_r(A):=r(A)$ and make the following initial observations:
\begin{itemize}
	\item[(i)] One can easily show that the computational problem of the operator norm of any $A\in\Omega_{\mathrm{B}}$ lies in $\Sigma_1^A$. Hence, since $r(A)\leq \left\|A\right\|$, we can easily get an upper bound for $\Xi_r(A)$ in one limit. Of course, if $A$ is not normal, this upper bound may not agree with $\Xi_r(A)$.
	\item[(ii)] If an operator lies in $\Omega_g$ with $g(x)=x$, then the convex hull of the spectrum is equal to the closure of the numerical range (recall that the numerical range is $\{\langle Ax,x\rangle:\|x\|=1\}$) \cite{orland1964class}. Such operators are known as convexoid and the problem of computing $\Xi_r(A)$ for such operators lies in $\Sigma_1^A$.
	\item[(iii)] In light of Gelfand's famous formula $\Xi_r(A)=\lim_{n\rightarrow\infty}\|A^n\|^{\frac{1}{n}}$, one might expect that the computation of $\Xi_r(A)$ is strictly easier in the sense of the SCI hierarchy than that of the spectrum.
\end{itemize}

The following shows that the intuition in (iii) is misguided in general, and only occurs if an operator is convexoid as in (ii). Computing $\Xi_r(A)$ is just as hard as computing the spectrum for the class $\Omega_{\mathrm{B}}$. Controlling the resolvent via a function $g$ as in \eqref{control_resolvent} makes the problem easier in the sense of SCI hierarchy than the general class $\Omega_{\mathrm{B}}$, but is not sufficient to reduce the SCI of the problem to $1$.

\begin{theorem}
\label{spec_rad_thm}
Let $g:\mathbb{R}_{+}\rightarrow\mathbb{R}_{+}$ be a strictly increasing, continuous function that vanishes only at $0$ with $\lim_{x\rightarrow\infty}g(x)=\infty$. In addition, suppose that $g(x)\leq (1-\delta)x$ for some $\delta\in(0,1)$. Then:
\begin{align*}
&\Delta^G_1 \not\owns  \{\Xi_r,\Omega_\mathrm{D},\Lambda_1\} \in \Sigma^A_1,\quad&&\Delta^G_1 \not\owns  \{\Xi_r,\Omega_\mathrm{N},\Lambda_1\} \in \Sigma^A_1,\quad&&\Delta^G_1 \not\owns  \{\Xi_r,\Omega_f\cap\Omega_g,\Lambda_1\} \in \Sigma^A_1,\\
&\Delta^G_2 \not\owns  \{\Xi_r,\Omega_g,\Lambda_1\} \in \Sigma^A_2,\quad&&\Delta^G_2 \not\owns  \{\Xi_r,\Omega_f,\Lambda_1\} \in \Pi^A_2,\quad &&\Delta^G_3 \not\owns \{\Xi_r,\Omega_{\mathrm{B}},\Lambda_1\} \in \Pi^A_3.
\end{align*}
When considering the evaluation set $\Lambda_2$, the only changes are the following classifications:
\begin{align*}
&\Delta^G_1 \not\owns  \{\Xi_r,\Omega_g,\Lambda_2\} \in \Sigma^A_1,\quad&&\Delta^G_2 \not\owns \{\Xi_r,\Omega_{\mathrm{B}},\Lambda_2\} \in \Pi^A_2.
\end{align*} 
\end{theorem}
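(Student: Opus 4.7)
For the upper bounds I stratify by available structure. For $\Omega_{\mathrm{D}}$ the tower $\Gamma_n(A) = \max_{k \leq n} |A_{kk}|$ is $\Sigma_1^A$. For $\Omega_{\mathrm{N}}$ one uses $r(A) = w(A)$ for normal operators and the monotone approximation $w(P_n A P_n) \uparrow w(A)$, where each $w(P_n A P_n)$ is a finite-dimensional computation from $\Lambda_1$. For $\Omega_g \cap \Omega_f$ with $\Lambda_1$, and for $\Omega_g$ with $\Lambda_2$, bounded dispersion (respectively direct access) makes the resolvent proxy $\gamma_n(z;A)$ of \cite{colb1} computable in finite time with error control, and the growth function $g$ converts $\gamma_n(z;A) \leq g(2^{-n})$ into the certified estimate $\mathrm{dist}(z,\mathrm{Sp}(A)) \leq 2^{-n}$; the tower outputs the largest $|z|$ over a grid on $\{|z| \leq \|P_n A P_n\| + 1\}$ passing this test, producing a $\Sigma_1^A$ upper bound for $r(A)$.

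All remaining upper bounds use Gelfand's formula $r(A) = \inf_n \|A^n\|^{1/n}$, whose outer limit is monotone from above after taking the running infimum, and is therefore $\Pi_1$-like. Under $\Omega_f$ with $\Lambda_1$, dispersion makes matrix entries of $A^n$ computable in finite time with error control, so $\|A^n\|$ is $\Sigma_1^A$ via $\|A^n P_k\| \uparrow \|A^n\|$, assembling to $\Pi_2^A$. Under $\Omega_{\mathrm{B}}$ with $\Lambda_2$, direct access to $\langle Ae_j,Ae_i\rangle$ allows finite-time evaluation of $\|A P_k\|$, and an inductive tail-bound step yields $\|A^n\|$ in one limit, again $\Pi_2^A$. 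Under $\Omega_{\mathrm{B}}$ with $\Lambda_1$ the tails of matrix products are not controlled, so an extra limit is needed and we obtain $\Pi_3^A$. The remaining case $\Omega_g$ with $\Lambda_1$ uses the $\gamma_n$ strategy but requires one additional inner limit to compute $A^*A$ entries, giving $\Sigma_2^A$.

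For the lower bounds I use explicit diagonal perturbations at the bottom of the hierarchy and the Baire--SCI correspondence developed in \S\ref{bigHth} at the top. Each $\notin \Delta_1^G$ separation follows by a one-step adversary: given a candidate general tower with eventual error strictly less than $1/4$, identify the finite oracle window used on the zero operator at step $n$ and insert a diagonal entry of modulus $1$ outside that window, taking care that the constructed operator lies inside the relevant subclass (which is immediate for diagonal perturbations of zero). Nesting and diagonalising this template across depth yields the $\notin \Delta_2^G$ separations for $\Omega_g$, $\Omega_f$ and for $\Omega_{\mathrm{B}}$ with $\Lambda_2$. The deepest separation, $\notin \Delta_3^G$ for $\Omega_{\mathrm{B}}$ with $\Lambda_1$, uses the new machinery: encode a $\Pi^0_3$-complete subset of Baire space as a block-structured bounded operator so that flipping the encoded bit precisely toggles whether the spectrum contains a point above a threshold modulus. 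A depth-$2$ general tower for $r(A)$ would then compute a function of Baire rank strictly less than the target, a contradiction.

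The main obstacle is this last lower bound. Earlier results for $\mathrm{Sp}(A)$ \cite{ben2015can} constrain spectra as sets but do not by themselves constrain the single scalar $\sup_{\lambda\in\mathrm{Sp}(A)}|\lambda|$, so one must design the block encoding so that the supremum over the spectrum, rather than the spectrum as a whole, is the quantity encoding the $\Pi^0_3$ relation. The new mechanism in \S\ref{bigHth} is exactly what converts a Baire-rank gap into an SCI-depth gap, and the principal technical task is arranging the construction to live entirely inside $\Omega_{\mathrm{B}}$ and to remain compatible with the restricted $\Lambda_1$-oracle access.
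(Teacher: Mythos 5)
Your upper bounds are mostly sound, and the Gelfand-formula route ($r(A)=\inf_n\|A^n\|^{1/n}$) is a genuine alternative to what the paper does. The paper builds towers for $\Xi_r$ simply by applying $\sup_{z\in\cdot}|z|$ to the existing towers for $\mathrm{Sp}(A)$ from \cite{colb1} and \cite{ben2015can}; your Gelfand route is noted in the paper only as a remark. Either gives the claimed $\Pi_2^A/\Pi_3^A$ classifications, though with Gelfand you must be careful that the inner limits converge monotonically in the right direction so the stacked tower really lands in $\Pi^A_k$ rather than just $\Delta^A_{k+1}$.

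The lower bounds, however, contain a genuine gap. You claim the $\notin\Delta_2^G$ separations for $\Omega_g$ and $\Omega_f$ (and $\Omega_{\mathrm{B}}$ with $\Lambda_2$) follow by ``nesting and diagonalising'' the diagonal-perturbation adversary. This cannot work: every diagonal operator is normal, and you yourself prove $\{\Xi_r,\Omega_{\mathrm{N}},\Lambda_1\}\in\Sigma_1^A\subset\Delta_1^G\subset\Delta_2^G$. An adversary that only produces diagonal (or even normal) inputs will be beaten by the $\Sigma_1^A$ algorithm $\|P_nAP_n\|$, so it can never certify $\notin\Delta_2^G$. The obstruction one must exploit is precisely non-normality: an operator whose finite sections carry spectral radius strictly larger (for $\Omega_g$) or strictly smaller (for $\Omega_f$) than that of the limiting operator. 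The paper's $\Omega_f$ construction uses nilpotent Jordan shift blocks $A_m$ (spectral radius $0$) alternating with zeros, so each finite truncation has $r=0$ while the limit operator has $r=1$. The $\Omega_g$ construction conjugates a normal $2\times 2$ block $B$ by $S_{\pm}(\epsilon)=\bigl(\begin{smallmatrix}1&0\\ \pm\epsilon&1\end{smallmatrix}\bigr)$ to produce $\hat B$ whose $(1,1)$ entry $1+\epsilon^2$ strictly exceeds its spectral radius $(1+\sqrt{1+4\epsilon^2})/2$ --- impossible for a normal matrix --- and the hypothesis $g(x)\leq(1-\delta)x$, which your proposal never invokes, is used exactly to guarantee that $\hat B\oplus 0\oplus\cdots$ still lies in $\Omega_g$ (via $\|S_+\|\|S_-\|\leq(1+\epsilon+\epsilon^2)^2\leq 1/(1-\delta)$). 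Omitting this hypothesis and relying on diagonal perturbations is not a stylistic difference; it is why the argument breaks. Your sketch of the $\notin\Delta_3^G$ bound via the Baire/SCI correspondence of \S\ref{bigHth} is the right idea and matches the paper (encoding $\tilde\Xi_1$ into shift-like blocks $B_j$ so that $r(A)\in\{1/2,3/2\}$ reflects the answer), though the parity you name ($\Pi^0_3$-complete vs.\ the paper's $\Sigma^0_3$-complete $\tilde\Xi_1$) should be checked when you make the reduction precise.
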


\begin{remark}
The $\Pi_2^A$ algorithm for $\{\Xi_r,\Omega_f\}$ does not need a null sequence $\{c_n\}$ bounding the dispersion, $D_{f,n}(A)\leq c_n$, to be sharp in the SCI hierarchy since this is absorbed in the first limit.\hfill$\boxtimes$
\end{remark}

\begin{remark}
The proofs of the lower bounds in Theorem \ref{spec_rad_thm} for $\Omega_g$ require $g$ with the stated additional property and $\delta>0$. In particular, the lower bound does not cover the smaller class of $G_1$ operators.\hfill$\boxtimes$
\end{remark}

\subsection{Essential spectral radii}
\label{sec:rev:ess_spec_rad}

Next, we consider the essential spectral radius. Define the essential spectrum of $A\in\Omega_{\mathrm{B}}$ as
$$
\mathrm{Sp}_{\mathrm{ess}}(A)=\bigcap_{B\in\Omega_K}\mathrm{Sp}(A+B),
$$
where $\Omega_K$ denotes the class of compact operators. The essential spectral radius, $\Xi_{er}(A)$, is simply the supremum of the absolute values over $\mathrm{Sp}_{\mathrm{ess}}(A)$.

\begin{theorem}
\label{spec_rad_thm2}
We have the following classifications for $i=1,2$:
\begin{align*}
&\Delta^G_2 \not\owns  \{\Xi_{er},\Omega_\mathrm{D},\Lambda_i\} \in \Pi^A_2,\quad&&\Delta^G_2 \not\owns  \{\Xi_{er},\Omega_\mathrm{N},\Lambda_i\} \in \Pi^A_2,\quad&&\Delta^G_2 \not\owns  \{\Xi_{er},\Omega_f,\Lambda_i\} \in \Pi^A_2.
\end{align*}
Whereas, for general operators,
\begin{align*}
\Delta^G_3 \not\owns \{\Xi_{er},\Omega_{\mathrm{B}},\Lambda_1\} \in \Pi^A_3,\quad \Delta^G_2 \not\owns \{\Xi_{er},\Omega_{\mathrm{B}},\Lambda_2\} \in \Pi^A_2.
\end{align*}
\end{theorem}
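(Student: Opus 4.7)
My plan is to prove each of the five classifications by constructing a matching tower of algorithms (upper bound) and an obstruction family (lower bound), working outward from the diagonal class $\Omega_\mathrm{D}$. The upper bound in the diagonal case rests on the identity
$$
\Xi_{er}(A) \;=\; \limsup_{i\to\infty} |d_i|, \qquad d_i = \langle A e_i, e_i\rangle,
$$
valid because the essential spectrum of a self-adjoint diagonal operator is the union of the accumulation points of $\{d_i\}$ with its infinite-multiplicity eigenvalues. Setting $\Gamma_{n_2, n_1}(A) = \max_{n_2 \leq i \leq n_1} |d_i|$, the inner $n_1$-limit increases to $\sup_{i \geq n_2} |d_i|$ and the outer $n_2$-limit decreases monotonically to the $\limsup$; since the final limit is $\Pi_1$-style this yields $\in \Pi^A_2$. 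For $\Omega_\mathrm{N}$ and $\Omega_f$ I would combine the $\Sigma^A_1$ pseudospectrum machinery of \cite{colb1} with a tail variant of $\gamma_n$,
$$
\gamma^{(m)}_{n}(z;A) \;=\; \min\bigl\{\sigma_1\bigl((A - zI)|_{(P_n - P_m)\mathcal{H}}\bigr),\, \sigma_1\bigl((A^* - \bar z I)|_{(P_n - P_m)\mathcal{H}}\bigr)\bigr\},
$$
whose vanishing as $m,n \to \infty$ produces a Weyl sequence supported in the tail of the canonical basis and thus witnesses an essential-spectrum point. For $A \in \Omega_f$ this quantity is computable from $\Lambda_1$ with error controlled by the dispersion bounds $c_n$, and for $A \in \Omega_\mathrm{N}$ the normality gives $\gamma_n(z;A)\to\mathrm{dist}(z,\mathrm{Sp}(A))$ and one localises the approximate eigenvectors to tail blocks. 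The two-index algorithm outputs the maximum of $|z|$ over a $1/n_2$-grid of points $z$ at which $\gamma^{(n_2)}_{n_1}(z;A) < 1/n_2$, plus a safety margin of $2^{-n_2}$; the inner limit gives an overestimate of $\Xi_{er}$ at scale $n_2$, and the outer limit decreases it to $\Xi_{er}(A)$ from above, giving $\Pi^A_2$.

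For $\Omega_\mathrm{B}$ with $\Lambda_2$, the function $\gamma_n$ is a finite arithmetic computation in the supplied data, so the two-limit scheme above carries through giving $\Pi^A_2$. With only $\Lambda_1$ one must approximate $A^*A$ and $AA^*$ via truncations using an additional innermost limit, lifting the classification by one level to $\Pi^A_3$; the resulting three-index algorithm reuses the two limits above and has a $\Pi_1$-style outer limit decreasing to $\Xi_{er}(A)$, by the same mechanism that appears in the $\Omega_\mathrm{B}$ part of Theorem \ref{spec_rad_thm}.

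The lower bounds use the Baire-hierarchy tool announced in the introduction and developed in \S \ref{bigHth}. For $\Omega_\mathrm{D}$, $\Omega_\mathrm{N}$ and $\Omega_f$ I would construct a family $\{A_x\}$ of diagonal operators parameterised by $x \in \{0,1\}^{\mathbb{N}^2}$ with the $|d_i|$'s arranged so that
$$
\Xi_{er}(A_x) \;=\; \lim_{n\to\infty} \lim_{m\to\infty} x_{n,m}
$$
whenever the iterated limit exists. A $\Delta^G_2$ algorithm would then compute a genuine Baire-class-$2$ function in one limit, contradicting the SCI--Baire equivalence on this family; this gives $\notin \Delta^G_2$ simultaneously for all three classes, the $\Omega_f$ statement following by taking $f(n) = n+1$ and $c_n \equiv 0$. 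For $\Omega_\mathrm{B}$ with $\Lambda_1$ I would piggyback on the three-limit lower bound for $\{\Xi_r, \Omega_\mathrm{B}, \Lambda_1\}$ established in Theorem \ref{spec_rad_thm}: the block-triangular examples used there can be chosen so that $\Xi_r$ and $\Xi_{er}$ coincide, transferring the impossibility. For $\Omega_\mathrm{B}$ with $\Lambda_2$ the diagonal lower bound above already suffices, since $\Lambda_1$ and $\Lambda_2$ agree on diagonal inputs.

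The main obstacle I anticipate lies in the $\Omega_\mathrm{N}$ and $\Omega_f$ upper bounds: showing that the tail-singular-value test together with the grid-plus-threshold rule genuinely shrinks to $\Xi_{er}(A)$ from above, rather than stalling at the full spectral radius or being misled by clusters of finite-multiplicity eigenvalues accumulating on the essential spectrum. The analysis must verify that a point $z$ surviving the test at scale $n_2$ can be promoted, via compactness and a diagonal extraction of disjoint tail blocks, to an honest Weyl sequence, which then places $z$ in $\mathrm{Sp}_{\mathrm{ess}}(A)$. A secondary technical point is confirming in the lower-bound families that $x\mapsto A_x$ is continuous in the input metric so the Baire-hierarchy argument of \S \ref{bigHth} applies verbatim, and that the block constructions for $\Omega_\mathrm{B}$ really do force $\Xi_r$ and $\Xi_{er}$ to coincide throughout the family.
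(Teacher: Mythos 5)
Your upper bound strategy for $\Omega_\mathrm{N}$ with $\Lambda_1$ has a genuine gap. The tail variant $\gamma_n^{(m)}(z;A)=\sigma_1\bigl((A-zI)|_{(P_n-P_m)\mathcal{H}}\bigr)$ is the smallest singular value of the map from the finite block $(P_n-P_m)\mathcal{H}$ into the \emph{whole} space $\mathcal{H}$; evaluating it requires knowledge of $\langle (A-zI)e_j,(A-zI)e_i\rangle$ for $m<i,j\le n$, which are entries of $A^*A$ and hence involve infinitely many terms. You correctly note that dispersion bounds collapse this for $\Omega_f$, and that an extra inner limit is needed for $\Omega_\mathrm{B}$, but for $\Omega_\mathrm{N}$ the assertion that ``one localises the approximate eigenvectors to tail blocks'' addresses a spectral-theoretic question (existence of tail-supported Weyl sequences) rather than the computational one: without dispersion control, $\gamma_n^{(m)}$ is not a finite computation from $\Lambda_1$ and requires an additional innermost limit, landing you at $\Pi_3^A$, not the claimed $\Pi_2^A$. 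The paper escapes this by exploiting the essential numerical range: for hyponormal (in particular normal) $A$, $W_e(A)=\mathrm{conv}(\mathrm{Sp}_{\mathrm{ess}}(A))$, so $\Xi_{er}(A)=\sup_{z\in W_e(A)}|z|$; and $W_e(A)$ is the decreasing limit of $\overline{W\bigl((I-P_{n_2})A|_{(I-P_{n_2})\mathcal{H}}\bigr)}$, whose finite truncations are numerical ranges of \emph{square} compressions $(I-P_{n_2})P_{n_1+n_2+1}A|_{P_{n_1+n_2+1}(I-P_{n_2})\mathcal{H}}$ read off directly from $\Lambda_1$. That trick is what makes a two-limit tower possible for $\Omega_\mathrm{N}$ with $\Lambda_1$ and is the key idea your proposal lacks. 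Relatedly, for your grid-plus-threshold rule you do not justify that the outer $n_2$-limit actually converges (let alone monotonically from above): as $n_2$ grows, the tail subspace shrinks and the threshold tightens (pushing the output down) while the grid refines (pushing it up), and the safety margin $2^{-n_2}$ need not dominate the rate at which $\sigma_1\bigl((A-zI)|_{(I-P_{n_2})\mathcal{H}}\bigr)$ vanishes for $z\in\mathrm{Sp}_{\mathrm{ess}}(A)$; this needs a real argument.

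For the remaining cases you would do better to black-box existing results, as the paper does: $\{\Xi_{er},\Omega_f,\Lambda_1\}\in\Pi_2^A$ and $\{\Xi_{er},\Omega_\mathrm{B},\Lambda_1\}\in\Pi_3^A$ follow immediately from the $\Pi_2^A$ and $\Pi_3^A$ towers for $\mathrm{Sp}_{\mathrm{ess}}$ in \cite{ben2015can} composed with $\sup|z|$, and $\Lambda_2$ collapses the first limit. On the lower bounds, your Baire-hierarchy reduction for $\Omega_\mathrm{D}$ is in principle workable but is both heavier and vaguer than needed: the paper gives a direct adversarial argument (blocks of diagonal $1$'s interleaved with growing blocks of $0$'s, forcing a putative one-limit algorithm to output $<1/4$ on an operator with $\Xi_{er}=1$), and if you insist on the descriptive-set route a single-$\limsup$ reduction on $\{0,1\}^\mathbb{N}$ already suffices to rule out Baire class 1, making the double-indexed array unnecessary. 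Your observation that the three-limit lower bound from Theorem \ref{spec_rad_thm} transfers because the shift-block operators there satisfy $\Xi_r(A)=\Xi_{er}(A)$ is correct and matches the paper.
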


\subsection{Capacity and polynomial operator norms}
\label{sec:rev:CAP}

Given a polynomial $p$ of degree at least two\footnote{We fix the polynomial $p$ for the strongest possible negative results. However, the existence of the towers of algorithms also holds when considering the polynomial $p$ itself as an input.}, we consider the problem of computing $\Xi_{r,p}=\|p(A)\|$ and the capacity of the spectrum defined by
$$
\Xi_{cap}(A)=\inf_{\text{monic polynomial }p}\|p(A)\|^{\frac{1}{\mathrm{deg}(p)}}=\lim_{d\rightarrow\infty}\inf\left\{\|p(A)\|^{\frac{1}{d}}:\text{monic polynomial }p,\mathrm{deg}(p)=d\right\}.
$$
A theorem of Halmos shows that this definition of capacity agrees with the usual potential-theoretic definition of capacity of the set $\mathrm{Sp}(A)$ \cite{halmos1971capacity}. Roughly speaking, the capacity measures the ability of $\mathrm{Sp}(A)$ to hold electrical charge. We will also see some other measures of size in \S \ref{Leb_sec} and \S \ref{frac_dims_sec}. The capacity of the spectrum is of particular interest in Krylov methods where, for instance, it is related to the speed of convergence\footnote{This is an idealisation since the capacity studies operator norms while true Krylov processes look at $p(A)x$ with one or several vectors $x$. However, from local spectral theory (e.g., \cite{muller1992local}) it follows that, generically, the asymptotic speeds are the same.} \cite{nevanlinna2012convergence,nevanlinna1995hessenberg,muller2007spectral,nevanlinna1990linear,miekkala1996iterative}. The capacity is also an important object in local spectral theory \cite{aiena2007fredholm,laursen2000introduction,muller2007spectral}, and related work \cite{nevanlinna2009computing,burke2006characterizations} includes methods for computing the polynomially convex hull of an operator. The following theorem provides the relevant SCI classifications.

\begin{theorem}
\label{spec_rad_thm3}
We have the following classifications for $i=1,2$ and $\hat\Omega=\Omega_\mathrm{D}$ or $\Omega_f$:
\begin{align*}
&\Delta^G_1 \not\owns  \{\Xi_{r,p},\hat\Omega,\Lambda_i\} \in \Sigma^A_1,\quad&&\Delta^G_2 \not\owns  \{\Xi_{cap},\hat\Omega,\Lambda_i\} \in \Pi^A_2.
\end{align*}
For $\tilde\Omega=\Omega_\mathrm{N},\Omega_g$ or $\Omega_{\mathrm{B}}$,
\begin{align*}
&\Delta^G_2 \not\owns  \{\Xi_{r,p},\tilde\Omega,\Lambda_1\} \in \Sigma^A_2,\quad&&\Delta^G_3 \not\owns  \{\Xi_{cap},\tilde\Omega,\Lambda_1\} \in \Pi^A_3\\
&\Delta^G_1 \not\owns  \{\Xi_{r,p},\tilde\Omega,\Lambda_2\} \in \Sigma^A_1,\quad&&\Delta^G_2 \not\owns  \{\Xi_{cap},\tilde\Omega,\Lambda_2\} \in \Pi^A_2.
\end{align*}
\end{theorem}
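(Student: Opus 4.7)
My plan is to reduce all four claims to the subproblem of computing $\Xi_{r,p}(A) = \|p(A)\|$ for a given polynomial $p$, classify this subproblem in each setting, and then pass to $\Xi_{cap}(A)$ by an outer minimisation over a dense countable family of monic polynomials.

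For the upper bounds on $\Xi_{r,p}$, I would treat each class separately. On $\Omega_{\mathrm{D}}$, since $p(A)$ is diagonal with entries $p(A_{ii})$, the arithmetic algorithm $\Gamma_n(A) := \max_{i \le n} |p(A_{ii})|$ produces a non-decreasing sequence of under-approximations converging to $\|p(A)\|$, giving $\Sigma_1^A$. On $\Omega_f$ the dispersion estimate $c_n \to 0$ allows replacing each factor of $A$ in $A^d P_n$ by the telescoped truncation $P_{f^{[k]}(n)} A P_{f^{[k-1]}(n)}$; the resulting product approximates $p(A) P_n$ with norm error $O(d \|A\|^{d-1} c_n)$, and $\|p(A) P_n\| \uparrow \|p(A)\|$ delivers $\Sigma_1^A$ after subtracting the dispersion correction. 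For $\tilde\Omega$ with $\Lambda_2$, the tail identity $\|(I - P_m) A e_j\|^2 = (A^* A)_{jj} - \sum_{i \le m} |A_{ij}|^2$ lets one compute $p(A) e_j$ iteratively with known error, so $\|p(A) P_n\|$ is computable to precision $2^{-n}$ in finitely many arithmetic operations and a single limit $n \to \infty$ yields $\Sigma_1^A$. When only $\Lambda_1$ is available on $\tilde\Omega$, the tail control is lost, but for each fixed $n$ we still have $\|P_m A P_n - A P_n\| \to 0$ as $m \to \infty$; iterating this inner $m$-limit through each factor of the product gives $\Sigma_2^A$.

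To lift to $\Xi_{cap}$, I would enumerate a countable dense family $\{q_k\}$ of monic polynomials with Gaussian-rational coefficients and set
\begin{equation*}
\Gamma_{n_{j+1}, n_j, \dots, n_1}(A) := \min_{k \le n_{j+1}} \bigl(\tilde\Gamma^{(q_k)}_{n_j, \dots, n_1}(A)\bigr)^{1/\deg q_k},
\end{equation*}
where $\tilde\Gamma^{(q_k)}$ denotes the inner $\Sigma_j^A$ algorithm from the preceding paragraph. The inner $j$ limits converge to $\min_{k \le n_{j+1}} \|q_k(A)\|^{1/\deg q_k}$, which is a non-increasing family of over-approximations of $\Xi_{cap}(A) = \inf_p \|p(A)\|^{1/\deg p}$ converging to it by density of $\{q_k\}$. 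The outer limit is thus of $\Pi_1$-type, so the overall algorithm sits in $\Pi_{j+1}^A$, yielding the claimed $\Pi_2^A$ and $\Pi_3^A$ classifications.

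For the lower bounds I would invoke the SCI machinery of \S \ref{bigHth}. The impossibility $\{\Xi_{r,p}, \Omega_{\mathrm{D}}, \Lambda_1\} \notin \Delta_1^G$ follows from the diagonal adversary $A_k := \alpha \, e_k \otimes e_k$ with $\alpha$ chosen so that $|p(\alpha)| > |p(0)|$: the $A_k$ converge pointwise in $\Lambda_1$ to $0$ while $\|p(A_k)\| = |p(\alpha)|$ stays bounded away from $\|p(0)\|$, obstructing any continuous $\Delta_1^G$ limit. Iterating the construction with hidden non-diagonal blocks placed outside each successive query window delivers $\notin \Delta_2^G$ on $\tilde\Omega$ with $\Lambda_1$, and analogous diagonal families whose spectra oscillate between a single point (capacity $0$) and a positive-capacity arc supply the $\Xi_{cap}$ lower bounds. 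The main obstacle will be these capacity bounds: unlike $\|p(A)\|$, which is determined to precision $\varepsilon$ by finitely many matrix entries together with a norm bound, the capacity is a global potential-theoretic quantity and can only be certified below a threshold after testing arbitrarily many candidate polynomials; to defeat a $(j+1)$-limit general algorithm I would interleave the adversarial matrix construction with a Chebyshev-type polynomial dichotomy hidden at arbitrarily deep truncation levels, which is precisely the setting for which the new arbitrary-depth lower-bound tool of contribution (6) is designed.
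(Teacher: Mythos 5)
Your upper bounds (approximating $\|P_n p(A) P_n\|$ via $f$ and $\{c_n\}$, or via $\Lambda_2$, or with an extra inner truncation limit; then minimising $\|q_k(A)\|^{1/\deg q_k}$ over an enumeration of monic rational polynomials to lift a $\Sigma_k^A$ tower for $\Xi_{r,p}$ to a $\Pi_{k+1}^A$ tower for $\Xi_{cap}$) agree with the paper's argument and are sound, as are the easy lower bounds on $\hat\Omega$. The gaps are in the two harder lower bounds. For $\{\Xi_{r,p},\tilde\Omega,\Lambda_1\}\notin\Delta_2^G$, the phrase ``hidden non-diagonal blocks placed outside each successive query window'' does not name the construction that actually makes this work: you need a concrete self-adjoint family with a built-in spectral-pollution mechanism. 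The paper's Lemma \ref{useful_spec_poll} supplies it — the $2k\times 2k$ symmetric matrix $B(z_1,\dots,z_k)$ has spectrum exactly $\{-1,1\}$, yet its upper-left $k\times k$ block is $\mathrm{diag}(z_1,\dots,z_k)$ with the $z_j$ dense in $[-1,1]$. Normalising $p$ to vanish at $\pm1$ with $|p(0)|>1$, the adversary $A=\bigoplus_r B(z_1,\dots,z_{l_r})$ then has $\|p(A)\|=0$ while agreeing on arbitrarily many entries with operators $B_k$ whose spectrum is all of $[-1,1]$ and for which $\|p(B_k)\|\ge|p(0)|>1$, which is what defeats a height-one tower.

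The more serious gap is $\{\Xi_{cap},\tilde\Omega,\Lambda_1\}\notin\Delta_3^G$. Invoking ``a Chebyshev-type dichotomy at arbitrarily deep truncation levels'' and the arbitrary-depth tool identifies the right instrument but not the reduction, and the reduction is the content. One must embed the $\Sigma_3^0$-complete problem $\tilde\Xi_2$ (does a $\{0,1\}$-array have only finitely many columns with finitely many $1$'s?) into $\Xi_{cap}$. The paper does this by building, for each column $j$, a self-adjoint $C^{(j)}$ out of copies of $B(z_1,\dots,z_j)$ and the diagonal $\mathrm{diag}(z_1,\dots,z_j)$ arranged so that $\mathrm{Sp}(C^{(j)})\subset\{-1,0,1\}$ when the column has infinitely many $1$'s and $\{z_1,\dots,z_j\}\subset\mathrm{Sp}(C^{(j)})$ otherwise. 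Then $A=\bigoplus_j C^{(j)}$ has $\Xi_{cap}(A)=0$ in the first case (finite spectrum) and $\Xi_{cap}(A)=1/2$ in the second ($\mathrm{Sp}(A)=[-1,1]$; minimality of monic Chebyshev polynomials), so a hypothetical height-two tower for $\Xi_{cap}$ would yield one for $\tilde\Xi_2$, contradicting Theorem \ref{DST_main}. Diagonal families, which you offer as the analogue, cannot reach $\Delta_3^G$: truncations of a diagonal operator never pollute, which is precisely why $\Omega_\mathrm{D}$ and $\Omega_f$ sit one level lower in the statement. The pollution block $B$ is indispensable to cross from $\Delta_2^G$ to $\Delta_3^G$.
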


The proof shows these problems have the same classifications for $\Omega_\mathrm{SA}$ as $\Omega_\mathrm{N}$. Somewhat surprising is the result that the computation of $\|p(A)\|$ requires two successive limits for self-adjoint operators. The proof shows that one reason for this is spectral pollution associated with finite section methods.

\subsection{Essential numerical range, gaps in essential spectra and detecting failure of finite section}
\label{fs_fails}

We now consider geometric features of spectra that are related to the finite section method, the most intensely studied computational method of approximating spectra \cite{Albrecht_Fields, Bottcher_pseu,Bottcher_book, bottcher2006analysis}.\footnote{W. Arveson  \cite{Arveson_cnum_lin94, Arveson_noncommute93,Arveson_role_of94,Arveson_Improper93,  Arveson_discrete91} and N. Brown \cite{brown2007quasi, Brown_2006, Brown_Memoars} pioneered spectral computations from the point of view of $C^*$-algebras, both for the general spectral computation problem as well as for Schr\"{o}dinger operators. This combination can be traced back to the work of A. B{\"o}ttcher \& B. Silberman \cite{Albrecht1983}. Arveson also considered spectral computation in terms of densities, which is related to Szeg\"{o}'s work \cite{Szego} on finite section approximations.} The basic form of the finite section method approximates the spectrum of $A$ by $\mathrm{Sp}(P_nA|_{P_n\mathcal{H}})$, where $\{P_m\}$ is a sequence of finite-dimensional projections converging strongly to the identity as $m\rightarrow\infty$. The computation is often done with finite element, finite difference or spectral methods by discretising the operator on a suitable finite-dimensional space \cite{rappaz1997spectral,boffi2000problem,Boffi2,Annalisa2,zhao2007spurious,Snorre1,klaus1981point,lewin2014spurious}. Even when $A$ is self-adjoint, spurious eigenvalues, that have nothing to do with $\mathrm{Sp}(A)$, can accumulate anywhere within gaps of the essential spectrum as $n\rightarrow\infty$.\footnote{Even when the finite section method converges, it typically only yields $\Delta_2^A$ classifications in the SCI hierarchy \cite{Arieh2,bottcher_grudsky_iserles_2011,Arieh_IMA, brunner2011}.} This is known as \textit{spectral pollution}. More precisely, the essential numerical range of $A\in\Omega_{\mathrm{B}}$ is defined as
\begin{equation}
\label{ess_num_RR_deff}
W_e(A)=\bigcap_{B\in\Omega_K}\overline{W(A+B)},
\end{equation}
where $W(A)=\{\langle Ax,x\rangle:\|x\|=1\}$ is the usual numerical range.\footnote{If $A$ is hyponormal, then $W_e(A)$ is the convex hull of the essential spectrum \cite{salinas1972operators}.} We recall the following two theorems.

\begin{theorem}[Pokrzywa \cite{Pokrzywa_79}]\label{polish}
Let $A \in \mathcal{B}(\mathcal{H})$ and let $\{P_n\}$ be a sequence of finite-dimensional projections converging strongly to the identity. Suppose that $S \subset W_e(A).$ Then there exists a sequence
$\{Q_n\}$ of finite-dimensional projections such that $P_n < Q_n$ (so $Q_n \rightarrow I$ strongly) and 
\[
d_{\mathrm{H}}(\mathrm{Sp}(P_nA |_{P_n\mathcal{H}}) \cup S, \mathrm{Sp}(Q_nA |_{Q_n\mathcal{H}})) \rightarrow 0,
 \quad \text{as }n \rightarrow \infty, 
\]
where $d_{\mathrm{H}}$ denotes the Hausdorff distance.
\end{theorem}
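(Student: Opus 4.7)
The plan is to exploit the classical characterization that $\lambda\in W_e(A)$ if and only if there exists an orthonormal sequence $\{v_k\}\subset\mathcal{H}$ with $v_k\rightharpoonup 0$ weakly and $\langle Av_k,v_k\rangle\to\lambda$. The enlargement $Q_n$ will be built by appending to $P_n$ finitely many such weakly null ``pollution vectors'', one for each point of an increasingly fine net in $\overline{S}$. For each $n$, first fix a finite $\epsilon_n$-net $\{\lambda_{n,1},\ldots,\lambda_{n,m_n}\}$ of $\overline{S}\subseteq W_e(A)$ with $\epsilon_n\to 0$, and for each $j$ a weakly null orthonormal sequence $\{v_k^{(n,j)}\}$ with $\langle Av_k^{(n,j)},v_k^{(n,j)}\rangle\to\lambda_{n,j}$. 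Then inductively pick $w_{n,j}=v_{k_j}^{(n,j)}$ for increasing indices $k_j$, followed by a small Gram--Schmidt correction making $w_{n,j}$ exactly orthogonal to $P_n\mathcal{H}$ and to $w_{n,1},\ldots,w_{n,j-1}$.

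Since $P_n\mathcal{H}$ and the previously chosen $w_{n,i}$, $Aw_{n,i}$, $A^*w_{n,i}$ ($i<j$) span finite-dimensional subspaces fixed before the choice of $k_j$, weak nullity of $\{v_k^{(n,j)}\}$ lets us simultaneously enforce
\begin{equation*}
\|P_nAw_{n,j}\|,\ \|P_nA^*w_{n,j}\|,\ |\langle Aw_{n,j},w_{n,j}\rangle-\lambda_{n,j}|,\ |\langle Aw_{n,j},w_{n,i}\rangle|,\ |\langle Aw_{n,i},w_{n,j}\rangle|<\delta_{n,j}
\end{equation*}
for any prescribed tolerances $\delta_{n,j}>0$. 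Define $Q_n$ to be the orthogonal projection onto $P_n\mathcal{H}\oplus\operatorname{span}\{w_{n,1},\ldots,w_{n,m_n}\}$, so that $P_n<Q_n\to I$ strongly. In the block decomposition $Q_n\mathcal{H}=P_n\mathcal{H}\oplus\operatorname{span}\{w_{n,j}\}$, the top-left block of $\tilde A_n$ is exactly $A_n$, the off-diagonal blocks have norms of size $O(\sqrt{m_n}\max_j\delta_{n,j})$, and the bottom-right block is within $O(\sqrt{m_n}\max_j\delta_{n,j})$ of $\operatorname{diag}(\lambda_{n,1},\ldots,\lambda_{n,m_n})$.

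The main obstacle is the last step: for non-normal matrices a small norm-perturbation can move eigenvalues by far more than the perturbation size, so one cannot just invoke operator-norm stability of the spectrum. The remedy is to add the vectors $w_{n,j}$ one at a time and track the resulting characteristic polynomials. Going from the compression on $P_n\mathcal{H}\oplus\operatorname{span}\{w_{n,1},\ldots,w_{n,j-1}\}$ to that on $P_n\mathcal{H}\oplus\operatorname{span}\{w_{n,1},\ldots,w_{n,j}\}$ amounts to an $O(\delta_{n,j})$ perturbation of a fixed $(n+j)\times(n+j)$ block diagonal matrix with an extra diagonal entry $\lambda_{n,j}$. By continuity of the roots of polynomials in their coefficients, at each step one can choose $\delta_{n,j}$ small enough (depending on the already constructed matrix) to ensure that the spectrum shifts by at most $\epsilon_n/m_n$; summing the telescoping bounds gives $d_{\mathrm{H}}(\mathrm{Sp}(\tilde A_n),\mathrm{Sp}(A_n)\cup\{\lambda_{n,j}\})<\epsilon_n$. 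Combined with the $\epsilon_n$-density of $\{\lambda_{n,j}\}$ in $\overline{S}\supseteq S$, this yields the desired $d_{\mathrm{H}}(\mathrm{Sp}(A_n)\cup S,\mathrm{Sp}(\tilde A_n))\to 0$.
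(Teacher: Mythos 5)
The paper does not prove this theorem; it is stated as a cited result of Pokrzywa \cite{Pokrzywa_79}, so there is no in-paper proof to compare against. Your sketch must therefore be judged on its own merits, and I believe it is essentially correct and captures the standard mechanism used in the spectral-pollution literature.

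The two ideas doing the real work are both sound. First, the characterisation $\lambda\in W_e(A)$ iff there is an orthonormal (hence weakly null) sequence $\{v_k\}$ with $\langle Av_k,v_k\rangle\to\lambda$, together with the fact that $P_n\mathcal{H}$ and the previously chosen $w_{n,i}$, $Aw_{n,i}$, $A^*w_{n,i}$ span a \emph{fixed} finite-dimensional space before $k_j$ is selected, lets you drive $\|P_nAw_{n,j}\|$, $\|P_nA^*w_{n,j}\|$, the off-diagonal couplings, and the deviation of $\langle Aw_{n,j},w_{n,j}\rangle$ from $\lambda_{n,j}$ below \emph{any} prescribed tolerance (and the Gram--Schmidt correction is an $O(\eta)$ change that preserves all of these estimates). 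Second, you correctly diagnose that naively bounding the operator-norm perturbation of the full $Q_n$-compression would fail, since eigenvalues of non-normal matrices are only H\"older-$1/d$ in the entries; the fix of adding one vector at a time and choosing $\delta_{n,j}$ \emph{after} $T_{j-1}$ is fixed (so that mere continuity of the spectrum of a finite matrix in its entries suffices) is exactly right, and the telescoping estimate $d_{\mathrm{H}}(\mathrm{Sp}(T_{m_n}),\mathrm{Sp}(A_n)\cup\{\lambda_{n,j}\}_j)<\epsilon_n$ follows because $d_{\mathrm{H}}(X,Y)<\delta$ implies $d_{\mathrm{H}}(X\cup\{z\},Y\cup\{z\})<\delta$. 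Combined with the $\epsilon_n$-density of $\{\lambda_{n,j}\}_j\subset\overline{S}$ in $\overline{S}$ (which is compact since $W_e(A)$ is bounded and closed) and the fact that $d_{\mathrm{H}}(E,\overline{E})=0$, the conclusion follows. The only loose end worth mentioning is the degenerate case $S=\emptyset$, where $m_n=0$ forces $Q_n=P_n$ and the strictness $P_n<Q_n$ fails; this requires a separate (trivial, one-extra-vector) treatment, but it is a boundary case and does not affect the substance of the argument.
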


\begin{theorem}[Pokrzywa \cite{Pokrzywa_79}]\label{t:conv}
Let $A \in \mathcal{B}(\mathcal{H})$ and let $\{P_n\}$ be a sequence of finite-dimensional projections converging strongly to the identity. If $\lambda \notin W_e(A)$ then $\lambda \in\mathrm{Sp}(A)$ if and only if 
$$
\mathrm{dist}(\lambda,\mathrm{Sp}( P_nA|_{P_n\mathcal{H}})) \rightarrow 0,
\quad\text{as } n \rightarrow \infty.
$$ 
\end{theorem}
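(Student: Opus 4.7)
The plan is to prove the two implications separately, exploiting in both cases that $\lambda\notin W_e(A)$ guarantees a compact operator $K$ with $\lambda\notin\overline{W(A+K)}$, so by hyperplane separation we may, after translation and rotation, assume $\lambda=0$ and $\mathrm{Re}\langle(A+K)x,x\rangle\geq c\|x\|^{2}$ for some $c>0$ and all $x\in\mathcal{H}$. I will also use the standard description $W_e(A)=\{\lim\langle Ax_n,x_n\rangle:\|x_n\|=1,\;x_n\rightharpoonup 0\}$.

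For the implication $(\Leftarrow)$, assume $\mathrm{dist}(\lambda,\mathrm{Sp}(A_n))\to 0$ and pick $\lambda_n\in\mathrm{Sp}(A_n)$ with $\lambda_n\to\lambda$ together with unit eigenvectors $x_n\in P_n\mathcal{H}$ of $A_n$. Since $x_n\in P_n\mathcal{H}$ the identity $\langle Ax_n,x_n\rangle=\lambda_n$ is immediate, and $\{Ax_n\}$ being bounded together with $P_n\to I$ strongly forces $(I-P_n)Ax_n\rightharpoonup 0$. If $x_n\rightharpoonup 0$ along any subsequence then $\lambda=\lim\lambda_n=\lim\langle Ax_n,x_n\rangle\in W_e(A)$, contradicting the hypothesis. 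So some subsequence satisfies $x_n\rightharpoonup x\neq 0$, and passing to the weak limit in $(A-\lambda I)x_n=(\lambda_n-\lambda)x_n+(I-P_n)Ax_n\rightharpoonup 0$ yields $(A-\lambda I)x=0$, giving $\lambda\in\mathrm{Sp}(A)$.

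For the implication $(\Rightarrow)$, assume $0=\lambda\in\mathrm{Sp}(A)$ and set $B=A+K$. The sectoriality condition gives invertibility of $B-\mu I$ on $\mathcal{H}$ and of $(A_n+K_n)-\mu I_n$ on $P_n\mathcal{H}$, where $K_n:=P_nKP_n|_{P_n\mathcal{H}}$, uniformly for $\mu$ in a small disk $D$ about $0$, with resolvent norms bounded independently of $n$. The factorisation $A-\mu I=(B-\mu I)(I-(B-\mu I)^{-1}K)$ and its finite-section analogue reduce the spectral problems to $\mu\in\mathrm{Sp}(A)\cap D\Leftrightarrow 1\in\mathrm{Sp}(T(\mu))$ and $\mu\in\mathrm{Sp}(A_n)\cap D\Leftrightarrow 1\in\mathrm{Sp}(T_n(\mu))$, where $T(\mu)=(B-\mu I)^{-1}K$ and $T_n(\mu)=(A_n+K_n-\mu I_n)^{-1}K_n$ are \emph{compact} operators. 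Compactness of $K$ gives $\|K-P_nKP_n\|\to 0$; combining this with the strong convergence of the finite-section resolvents of the sectorial operator $B$ upgrades (after extension of $T_n(\mu)$ by zero on $P_n^{\perp}\mathcal{H}$) to norm convergence $T_n(\mu)\to T(\mu)$, uniformly on compact subsets of $D$. Standard norm-convergence perturbation theory for eigenvalues of compact operators then produces $\mu_n\in\mathrm{Sp}(A_n)\cap D$ with $\mu_n\to 0=\lambda$, as required.

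The main obstacle is precisely the forward direction. The naive approximation $(A_n-\lambda I_n)P_nx\to 0$ obtained from a genuine eigenvector $x$ of $A$ only forces the smallest singular value of $A_n-\lambda I_n$ to vanish, which for non-normal finite matrices is entirely compatible with $\mathrm{Sp}(A_n)$ staying bounded away from $\lambda$. The hypothesis $\lambda\notin W_e(A)$ must therefore enter structurally; the strategy uses it to convert the spectral question into an eigenvalue problem for the compact operators $T_n(\mu)$ and $T(\mu)$, where norm convergence is available and spectral pollution cannot occur.
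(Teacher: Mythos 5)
The paper does not prove this statement: it is quoted verbatim from Pokrzywa \cite{Pokrzywa_79} as background for \S\ref{fs_fails}, so there is no internal proof to compare against. Evaluating your argument on its own merits:

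Your $(\Leftarrow)$ direction is correct and clean. The identity $\langle Ax_n,x_n\rangle=\lambda_n$ for the finite-section eigenvectors, the dichotomy ``weak limit zero $\Rightarrow\lambda\in W_e(A)$'' versus ``weak limit $x\ne 0$ $\Rightarrow (A-\lambda)x=0$'', and the use of $(I-P_n)Ax_n\rightharpoonup 0$ are all sound.

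Your $(\Rightarrow)$ direction has the right architecture — the coercive decomposition $A=B-K$, the factorisation $A-\mu=(B-\mu)(I-T(\mu))$ with $T(\mu)=(B-\mu)^{-1}K$ compact, and the norm convergence $T_n(\mu)\to T(\mu)$ (via $\|(I-P_n)K\|\to 0$ together with strong Galerkin resolvent convergence for the coercive $B$) — but the final sentence hides a genuine gap. First, the conclusion implicitly requires $\lambda$ to be an \emph{isolated} eigenvalue of $A$ of finite algebraic multiplicity. This is true when $\lambda\notin W_e(A)$ but is not automatic: one must rule out the degenerate branch of the analytic Fredholm alternative for $I-T(\mu)$. (It can be done, e.g.\ by Gram–Schmidt orthonormalising eigenvectors $x_n\in\ker(A-\mu_n)$ for distinct $\mu_n\to\lambda$; the invariance of $\mathrm{span}\{x_1,\dots,x_n\}$ forces $\langle Au_n,u_n\rangle=\mu_n$ for the orthonormalised $u_n$, placing $\lambda$ in $W_e(A)$ — a contradiction.) Your proposal does not address this, and the next step depends on it. Second, ``standard norm-convergence perturbation theory for eigenvalues of compact operators'' is not the right tool: perturbing the fixed compact operator $T_n(0)$ only produces $\nu_n\in\mathrm{Sp}(T_n(0))$ with $\nu_n\to 1$, which says nothing direct about $\mathrm{Sp}(A_n)$. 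What is actually needed is a statement about \emph{characteristic values of the operator pencil} $\mu\mapsto I-T_n(\mu)$ — either the Gohberg--Sigal argument principle, or, more elementarily and exploiting the isolation just discussed, a Riesz-projection argument: choose a small contour $\gamma$ about $\lambda$ in the resolvent set; uniform norm convergence of $T_n$ on $\gamma$ gives uniform bounds on $(I_n-T_n(\mu))^{-1}$, hence uniform resolvent bounds and strong convergence of $(A_n-\mu)^{-1}$ on $\gamma$, hence strong convergence $E_n\to E$ of the associated Riesz projections; since $E$ has finite rank $\geq 1$, one has $\mathrm{rank}\,E_n\geq 1$ eventually, so $\mathrm{Sp}(A_n)\cap\mathrm{int}\,\gamma\neq\emptyset$, and shrinking $\gamma$ finishes. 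Both missing pieces are fixable with the machinery you already set up, but as written the forward direction is not proved.
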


Theorems \ref{polish} and \ref{t:conv} show that spectral pollution is confined to the essential numerical range and can be arbitrarily bad in $W_e(A)\backslash \mathrm{Sp}(A)$.\footnote{In the non-normal case it is possible for finite sections to not capture all of the spectrum - parts of the spectrum may be unattainable. This is distinct from spectral pollution. Theorem \ref{polish} says that, up to a different choice of projections, this can be avoided on $W_e(A)$.} For self-adjoint operators, the gaps in the essential spectrum correspond exactly to $W_e(A)\backslash \mathrm{Sp}(A)$. As a result, there has been considerable attention towards methods that detect gaps in essential spectra and eigenvalues within these gaps \cite{carmona2012spectral,lewin2014spurious,boffi2000problem,Shargorodsky1}, as well as studying the precise nature of spectral pollution \cite{rappaz1997spectral,lewin2010spectral,Marletta_pollution,Marletta_Spec_gaps}.

A consequence of the main result of this section, Theorem \ref{spec_poll_hard}, is that detecting these gaps is strictly harder in the sense of the SCI hierarchy than computing the spectrum for self-adjoint operators (which was classified in \cite{colb1,colbrook3,ben2015can}). We define the problem function $\Xi_{we}(A)=W_e(A)$. For a given non-empty open set $U$ in $\mathbb{F}$ (with $\mathbb{F}$ being $\mathbb{C}$ or $\mathbb{R}$), let $\Xi_{poll}^{\mathbb{F}}$ be the decision problem
$$
\Xi_{poll}^{\mathbb{F}}(A,U)=\begin{cases}
1,\quad\text{ if }\overline{U}\cap(W_e(A)\backslash \mathrm{Sp}(A))\neq\emptyset\\
0,\quad\text{ otherwise.}
\end{cases}
$$
$\Xi_{poll}^{\mathbb{F}}$ decides whether spectral pollution can occur on the {closed} set $\overline{U}$. For the self-adjoint case and $\mathbb{F}=\mathbb{R}$, this is equivalent to asking whether there exists a point in the {open} set $U$ that also lies in a gap of the essential spectrum. To incorporate $U$ into $\Lambda_i$, we allow access to a countable number of open balls $\{U_m\}_{m\in\mathbb{N}}$ whose union is $U$. If $\mathbb{F}=\mathbb{R}$, then each $U_m$ is of the form $(a_m,b_m)$ with $a_m,b_m\in\mathbb{Q}\cup\{\pm\infty\}$. If $\mathbb{F}=\mathbb{C}$, then each $U_m$ is equal to $D_{r_m}(z_m)$ (the open ball of radius $r_m$ centred at $z_m$) with $r_m\in\mathbb{Q_+}\cup\{\infty\}$ and $z_m\in\mathbb{Q}+i\mathbb{Q}$. We add pointwise evaluations of the relevant sequences $\{(a_m,b_m)\}$ or $\{(r_m,z_m)\}$ to $\Lambda_i$.

\begin{theorem}[Computation of essential numerical range and whether spectral pollution can occur on a set]
\label{spec_poll_hard}
Let $\Omega=\Omega_\mathrm{N},\Omega_\mathrm{SA}$ or $\Omega_{\mathrm{B}}$ and let $i=1,2$. Then
$$
\Delta^G_2 \not\owns  \{\Xi_{we},\Omega,\Lambda_i\} \in \Pi^A_2.
$$
Furthermore, for $i=1,2$ the following classifications hold, valid also if we restrict to the case $U=U_1$ or to $U=U_1=\mathbb{F}$:
\begin{align*}
\Delta^G_3 \not\owns \{\Xi_{poll}^{\mathbb{R}},\Omega_\mathrm{SA},\Lambda_i\} \in \Sigma^A_3, \quad &&\Delta^G_3 \not\owns \{\Xi_{poll}^{\mathbb{C}},\Omega_{\mathrm{B}},\Lambda_i\} \in \Sigma^A_3.
\end{align*}
\end{theorem}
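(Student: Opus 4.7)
The plan is to treat the theorem in three pieces, corresponding to its three distinct problem functions, beginning with the essential numerical range since the algorithms for $\Xi_{poll}^{\mathbb{F}}$ build on it. For the $\Pi_2^A$ upper bound for $\{\Xi_{we},\Omega,\Lambda_i\}$, I would use the classical characterisation
\[
W_e(A)=\bigcap_{n\in\mathbb{N}}\overline{W\!\left((I-P_n)A(I-P_n)\right)},
\]
which follows from the fact that $\lambda\in W_e(A)$ iff there is an orthonormal sequence $\{x_k\}$ with $\langle Ax_k,x_k\rangle\to\lambda$. For each fixed $n$ the closure of the numerical range of the tail is convex (Toeplitz--Hausdorff), so it is determined by its support function $h_\theta=\sup\mathrm{Sp}(\mathrm{Re}(e^{-i\theta}(I-P_n)A(I-P_n)))$, and this supremum can be computed with controlled error in a single limit by further truncating to $P_m(I-P_n)A(I-P_n)P_m$ and applying a top-of-spectrum routine to a bounded self-adjoint matrix (using $\Lambda_2$, or $\Lambda_1$ with the dispersion data $f,\{c_n\}$ in the relevant subclasses). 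Intersecting a $2^{-n_2}$-net of resulting half-planes gives $\Gamma_{n_1,n_2}(A)$ which provides an outer approximation to $W_e(A)$, and the outer limit in $n_2$ realises the $\Pi_1$ semantic because $W_e(A)\subset\Gamma_{n_1,n_2}(A)+B_{2^{-n_2}}(0)$ by construction.

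For the lower bound $\{\Xi_{we},\Omega,\Lambda_i\}\notin\Delta_2^G$, it suffices to work with $\Omega_{\mathrm{D}}\subset\Omega_{\mathrm{N}}$, where $W_e(A)$ equals the convex hull of the set of accumulation points of the diagonal. I would construct a family of diagonal operators $\{A^{(k)}\}$ in which the accumulation structure of the diagonal toggles with $k$ in such a way that any hypothetical $\Delta_2^G$ algorithm, evaluated in the Hausdorff metric, would decide a $\Pi_2$-hard arithmetic predicate on inputs coming from $\Lambda_i$; this is essentially the same diagonal/accumulation gadget used for essential spectra in earlier SCI work, adapted to convex hulls. Since accumulation-style predicates are known not to lie in the Baire class $\Delta_2$, the standard correspondence between the first few SCI levels and the low Baire hierarchy yields the impossibility.

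For the $\Sigma_3^A$ upper bound for $\Xi_{poll}^{\mathbb{F}}$, I would combine the $\Pi_2^A$ tower $\Gamma^{we}_{n_1,n_2}$ for $W_e$ with an outer-approximating tower $\Gamma^{sp}_{n_1,n_2}$ for $\mathrm{Sp}$ (available in $\Pi_2^A$ for $\Omega_{\mathrm{B}}$ and in $\Sigma_1^A$ for $\Omega_{\mathrm{SA}}$ by the results of \cite{colb1}). For each $n_3$, sweep a $2^{-n_3}$-net through $\bigcup_{m\le n_3}U_m\cap\overline{B_{n_3}(0)}$ and output $1$ iff there is a net-point $z$ which the inner towers certify to lie in $\Gamma^{we}_{n_1,n_2}(A)$ and at distance at least $2^{-n_3}$ from $\Gamma^{sp}_{n_1,n_2}(A)$. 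If the true answer is $1$ there is a genuine witness $z_0\in\overline{U}\cap(W_e(A)\setminus\mathrm{Sp}(A))$ at positive distance $\delta$ from $\mathrm{Sp}(A)$; for all $n_3$ with $2^{-n_3}<\delta/3$ the net contains a point within $\delta/3$ of $z_0$ that the inner towers eventually certify, so the outer limit stabilises at $1$. Conversely, if the answer is $0$, the gap between $W_e(A)$ and $\overline{U}^c\cup\mathrm{Sp}(A)$ collapses and no net-point can persist as a witness, giving the $\Sigma_1$ outer-limit semantic required by $\Sigma_3^A$.

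The main obstacle is the lower bound $\{\Xi_{poll}^{\mathbb{F}},\cdot,\cdot\}\notin\Delta_3^G$, which must hold even in the extreme case $U=U_1=\mathbb{F}$. Here I would invoke the new higher-index lower-bound framework advertised in the introduction, via the equivalence with the Baire hierarchy for this class of problems. The goal is to encode a $\Sigma_3$-complete arithmetic predicate of the form $\exists n_1\,\forall n_2\,\exists^\infty n_3\,R(n_1,n_2,n_3)$, with $R$ decidable from $\Lambda_i$, into the essential-spectral geometry of a self-adjoint (or in the $\Omega_{\mathrm{B}}$ case, more general) block-diagonal operator. Each $n_1$-block would carry a candidate gap in the essential spectrum; the $n_2$-sublevel would determine whether that gap is eventually filled by finitely supported eigenvalues; the $n_3$-sublevel would control the persistence of the filling, so that the gap survives in the limit iff $R$ witnesses the outer existential. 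Because for self-adjoint operators $W_e(A)\setminus\mathrm{Sp}(A)\ne\emptyset$ iff $\mathrm{Sp}_{\mathrm{ess}}(A)$ has a gap, a $\Delta_3^G$ algorithm for $\Xi_{poll}^{\mathbb{R}}$ would translate into a $\Delta_3$-Baire decision of the source predicate, contradicting its $\Sigma_3$-hardness; the $\mathbb{C}$/$\Omega_{\mathrm{B}}$ case reduces to the self-adjoint one by direct sum with a fixed accretive block that shifts the relevant geometry into the complex plane without changing the pollution structure.
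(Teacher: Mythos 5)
Your plan for the $\Pi_2^A$ upper bound for $\Xi_{we}$ is a legitimate alternative to the paper's: you compute support functions of the tail numerical ranges via a top-of-spectrum routine, whereas the paper (Lemma \ref{num_ran_fs}) samples $\langle Bx,x\rangle$ directly over a rational net of unit vectors. Both yield outer, monotone approximations to $W_e(A)$, and your version buys some clarity about the $\Pi_1$ semantics of the outer limit. Likewise your $\Xi_{we}$ lower bound is valid but heavier than needed: the paper handles $\not\in\Delta_2^G$ by a short direct diagonalisation (as in step 1 of the proof of Theorem \ref{spec_rad_thm2}, taking $A=\bigoplus C_{l_r}\oplus A_{l_r}$ so that $\Xi_{we}(A)=[0,1]$ while every finite truncation looks like $\{0\}$), with no appeal to the Baire-hierarchy machinery.

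The genuine gap is in your $\Sigma_3^A$ upper bound for $\Xi_{poll}^{\mathbb{F}}$. You propose to combine the $\Pi_2^A$ tower for $W_e$ with an outer-approximating tower for $\mathrm{Sp}(\cdot)$, citing $\Pi_2^A$ availability for $\Omega_{\mathrm{B}}$ and $\Sigma_1^A$ for $\Omega_{\mathrm{SA}}$. That is not the case with $\Lambda_1$: $\{\mathrm{Sp}(\cdot),\Omega_{\mathrm{B}},\Lambda_1\}\in\Pi_3^A\setminus\Delta_3^G$ and $\{\mathrm{Sp}(\cdot),\Omega_{\mathrm{SA}},\Lambda_1\}\in\Sigma_2^A\setminus\Delta_2^G$ (the one-limit results from \cite{colb1} require bounded dispersion or $\Lambda_2$). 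Feeding a height-$3$ spectral tower into your sweep would produce a tower of height at least $4$, not $3$, for $\Lambda_1$, and the theorem requires $\Sigma_3^A$ for both $i=1$ and $i=2$. The paper sidesteps this by never building a set-valued tower for the spectrum; it tests non-membership pointwise through the functions $\gamma_{n_2,n_1}(z;A)\uparrow\gamma_{n_2}(z;A)\downarrow\|R(z,A)\|^{-1}$, so that the resolvent certification shares its two limits with the $W_e$ approximation. Concretely, after filtering candidate points into $\Upsilon_{n_2,n_1}(A,U)$, the quantity $Q_{n_2,n_1}(A,U)=\sup_{z\in\Upsilon_{n_2,n_1}}\tilde\gamma_{n_2,n_1}(z;A)-M_{n_2,n_1}/n_1$ converges (from below, then from above) to $Q(A,U)=\sup_{z\in W_e(A)\cap\overline{U}}\|R(z,A)\|^{-1}$, and the $\Sigma_1$ outer limit is achieved by thresholding $Q_{n_2,n_1}$ against $1/n_3$. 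This interleaving is what makes the total height come out to $3$; you would need to reorganise your construction along these lines rather than composing two independent towers.

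Your $\Sigma_3^A$ lower bound sketch is directionally right (reduce a $\Sigma_3^0$-complete array predicate to the problem using Theorem \ref{DST_main}), but the quantifier mapping you describe is more layered than necessary. The paper encodes only the two-index predicate $\tilde\Xi_1$ (some column has infinitely many $1$'s, which is already $\Sigma_3^0$-complete) by assigning to column $j$ a dense sequence in a fixed interval $I_j\subset[0,1)$ separated from $1$; the essential spectrum then contains $I_j$ iff column $j$ has infinitely many $1$'s, and a gap opens iff at least one such $I_j$ is present. No separate $n_3$-level gadget controlling persistence of filling is needed. Your description would work in principle but you should check that the operator you build actually lands in $\Omega_{\mathrm{SA}}$ (or $\Omega_{\mathrm{B}}$) with evaluations computable in finitely many reads of the array, which the paper's explicit diagonal-plus-bijection construction makes manifest.
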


\begin{remark}[Computing spectra is easier than algorithmically determining if spectral pollution can occur on a set]
One can show that $\{\mathrm{Sp}(\cdot),\Omega_\mathrm{SA},\Lambda_1\}\in\Sigma_2^A$ and $\{\mathrm{Sp}(\cdot),\Omega_\mathrm{SA},\Lambda_2\}\in\Sigma_1^A$. Hence determining $\Xi_{poll}^\mathbb{R}$ is strictly harder than the spectral computational problem and requires two additional successive limits if $\Lambda=\Lambda_2$. Even in the general case, $\{\mathrm{Sp}(\cdot),\Omega_{\mathrm{B}},\Lambda_2\}\in\Pi_2^A$ and hence the spectral problem is strictly easier in the sense of SCI hierarchy. The proofs also make clear that we get the same classification of $\Xi_{poll}^{\mathbb{F}}$ for other classes such as $\Omega_\mathrm{N}$, $\Omega_g$ etc.\hfill$\boxtimes$
\end{remark}

\begin{remark}[Unbounded operators]
In \S \ref{append_UB_ENR}, we show that computing the essential numerical range for closed unbounded operators $T$ on $l^2(\mathbb{N})$ (under the condition that the linear span of the canonical basis forms a core of $T$) also lies in $\Pi_2^A$. The definition of the essential numerical range for such operators was recently given in \cite{bogli2020essential}. This paper showed that $W_e(T)$ consists precisely of the essential spectrum of $T$ together with all possible spectral pollution that may arise by applying projection methods to approximate the spectrum of $T$, thus generalising Theorems \ref{polish} and \ref{t:conv}. A computational example is given in \S \ref{froja}.\hfill$\boxtimes$
\end{remark}

\subsection{Lebesgue measure of spectra}
\label{Leb_sec}

A basic property of the set $\mathrm{Sp}(A)$, also connected to physical applications, is its Lebesgue measure. Well-studied operators such as the almost Mathieu operator at critical coupling \cite{avila2006} or the Fibonacci Hamiltonian \cite{sutHo1989singular} have spectra with Lebesgue measure zero. Following \cite{aubry1980analyticity}, there have been many further numerical studies \cite{thouless1983bandwidths,thouless1990scaling,thouless1991total}. For further examples of operators with numerical approximations of the Lebesgue measure, see the references in \cite{avila2017spectral,benza1991band,sire1989electronic}. Numerical studies typically look at periodic approximates \cite{puelz2015spectral}, and computing the Lebesgue measure of periodic approximates of tridiagonal operators lies in $\Delta_1^A$. The tools we develop are more general and do not assume such structure. Verification of our algorithms for the almost Mathieu operator is presented in \S \ref{jhnwagtwqrfg}.

The Lebesgue measure on $\mathbb{C}$ will be denoted by $\mathrm{Leb}$. When considering classes of self-adjoint operators, we use the Lebesgue measure on $\mathbb{R}$ denoted by $\mathrm{Leb}_{\mathbb{R}}$. We also define
$$
\widehat{\mathrm{Sp}}_{\epsilon}(A)=\{z\in\mathbb{C}:\|R(z,A)\|^{-1}<\epsilon\},
$$
whose closure is $\mathrm{Sp}_\epsilon(A)$. For a class $\Omega\subset\Omega_{\mathrm{B}}$, there are three questions we answer in this section:
\begin{enumerate}
	\item Given $A\in\Omega$, can we compute $\mathrm{Leb}(\mathrm{Sp}(A))$?
	\item Given $A\in\Omega$ and $\epsilon>0$, can we compute $\mathrm{Leb}(\widehat{\mathrm{Sp}}_{\epsilon}(A))$?\footnote{We consider the computation of $\mathrm{Leb}(\widehat{\mathrm{Sp}}_{\epsilon}(A))$ instead of $\mathrm{Leb}({\mathrm{Sp}}_{\epsilon}(A))$ since it is not clear that the level sets
\begin{equation}
\label{level_sets}
S_{\epsilon}(A):=\{z\in\mathbb{C}:\left\|R(z,A)\right\|^{-1}=\epsilon\}
\end{equation}
always have Lebesgue measure zero (this is currently an open problem for general bounded operators). This situation is analogous to the case of approximating the pseudospectra of bounded operators, where one uses the crucial property that pseudospectra cannot jump - the resolvent norm cannot be constant on open subsets of $\mathbb{C}\backslash\mathrm{Sp}(A)$ for a bounded operator $A$ acting on a separable Hilbert space \cite{shargorodsky2008level}. The question of whether the sets in \eqref{level_sets} are Lebesgue null is the measure theoretic equivalent. Note, however, that it is straightforward to show that $S_{\epsilon}(A)$ is null for $A\in\Omega_{\mathrm{N}}$ through the formula $\|R(z,A)\|^{-1}=\mathrm{dist}(z,\mathrm{Sp}(A))$.}
	\item Given $A\in\Omega$, can we determine whether $\mathrm{Leb}(\mathrm{Sp}(A))=0$?
\end{enumerate}

For the first two questions, we consider the metric space $([0,\infty),d)$ with the Euclidean metric. For question three we consider the discrete metric on $\{0,1\}$, where $1$ is interpreted as ``Yes'', and $0$ as ``No''. We denote the above problem functions by $\Xi_1^{L},\Xi_2^{L}$ and $\Xi_3^{L}$, respectively. In analogy to computing spectra and pseudospectra, $\Xi_2^L$ is the easiest to compute and can be done in one limit for a large class of operators. It also follows from the dominated convergence theorem that
\begin{equation}
\label{eps_down_leb}
\lim_{\epsilon\downarrow 0}\mathrm{Leb}(\widehat{\mathrm{Sp}}_{\epsilon}(A))=\mathrm{Leb}(\mathrm{Sp}(A)).
\end{equation}

\begin{theorem}[Lebesgue measure of spectra]
\label{Leb_1}
Given the above set-up, we have the following classifications
\begin{equation*}
\Delta^G_2 \not\owns \{\Xi_1^L,\Omega_f,\Lambda_i\} \in \Pi^A_2, \quad \Delta^G_2 \not\owns  \{\Xi_1^L,\Omega_\mathrm{D},\Lambda_i\} \in \Pi^A_2 \quad i=1,2,  
\end{equation*}
and for $\Omega=\Omega_{\mathrm{B}},\Omega_\mathrm{SA}$, $\Omega_\mathrm{N}$ or $\Omega_g$,
$$
\Delta^G_3 \not\owns \{\Xi_1^L,\Omega,\Lambda_1 \} \in \Pi^A_3,\quad \Delta^G_2 \not\owns \{\Xi_1^L,\Omega,\Lambda_2 \} \in \Pi^A_2.
$$
\end{theorem}

The algorithm constructed in the proof of Theorem \ref{Leb_1} is local, and can be adapted to find the Lebesgue measure of $\mathrm{Sp}(A)$ intersected with any compact interval or cube in one or two dimensions, respectively. Moreover, when considering $\Omega_f$, we do not need the sequence $\{c_n\}$, and the algorithm can be restricted to $\mathbb{R}$, where it converges to $\mathrm{Leb}_{\mathbb{R}}(\mathrm{Sp}(A)\cap\mathbb{R})$. Our results also hold when considering bounded diagonal operators (dropping the restriction of self-adjointness) and using $\mathrm{Leb}$ instead of $\mathrm{Leb}_{\mathbb{R}}$.

We now turn to the SCI classification of $\mathrm{Leb}(\widehat{\mathrm{Sp}}_{\epsilon}(A))$, which is useful since it provides a route to computing $\mathrm{Leb}(\mathrm{Sp}(A))$ for any $A\in\Omega_{\mathrm{B}}$ via \eqref{eps_down_leb}. This is a similar state of affairs to the computation of the spectrum itself - one can approximate the spectrum via pseudospectra. 

\begin{theorem}[Lebesgue measure of pseudospectra]
\label{Leb_2}
Given the above set-up, we have the following classifications
\begin{equation*}
\Delta^G_1 \not\owns \{\Xi_2^L,\Omega_f,\Lambda_i\} \in \Sigma^A_1, \quad \Delta^G_1 \not\owns  \{\Xi_2^L,\Omega_\mathrm{D},\Lambda_i\} \in \Sigma^A_1 \quad i=1,2,
\end{equation*}
and for $\Omega=\Omega_{\mathrm{B}},\Omega_\mathrm{SA}$, $\Omega_\mathrm{N}$ or $\Omega_g$,
$$
\Delta^G_2 \not\owns \{\Xi_2^L,\Omega,\Lambda_1 \} \in \Sigma^A_2,\quad \Delta^G_1 \not\owns \{\Xi_2^L,\Omega,\Lambda_2 \} \in \Sigma^A_1.
$$
\end{theorem}

Why is $\Xi_2^L$ easier to compute than $\Xi_1^L$? Heuristically, the pseudospectrum is less refined than the spectrum, making the measure easier to approximate. Another viewpoint is the continuity points of the maps $\Xi_1^L$ and $\Xi_2^L$. For simplicity, consider these maps restricted to $\Omega_\mathrm{D}$ and equip these diagonal operators with the operator norm topology. The following shows that $\Xi_2^L$ is more stable than $\Xi_1^L$, explaining why it is easier to approximate. Again, this is the same state of affairs as comparing $\mathrm{Sp}(A)$ and $\mathrm{Sp}_{\epsilon}(A)$ as sets.
\begin{proposition}
\label{leb_cty}
In the above set-up, the following hold:
\begin{enumerate}
	\item $\Xi_1^L$ is continuous at $A\in\Omega_\mathrm{D}$ if and only if $\mathrm{Leb}_{\mathbb{R}}(\mathrm{Sp}(A))=0$.
	\item $\Xi_2^L$ is continuous at all $A\in\Omega_\mathrm{D}$.
\end{enumerate}
\end{proposition}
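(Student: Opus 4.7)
For part (1), the key tool is the fact that for self-adjoint (hence diagonal) operators one has the Weyl-type estimate $d_{\mathrm{H}}(\mathrm{Sp}(A),\mathrm{Sp}(B))\leq\|A-B\|$, so spectra vary continuously in Hausdorff distance under operator-norm perturbations. The plan is to handle the two directions by complementary means: an enclosure/continuity-of-measure argument for the implication $\mathrm{Leb}_\mathbb{R}(\mathrm{Sp}(A))=0\Rightarrow$ continuity, and an explicit rounding construction for the converse.

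For the ``if'' direction, suppose $\mathrm{Leb}_\mathbb{R}(\mathrm{Sp}(A))=0$ and $A_n\to A$ in operator norm, writing $\delta_n:=\|A_n-A\|\to 0$. Then $\mathrm{Sp}(A_n)$ is contained in the closed $\delta_n$-neighbourhood $K_{\delta_n}$ of $K:=\mathrm{Sp}(A)$, so $\Xi_1^L(A_n)=\mathrm{Leb}_\mathbb{R}(\mathrm{Sp}(A_n))\leq\mathrm{Leb}_\mathbb{R}(K_{\delta_n})$. Since $K$ is a bounded closed set, $K_\delta\downarrow K$ as $\delta\downarrow 0$, and by downward continuity of the finite measure $\mathrm{Leb}_\mathbb{R}|_{K_1}$ one obtains $\mathrm{Leb}_\mathbb{R}(K_{\delta_n})\to\mathrm{Leb}_\mathbb{R}(K)=0=\Xi_1^L(A)$, giving continuity at $A$. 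For the ``only if'' direction, suppose $\mathrm{Leb}_\mathbb{R}(\mathrm{Sp}(A))>0$ and write $A=\mathrm{diag}(a_i)$. For each $\delta>0$ let $A_\delta:=\mathrm{diag}(a_i^\delta)$, where $a_i^\delta$ is the nearest point of $\delta\mathbb{Z}$ to $a_i$. Then $A_\delta\in\Omega_\mathrm{D}$, $\|A_\delta-A\|\leq\delta/2\to 0$, and the diagonal entries of $A_\delta$ lie in the finite set $\delta\mathbb{Z}\cap[-\|A\|-\delta/2,\|A\|+\delta/2]$, so $\mathrm{Sp}(A_\delta)$ is finite and $\Xi_1^L(A_\delta)=0\not\to\Xi_1^L(A)>0$, proving discontinuity.

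For part (2), one exploits that for any normal operator $\|R(z,A)\|^{-1}=\mathrm{dist}(z,\mathrm{Sp}(A))$, so $\widehat{\mathrm{Sp}}_\epsilon(A)=\{z:\mathrm{dist}(z,\mathrm{Sp}(A))<\epsilon\}$ is precisely the open $\epsilon$-neighbourhood of $\mathrm{Sp}(A)$. For $A_n\to A$ in $\Omega_\mathrm{D}$, set $\delta_n:=d_{\mathrm{H}}(\mathrm{Sp}(A_n),\mathrm{Sp}(A))\leq\|A_n-A\|$; then the inclusions $\widehat{\mathrm{Sp}}_{\epsilon-\delta_n}(A)\subseteq\widehat{\mathrm{Sp}}_\epsilon(A_n)\subseteq\widehat{\mathrm{Sp}}_{\epsilon+\delta_n}(A)$ squeeze $\Xi_2^L(A_n)$ between $\mathrm{Leb}(\widehat{\mathrm{Sp}}_{\epsilon\pm\delta_n}(A))$. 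By monotone continuity of Lebesgue measure, the inner term increases to $\mathrm{Leb}(\widehat{\mathrm{Sp}}_\epsilon(A))$ and the outer term decreases to $\mathrm{Leb}(\widehat{\mathrm{Sp}}_\epsilon(A)\cup S_\epsilon(A))$; invoking the fact (already recorded in the text preceding the statement) that $\mathrm{Leb}(S_\epsilon(A))=0$ for normal $A$, both bounds have the common limit $\Xi_2^L(A)$, yielding continuity. The main subtlety, and essentially the only non-bookkeeping step, is the need for the null-level-set property of $S_\epsilon(A)$ in the squeeze; without it only upper and lower semicontinuity of $\Xi_2^L$ would follow, which is precisely why the self-adjoint/normal hypothesis enters the statement.
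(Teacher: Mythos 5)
Your proof is correct and follows essentially the same route as the paper: Hausdorff-continuity of spectra under operator-norm perturbations of self-adjoint diagonal operators plus monotone continuity of Lebesgue measure for the positive direction of (1), a quantization/rounding construction forcing finite spectrum for the converse, and the $\epsilon\pm\delta$ pseudospectral sandwich together with the null level-set fact $\mathrm{Leb}_{\mathbb{R}}(S_\epsilon(A))=0$ for (2). The only cosmetic difference is that for the forward direction of (1) the paper packages the neighbourhood argument via the decreasing sets $F_n=\mathrm{Sp}(A)\cup\bigcup_{m\geq n}\mathrm{Sp}(A_m)$, whereas you use closed $\delta_n$-fattenings of $\mathrm{Sp}(A)$ directly; these are interchangeable.
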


Finally, when computing $\Xi_3^L$, we let $(\mathcal{M},d)$ be the set $\{0,1\}$ endowed with the discrete topology and consider the problem function
$$
\Xi_3^L(A)=
\begin{cases}
0,\quad\text{ if }\mathrm{Leb}(\mathrm{Sp}(A))>0\\
1,\quad\text{ otherwise.}
\end{cases}
$$
It is straightforward to build a family of algorithms that converge in three successive limits for this problem using the algorithm constructed in Theorem \ref{Leb_1} and its monotonicity. The next theorem shows that this is optimal, even for the set of diagonal self-adjoint bounded operators. This demonstrates how hard it is to solve decision problems about the spectrum with finite amounts of information, particularly when the problems involve an object that ignores countable sets, such as the Lebesgue measure.

\begin{theorem}[Is the spectrum Lebesgue null?]
\label{Leb_3}
Given the above set-up, we have the following classifications
$$
\Delta^G_3\not\owns\{\Xi_3^L,\Omega_f,\Lambda_i\}\in\Pi^A_3,\quad \Delta^G_3\not\owns\{\Xi_3^L,\Omega_\mathrm{D},\Lambda_i\}\in\Pi^A_3, \quad i=1,2,
$$
and for $\Omega=\Omega_{\mathrm{B}},\Omega_\mathrm{SA}$, $\Omega_\mathrm{N}$ or $\Omega_g$,
$$
\Delta^G_4\not\owns\{\Xi_3^L,\Omega,\Lambda_1\}\in\Pi^A_4, \quad \Delta^G_3\not\owns\{\Xi_3^L,\Omega,\Lambda_2\}\in\Pi^A_3.
$$
\end{theorem}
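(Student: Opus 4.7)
The plan is to obtain the upper bounds by adding one outer limit to the $\Pi^A_k$-tower \texttt{LebSpec} from Theorem \ref{Leb_1}, and to obtain the lower bounds via the SCI/Baire correspondence developed in \S\ref{bigHth}, reducing to a descriptive set-theoretic lower bound for the predicate ``closure of a bounded sequence has Lebesgue measure zero''.

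For the upper bounds, let $\Gamma^L$ denote the $\Pi^A_k$-tower for $\{\Xi_1^L,\Omega,\Lambda\}$ of Theorem \ref{Leb_1}, whose outermost ($\Pi^A_1$) limit converges to $\mathrm{Leb}(\mathrm{Sp}(A))$ from above. Define
\[
\Gamma_{n_{k+1},n_k,\ldots,n_1}(A)=\begin{cases}1,& \Gamma^L_{n_k,\ldots,n_1}(A)<2^{-n_{k+1}},\\0,& \text{otherwise.}\end{cases}
\]
If $\mathrm{Leb}(\mathrm{Sp}(A))=0$, then for every $n_{k+1}$ the inner limits eventually return $1$; if $\mathrm{Leb}(\mathrm{Sp}(A))>0$, then as soon as $2^{-n_{k+1}}<\mathrm{Leb}(\mathrm{Sp}(A))$ the inner limits return $0$ eventually. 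The iterated limit therefore equals $\Xi_3^L(A)$, and a zero output of the outer limit is only possible when $\mathrm{Leb}(\mathrm{Sp}(A))>0$, which is the discrete-metric $\Pi^A_1$ guarantee at the outermost level, yielding the stated $\Pi^A_{k+1}$ classifications.

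For the lower bounds, I work first on the diagonal class. Identifying diagonal self-adjoint operators with rational entries in $[0,1]$ with the subspace $([0,1]\cap\mathbb{Q})^{\mathbb{N}}$ of the Polish space $[0,1]^{\mathbb{N}}$ equipped with the product topology, the problem function becomes the characteristic function of
\[
\mathcal{Z}=\{(a_n):\mathrm{Leb}_{\mathbb{R}}(\overline{\{a_n:n\in\mathbb{N}\}})=0\}.
\]
The identity $\overline{\{a_n\}}=\bigcap_k\bigcup_n(a_n-1/k,a_n+1/k)$ combined with downward monotone convergence yields the $\Pi^0_3$ presentation
\[
(a_n)\in\mathcal{Z}\iff\forall m\,\exists k\,\forall N:\ \mathrm{Leb}\bigl(\cup_{n\leq N}(a_n-1/k,a_n+1/k)\bigr)<2^{-m}.
\]
Classical descriptive set theory, via a continuous reduction from the canonical $\Pi^0_3$-complete set that threshold-switches between a null Cantor-type enumeration and a dense rational enumeration of $[0,1]$, shows $\mathcal{Z}$ is $\Pi^0_3$-hard, so $\mathcal{Z}\notin\Sigma^0_3$ and $\chi_{\mathcal{Z}}$ is not Baire class $2$. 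Invoking the SCI/Baire correspondence of \S\ref{bigHth} for this diagonal-input class (in which $\Lambda_1$ and $\Lambda_2$ both define continuous input maps) forbids any $\Delta^G_3$ tower, yielding $\{\Xi_3^L,\Omega_\mathrm{D},\Lambda_i\}\notin\Delta^G_3$. The same diagonal operators lie in $\Omega_f$ (via $f(n)=n+1$ and $c_n=0$), in $\Omega_\mathrm{SA}$, $\Omega_\mathrm{N}$, and in $\Omega_g$ with $g(x)=x$, which delivers the $\Omega_f$ lower bound together with the $\Lambda_2$-lower bounds for $\Omega\in\{\Omega_{\mathrm{B}},\Omega_\mathrm{SA},\Omega_\mathrm{N},\Omega_g\}$ since $\Lambda_2$-entries of diagonal matrices are computable exactly in finite time.

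The remaining lower bound $\{\Xi_3^L,\Omega,\Lambda_1\}\notin\Delta^G_4$ for $\Omega\in\{\Omega_{\mathrm{B}},\Omega_\mathrm{SA},\Omega_\mathrm{N},\Omega_g\}$ is obtained by pre-composing the diagonal reduction with the one-level $\Lambda_1$-to-$\Lambda_2$ inflation that appears in the lower-bound half of Theorem \ref{Leb_1}: embed each diagonal $\mathrm{diag}(a_n)$ into a self-adjoint operator in the ambient class whose off-diagonal couplings reveal $(a_n)$ only through one additional pointwise limit when read through $\Lambda_1$, so that recovering the diagonal data costs exactly one further limit. The main obstacle will be verifying that the SCI/Baire correspondence of \S\ref{bigHth} applies verbatim in this enlarged configuration, together with choosing the continuous $\Pi^0_3$-hardness reduction so that its image respects the ambient constraints (self-adjointness, normality, or the resolvent bound via $g$) and the resulting inflation promotes the lower bound from $\Delta^G_3$ to $\Delta^G_4$ rather than collapsing it.
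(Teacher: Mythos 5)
Your upper-bound construction is essentially the paper's: both compose the $\Pi^A_k$ tower $\Gamma^L$ from Theorem~\ref{Leb_1} with a shrinking threshold indicator. The paper uses $\chi_{[0,1/n_{k+1}]}$ (left-continuous) and explicitly forces monotone convergence at each inner level by taking successive maxima/minima, so that the inner limit of the indicator equals the indicator of the inner limit. Your $\chi_{[0,2^{-n_{k+1}})}$ is right-continuous, so on a level where the inner convergence is monotone from below the commutation can fail at the boundary; the iterated limit still comes out right (the mismatch only affects a single threshold value of $n_{k+1}$, and the top-level sequence is still eventually constant and non-increasing), but you should verify the intermediate limits exist and that $\Gamma_{n_{k+1}}(A)\searrow\Xi_3^L(A)$, since the paper's indicator choice is designed precisely to make those verifications immediate. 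Moreover, in the $\Omega_{\mathrm{B}}$ case the raw tower of Theorem~\ref{Leb_1} has alternating monotonicity directions across its three limits, and the paper notes one must post-process to enforce monotonicity at each stage; that step is silent in your write-up.

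The lower bounds are where the real gaps lie. Your plan for $\Omega_{\mathrm{D}}$ is in the right spirit—invoke Theorem~\ref{B_equiv} and $\Pi^0_3$-completeness of $\mathcal{Z}$—but two points must be filled in. First, the SCI/Baire correspondence applies with the \emph{discrete} product topology on $([0,1]\cap\mathbb{Q})^{\mathbb{N}}$ (this is what the remark after Theorem~\ref{B_equiv} allows, $A^B$ with discrete $A$), not the subspace topology inherited from the Polish space $[0,1]^{\mathbb{N}}$ which you reference. $\Pi^0_3$-hardness via Euclidean-continuous reductions does not automatically transfer: you need a reduction $\{0,1\}^{\mathbb{N}^2}\to([0,1]\cap\mathbb{Q})^{\mathbb{N}}$ in which each output coordinate depends on only finitely many input bits. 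Second, the ``threshold-switching'' reduction you allude to is precisely the substance of the proof, and you have not exhibited it. The paper supplies both ingredients at once: from a $\{0,1\}$-array $\{a_{i,j}\}$ it constructs diagonal entries $c_{i,j}=a_{i,j}b_{r(i,j),j}$ with $r(i,j)=\max\{1,\sum_{k\le i}a_{k,j}\}$ and $\{b_{i,j}\}_i$ dense in $I_j=[1-2^{-(j-1)},1-2^{-j}]$, giving $\mathrm{Leb}_{\mathbb{R}}(\mathrm{Sp}(A))=\sum_{j:\sum_ia_{i,j}=\infty}2^{-j}$. Each $c_{i,j}$ reads only finitely many $a$-entries, so the map is continuous in the discrete product topology, and it reduces the $\Sigma^0_3$-complete problem $\tilde\Xi_1$ (``is there a column with infinitely many ones?'') to $\Xi_3^L$, which is exactly the hardness argument you need.

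The promotion to $\{\Xi_3^L,\Omega_{\mathrm{SA}},\Lambda_1\}\notin\Delta^G_4$ is the substantive gap, and you rightly flag it as the ``main obstacle'' without resolving it. ``Pre-composing the diagonal reduction with a one-level $\Lambda_1$-to-$\Lambda_2$ inflation'' is not a mechanism that generically lifts a $\Delta^G_3$ lower bound to a $\Delta^G_4$ one: you cannot just stack an extra limit onto a reduction and expect the Baire level to increase. The paper instead reduces from a genuinely harder combinatorial problem $\tilde\Xi_4=\Xi_{3,Q}$ (with $\mathrm{SCI}_G=4$ by Theorem~\ref{DST_main}), using the self-adjoint operator $T=\bigoplus_m\alpha_m(A_m)$ built in step~3 of the proof of Theorem~\ref{spec_rad_thm3}. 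The block $A_m$ (constructed via Lemma~\ref{useful_spec_poll}, whose $2k\times 2k$ symmetric matrices have spectrum $\{-1,1\}$) is designed so that $\mathrm{Sp}(A_m)$ is finite or equals $[-1,1]$ depending on whether $\{a_{m,i,j}\}_{i,j}$ has only finitely many bad columns, and the spectral-pollution structure hides that dichotomy from $\Lambda_1$ in a way that genuinely costs the extra limit. Without a construction of this kind (respecting self-adjointness or normality as required), your last lower bound is not established.
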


\begin{remark}
These are the first examples of computational spectral problems that require four successive limits to compute in the SCI hierarchy. To prove this, we need some tools from descriptive set theory in \S \ref{bigHth}. Note that we prove the lower bounds for \textit{general} algorithms, so regardless of the model of computation.\hfill$\boxtimes$
\end{remark}

\subsection{Fractal dimensions of spectra}
\label{frac_dims_sec}

When considering operators from physical models, such as Schr\"odinger operators in quantum mechanics, fractal dimensions of spectra are related to important physical phenomena, such as the spreading of an initially localised wavepacket \cite{killip2003dynamical}. Further applications and numerical studies have already been discussed in \S \ref{intro}. However, estimating the fractal dimension is extremely difficult. This can be explained by the SCI hierarchy - the $\mathrm{SCI}>1$, even for computing the box-counting dimension, the most basic definition of fractal dimension. The Hausdorff dimension is even worse and has $\text{SCI}\geq 3$. In this section, we exclusively treat self-adjoint operators and hence seek fractal dimensions of $\mathrm{Sp}(A)\subset\mathbb{R}$.\footnote{The proofs for general self-adjoint operators can be adapted with an additional successive limit and the use of two-dimensional covering boxes to treat the class of general bounded operators. Some care is needed to deal with the boundaries of covering boxes for the Hausdorff dimension, but we omit the details.}

\textit{\textbf{Box-Counting Dimension:}} Let $F$ be a bounded set in $\mathbb{R}$ and let $N_{\delta}(F)$ be the number of closed intervals of length $\delta>0$ required to cover $F$. We define the upper and lower box-counting dimensions as
$$
\overline{\mathrm{dim}}_B(F)=\limsup_{\delta\downarrow{}0}\frac{\log(N_{\delta}(F))}{\log(1/\delta)},\quad
\underline{\mathrm{dim}}_B(F)=\liminf_{\delta\downarrow{}0}\frac{\log(N_{\delta}(F))}{\log(1/\delta)}.
$$
When $\overline{\mathrm{dim}}_B(F)=\underline{\mathrm{dim}}_B(F)$, we can replace the $\liminf$ and $\limsup$ by $\lim$, and the common value is the box-counting dimension $\mathrm{dim}_B(F)$, an example of a fractal dimension. A possible drawback of the box-counting dimension is its lack of countable stability. For example, $\mathrm{dim}_B(\{0,1,1/2,1/3,\ldots\})=1/2$. Let $\Omega_{f}^{BD}$ be the class of self-adjoint operators in $\Omega_f$ (see \eqref{bd_disp2}) whose upper and lower box-counting dimensions of the spectrum agree. Let $\Omega_{\mathrm{SA}}^{BD}$ be the class of self-adjoint operators whose upper and lower box-counting dimensions of the spectrum agree, and denote by $\Omega_{\mathrm{D}}^{BD}$ the class of diagonal operators in $\Omega_{\mathrm{SA}}^{BD}$.

{\textit{\textbf{Hausdorff Dimension:}}} A more complicated, yet robust notion of fractal dimension is related to the Hausdorff measure \cite{falconer2004fractal,mattila1999geometry}. Let $F\subset\mathbb{R}^n$ be a bounded Borel set and let $\mathcal{C}_{\delta}(F)$ denote the class of (countable) $\delta$-covers\footnote{That is, the set of covers $\{U_i\}_{i\in I}$ with $I$ at most countable and with $\mathrm{diam}(U_i)\leq\delta$.} of $F$. One first defines the quantities (for $d\geq0$)
$$
\mathcal{H}^{d}_{\delta}(F)=\inf\left\{\sum_i\mathrm{diam}(U_i)^d:\{U_i\}\in\mathcal{C}_{\delta}(F)\right\},\quad\mathcal{H}^{d}(F)=\lim_{\delta\downarrow0}\mathcal{H}^{d}_{\delta}(F).
$$
There is a unique $d'=\mathrm{dim}_{H}(F)\geq0$, the Hausdorff dimension of $F$, such that $\mathcal{H}^{d}(F)=0$ for $d>d'$ and $\mathcal{H}^{d}(F)=\infty$ for $d<d'$. One can prove that 
$$
\mathrm{dim}_{H}(F)\leq\underline{\mathrm{dim}}_B(F)\leq\overline{\mathrm{dim}}_B(F).
$$

With these definitions in hand, we can now present the main theorem of this section.

\begin{theorem}[Fractal dimensions of spectra]
\label{fractal_theoremhhhjh}
Let $\Xi_B(A)=\mathrm{dim}_B(\mathrm{Sp}(A))$ and $\Xi_H=\mathrm{dim}_H(\mathrm{Sp}(A))$. Then for $i=1,2$,
\begin{align*}
&\Delta^G_2\not\owns\{\Xi_B,\Omega_{f}^{BD},\Lambda_i\}\in\Pi^A_2, \quad && \Delta^G_2\not\owns\{\Xi_B,\Omega_{\mathrm{D}}^{BD},\Lambda_i\}\in\Pi^A_2\\
&\Delta^G_3\not\owns\{\Xi_H,\Omega_f\cap\Omega_\mathrm{SA},\Lambda_i\}\in\Sigma^A_3, \quad && \Delta^G_3\not\owns\{\Xi_H,\Omega_\mathrm{D},\Lambda_i\}\in\Sigma^A_3,
\end{align*}
whereas
\begin{align*}
&\Delta^G_3\not\owns\{\Xi_B,\Omega_{\mathrm{SA}}^{BD},\Lambda_1\}\in\Pi^A_3,\quad &&\Delta^G_2\not\owns\{\Xi_B,\Omega_{\mathrm{SA}}^{BD},\Lambda_2\}\in\Pi^A_2\\
&\Delta^G_4\not\owns\{\Xi_H,\Omega_\mathrm{SA},\Lambda_1\}\in\Sigma^A_4,\quad &&\Delta^G_3\not\owns\{\Xi_H,\Omega_\mathrm{SA},\Lambda_2\}\in\Sigma^A_3.
\end{align*}
\end{theorem}

\begin{remark}[When $\underline{\mathrm{dim}}_B(\mathrm{Sp}(A))\neq \overline{\mathrm{dim}}_B(\mathrm{Sp}(A))$]The algorithms for $\Xi_B$ also converge without the assumption that the upper and lower box-counting dimensions of $\mathrm{Sp}(A)$ agree, to a quantity $\Gamma(A)$ with
$$
\underline{\mathrm{dim}}_B(\mathrm{Sp}(A))\leq \Gamma(A)\leq\overline{\mathrm{dim}}_B(\mathrm{Sp}(A)).
$$
One of the properties that makes the Hausdorff dimension harder to compute than the box-counting dimension is its countable stability, meaning that if $F$ is countable then $\mathrm{dim}_H(F)=0$.\hfill$\boxtimes$
\end{remark}

\begin{remark}
Some of our results have interpretations for real bounded sequences. Given such a sequence $\{a_i\}_{i\in\mathbb{N}}\subset\mathbb{R}$, we can ask the same questions about $\overline{\{a_1,a_2,\ldots\}}$ as we have asked about the spectrum. We can embed these problems as spectral problems for the class $\Omega_\mathrm{D}$ of bounded self-adjoint diagonal operators by simply considering diagonal operators with entries $\{a_1,a_2,\ldots\}$. Theorems \ref{Leb_1}, \ref{Leb_3} and \ref{fractal_theoremhhhjh} immediately then give the classifications. With regards to fractal dimensions, the key problem is to try and relate the amount of data that has been seen to the resolution obtained from the data (as highlighted in the computational example below). Once we have the framework of the SCI, we can immediately see why the problem is so difficult - the computational problem requires three successive limits for the Hausdorff dimension.\hfill$\boxtimes$
\end{remark}

Finally, the following lemma is used in the construction of the tower of algorithms for computing the Hausdorff dimension but is interesting in its own right so is listed here.

\begin{lemma}
\label{halt_test}
Let $(a,b)\subset\mathbb{R}$ be a finite open interval and let $A\in\Omega_f\cap\Omega_\mathrm{SA}$. Then determining whether
$
\mathrm{Sp}(A)\cap(a,b)\neq\emptyset
$
using $\Lambda_i$ is a problem with $\mathrm{SCI}_A=1$. Furthermore, we can design an algorithm that halts if and only the answer is ``Yes'', that is, the problem lies in $\Sigma^A_1$. Similarly the problem lies in $\Sigma_2^A$ when considering $\Omega_\mathrm{SA}$ with $\Lambda_1$ (or $\Sigma_1^A$ when we allow access to $\Lambda_2$).
\end{lemma}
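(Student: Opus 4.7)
The plan is to build a halting verifier on top of the approximants $\gamma_n(z;A)=\min\{\sigma_1((A-zI)|_{P_n\mathcal{H}}),\sigma_1((A^*-\bar z I)|_{P_n\mathcal{H}})\}$ described in the introduction. The crucial property to exploit is that for self-adjoint $A$ one has $\|R(z,A)\|^{-1}=\mathrm{dist}(z,\mathrm{Sp}(A))$, so that $\gamma_n(z;A)\downarrow\mathrm{dist}(z,\mathrm{Sp}(A))$ monotonically from above, uniformly on compact subsets of $\mathbb{C}$. This one-sided convergence is what enables a $\Sigma_1$-style halting algorithm: an upper bound on $\mathrm{dist}(z,\mathrm{Sp}(A))$ that is small enough is itself a certificate that $\mathrm{Sp}(A)\cap(a,b)\neq\emptyset$.

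First I would explain how to get hold of $\gamma_n(z;A)$ in each set-up. Under $\Omega_f\cap\Omega_{\mathrm{SA}}$ the bounded-dispersion data $(f,\{c_n\})$ make all entries of $A^*A$ and $AA^*$ computable from $\Lambda_1$ with explicit error control, so a rational $\tilde\gamma_n(z;A)$ with $|\tilde\gamma_n(z;A)-\gamma_n(z;A)|\leq 2^{-n}$ can be produced in finitely many arithmetic steps. The same estimate is immediate under $\Lambda_2$ for $\Omega_{\mathrm{SA}}$ since the required $A^*A$ and $AA^*$ entries are then read off directly. Under $\Omega_{\mathrm{SA}}$ with $\Lambda_1$ only, we instead approximate $(A^*A)_{ij}=\sum_k\overline{A_{ki}}A_{kj}$ by truncating the sum at height $k$, giving $\tilde\gamma_{n,k}(z;A)\to\gamma_n(z;A)$ as $k\to\infty$; this is the source of the extra outer limit in that regime.

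The core algorithm is then the following. Fix an enumeration $\{z_m\}_{m\in\mathbb{N}}$ of $(a,b)\cap\mathbb{Q}$ and set $\delta_m=\min(z_m-a,b-z_m)$. At step $n$, compute $\tilde\gamma_n(z_m;A)$ for $m\leq n$ and check whether $\tilde\gamma_n(z_m;A)+2^{-n}<\delta_m$ for some such $m$; if yes, halt and output $1$, otherwise output $0$. Soundness follows because the inequality forces $\mathrm{dist}(z_m,\mathrm{Sp}(A))\leq\gamma_n(z_m;A)<\delta_m$, so $\mathrm{Sp}(A)$ meets $(z_m-\delta_m,z_m+\delta_m)\subset(a,b)$. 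Completeness: if $\lambda\in\mathrm{Sp}(A)\cap(a,b)$ and one chooses rational $z_m$ with $|z_m-\lambda|<\tfrac14\min(\lambda-a,b-\lambda)$, then $\mathrm{dist}(z_m,\mathrm{Sp}(A))$ is strictly less than $\delta_m$, and the monotone convergence $\gamma_n(z_m;A)\downarrow\mathrm{dist}(z_m,\mathrm{Sp}(A))$ triggers the test for all sufficiently large $n$. Hence the algorithm halts iff the answer is yes, which puts the problem in $\Sigma_1^A$ under $\Omega_f\cap\Omega_{\mathrm{SA}}$ with either $\Lambda_i$ and under $\Omega_{\mathrm{SA}}$ with $\Lambda_2$; since the answer depends on infinitely many matrix entries, $\mathrm{SCI}_A=1$. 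For $\Omega_{\mathrm{SA}}$ with $\Lambda_1$ I would wrap the construction with an outer halting index $n_2$ and an inner approximation index $n_1$ controlling the accuracy of $\tilde\gamma_{n_2,n_1}$, so that $\lim_{n_1}\Gamma_{n_2,n_1}$ reduces to the one-limit algorithm at resolution $n_2$ and the outer halting behaviour delivers $\Sigma_2^A$. The delicate point here will be arranging the inner limit to converge in $\{0,1\}$ rather than oscillating at the boundary of the strict test; I expect to handle this by inserting a small safety margin in the threshold and making the inner output sticky, so that once the test fires at any inner step we stay at $1$ forever after.
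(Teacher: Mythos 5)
Your proposal uses the same core mechanism as the paper --- the monotone one-sided convergence of $\gamma_n(z;A)$ from above to $\|R(z,A)\|^{-1}=\mathrm{dist}(z,\mathrm{Sp}(A))$ for self-adjoint $A$, which turns a small computed upper bound into a halting certificate --- but you implement it in a more roundabout way. The paper tests the single point $c=(a+b)/2$ with $\delta=(b-a)/2$ and relies on the exact equivalence, for self-adjoint $A$: $\mathrm{Sp}(A)\cap(a,b)\neq\emptyset$ if and only if $\mathrm{dist}(c,\mathrm{Sp}(A))<\delta$. Because this is an equivalence, the single midpoint test is already both sound and complete, and your enumeration of every rational $z_m\in(a,b)$ together with the density argument is unnecessary. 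Your version is nevertheless correct for the $\Sigma_1^A$ claims (and would even cover open sets that are not intervals, which the lemma does not require).

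Where your write-up falls short of a proof is the $\Sigma_2^A$ case under $\Omega_\mathrm{SA}$ with $\Lambda_1$. You name the right subtlety (convergence of the inner limit in $\{0,1\}$ at the boundary of the threshold) but your proposed fix, ``a small safety margin and a sticky inner output,'' does not work as stated: making the inner flag sticky would preserve a spurious $1$ in the case $\mathrm{dist}(c,\mathrm{Sp}(A))=\delta$, breaking the $\Sigma$-soundness ($\Gamma_{n_2}(A)\leq\Xi(A)$) in the ``No'' case. What actually resolves it is a \emph{vanishing} safety margin tied to the outer index, e.g.\ testing against $\delta-1/n_2$, together with an approximation of $\gamma_{n_2}(c;A)$ that converges from below as $n_1\to\infty$ (take running maxima of rigorous lower bounds). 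Then in the ``No'' case one always has $\gamma_{n_2}(c;A)\geq\|R(c,A)\|^{-1}\geq\delta>\delta-1/n_2$, so the inner limit is $0$ for every $n_2$, while in the ``Yes'' case $\gamma_{n_2}(c;A)\downarrow\|R(c,A)\|^{-1}<\delta$ strictly implies $\gamma_{n_2}(c;A)<\delta-1/n_2$ for all large $n_2$, giving inner limit $1$ eventually. This (not stickiness) is the mechanism behind the one-sided $\Sigma_2^A$ convergence. You should either spell this out or explicitly defer to the oscillation structure $\gamma_{n_2,n_1}\uparrow\gamma_{n_2}\downarrow\|R(\cdot,A)\|^{-1}$; leaving it at ``I expect to handle this'' is the genuine gap in the proposal.
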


\section{Computational Examples}
\label{num_test}

In this section, we demonstrate that the SCI-sharp algorithms constructed in this paper can be efficiently implemented for large-scale computations. Moreover, the algorithms have desirable convergence properties, converging monotonically or being eventually constant, as captured by the $\Sigma/\Pi$ classification. Generically, this monotonicity holds in all of the successive limits, and not just the final limit; many of the towers of algorithms undergo \textit{oscillation phenomena} where each subsequent limit is monotone but in the opposite sense/direction than the limit beforehand. We can take advantage of this when analysing the algorithms numerically. The algorithms also highlight suitable information that lowers the SCI classification to $\Sigma_1/\Pi_1$. Other advantages of the algorithms based on approximating the resolvent norm include locality, numerical stability and speed/parallelisation. In the examples that follow, we remind the reader what each parameter $n_k$ intuitively does in the relevant algorithm and simplified routines for many of the algorithms can be found in Appendix \ref{compute_rout_appendix}. Finally, we point the reader to Remark \ref{fdjojosr} - all of the algorithms can be implemented rigorously using arithmetic operations over the rationals or with methods such as interval arithmetic.

\subsection{Spectral radius}

We begin with the spectral radius and consider the upper-triangular non-normal operator on $l^2(\mathbb{Z})$ defined by its action on the canonical basis via
$$
Ae_j=e_{j-2}+i^{j}e_{j-1}.
$$
In this case, the operator norm of $A$ is $2$ and the approximation of the spectrum by finite section is $\{0\}$. Hence, to compute the spectral radius, one must resort to the techniques used in our algorithms based on rectangular truncations. Recall that the SCI classification for computing the spectral radius of such operators (where the dispersion is known) is $\Pi_2^A$ (see Theorem \ref{spec_rad_thm} for further classifications). The first parameter, $n_1$, controls the size of the rectangular truncation\footnote{For this example and other operators on $l^2(\mathbb{Z})$ below, we reorder the basis so that the operator $A$ acts on $l^2(\mathbb{N})$.} (as well as the grid resolution), whereas the second, $n_2$, controls the resolvent norm cut-off ($\epsilon=1/n_2$).

Figure \ref{SR_ex} (left) shows the output of $\Gamma_{n_2,n_1}(A)$ for computing the spectral radius. We see the expected monotonicity; $\Gamma_{n_2,n_1}(A)$ is increasing in $n_1$ but decreasing in $n_2$. It appears that $\lim_{n_1\rightarrow\infty}\Gamma_{10^2,n_1}(A)\approx\lim_{n_1\rightarrow\infty}\Gamma_{10^3,n_1}(A)\approx 1.4149$. The fact that these two values for different $n_2$ are similar suggests that we have reached convergence. Though, of course, the proof that the problem does not lie in $\Delta_2^G$ shows that we can never apply a choice of subsequences to gain convergence in one limit over the whole class $\Omega_f$. Nevertheless, the approximate value of $1.4149$ is confirmed in Figure \ref{SR_ex} (right) where we have shown pseudospectra, computed using the algorithm in \cite{colb1}.

\begin{figure}
\centering
\includegraphics[width=0.49\textwidth,trim={0mm 0mm 0mm 0mm},clip]{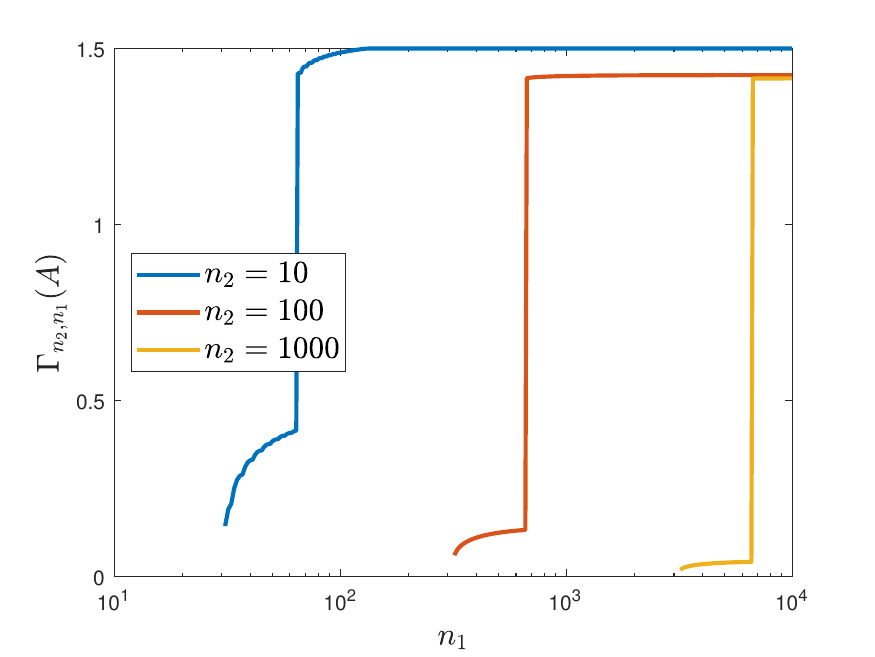}
\includegraphics[width=0.49\textwidth,trim={0mm 0mm 0mm 0mm},clip]{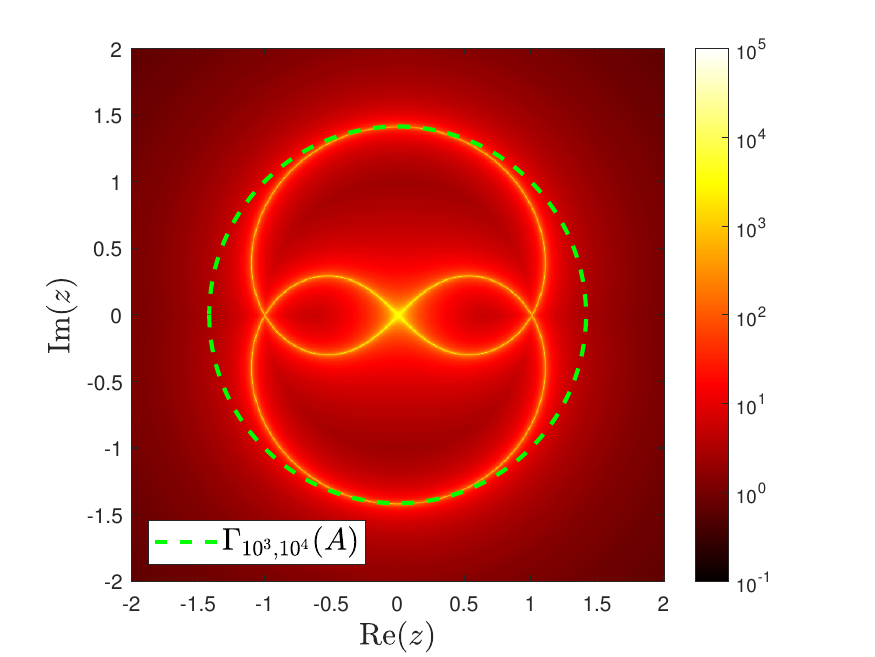}
\caption{Left: Output of the algorithm for computing the spectral radius. Right: Pseudospectrum computed using the method of \cite{colb1} (the colour scale corresponds to the resolvent norm $\|(A-zI)^{-1}\|$) which provides error control. We have show the output of $\Gamma_{10^3,10^4}(A)$ via the green dashed circle.}
\label{SR_ex}
\end{figure}

\subsection{Essential numerical range}
\label{froja}

\begin{figure}
\centering
\includegraphics[width=0.49\textwidth,trim={0mm 0mm 0mm 0mm},clip]{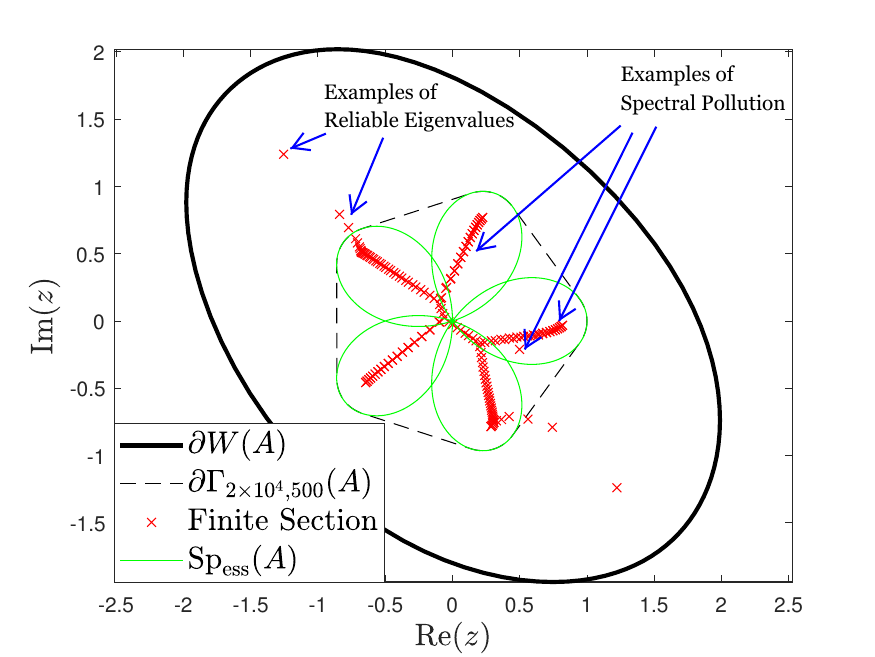}
\includegraphics[width=0.49\textwidth,trim={0mm 0mm 0mm 0mm},clip]{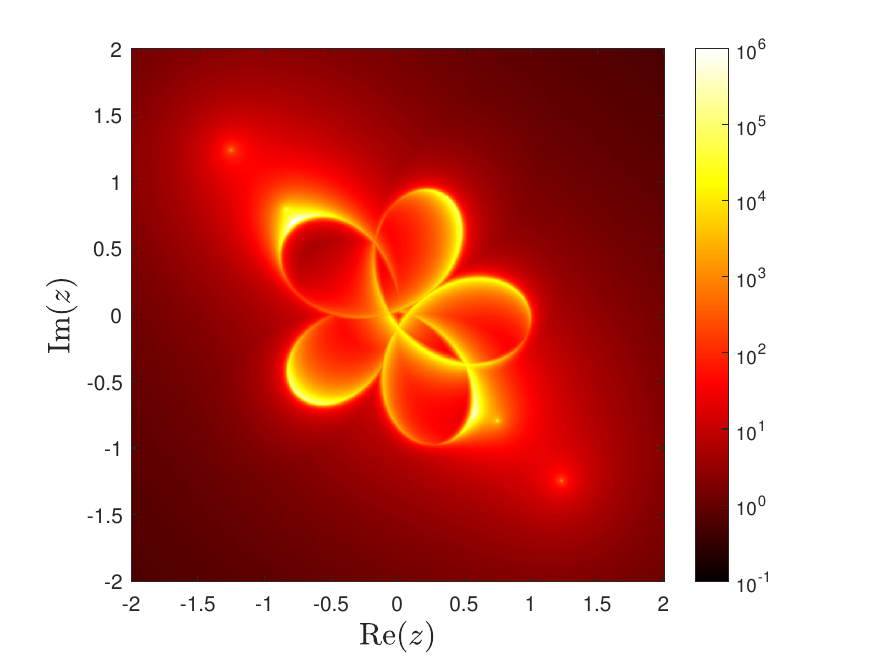}
\caption{Left: The boundaries of $\partial W(A)$ and $\partial\Gamma_{2\times 10^4,500}(A)$. We have also shown the essential spectrum of $A$ (whose convex hull, in this example, corresponds to $W_e(A)$) and the output of finite section for a $200\times 200$ truncation. Right: Pseudospectrum computed using the method of \cite{colb1} (the colour scale corresponds to the resolvent norm $\|(A-zI)^{-1}\|$) which provides error control. This confirms that eigenvalues, computed using finite section, outside $\partial\Gamma_{2\times 10^4,500}(A)$ are accurate and, in this example, indicates that the other eigenvalues correspond to spectral pollution.}
\label{NR_ex}
\end{figure}

\begin{figure}
\centering
\includegraphics[width=1\textwidth,trim={22mm 0mm 15mm 0mm},clip]{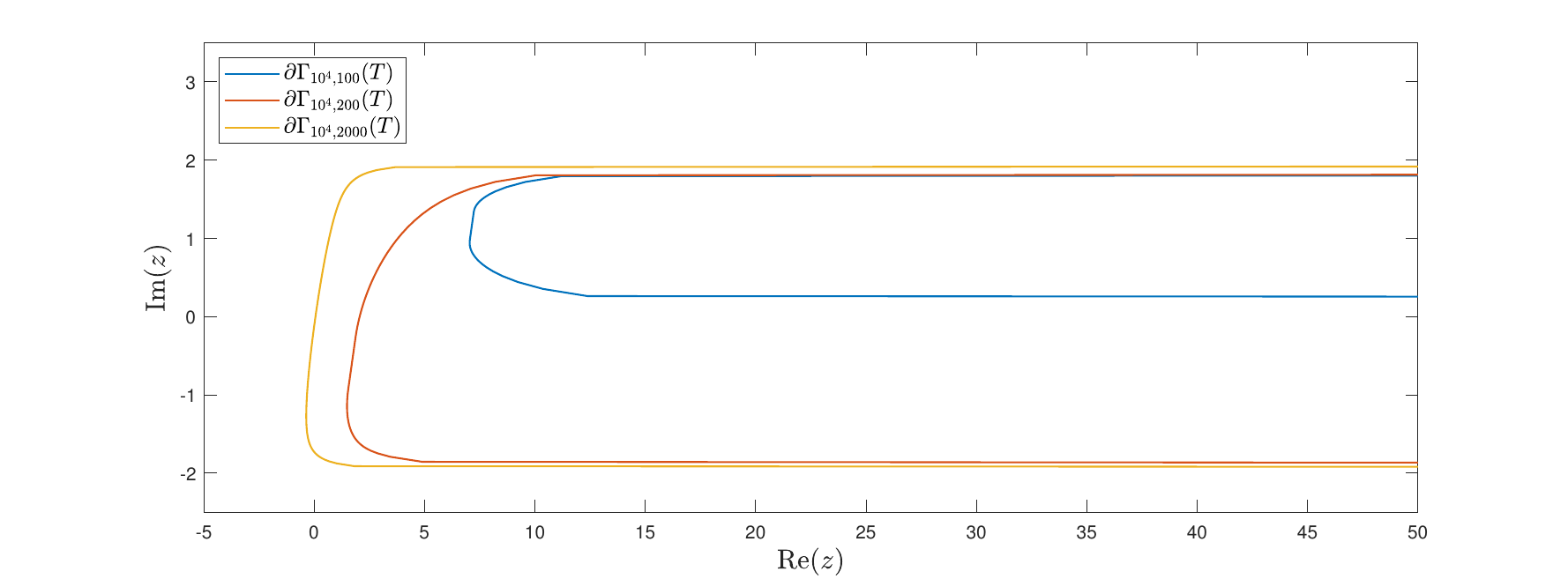}
\caption{The output of the algorithm for computing the essential numerical range of closed operators, applied to the complex Schr\"odinger operator $T$ in \eqref{complex_schd}.}
\label{NR_ex2}
\end{figure}

To demonstrate the algorithm for computing the essential numerical range, we first consider the Laurent operator $A_0$ acting on $l^2(\mathbb{Z})$ with the symbol
$$
a(t)=\frac{t^4+t^{-1}}{2}.
$$
In this case, $\mathrm{Sp}(A_0)=\mathrm{Sp}_{\mathrm{ess}}(A_0)=\{a(z):|z|=1\}$. We consider the operator $A=A_0+E$ where the compact perturbation $E$ is given by
$$
Ee_j=-\frac{3i}{1+|j|}e_{j-1}.
$$
Recall that the SCI classification for computing the essential numerical range is $\Pi_2^A$ (see Theorem \ref{spec_poll_hard}). The first parameter, $n_1$, controls the size of the truncation, whereas the second, $n_2$, controls how far along the matrix the truncations $(I-P_{n_2})P_{n_1+n_2}A|_{P_{n_1+n_2}(I-P_{n_2})\mathcal{H}}$ are taken with respect to the canonical basis.

Figure \ref{NR_ex} (left) shows the output of the algorithm $\Gamma_{n_2,n_1}(A)$ to compute the essential numerical range for $n_2=20000$ and $n_1=500$. We  show the boundary $\partial\Gamma_{n_2,n_1}(A)$ since the essential numerical range is convex. In this example, $W_\mathrm{e}(A)$ is the convex hull of $\mathrm{Sp}_{\mathrm{ess}}(A_0)$, which allows us to verify the output of the algorithm. We also show 200 eigenvalues of finite section (computed using extended precision to avoid numerical instabilities associated with non-normal truncations), the majority of which are due to truncation and provide an example of spectral pollution. This is confirmed when we compare to the pseudospectrum, also shown in Figure \ref{NR_ex} (right), computed using the algorithm in \cite{colb1}. However, eigenvalues outside $W_\mathrm{e}(A)$ correspond to true eigenvalues of $A$ (see Theorem \ref{t:conv}).

The algorithm can also be extended to unbounded operators, as outlined in \S \ref{append_UB_ENR}. For example, we consider the complex Schr\"odinger operator
\begin{equation}
\label{complex_schd}
T=-\frac{d^2}{dx^2}+(2i+1)\cos(x).
\end{equation}
By using a Gabor basis, we can represent $T$ as a closed operator on $l^2(\mathbb{N})$ such that the linear span of the canonical basis (corresponding to the Gabor basis) forms a core. This allows us to use Corollary \ref{gojoknb}, where we can compute the matrix elements (corresponding to inner products with the basis functions) with error control using quadrature. Figure \ref{NR_ex2} shows the output for $n_2=10^4$ and various $n_1$. We see the expected monotonicity as $n_1$ increases and the output for $n_1=2000$ has converged to visible accuracy in the plot.

\subsection{Capacity}

\begin{figure}
\centering
\includegraphics[width=0.49\textwidth,trim={35mm 92mm 35mm 92mm},clip]{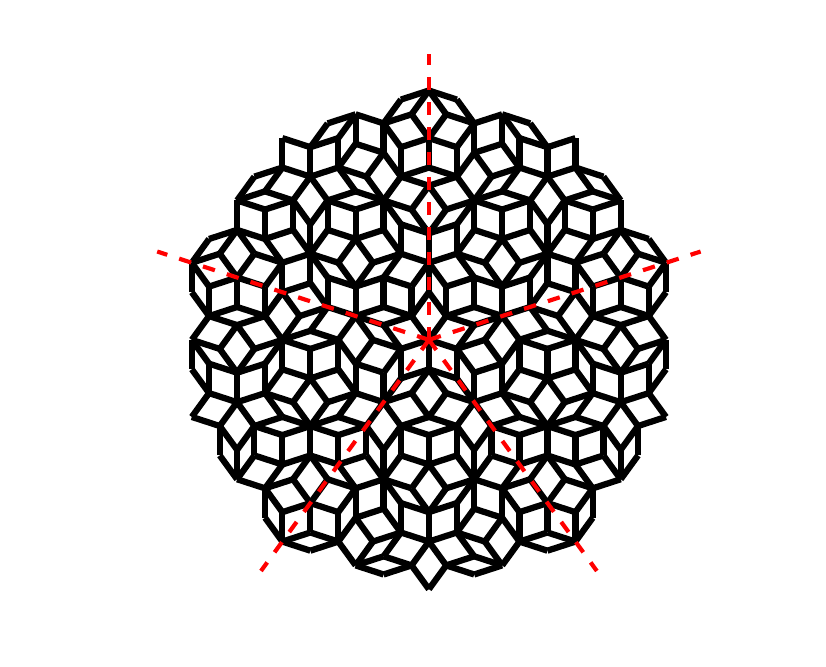}
\includegraphics[width=0.49\textwidth,trim={0mm 0mm 0mm 0mm},clip]{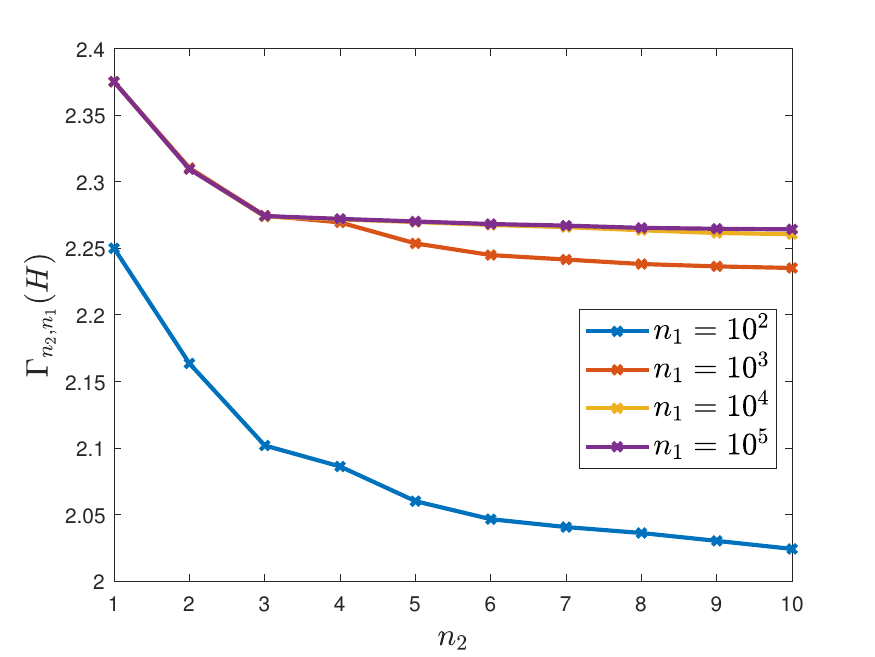}
\caption{Left: Finite portion of the Penrose tiling showing a fivefold rotational symmetry. Right: Output of the algorithm for computing the capacity of $\mathrm{Sp}(H)$, where $H$ is the operator in \eqref{H_0}.}
\label{CAP_ex}
\end{figure}

We now consider a transport Hamiltonian on a Penrose tile for which few analytical results are known. Quasicrystals were discovered in 1982 by Shechtman \cite{PhysRevLett.53.1951} who was awarded the Nobel prize in 2011 for his discovery. Over the past 30 years, there has been considerable interest in their often exotic properties \cite{stadnik2012physical,damanik2012spectral}. The Penrose tile is the standard two-dimensional model \cite{della2005band,vardeny2013optics}, and a finite portion of the tiling is shown in Figure \ref{CAP_ex} (left). However, unlike one-dimensional models, very little is known about the spectral properties of two-dimensional quasicrystals. Let $G$ be the graph consisting of the vertices, $V(G)$, of the Penrose tiling and $E(G)$ the set of edges. If there is an edge connecting two vertices $x$ and $y$, we write $x \sim y$. The (negative) Laplacian, $H$, acts on $\psi \in l^2(V(G)) \cong l^{2}(\mathbb{N})$ by 
\begin{equation}\label{H_0}
(H\psi)(x) = \sum_{y\sim x} \left(\psi(y)-\psi(x)\right).
\end{equation}
By choosing a suitable ordering of the vertices, we can represent $H$ as an operator acting on $l^2(\mathbb{N})$ of bounded dispersion with $f(n)-n\sim\mathcal{O}(\sqrt{n})$. Recall that the SCI classification for computing the capacity of the spectrum of such operators is $\Pi_2^A$ (see Theorem \ref{spec_rad_thm3} for further classifications). The first parameter, $n_1$, controls the size of the truncation used to test if intervals intersect the spectrum via Lemma \ref{halt_test}, whereas the second, $n_2$, controls the spacings of the interval coverings (which have width $2^{-n_2}$). In this example, we used the conformal mapping method of \cite{liesen2017fast} to accurately and rapidly compute the capacity of finite unions of intervals in $\mathbb{R}$ (see also Remark \ref{capacity_efficiency}).

Figure \ref{CAP_ex} (right) shows the output of $\Gamma_{n_2,n_1}(H)$ and we see the expected monotonicity; the output is increasing in $n_1$ but decreasing in $n_2$. By comparing the outputs for $n_1=10^4$ and $n_1=10^5$, it appears we have convergence up to around $n_2=8$. This suggests an upper bound (since the output is non-increasing in $n_2$) of approximately $2.26$ for the capacity of $\mathrm{Sp}(H)$ ($\mathrm{Sp}(H)$ is shown in Figure \ref{AM_leb}).

\subsection{Lebesgue measure}
\label{jhnwagtwqrfg}

\begin{figure}\centering
\begin{minipage}{.49\textwidth}
\centering
\includegraphics[width=1\textwidth,trim={0mm 0mm 00mm 0mm},clip]{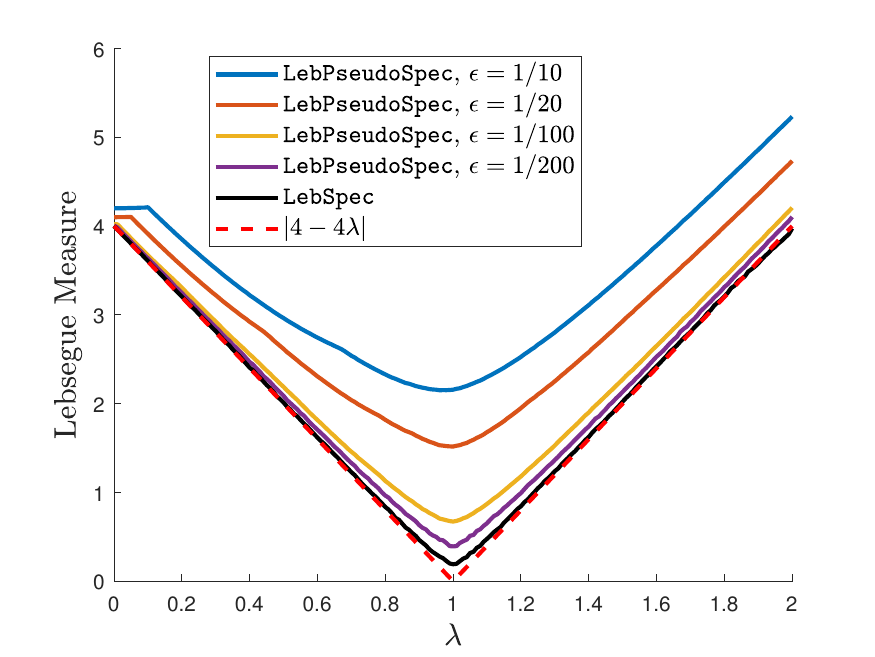}
\end{minipage}
\centering
\begin{minipage}{.49\textwidth}
\centering
\includegraphics[width=1\textwidth,trim={33mm 125mm 40mm 97mm},clip]{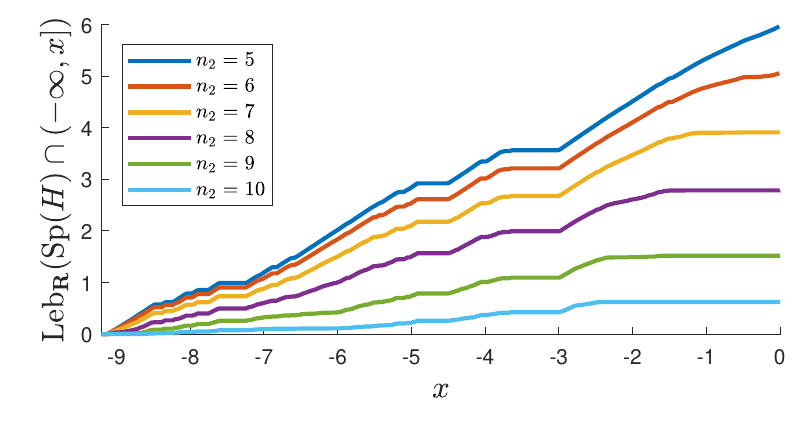}
\includegraphics[width=1\textwidth,trim={33mm 92mm 40mm 165mm},clip]{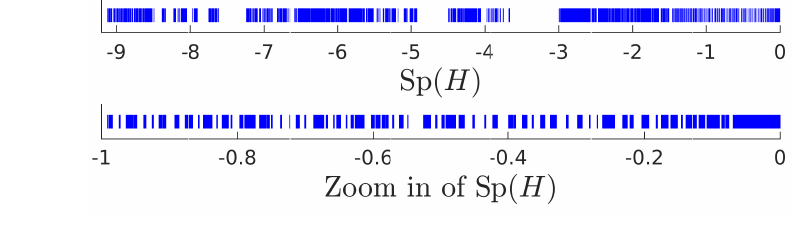}
\end{minipage}
\caption{Left: Output of algorithm \texttt{LebSpec} to compute $\mathrm{Leb}_{\mathbb{R}}(\mathrm{Sp}(H_{\alpha}))$ as well as the algorithm \texttt{LebPseudoSpec} for $\mathrm{Leb}_{\mathbb{R}}(\mathrm{Sp}_{\epsilon}(H_{\alpha}))$ (which converges to $\mathrm{Leb}_{\mathbb{R}}(\mathrm{Sp}(H_{\alpha}))$ as $\epsilon\downarrow 0$). These were computed using $n_1=10^4$ and $n_2=7$. Right: Estimates for $\mathrm{Leb}_{\mathbb{R}}(\mathrm{Sp}(H)\cap(-\infty,x])$, where $H$ is the Laplacian on a Penrose tiling in \eqref{H_0}, obtained by letting $n_1=10^5$ and selecting different $n_2$. The estimate above $-3$ appears to be well resolved, suggesting a region of Lebesgue measure $0$.}
\label{AM_leb}
\end{figure}

First, we consider the almost Mathieu operator, which is related to a wealth of mathematical and physical problems such as the Ten Martini Problem \cite{avila2009ten}. The operator acts on $l^2(\mathbb{Z})$ via
\begin{equation}
(H_{\alpha}x)_n=x_{n-1}+x_{n+1}+2\lambda\cos(2\pi n\alpha)x_n.
\label{Harper}
\end{equation}
The choice of $\lambda=1$ was studied in Hofstadter's classic paper \cite{hofstadter1976energy}, giving rise to the famous Hofstadter butterfly. In this case, the Hamiltonian represents a crystal electron in a uniform magnetic field and the spectrum can be interpreted as the allowed energies of the system. For irrational $\alpha$, we have \cite{avila2006}
\begin{equation}
\label{important_conje}
\mathrm{Leb}_{\mathbb{R}}(\mathrm{Sp}(H_{\alpha}))=4\left|1-\left|\lambda\right|\right|
\end{equation}
and we consider the case $\alpha=(\sqrt{5}-1)/2$. Recall that the SCI classification for computing the Lebesgue measure of the spectrum of such operators (where the dispersion is known) is $\Pi_2^A$, whereas the SCI classification of computing the Lebesgue measure of the pseudospectrum is $\Sigma_1^A$ (see Theorems \ref{Leb_1}, \ref{Leb_2} and \ref{Leb_3} for the further classifications). For computing the Lebesgue measure of the spectrum, the first parameter, $n_1$, controls the size of the truncation used to compute the approximation of the resolvent norm, whereas the second, $n_2$, controls the grid refinement (the spacings are $2^{-n_2}$). For the pseudospectrum, $n_1$ controls the size of the truncations and the grid spacings.

Figure \ref{AM_leb} (left) shows the output of the algorithms computing $\mathrm{Leb}_{\mathbb{R}}(\mathrm{Sp}(H_{\alpha}))$ (\texttt{LebSpec}) and also $\mathrm{Leb}_{\mathbb{R}}(\mathrm{Sp}_{\epsilon}(H_{\alpha}))$ (\texttt{LebPseudoSpec}) for a range of values of $\epsilon$. We chose values of $n_1=10^4$ and a grid spacing of $1/128$ ($n_2=7$). One can clearly see that the estimates for $\mathrm{Leb}_{\mathbb{R}}(\mathrm{Sp}_{\epsilon}(H_{\alpha}))$ are decreasing to the true value of $\mathrm{Leb}_{\mathbb{R}}(\mathrm{Sp}(H_{\alpha}))$, which is well approximated by \texttt{LebSpec}.

Next, we consider the operator $H$ in \eqref{H_0}, for which the Lebesgue measure of $\mathrm{Sp}(H)$ is unknown. We set $n_1=10^5$ and look at the average estimated error of the output via \texttt{DistSpec} (see Appendix \ref{basic_spec22}). This was of the order $10^{-3}$, so we consider grid refinements of spacing $1/32, 1/64,\ldots,1/1024$ corresponding to $n_2=5, 6,\ldots,10$. Figure \ref{AM_leb} (right) shows the output as a cumulative Lebesgue measure, that is, an estimate of $\mathrm{Leb}_{\mathbb{R}}(\mathrm{Sp}(A)\cap(-\infty,x])$ for a given $x$, along with the computed spectrum (for a grid spacing of $10^{-5}$). The figure provides strong evidence that the part of the spectrum closest to $0$ is resolved by the algorithm and has Lebesgue measure zero. We shall see more evidence for this in \S \ref{fract_omwmcf}.

\subsection{Fractal dimension}
\label{fract_omwmcf}

\begin{figure}
\centering
\includegraphics[width=0.49\textwidth,trim={0mm 0mm 0mm 0mm},clip]{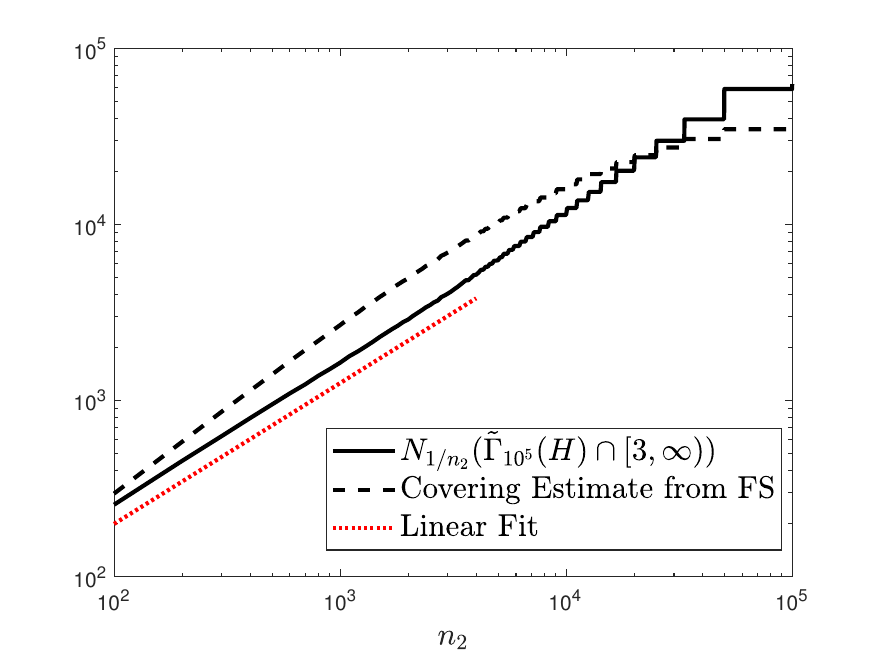}
\caption{A plot of $N_{1/n_2}(\tilde{\Gamma}_{10^5}(H)\cap[-3,\infty))$ against $n_2$. We found a scaling region with estimated box-counting dimension $\approx 0.80$. Note that for large $n_2\gtrsim 5000$, scalings are not resolved by $\tilde{\Gamma}_{10^5}$ (we can predict when this happens using the $\Sigma_1^A$ property of $\tilde{\Gamma}_{n}$). We have also shown the approximation using finite sections (square $10^5\times 10^5$ matrix truncations), as a dashed line, which overestimate the size of coverings, cannot detect the fractal structure, and break down for smaller $n_2$.}
\label{FD_ex}
\end{figure}

For this example, we again consider the operator $H$ in \eqref{H_0}, for which the fractal dimension of $\mathrm{Sp}(H)$ is unknown. In Figure \ref{FD_ex}, we plot $N_{1/n_2}(\tilde{\Gamma}_{10^5}(H)\cap[-3,\infty))$ against $n_2$ (recall that $N_\delta($F$)$ is the number of closed intervals of length $\delta> 0$ required to cover $F$). This corresponds to a rectangular truncation with $n_1=10^5$ columns. Recall that $\tilde{\Gamma}_{n}$ denotes the algorithm that converges to the spectrum with error control, in particular avoiding spectral pollution (see Appendix \ref{basic_spec22}). We also show a linear fit of slope $0.8$. The error control provided by the algorithm $\tilde{\Gamma}_{n}$ allows us to deduce the region where the fit holds, corresponding to a reliable resolution of the spectrum (this is at least as large as the region shown in the plot). In other words, we can ensure that $n_2$ is not too large so that the spacings of the coverings are not smaller than the numerically resolved spectrum. As expected, when $n_2$ is too large we see the effect of the grid spacing and the unresolved spectrum (by choosing larger $n_1$, we can take $n_2$ larger). The figure suggests that the spectrum above $-3$ is fractal with box-counting dimension $\approx 0.8$ and hence has Lebesgue measure zero, in agreement with the findings in Figure \ref{AM_leb}.

Figure \ref{FD_ex} also shows what happens when one performs the same experiment but with a finite section replacing $\tilde{\Gamma}_{n}$ (now using a square $10^5\times 10^5$ truncation). There are two noticeable features. First, for small $n_2$, using a finite section produces an overestimate of the size of the covering and the corresponding slope of the graph due to spectral pollution. In other words, finite section prevents us from detecting the fractal spectrum. Second, the covering estimate via finite section breaks down at smaller $n_2$ and it is impossible to predict suitable values of $n_2$ so that the spacings of the coverings do not go beyond the resolution of the computed spectrum. Together, these issues highlight why the finite section method is unsuitable in general\footnote{There do exist examples of operators, typically with a lot of structure, where one can use periodic versions of finite section.} for approximating fractal dimensions and why the new algorithms in this paper (which are proven to converge) are needed.

\section{Mathematical Preliminaries and Combinatorial Problems in the SCI Hierarchy}
\label{bigHth}

In this section, we begin by providing formal definitions of the SCI hierarchy. We then link the SCI hierarchy, in a certain specific case, to the Baire hierarchy on a suitable topological space. As well as being interesting in its own right, this provides a useful method of providing canonical problems high up in the SCI hierarchy. In particular, the results we prove hold for towers of \textit{general} algorithms (see Definition \ref{Gen_alg}) without the restrictions of arithmetic operations or notions of recursivity etc. This will be used extensively in the proofs of lower bounds for spectral problems that have $\mathrm{SCI}>2$, where we typically reduce the problems discussed here to the given spectral problem. It should be stressed that such links to existing hierarchies only exist in special cases when $\Omega$ and $\mathcal{M}$ are particularly well-behaved. Even when such a link does exist, the induced topology on $\Omega$ is often too complicated, unnatural or strong to be useful from a computational viewpoint. We also take the view that, for problems of scientific interest, the mappings $\Lambda$ and metric space $\mathcal{M}$ are often given to us apriori from the corresponding applications and are typically not compatible with topological viewpoints of computation.

\subsection{The SCI hierarchy}
\label{SCI_Hierarchy}

We begin by defining the Solvability Complexity Index (SCI) hierarchy, allowing us to show that our algorithms realise the boundary of what computers can achieve. We have already presented the definition of a computational problem $\{\Xi,\Omega,\mathcal{M},\Lambda\}$ in \S \ref{sec:SCI_brief_intro_FF}. Recall that the goal is to find algorithms that approximate the function $\Xi$. More generally, the main pillar of our framework is the concept of a tower of algorithms, which is needed to describe problems that need several successive limits in the computation. However, first one needs the definition of a general algorithm.

\begin{definition}[General Algorithm]\label{Gen_alg}
Given a computational problem $\{\Xi,\Omega,\mathcal{M},\Lambda\}$, a {general algorithm} is a mapping $\Gamma:\Omega\to \mathcal{M}$ such that for each $A\in\Omega$
\begin{itemize}
\item[(i)] there exists a (non-empty) finite subset of evaluations $\Lambda_\Gamma(A) \subset\Lambda$, 
\item[(ii)] the action of $\,\Gamma$ on $A$ only depends on $\{A_f\}_{f \in \Lambda_\Gamma(A)}$ where $A_f := f(A),$
\item[(iii)] for every $B\in\Omega$ such that $B_f=A_f$ for every $f\in\Lambda_\Gamma(A)$, it holds that $\Lambda_\Gamma(B)=\Lambda_\Gamma(A)$.
\end{itemize}
\end{definition}

The definition of a general algorithm is more general than the definition of a Turing machine \cite{turing1937computable} or a BSS machine \cite{BCSS}. A general algorithm has no restrictions on the operations allowed. The only restriction is that it can only take a finite amount of information, though it is allowed to \emph{adaptively} choose the finite amount of information it reads depending on the input. Condition (iii) ensures that the algorithm consistently reads the information. With a definition of a general algorithm, we can define the concept of towers of algorithms.

\begin{definition}[Tower of Algorithms]\label{tower_funct}
Given a computational problem $\{\Xi,\Omega,\mathcal{M},\Lambda\}$, a {tower of algorithms of height $k$
 for $\{\Xi,\Omega,\mathcal{M},\Lambda\}$} is a family of sequences of functions
 $$\Gamma_{n_k}:\Omega
\rightarrow \mathcal{M},\ \Gamma_{n_k, n_{k-1}}:\Omega
\rightarrow \mathcal{M},\quad \dots\quad,\ \Gamma_{n_k, \hdots, n_1}:\Omega \rightarrow \mathcal{M},
$$
where $n_k,\hdots,n_1 \in \mathbb{N}$ and the functions $\Gamma_{n_k, \hdots, n_1}$ at the lowest level of the tower are general algorithms in the sense of Definition \ref{Gen_alg}. Moreover, for every $A \in \Omega$,
$$
\Xi(A)= \lim_{n_k \rightarrow \infty} \Gamma_{n_k}(A), \quad \Gamma_{n_k, \hdots, n_{j+1}}(A)= \lim_{n_j \rightarrow \infty} \Gamma_{n_k, \hdots, n_j}(A) \quad j=k-1,\dots,1.
$$
\end{definition}

In addition to a general tower of algorithms, we focus on arithmetic towers.

\begin{definition}[Arithmetic Tower]\label{arith_tower_def}
Given a computational problem $\{\Xi,\Omega,\mathcal{M},\Lambda\}$, where $\Lambda$ is countable, we define the following: An {arithmetic tower of algorithms} of height $k$
 for $\{\Xi,\Omega,\mathcal{M},\Lambda\}$ is a tower of algorithms where the lowest functions $\Gamma = \Gamma_{n_k, \hdots, n_1} :\Omega \rightarrow \mathcal{M}$ satisfy the following:
 For each $A\in\Omega$ the mapping $(n_k, \hdots, n_1) \mapsto \Gamma_{n_k, \hdots, n_1}(A) = \Gamma_{n_k, \hdots, n_1}(\{A_f\}_{f \in \Lambda})$ is recursive, and $\Gamma_{n_k, \hdots, n_1}(A)$ is a finite string of complex numbers that can be identified with an element in $\mathcal{M}$. For arithmetic towers we let $\alpha = A$.
\end{definition} 

\begin{remark}By recursive we mean the following. If $f(A) \in \mathbb{Q}$ (or $\mathbb{Q}+i\mathbb{Q}$) for all $f \in \Lambda$, $A \in \Omega$, and $\Lambda$ is countable, then $\Gamma_{n_k, \hdots, n_1}(\{A_f\}_{f \in \Lambda})$ can be executed by a Turing machine \cite{turing1937computable}, that takes $(n_k, \hdots, n_1)$ as input, and that has an oracle tape consisting of $\{A_f\}_{f \in \Lambda}$. If $f(A) \in \mathbb{R}$ (or $\mathbb{C}$) for all $f \in \Lambda$, then $\Gamma_{n_k, \hdots, n_1}(\{A_f\}_{f \in \Lambda})$ can be executed by a BSS machine \cite{BCSS} that takes $(n_k, \hdots, n_1)$, as input, and that has an oracle that can access any $A_f$ for $f \in \Lambda$.\hfill$\boxtimes$
\end{remark}

Given the definitions above we can now define the key concept, namely, the Solvability Complexity Index: 

\begin{definition}[Solvability Complexity Index]\label{complex_ind}
A computational problem $\{\Xi,\Omega,\mathcal{M},\Lambda\}$ is said to have {Solvability Complexity Index $\mathrm{SCI}(\Xi,\Omega,\mathcal{M},\Lambda)_{\alpha} = k$}, with respect to a tower of algorithms of type $\alpha$, if $k$ is the smallest integer for which there exists a tower of algorithms of type $\alpha$ of height $k$. If no such tower exists then $\mathrm{SCI}(\Xi,\Omega,\mathcal{M},\Lambda)_{\alpha} = \infty.$ If there exists a tower $\{\Gamma_n\}_{n\in\mathbb{N}}$ of type $\alpha$ and height one such that $\Xi = \Gamma_{n_1}$ for some $n_1 < \infty$, then we define $\mathrm{SCI}(\Xi,\Omega,\mathcal{M},\Lambda)_{\alpha} = 0$. The type $\alpha$ may be General, or Arithmetic, denoted respectively G and A. We may sometimes write $\mathrm{SCI}(\Xi,\Omega)_{\alpha}$ to simplify notation when $\mathcal{M}$ and $\Lambda$ are obvious. 
\end{definition}

We will let $\mathrm{SCI}(\Xi,\Omega)_{\mathrm{A}}$ and $\mathrm{SCI}(\Xi,\Omega)_{\mathrm{G}}$ denote the SCI with respect to an arithmetic tower and a general tower, respectively. Note that a general tower means just a tower of algorithms as in Definition \ref{tower_funct}, where there are no restrictions on the mathematical operations. Thus, clearly $\mathrm{SCI}(\Xi,\Omega)_{\mathrm{A}} \geq \mathrm{SCI}(\Xi,\Omega)_{\mathrm{G}}$. The definition of the SCI immediately induces the SCI hierarchy:

\begin{definition}[The Solvability Complexity Index Hierarchy]
\label{1st_SCI}
Consider a collection $\mathcal{C}$ of computational problems and let $\mathcal{T}$ be the collection of all towers of algorithms of type $\alpha$ for the computational problems in $\mathcal{C}$.
Define 
\begin{equation*}
\begin{split}
\Delta^{\alpha}_0 &:= \{\{\Xi,\Omega\} \in \mathcal{C} \ \vert \   \mathrm{SCI}(\Xi,\Omega)_{\alpha} = 0\}\\
\Delta^{\alpha}_{m+1} &:= \{\{\Xi,\Omega\}  \in \mathcal{C} \ \vert \   \mathrm{SCI}(\Xi,\Omega)_{\alpha} \leq m\}, \qquad \quad m \in \mathbb{N},
\end{split}
\end{equation*}
as well as
\[
\Delta^{\alpha}_{1} := \{\{\Xi,\Omega\}  \in \mathcal{C}   \  \vert \ \exists \ \{\Gamma_n\}_{n\in \mathbb{N}} \in \mathcal{T}\text{ s.t. } \forall A \ d(\Gamma_n(A),\Xi(A)) \leq 2^{-n}\}. 
\]
\end{definition}

When there is additional structure on the metric space, such as in the spectral case when one considers the Attouch--Wets or the Hausdorff metric, one can extend the SCI hierarchy. For non-empty closed sets, we consider the \emph{Attouch--Wets metric} defined by
\begin{equation}\label{eq:Attouch-Wets}
d_{\mathrm{AW}}(C_1,C_2)=\sum_{n=1}^{\infty} 2^{-n}\min\left\{{1,\underset{\left|x\right|\leq n}{\sup}\left|\mathrm{dist}(x,C_1)-\mathrm{dist}(x,C_2)\right|}\right\},
\end{equation}
for $C_1,C_2\in\mathrm{Cl}(\mathbb{C}),$ where $\mathrm{Cl}(\mathbb{C})$ denotes the set of closed non-empty subsets of $\mathbb{C}$. This generalises the familiar Hausdorff metric to unbounded closed sets and corresponds to local uniform converge on compact subsets of $\mathbb{C}$.

\begin{definition}[The SCI Hierarchy (Attouch--Wets/Hausdorff metric)]
Given the set-up in Definition \ref{1st_SCI}, and suppose in addition that $(\mathcal{M},d)$ has the Attouch--Wets or the Hausdorff metric induced by another metric space $(\mathcal{M}^{\prime},d')$,
define, for $m \in \mathbb{N}$,
\begin{align*}
\Sigma^{\alpha}_0 &= \Pi^{\alpha}_0 = \Delta^{\alpha}_0,\\
\Sigma^{\alpha}_{1} &= \{\{\Xi,\Omega\} \in \Delta_{2}^{\alpha} \ \vert \  \exists \ \{\Gamma_{n}\} \in \mathcal{T}, \  \{X_{n}(A)\}\subset\mathcal{M} \text{ s.t. }  \ \Gamma_{n}(A)  \mathop{\subset}_{\mathcal{M}^{\prime}} X_n(A),\\
 & \qquad \qquad \qquad \qquad \lim_{n\rightarrow\infty}\Gamma_{n}(A)=\Xi(A),\ \ d(X_{n}(A),\Xi(A))\leq 2^{-n} \ \ \forall A \in \Omega\}, \\
\Pi^{\alpha}_{1} &= \{\{\Xi,\Omega\} \in \Delta_{2}^{\alpha} \ \vert \  \exists \ \{\Gamma_{n}\} \in \mathcal{T}, \  \{X_{n}(A)\}\subset\mathcal{M} \text{ s.t. }  \ \Xi(A)  \mathop{\subset}_{\mathcal{M}^{\prime}} X_{n}(A),\\
& \qquad \qquad \qquad \qquad \lim_{n\rightarrow\infty}\Gamma_{n}(A)=\Xi(A),\ \ d(X_{n}(A),\Gamma_n(A))\leq 2^{-n} \ \ \forall A \in \Omega\},
\end{align*}
where $\mathop{\subset}_{\mathcal{M}^{\prime}}$ means inclusion in the metric space $\mathcal{M}^{\prime}$, and $\{X_{n}(A)\}$ is a sequence where $X_n(A) \in \mathcal{M}$ depends on $A$. Moreover, 
\begin{equation*}
\begin{split}
\Sigma^{\alpha}_{m+1} = \{\{\Xi,\Omega\} \in \Delta_{m+2}^{\alpha} \ &\vert \  \exists \ \{\Gamma_{n_{m+1},\ldots,n_1}\} \in \mathcal{T}, \  \{X_{n_{m+1}}(A)\}\subset\mathcal{M} \text{ s.t. }  \ \Gamma_{n_{m+1}}(A)  \mathop{\subset}_{\mathcal{M}^{\prime}} X_{n_{m+1}}(A),\\
 & \hspace{-3mm}\lim_{n_{m+1}\rightarrow\infty}\Gamma_{n_{m+1}}(A)=\Xi(A),\ \ d(X_{n_{m+1}}(A),\Xi(A))\leq 2^{-n_{m+1}} \ \ \forall A \in \Omega\}, \\
\Pi^{\alpha}_{m+1} = \{\{\Xi,\Omega\} \in \Delta_{m+2}^{\alpha} \ &\vert \  \exists \ \{\Gamma_{n_{m+1},\ldots,n_1}\} \in \mathcal{T}, \  \{X_{n_{m+1}}(A)\}\subset\mathcal{M} \text{ s.t. }  \ \Xi(A)  \mathop{\subset}_{\mathcal{M}^{\prime}} X_{n_{m+1}}(A),\\
& \hspace{-3mm}\lim_{n_{m+1}\rightarrow\infty}\Gamma_{n_{m+1}}(A)=\Xi(A),\ \ d(X_{n_{m+1}}(A),\Gamma_{n_{m+1}}(A))\leq 2^{-n_{m+1}} \ \ \forall A \in \Omega\},
\end{split}
\end{equation*}
where $d$ can be either $d_{\mathrm{H}}$ or $d_{\mathrm{AW}}$.
\end{definition}

Note that to build a $\Sigma_1$ algorithm, it is enough (by taking subsequences of $n$) to construct $\Gamma_n(A)$ such that $\Gamma_{n}(A) \subset \mathcal{N}_{E_n(A)}(\Xi(A))$ with some computable $E_n(A)$ that converges to zero. The same idea can be applied to the real line with the usual metric, or $\{0,1\}$ with the discrete metric (we interpret $1$ as ``Yes'').

\begin{definition}[The SCI Hierarchy (totally ordered set)]
Given the set-up in Definition \ref{1st_SCI} and suppose in addition that $\mathcal{M}$ is a totally ordered set. 
Define 
\begin{equation*}
\begin{split}
\Sigma^{\alpha}_0 &= \Pi^{\alpha}_0 = \Delta^{\alpha}_0,\\
\Sigma^{\alpha}_{1} &= \{\{\Xi,\Omega\} \in \Delta_{2}^{\alpha} \ \vert \  \exists \ \{\Gamma_{n}\} \in \mathcal{T} \text{ s.t. } \Gamma_{n}(A) \nearrow \Xi(A) \ \, \forall A \in \Omega\}, 
\\
\Pi^{\alpha}_{1} &= \{\{\Xi,\Omega\} \in \Delta_{2}^{\alpha} \ \vert \  \exists \ \{\Gamma_{n}\} \in \mathcal{T} \text{ s.t. } \Gamma_{n}(A) \searrow \Xi(A) \ \, \forall A \in \Omega\},
\end{split}
\end{equation*}
where $\nearrow$ and $\searrow$ denotes convergence from below and above respectively,
as well as, for $m \in \mathbb{N}$, 
\begin{equation*}
\begin{split}
\Sigma^{\alpha}_{m+1} &= \{\{\Xi,\Omega\} \in \Delta_{m+2}^{\alpha} \ \vert \  \exists \ \{\Gamma_{n_{m+1}, \hdots, n_1}\} \in \mathcal{T} \text{ s.t. }\Gamma_{n_{m+1}}(A) \nearrow \Xi(A) \ \, \forall A \in \Omega\}, \\
\Pi^{\alpha}_{m+1} &= \{\{\Xi,\Omega\} \in \Delta_{m+2}^{\alpha} \ \vert \  \exists \ \{\Gamma_{n_{m+1}, \hdots, n_1}\} \in \mathcal{T} \text{ s.t. }\Gamma_{n_{m+1}}(A) \searrow \Xi(A) \ \, \forall A \in \Omega\}.
\end{split}
\end{equation*}
\end{definition}

\begin{remark}[$\Delta^{\alpha}_1\subsetneq \Sigma^{\alpha}_1 \subsetneq \Delta^{\alpha}_2$]
Note that the inclusions are strict. For example, if $\Omega_K$ consists of the set of compact infinite matrices acting on $l^2(\mathbb{N})$ and $\Xi(A)=\mathrm{Sp}(A)$ (the spectrum of $A$) then $\{\Xi, \Omega_K\} \in \Delta^{\alpha}_2$ but not in $ \Sigma_1^\alpha\cup\Pi_1^\alpha$ for $\alpha$ representing either towers of arithmetical or general type (see \cite{ben2015can} for a proof). Moreover, as was demonstrated in \cite{colb1}, if $\tilde \Omega$ is the set of discrete Schr\"odinger operators on $l^2(\mathbb{Z})$, then $\{\Xi, \tilde \Omega\} \in \Sigma^{\alpha}_1$ but not in $\Delta^{\alpha}_1$.\hfill$\boxtimes$
\end{remark}

Suppose we are given a computational problem $\{\Xi, \Omega, \mathcal{M}, \Lambda\}$, and that $\Lambda = \{f_j\}_{j \in \beta}$, where $\beta$ is some index set that can be finite or infinite. Obtaining $f_j$ may be a computational task on its own, which is exactly the problem in most areas of 
computational mathematics. In particular, for $A \in \Omega$, $f_j(A)$ could be the number $e^{\frac{\pi}{j} i }$ for example. Hence, we cannot access $f_j(A)$, but rather $f_{j,n}(A)$ where $f_{j,n}(A) \rightarrow f_{j}(A)$ as $n \rightarrow \infty$. 
Or, just as for problems that are high up in the SCI hierarchy, it could be that we need several successive limits, in particular one may need mappings
$f_{j,n_m,\hdots, n_1}: \Omega \rightarrow \mathbb{D} + i\mathbb{D}$, where $\mathbb{D}$ denotes the dyadic rational numbers, such that 
\begin{equation}\label{Lambda_limits}
\lim_{n_m \rightarrow \infty} \hdots \lim_{n_1 \rightarrow \infty} \|f_{j,n_m,\hdots, n_1}(A) - f_j(A)\|_{\infty} = 0 \quad  \forall j\in\beta,\forall A \in \Omega.
\end{equation}

In particular, we may view the problem of obtaining $f_j(A)$ as a problem in the SCI hierarchy, where $\Delta_1$ classification would correspond to the existence of mappings $f_{j,n}: \Omega \rightarrow \mathbb{D} + i \mathbb{D}$
such that 
 \begin{equation}\label{Lambda_limits2}
 \|f_{j,n}(A) - f_j(A)\|_{\infty} \leq 2^{-n} \quad \forall j\in\beta,\forall A \in \Omega.
 \end{equation}

This idea is formalised in the following definition.

\begin{definition}[$\Delta_{m}$-information]\label{definition:Lambda_limits}
	Let $\{\Xi, \Omega, \mathcal{M}, \Lambda\}$ be a computational problem. For $m \in \mathbb{N}$ we say that $\Lambda$ has $\Delta_{m+1}$-information if each $f_j \in \Lambda$ is not available, however, there are mappings $f_{j,n_m,\hdots, n_1}: \Omega \rightarrow \mathbb{D} + i \mathbb{D}$ such that \eqref{Lambda_limits} holds. Similarly, for $m = 0$ there are mappings $f_{j,n}: \Omega \rightarrow \mathbb{D} + i \mathbb{D}$
	such that \eqref{Lambda_limits2} holds. Finally, if $k \in \mathbb{N}$ and $\hat \Lambda$ is a collection of such functions described above such that $\Lambda$ has $\Delta_k$-information, we say that $\hat \Lambda$ provides $\Delta_k$-information for $\Lambda$. Moreover, we denote the family of all such $\hat \Lambda$ by $\mathcal{L}^k(\Lambda)$. 
\end{definition}

We want algorithms that can handle all  computational problems $\{\Xi,\Omega,\mathcal{M},\hat \Lambda\}$ when $\hat \Lambda \in  \mathcal{L}^m(\Lambda)$. To formalise this, we define a computational problem with $\Delta_m$-information.

\begin{definition}[Computational problem with $\Delta_m$-information]
\label{inexact_def_need}
	Given $m \in \mathbb{N}$ with $m>1$, a computational problem where $\Lambda$ has $\Delta_m$-information is denoted by 
	$
	\{\Xi,\Omega,\mathcal{M},\Lambda\}^{\Delta_m} := \{\tilde \Xi,\tilde \Omega,\mathcal{M},\tilde \Lambda\},
	$ 
where 
$$
\tilde \Omega =\left\{ \tilde A = \{f_{j,n_{m-1},\hdots, n_1}(A)\}_{j,n_{m-1},\hdots, n_1 \in \beta \times \mathbb{N}^{m-1}} \, \vert \, A \in \Omega, \{f_j\}_{j \in \beta} = \Lambda, f_{j,n_{m-1},\hdots, n_1} \text{ satisfy \eqref{Lambda_limits}} \right\}.
$$
Moreover, $\tilde \Xi(\tilde A) = \Xi(A)$, and we have $\tilde \Lambda = \{\tilde f_{j,n_{m-1},\hdots, n_1}\}_{j,n_{m-1},\hdots, n_1 \in \beta \times \mathbb{N}^{m-1}}$ where $\tilde f_{j,n_{m-1},\hdots, n_1}(\tilde A) = f_{j,n_{m-1},\hdots, n_1}(A)$. Note that $\tilde \Xi$ is well-defined by Definition \ref{def:comp_prob} of a computational problem. Similarly, we define $\Delta_1$-information using \eqref{Lambda_limits2}.
\end{definition}

The SCI and the SCI hierarchy, given $\Delta_m$-information, are then defined in the standard obvious way. 

\begin{remark}[Classifications in this paper]
\label{fdjojosr}
For the problems considered in this paper, the SCI classifications do not change if we consider arithmetic towers with $\Delta_1$-information. This is easy to see through Church's thesis and an analysis of the stability of our algorithms. For example, when the input is rational we have been careful to restrict all relevant operations to $\mathbb{Q}$ rather than $\mathbb{R}$, and errors incurred from $\Delta_1$-information can be removed in the first limit. Explicitly, for the algorithms based on \texttt{DistSpec} (see Appendix \ref{basic_spec22}) it is possible to carry out an error analysis. We can also bound numerical errors (e.g., using interval arithmetic \cite{tucker2011validated}) and incorporate this uncertainty for the estimation of $\left\|R(z,A)\right\|^{-1}$ to gain the same classification of our problems. Similarly, for other algorithms based on similar functions. In other words, for the results of this paper, it does not matter which model of computation one uses for a definition of `algorithm'. From a classification point of view, they are equivalent for these spectral problems. This leads to rigorous $\Sigma_k^\alpha$ or $\Pi_k^\alpha$ type error control suitable for verifiable numerics. In particular, for $\Sigma_1^{\alpha}$ or $\Pi_1^{\alpha}$ towers of algorithms, this could be useful for computer-assisted proofs.\hfill$\boxtimes$
\end{remark}

\subsection{Recalling some results from descriptive set theory}
We briefly recall the definition of the Borel hierarchy as well as some well-known theorems from descriptive set theory. It is beyond the scope of this paper to provide an extensive discussion of descriptive set theory, but we refer the reader to \cite[Chapter 2]{kechris1987descriptive} for an excellent introduction that covers the main ideas.

Let $X$ be a metric space and define
$$
\Sigma_1^0(X)=\{U\subset X:U\text{ is open}\},\quad \Pi_1^0(X)=\sim\hspace{-1mm}\Sigma_1^0(X)=\{F\subset X:F\text{ is closed}\},
$$
where for a class $\mathcal{U}$, $\sim\hspace{-1mm}\mathcal{U}$ denotes the class of complements (in $X$) of elements of $\mathcal{U}$. Inductively define
\begin{align*}
\Sigma^0_\xi(X)&=\{\cup_{n\in\mathbb{N}}A_n:A_n\in\Pi^0_{\xi_n},\xi_n<\xi\},\text{ if }\xi>1,\\
\Pi_\xi^0(X)&=\sim\hspace{-1mm}\Sigma_\xi^0(X),\quad\Delta_\xi^0(X)=\Sigma^0_\xi(X)\cap \Pi_\xi^0(X).
\end{align*}
The full Borel hierarchy extends to all $\xi<\omega_1$ ($\omega_1$ being the first uncountable ordinal) by transfinite induction but we do not need this here.

\begin{definition}
Given a class of subsets, $\mathcal{U}$, of a metric space $X$ and given another metric space $Y$, we say that the function $f:X\rightarrow Y$ is $\mathcal{U}$-measurable if $f^{-1}(U)\in\mathcal{U}$ for every open set $U\subset Y$.
\end{definition}

Given metric spaces $X$ and $Y$, the Baire hierarchy is defined as follows. A function $f:X\rightarrow Y$ is of Baire class 1, written $f\in\mathcal{B}_1$, if it is $\Sigma_2^0(X)$-measurable. For $1<\xi<\omega_1$, a function $f:X\rightarrow Y$ is of Baire class $\xi$, written $f\in\mathcal{B}_\xi$, if it is the pointwise limit of a sequence of functions $f_n$ in $\mathcal{B}_{\xi_n}$ with $\xi_n<\xi$. The following Theorem is well-known \cite[Section 24]{kechris1987descriptive} and provides a useful link between the Borel and Baire hierarchies.

\begin{theorem}[Lebesgue, Hausdorff, Banach]
\label{Baire1}
Let $X,Y$ be metric spaces with $Y$ separable and $1\leq\xi<\omega_1$. Then $f\in\mathcal{B}_\xi$ if and only if it is $\Sigma_{\xi+1}^0(X)$ measurable. Furthermore, if $X$ is zero-dimensional (Hausdorff with a basis of clopen sets) and $f\in\mathcal{B}_1$, then $f$ is the pointwise limit of a sequence of continuous functions.
\end{theorem}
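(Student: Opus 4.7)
The plan is to prove the theorem in two parts: first the equivalence between Baire classes and $\Sigma^0_{\xi+1}$-measurability, by transfinite induction on $\xi$, and second the zero-dimensional strengthening to continuous approximants, by a direct clopen construction.

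For the equivalence, the base case $\xi = 1$ holds by the definition of $\mathcal{B}_1$ stated in the excerpt. For the forward direction of the inductive step, suppose $f = \lim_n f_n$ pointwise with $f_n \in \mathcal{B}_{\xi_n}$ and $\xi_n < \xi$. The key identity I would use is
\[
f^{-1}(U) = \bigcup_{m} \bigcup_{N} \bigcap_{n \geq N} f_n^{-1}(\overline{E_m}),
\]
where an open set $U \subseteq Y$ is expressed as an increasing union $U = \bigcup_m E_m$ with $\overline{E_m} \subseteq E_{m+1} \subseteq U$, taking $E_m = \{y \in U : d(y, Y \setminus U) > 1/m\}$. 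The inclusion $\subseteq$ follows since $f(x) \in E_m$ forces $f_n(x) \in \overline{E_m}$ eventually by continuity of $d(\cdot, Y\setminus U)$ and pointwise convergence, while the reverse inclusion is immediate from $\overline{E_m} \subseteq U$. By induction, $f_n^{-1}(\overline{E_m}) \in \Pi^0_{\xi_n + 1}$; since $\xi_n + 1 \leq \xi$ (using that $\xi$ is either a successor with $\xi_n \leq \xi - 1$, or a limit in which case $\xi_n + 1 < \xi$), the countable intersections lie in $\Pi^0_\xi$, and the two outer countable unions produce a $\Sigma^0_{\xi+1}$ set.

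The backward direction is the main technical obstacle. I would represent any $\Sigma^0_{\xi+1}$-measurable $f$ as a pointwise limit of lower-class functions by a direct selection scheme. Fix a countable dense subset $\{y_k\} \subseteq Y$ (here the separability of $Y$ enters). For each $n$, the preimages $A_{n,k} := f^{-1}(B(y_k, 2^{-n}))$ are $\Sigma^0_{\xi+1}$ and cover $X$, hence can be written as countable unions of $\Pi^0_\xi$ sets. For each $x$ one picks the minimal index $k$ whose corresponding $\Pi^0_\xi$ piece contains $x$ and sets $f_n(x) = y_k$. The level sets $f_n^{-1}(\{y_k\})$ are finite Boolean combinations of $\Pi^0_\xi$ sets, so $f_n^{-1}$ of every open subset of $Y$ lies in $\Sigma^0_\xi$; by the inductive hypothesis (applied with $\xi - 1$ in the successor case, or a cofinal sequence $\xi_n \nearrow \xi$ in the limit case) each $f_n$ lies in a Baire class strictly below $\xi$, while $d(f_n(x), f(x)) \leq 2^{-n}$ ensures $f_n \to f$ pointwise. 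The ordinal bookkeeping at limit stages is the delicate point.

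For the zero-dimensional refinement, given $f \in \mathcal{B}_1$ and a countable dense $\{y_k\} \subseteq Y$, the sets $A_{n,k} := f^{-1}(B(y_k, 2^{-n}))$ are $F_\sigma$ and cover $X$. The plan is to refine, for each $n$, this cover into a countable pairwise disjoint clopen partition $\{C_{n,k}\}$ of $X$ with each $C_{n,k}$ contained in some $A_{n, k'}$, and to define $f_n \equiv y_{k'}$ on $C_{n,k}$. Each $f_n$ is then locally constant on clopen sets and hence continuous, with $d(f_n(x), f(x)) \leq 2^{-n}$. The refinement is obtained by an exhaustion: peel off the closed pieces of each $F_\sigma$ decomposition in turn and engulf them in clopen neighborhoods drawn from the basis, subtracting what has already been claimed. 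This step is the sole place where the clopen basis is indispensable — in a general metric space the analogous pieces are only Borel, not open, which would yield a Baire class $1$ rather than a continuous approximation, matching precisely the iff in the first part of the theorem.
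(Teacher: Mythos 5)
The paper does not prove this theorem; it is stated as a classical result with a pointer to \cite{kechris1987descriptive} (Section 24), so there is no in-paper proof to compare against. Your proposal therefore has to stand on its own, and your forward direction is indeed the standard argument and is correct as sketched.

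The backward direction, however, has a genuine gap at the class-counting step. You decompose $A_{n,k}=f^{-1}(B(y_k,2^{-n}))\in\Sigma^0_{\xi+1}$ into countably many $\Pi^0_\xi$ pieces $P_i$ and define $f_n$ by minimal-index selection over the $P_i$. The level set $f_n^{-1}(\{y_k\})$ is then a \emph{countable} union of sets $P_i\setminus\bigcup_{i'<i}P_{i'}$. Each such difference is the intersection of a $\Pi^0_\xi$ set with a $\Sigma^0_\xi$ set, which in general lies only in $\Delta^0_{\xi+1}$, not in $\Sigma^0_\xi$ as you assert (e.g.\ a $G_\delta$ set that is not $F_\sigma$ already refutes the claim when $\xi=2$ and $Q=\emptyset$). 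A countable union of these is therefore only $\Sigma^0_{\xi+1}$, so the $f_n$ you build are $\Sigma^0_{\xi+1}$-measurable --- the \emph{same} class as $f$. The inductive hypothesis then yields $f_n\in\mathcal{B}_\xi$, not $\mathcal{B}_{\xi'}$ with $\xi'<\xi$, and the induction does not close: $f=\lim_n f_n$ gives $f\in\mathcal{B}_{\xi+1}$, which is weaker than what was assumed. The missing ingredient is the generalized reduction property of $\Sigma^0_{\xi+1}$, which replaces the cover $\{A_{n,k}\}_k$ by a pairwise \emph{disjoint} refinement of $\Delta^0_{\xi+1}$ sets, together with a separate lemma (Kechris, around Lemma 24.4) that a countable-range function with $\Delta^0_{\xi+1}$ level sets drops to $\mathcal{B}_\xi$; naive disjointification by a well-order does not do this.

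The zero-dimensional refinement has a similar soft spot. You propose to engulf the closed pieces of the $F_\sigma$ decompositions in clopen neighbourhoods drawn from the basis, but the $A_{n,k}$ are only $F_\sigma$, not open, so zero-dimensional normality (closed $\subseteq$ clopen $\subseteq$ open) does not trap a clopen set inside $A_{n,k}$. The standard route again goes through the generalized reduction theorem for $\Sigma^0_2$ to obtain a countable $\Delta^0_2$ partition refining $\{A_{n,k}\}_k$, plus the fact that in a zero-dimensional metrizable space such a $\Delta^0_2$ partition can be refined to a countable clopen partition; your sketch gestures at the right conclusion but does not supply a mechanism that actually produces clopen pieces.
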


The assumption that $X$ is zero-dimensional in the last statement is important. Without any assumptions, the final statement of the theorem is false, as is easily seen by considering $X=\mathbb{R}$. Examples of zero-dimensional spaces include products of the discrete space $\{0,1\}$ or the Cantor space. Any such space is necessarily totally disconnected, meaning that the connected components in the space are the one-point sets (the converse is true for locally compact Hausdorff spaces). Our primary interest will be when $Y$ is equal to $\{0,1\}$ or $[0,1]$, both with their natural topologies.

\subsection{Linking the SCI hierarchy to the Baire hierarchy in a special case}

\begin{definition}
\label{closed_under_search}
Given the triple $\{\Omega,\mathcal{M},\Lambda\}$, a class of algorithms $\mathcal{A}$ is closed under search with respect to $\{\Omega,\mathcal{M},\Lambda\}$ if whenever
\begin{enumerate}
	\item $\mathcal{I}$ is an index set,
	\item $\{n_i\}_{i\in\mathcal{I}}$ a family of natural numbers,
	\item $\{\Gamma_{i,l}:\Omega\rightarrow\mathcal{M}\}_{i\in\mathcal{I},l\leq n_i}\subset\mathcal{A}$,
	\item $\{U_{i,l}\}_{i\in\mathcal{I},l\leq n_i}$ family of basic open sets in $\mathcal{M}$ with	$\cup_{i\in\mathcal{I}}\cap_{l\leq n_i}\Gamma_{i,l}^{-1}(U_{i,l})=\Omega,$
	\item $\{c_i\}_{i\in\mathcal{I}}$ a family of points in some arbitrary dense subset of $\mathcal{M}$,
\end{enumerate}
then there is some $\Gamma\in\mathcal{A}$ such that for every $x\in\Omega$ there exists some $i\in\mathcal{I}$ with $\Gamma(x)=c_i$ and for all $l\leq n_i$ we have $\Gamma_{i,l}(x)\in U_{i,l}$.
\end{definition}

\begin{proposition}
\label{search_prop}
Suppose that $\mathcal{A}$ is closed under search with respect to $\{\Omega,\mathcal{M},\Lambda\}$, then there exists a topology $\mathcal{T}$ on $\Omega$ such that $\Delta_1^{\mathcal{A}}$ is precisely the set of continuous functions from $(\Omega,\mathcal{T})$ to $\mathcal{M}$.
\end{proposition}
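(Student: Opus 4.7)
The natural candidate is to let $\mathcal{T}$ be the coarsest topology on $\Omega$ making every $\Gamma \in \mathcal{A}$ continuous as a map into $\mathcal{M}$; equivalently, $\mathcal{T}$ is generated by the subbasis
$$ \mathcal{S} = \{\Gamma^{-1}(B(c,r)) : \Gamma \in \mathcal{A},\ c \in \mathcal{M},\ r > 0\}. $$
With this choice every element of $\mathcal{A}$ is $\mathcal{T}$-continuous by construction, and the work reduces to establishing the two inclusions characterising $\Delta_1^{\mathcal{A}}$ in terms of $\mathcal{T}$-continuity.

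For $\Delta_1^{\mathcal{A}} \subseteq C((\Omega,\mathcal{T});\mathcal{M})$, fix $\Xi \in \Delta_1^{\mathcal{A}}$ witnessed by $\{\Gamma_n\} \subset \mathcal{A}$ with $d(\Gamma_n(A), \Xi(A)) \leq 2^{-n}$ for every $A$. Given an open $U \subset \mathcal{M}$ and $A \in \Xi^{-1}(U)$, I would choose $\rho > 0$ with $B(\Xi(A),\rho) \subset U$ and $n$ with $4 \cdot 2^{-n} < \rho$. Then $V := \Gamma_n^{-1}(B(\Gamma_n(A), \rho/2)) \in \mathcal{S}$ is a subbasic neighbourhood of $A$, and the triangle inequality yields $d(\Xi(A'),\Xi(A)) \leq 2\cdot 2^{-n} + \rho/2 < \rho$ for every $A' \in V$, so $V \subset \Xi^{-1}(U)$. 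This is exactly the statement that $\Xi$ is $\mathcal{T}$-continuous, and it is really just the familiar fact that a uniform limit of continuous functions is continuous.

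The reverse inclusion is where the closed-under-search hypothesis does the real work. Fix a continuous $\Xi$, a dense subset $D \subset \mathcal{M}$, and an integer $n$. The collection $\{\Xi^{-1}(B(c, 2^{-n-1}))\}_{c \in D}$ is a $\mathcal{T}$-open cover of $\Omega$. Using the subbasis description of $\mathcal{T}$, I would refine it to an indexed family of basic open sets $V_i = \bigcap_{l \leq n_i} \Gamma_{i,l}^{-1}(U_{i,l})$, $i \in \mathcal{I}$, where each $U_{i,l}$ is an open ball of $\mathcal{M}$, $V_i \subset \Xi^{-1}(B(c_i, 2^{-n-1}))$ for an associated $c_i \in D$, and $\bigcup_i V_i = \Omega$. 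Feeding the data $(\mathcal{I}, \{n_i\}, \{\Gamma_{i,l}\}, \{U_{i,l}\}, \{c_i\})$ into Definition \ref{closed_under_search} produces an algorithm $\Gamma_n \in \mathcal{A}$ which on each input $A$ returns some $c_{i(A)}$ with $A \in V_{i(A)}$, whence $d(\Gamma_n(A), \Xi(A)) \leq 2^{-n-1} < 2^{-n}$. Thus $\Xi \in \Delta_1^{\mathcal{A}}$.

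The one delicate point is the refinement step in the reverse inclusion: one must produce a single indexed family whose output constants $c_i$ all lie in one common dense subset of $\mathcal{M}$, with the covering sets $V_i$ literally of the form demanded by Definition \ref{closed_under_search}. Allowing the index set $\mathcal{I}$ to be as large as necessary -- for instance indexed by pairs consisting of a point $A \in \Omega$ and a subbasic-intersection neighbourhood of $A$ witnessing containment in $\Xi^{-1}(B(c,2^{-n-1}))$ -- reduces this to straightforward bookkeeping; but it is the only place where the full strength of the closed-under-search hypothesis, rather than mere continuity of individual elements of $\mathcal{A}$, is genuinely used.
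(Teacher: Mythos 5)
Your proof is correct and follows essentially the same route as the paper's: you take $\mathcal{T}$ to be the topology generated by preimages of basic open sets under algorithms in $\mathcal{A}$, prove the forward inclusion via the uniform-limit-of-continuous-functions argument (which you spell out explicitly rather than citing), and prove the reverse inclusion by covering $\mathcal{M}$ with small balls centred at dense points, pulling back, refining to basic opens, and invoking closed-under-search to stitch together an algorithm attaining $2^{-n}$ accuracy. The only differences are cosmetic.
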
 

\begin{proof}
Let $\mathcal{T}$ be the topology generated by $\{\Gamma^{-1}(B):\Gamma\in\mathcal{A},B\subset\mathcal{M}\text{ basic open}\}$. Any $\Gamma\in\mathcal{A}$ is continuous with respect to this topology. Uniform limits of continuous functions into metric spaces are also continuous, and hence any function in $\Delta_1^{\mathcal{A}}$ is continuous with respect to $\mathcal{T}$.

For the other direction, suppose that $f: (\Omega,\mathcal{T})\rightarrow \mathcal{M}$ is continuous. Choose $\{c_i\}_{i\in\mathcal{I}}\subset\mathcal{M}$ such that $\mathcal{M}\subset \cup_{i\in\mathcal{I}}D(c_i,2^{-n})$. Continuity of $f$ implies that $f^{-1}(D(c_i,2^{-n}))$ are open. This implies that there is an index set $\mathcal{J}$, natural numbers $\{n_{i,j}\}_{j\in\mathcal{J}}$, a family $\{\Gamma_{i,j,l}\}_{i\in\mathcal{I},j\in\mathcal{J},l\leq n_{i,j}}$ (in $\mathcal{A}$) and a family of basic open sets $\{U_{i,j,l}\}_{i\in\mathcal{I},j\in\mathcal{J},l\leq n_{i,j}}$ with the property that
$$
f^{-1}(D(c_i,2^{-n}))=\bigcup_{j\in\mathcal{J}}\bigcap_{l\leq n_{i,j}}\Gamma^{-1}_{i,j,l}(U_{i,j,l}).
$$
It follows that
$$
\bigcup_{i\in\mathcal{I},j\in\mathcal{J}}\bigcap_{l\leq n_{i,j}}\Gamma^{-1}_{i,j,l}(U_{i,j,l})=\Omega.
$$
Since $\mathcal{A}$ is closed under search, there exists $f_n\in\mathcal{A}$ such that for every $x\in\Omega$ there exists some $i\in\mathcal{I}$ and $j\in\mathcal{J}$ with $f_n(x)=c_i$ and for all $l\leq n_{i,j}$, $x\in\Gamma^{-1}_{i,j,l}(U_{i,j,l}).$ But this implies that $d(f_n(x),f(x))< 2^{-n}$. Since $n$ was arbitrary, we have $f\in\Delta_1^{\mathcal{A}}$.
\end{proof}

The generated topology can be very perverse and not every class of algorithms is closed under search. However, we do have the following useful theorem when $\Omega$ (and $\Lambda$) is a particularly simple discrete space.

\begin{theorem}
\label{B_equiv}
Suppose that $\Omega=\{0,1\}^{\mathbb{N}}=\{\{a_i\}_{i\in\mathbb{N}}:a_i\in\{0,1\}\}$ with the set of evaluation functions $\Lambda$ equal to the set of pointwise evaluations $\{\lambda_{j}(a):=a_j:j\in\mathbb{N}\}$ and let $\mathcal{M}$ be an arbitrary separable metric space with at least two separated points. Endow $\Omega$ with the product topology, $\tilde{\mathcal{T}}$, induced by the discrete topology on $\{0,1\}$ and consider the Baire hierarchy, $\{\mathcal{B}_{\xi}((\Omega,\tilde{\mathcal{T}}),\mathcal{M})=\mathcal{B}_{\xi}\}_{\xi<\omega_1}$, of functions $f:\Omega\rightarrow\mathcal{M}$. Then for any problem function $\Xi:\Omega\rightarrow\mathcal{M}$ and $m\in\mathbb{N}$,
\begin{equation}
\label{Baire_equiv}
\{\Xi,\Omega,\Lambda\}\in\Delta^G_{m+1}\Leftrightarrow \Xi\in\mathcal{B}_{m}.
\end{equation}
In other words, the SCI corresponds to the Baire hierarchy index.
\end{theorem}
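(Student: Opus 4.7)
The strategy is to characterise $\Delta_1^G$ as the continuous functions $\Omega\to\mathcal{M}$ and then use Theorem \ref{Baire1} to match successive limits in the SCI hierarchy with successive levels of the Baire hierarchy; note that $\Omega$ is a compact, zero-dimensional Polish space (a countable product of copies of $\{0,1\}$), so all hypotheses of Theorem \ref{Baire1} are in force. I would first show $\Delta_1^G = C(\Omega,\mathcal{M})$. By Definition \ref{Gen_alg}(iii), any general algorithm $\Gamma$ is constant on the basic clopen cylinder determined by the finite evaluation set $\Lambda_\Gamma(A)$, hence locally constant and in particular continuous; the inclusion $\Delta_1^G\subset C(\Omega,\mathcal{M})$ then follows because uniform limits of continuous functions into a metric space remain continuous. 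For the reverse inclusion, fix continuous $f$ and $n\in\mathbb{N}$: for each $x\in\Omega$, continuity supplies a clopen cylinder $C_x$ with $f(C_x)\subset B_{2^{-n}}(f(x))$, and Tychonoff's theorem reduces this cover to a finite one. Extending all defining bits to a common length produces a disjoint clopen cylinder partition $\Omega=\bigsqcup_{i=1}^{N}C_i$, and a general algorithm $\Gamma_n$ is obtained by hard-coding the output on $C_i$ to be $f(x_i)$ for a fixed $x_i\in C_i$. This yields $\sup_{x\in\Omega}d(\Gamma_n(x),f(x))\leq 2^{-n}$, so $f\in\Delta_1^G$.

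For the base case $\Delta_2^G=\mathcal{B}_1$: if $\Xi\in\Delta_2^G$ then $\Xi$ is the pointwise limit of a sequence of general algorithms, each continuous, hence $\Xi\in\mathcal{B}_1$ by Theorem \ref{Baire1}. Conversely, if $\Xi\in\mathcal{B}_1$, the zero-dimensional half of Theorem \ref{Baire1} produces continuous $g_n\to\Xi$ pointwise; by the first step, each $g_n$ is the uniform limit of general algorithms $\Gamma_{n,k}$ with $\sup_x d(\Gamma_{n,k}(x),g_n(x))\leq 2^{-k}$, and the diagonal sequence $\Gamma_{n,n}$ converges pointwise to $\Xi$, placing $\Xi\in\Delta_2^G$.

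For the inductive step, unwinding the tower definition gives that $\Xi\in\Delta_{m+1}^G$ iff $\Xi=\lim_{n_m}\Gamma_{n_m}$ pointwise with each $\Gamma_{n_m}\in\Delta_m^G$. The inductive hypothesis $\Delta_m^G=\mathcal{B}_{m-1}$ then identifies $\Delta_{m+1}^G$ with the class of pointwise limits of sequences drawn from $\mathcal{B}_{m-1}$; using that $\mathcal{B}_k\subset\mathcal{B}_{m-1}$ for all $1\leq k\leq m-1$ (via constant sequences), this class is precisely $\mathcal{B}_m$.

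The main obstacle is the first step: the adaptive, finitary nature of a general algorithm forces any approximation to be locally constant, so closing the equivalence with arbitrary continuous functions hinges on the compactness of $\Omega$ supplied by Tychonoff, together with the fact that a clopen cylinder partition can be determined by reading only a bounded set of coordinates. Once this is in place, the remainder is bookkeeping built around the characterisation of Baire class $1$ on zero-dimensional spaces provided by Theorem \ref{Baire1}.
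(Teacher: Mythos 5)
Your proof is correct and reaches the same skeleton as the paper's (characterise $\Delta_1^G$ as the continuous functions, then use the zero-dimensional part of Theorem~\ref{Baire1} and iterate), but the route to the key first step is genuinely different. The paper proves $\Delta_1^G = C(\Omega,\mathcal{M})$ indirectly through Proposition~\ref{search_prop}: it verifies that general algorithms are closed under search via a well-ordering argument on the index set, and then identifies the algorithm-generated topology $\mathcal{T}$ with the product topology $\tilde{\mathcal{T}}$. This only uses countability of $\Lambda$, so it extends immediately to non-compact spaces like $\mathbb{N}^{\mathbb{N}}$, which is exactly what the remark after the theorem exploits. You instead invoke compactness of $\{0,1\}^{\mathbb{N}}$: extract a finite subcover from the cylinder cover supplied by continuity, pass to a common-length disjoint clopen partition, and define the algorithm by a finite lookup. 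This is more elementary and self-contained for the specific $\Omega$ in the theorem statement, but it would not carry over to $A^B$ with $A$ infinite, where the paper's well-ordering argument still goes through. One small bookkeeping point: with your construction the error bound is $2^{-n+1}$ rather than $2^{-n}$, because you hard-code $f(x_i)$ for an arbitrary $x_i\in C_i$ rather than the $f(x_j)$ attached to the covering cylinder $C_{x_j}\supset C_i$; starting from $2^{-(n+1)}$-balls (or using the cover point) fixes this. The organisation into an explicit base case plus induction is a clean equivalent of the paper's direct iteration of the pointwise-limit characterisation, and the unwinding of $\Delta_{m+1}^G$ as pointwise limits of $\Delta_m^G$ elements is exactly the content that the paper leaves implicit.
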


\begin{remark}
The proof shows that we can replace $\Omega$ by $\{0,1\}^{\mathbb{N}\times\mathbb{N}}$ or any other such product space (induced by a discrete topology) of the form $A^B$ with $A,B$ countable, with $\Lambda$ the corresponding component-wise evaluations, as long as $\mathcal{M}$ has at least $\left|A\right|$ jointly separated points and is separable.\hfill$\boxtimes$
\end{remark}

\begin{proof}
First we show that general algorithms are closed under search and that the topology $\mathcal{T}$ in Proposition \ref{search_prop} is equal to the product topology $\tilde{\mathcal{T}}$. Without loss of generality we can assume that $\mathcal{I}$ is well-ordered by $\prec$. Given $x\in\Omega$, let $k\in\mathbb{N}$ be minimal such that there exists $i\in\mathcal{I}$ with $x\in\cap_{l\leq n_i}\Gamma^{-1}_{i,l}(U_{i,l})$ and $\Lambda_{\Gamma_{i,l}}(x)\subset\{\lambda_j:j\leq k\}$ for $l\leq n_i$. Let $i_0$ be the $\prec$-least witness for $k$ and then define $\Gamma(x)=c_{i_0}$. The well-ordering of $\mathcal{I}$ implies that $\Gamma$ is a general algorithm and it clearly satisfies the requirements in the definition of closed under search. Note that this part of the proof only uses countability of $\Lambda$.

To equate the topologies, suppose that $\Gamma\in\Delta_0^G$ is a general algorithm. For each $a\in\Omega,$ $\Lambda_{\Gamma}(a)$ is finite and we can assume without loss of generality that it is equal to $\{\lambda_j:j\leq I(a)\}$ for some finite $I(a)$. In particular, there exists an open set $U_a$ such that any $b\in U_a$ has $\lambda_j(b)=\lambda_j(a)$ for $j\leq I(a)$ and hence $\Gamma(b)=\Gamma(a)$.
Then for any open set $B\subset\mathcal{M}$
$$
\Gamma^{-1}(B)=\bigcup_{a\in \Gamma^{-1}(B)}U_a
$$
is open. Hence each $\Gamma$ is continuous with respect to the product topology on $\Omega$. It follows that $\mathcal{T}\subset\tilde{\mathcal{T}}$. To prove the converse, we must show that each projection map $\lambda_j$ is continuous with respect to $\mathcal{T}$. Let $x_1,x_2$ be separated points in $\mathcal{M}$ and consider $f:\{0,1\}\rightarrow\mathcal{M}$ with $f(0)=x_1$ and $f(1)=x_2$. Then the composition $f\circ\lambda_j$ is a general algorithm and hence continuous with respect to $\mathcal{T}$. But this implies that $\lambda_j$ is continuous. It follows from Proposition \ref{search_prop} that $\{\Xi,\Omega,\Lambda\}\in\Delta^G_{1}$ if and only if $\Xi$ is continuous.

Now the space $(\Omega,\mathcal{T})$ is zero-dimensional and $\mathcal{M}$ is separable, hence by Theorem \ref{Baire1}, any element of $\mathcal{B}_1$ is a limit of continuous functions. The converse holds in greater generality. It follows that $\Xi\in\mathcal{B}_m$ if and only if there are $f_{n_m,\ldots,n_1}\in\Delta^G_{1}$ with
\begin{equation}
\label{Baire_lim}
\Xi(a)=\lim_{n_m\rightarrow\infty}\cdots\lim_{n_1\rightarrow\infty}f_{n_m,\ldots,n_1}(a).
\end{equation}
If this holds then there exists general algorithms $\Gamma_{n_m,\ldots,n_1}$ such that for all $a\in\Omega$,
$$
d(\Gamma_{n_m,\ldots,n_1}(a),f_{n_m,\ldots,n_1}(a))\leq 2^{-n_1}
$$
and hence
$$
\lim_{n_m\rightarrow\infty}\cdots\lim_{n_1\rightarrow\infty}\Gamma_{n_m,\ldots,n_1}(a)=\Xi(a)
$$
so that $\{\Xi,\Omega,\Lambda\}\in\Delta^G_{m+1}$. Conversely if $\{\Xi,\Omega,\Lambda\}\in\Delta^G_{m+1}$ with tower of algorithms $\{\Gamma_{n_{m},\ldots,n_1}\}$, then since each general algorithm is continuous, \eqref{Baire_lim} holds with $f_{n_m,\ldots,n_1}(a)=\Gamma_{n_{m},\ldots,n_1}$.
\end{proof}

\subsection{Combinatorial problems high up in the SCI hierarchy} We can now combine the results of the previous two subsections and obtain combinatorial problems high up in the SCI hierarchy. Let $k\in\mathbb{N}_{\geq 2}$ and let $\Omega_k$ denote the collection of all infinite arrays $\{a_{m_1,\ldots,m_k}\}_{m_1,\ldots,m_k\in\mathbb{N}}$ with entries $a_{m_1,\ldots,m_k}\in\{0,1\}$. As usual, $\Lambda_k$ is the set of component-wise evaluations/projections. Consider the formulas
\begin{align*}
P(a,m_1,\ldots,m_{k-2})&=\begin{cases}
1,\quad\text{if }\exists i \text{ }\forall j\text{ } \exists n>j \text{ s.t. }a_{m_1,\ldots,m_{k-2},n,i}=1\\
0,\quad\text{otherwise}
\end{cases},\\
Q(a,m_1,\ldots,m_{k-2})&=\begin{cases}
1,\quad\text{if } \forall^{\infty}i\forall j\text{ } \exists n>j \text{ s.t. }a_{m_1,\ldots,m_{k-2},n,i}=1\\
0,\quad\text{otherwise}
\end{cases},
\end{align*}
where $\forall^{\infty}$ means ``for all but a finite number of''. In words, $P$ decides whether the corresponding matrix has a column with infinitely many $1$'s, whereas $Q$ decides whether the matrix has only finitely many columns with only finitely many $1$'s. For $R=P$ or $Q$ consider the problem function for $a\in\Omega_k$
\begin{equation*}
\Xi_{k,R}(a)=\begin{cases}
\exists m_1\text{ } \forall m_2\text{ }\ldots\text{ }\forall m_{k-2} R(a,m_1,\ldots,m_{k-2}),\quad\text{if }k\text{ is even}\\
\forall m_1\text{ } \exists m_2\text{ }\ldots\text{ }\forall m_{k-2} R(a,m_1,\ldots,m_{k-2}),\quad\text{otherwise}
\end{cases},
\end{equation*}
that is, so that all quantifier types alternate.

\begin{theorem}
\label{DST_main}
Let $\mathcal{M}$ be either $\{0,1\}$ with the discrete metric or $[0,1]$ with the usual metric and consider the above problems $\{\Xi_k,\Omega_k,\mathcal{M},\Lambda_k\}$. For $k\in\mathbb{N}_{\geq 2}$ and $R=P$ or $Q$,
$$
\Delta_{k+1}^G\not\ni\{\Xi_{k,R},\Omega_{k},\mathcal{M},\Lambda_k\}\in\Delta_{k+2}^A.
$$
In other words, we can solve the problem via a height $k+1$ arithmetic tower, but it is impossible to do so with a height $k$ general tower. 
\end{theorem}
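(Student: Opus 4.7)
\textbf{Reduction via Theorems \ref{B_equiv} and \ref{Baire1}.} Identifying $\Omega_k = \{0,1\}^{\mathbb{N}^k}$ with $\{0,1\}^{\mathbb{N}}$ via a computable bijection of index sets, the remark following Theorem \ref{B_equiv} yields $\{\Xi_{k,R},\Omega_k,\Lambda_k\} \in \Delta^G_{m+1} \Longleftrightarrow \Xi_{k,R} \in \mathcal{B}_m$, and Theorem \ref{Baire1} identifies $\mathcal{B}_m$ with $\Sigma^0_{m+1}$-measurability. Since the range $\{0,1\}$ has clopen singletons in both candidate metric spaces $\mathcal{M}$, this amounts to $\Xi_{k,R}^{-1}(\{1\}) \in \Delta^0_{m+1}(\Omega_k)$ under the product topology. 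The two conclusions thus reduce to a Borel calculus: (a) $\Xi_{k,R}^{-1}(\{1\}) \in \Sigma^0_{k+1} \cup \Pi^0_{k+1} \subset \Delta^0_{k+2}$ (upper bound), and (b) $\Xi_{k,R}^{-1}(\{1\}) \notin \Delta^0_{k+1}$ (lower bound). Note that this reduction yields the lower bound for \emph{general} algorithms with no restriction on the model of computation.

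\textbf{Upper bound $\in \Delta^A_{k+2}$.} A direct inspection shows that $\Xi_{k,R}$ is expressible with exactly $k+1$ alternating quantifier blocks: three inside $R$ (namely $\exists i\,\forall j\,\exists n$ for $R=P$; for $R=Q$ the same count holds after combining $\forall i \geq N\,\forall j$ into a single $\forall$-block over pairs), together with the $k-2$ outer blocks on $m_1,\ldots,m_{k-2}$. I would mirror this with a tower of height $k+1$ in which each quantifier becomes a bounded $\min$ (for $\forall$) or $\max$ (for $\exists$) over finitely many bit values, truncated by the corresponding limit index. Concretely, for the base case $k=2,\,R=P$ the tower is
\[
\Gamma_{n_3,n_2,n_1}(a) \;=\; \max_{i\le n_3}\;\min_{j\le n_2}\;\max_{j<n\le n_1}\, a_{n,i},
\]
which converges to $\Xi_{2,P}(a)$ via three nested monotone limits. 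For general $k$ one appends one further outer $\max$- or $\min$-limit per outer quantifier $m_\ell$. Each bottom-level function uses only finitely many lookups and bit comparisons, so the tower is arithmetic, placing the problem in $\Delta^A_{k+2}$.

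\textbf{Lower bound $\notin \Delta^G_{k+1}$.} By the reduction it suffices to show $\Xi_{k,R}^{-1}(\{1\}) \notin \Delta^0_{k+1}$. Reading off the quantifier complexity, $\Xi_{k,R}^{-1}(\{1\}) \in \Sigma^0_{k+1}$ when $k$ is even and $\in \Pi^0_{k+1}$ when $k$ is odd. To exclude the dual class, I would exhibit a continuous (product-topology) reduction from a canonical Wadge-complete subset of $\{0,1\}^{\mathbb{N}}$ at the matching Borel level, whose existence is a classical result of descriptive set theory \cite{kechris1987descriptive}. For $k=2$ the formula for $R$ is already the textbook complete $\Sigma^0_3$ set: ``some column of a 2D array has infinitely many $1$s'' for $R=P$, or the equally-complete ``all but finitely many columns do'' for $R=Q$. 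For larger $k$ an inductive construction embeds a $\Sigma^0_k$- or $\Pi^0_k$-complete set into the extra $m_\ell$-axes by coordinate-wise spreading, matching the required alternation; continuity in the product topology is immediate from the cylinder-base description of such reductions. Completeness at level $\Sigma^0_{k+1}$ (resp.\ $\Pi^0_{k+1}$) forbids $\Xi_{k,R}^{-1}(\{1\})$ from lying in the dual class, and hence from $\Delta^0_{k+1}$.

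\textbf{Main obstacle.} The principal technical step is the Borel completeness argument: constructing inductive continuous reductions that spread a canonical complete set across the additional $m_\ell$-coordinates while preserving both continuity in the product topology and the exact alternation of quantifiers, and handling uniformly the two cases $R=P$ and $R=Q$ (which differ only in the ``$\exists i$'' vs.\ ``$\forall^\infty i$'' character of the third quantifier but are complete at the same Borel level). Once this is in place, the rest of the argument is a routine bookkeeping of quantifier depth together with the monotonicity of truncated $\min$/$\max$ limits.
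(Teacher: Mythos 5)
Your proposal is correct and follows essentially the same route as the paper: reduce to the Baire/Borel hierarchy via Theorem \ref{B_equiv} and Theorem \ref{Baire1}, give an explicit arithmetic tower for the upper bound (your nested $\max/\min$ is equivalent to the paper's $\chi_{(n_2,\infty)}(\sum a_{i,j})$ device), and deduce the lower bound from $\Sigma^0_3$-completeness of $P,Q$ (``$S_3$'') together with the standard quantifier-lifting argument for completeness at level $\Sigma^0_{k+1}$/$\Pi^0_{k+1}$. One small imprecision: in $[0,1]$ the singletons $\{0\},\{1\}$ are not clopen, but what you actually use---that $0$ and $1$ are separated by disjoint open sets, so $\Sigma^0_{m+1}$-measurability forces $\Xi^{-1}(\{1\})\in\Delta^0_{m+1}$---is exactly the paper's argument and is sound.
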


\begin{remark}
\label{cts_remark}
Note that we allow both \textit{discrete} and \textit{continuous} spaces $\mathcal{M}$, which will be important for our reduction arguments when proving lower bounds for classifications of spectral problems for non-discrete $\mathcal{M}$. The lower bound is a strong result in the sense that it holds regardless of the model of computation. In other words, it is the intrinsic combinatorial complexity of the problems that make the problems hard.\hfill$\boxtimes$
\end{remark}

\begin{proof}
We deal with the case of $R=P$ since the case of $R=Q$ is completely analogous. It is easy to see that $\{\Xi_{k,P},\Omega_{k},\mathcal{M},\Lambda_k\}\in\Delta_{k+2}^A$. First consider the case $k=2$ and set
$$
\Gamma_{n_3,n_2,n_1}(a)=\max_{j\leq n_3}\chi_{(n_2,\infty)}\left(\sum_{i=1}^{n_1} a_{i,j}\right),
$$
where $\chi_C$ denotes the indicator function of a set $C$. This is the decision problem that decides whether there exists a column with index at most $n_3$ such that there are at least $n_2$ $1$'s in the first $n_1$ rows. This is clearly an arithmetic tower and it is straightforward to show that this converges to $\Xi_{2,P}$ in $\mathcal{M}$ (in either of the $\{0,1\}$ and $[0,1]$ cases). For $k>2$ we simply alternate taking products (which corresponds to minima in this case) and maxima. Explicitly, we set
\begin{equation*}
\Gamma_{n_{k+1},\ldots,n_1}(a)=\begin{cases}
\displaystyle\max_{m_{1}\leq n_{k+1}}\displaystyle\prod_{m_{2}=1}^{n_{k}}\cdots\displaystyle\prod_{m_{k-2}=1}^{n_4}\left\{\displaystyle\max_{j\leq n_3}\chi_{(n_2,\infty)}\left(\sum_{i=1}^{n_1} a_{m_1,\ldots,m_{k-2},i,j}\right)\right\},\quad\text{if }k\text{ is even}\\
\displaystyle\prod_{m_{1}=1}^{n_{k+1}}\displaystyle\max_{m_{2}\leq n_{k}}\cdots\displaystyle\prod_{m_{k-2}=1}^{n_4}\left\{\displaystyle\max_{j\leq n_3}\chi_{(n_2,\infty)}\left(\sum_{i=1}^{n_1} a_{m_1,\ldots,m_{k-2},i,j}\right)\right\},\quad\text{otherwise}.
\end{cases}
\end{equation*}
Again, this is an arithmetic tower and it is straightforward to show that this converges to $\Xi_{k,P}$ in $\mathcal{M}$. It also holds that $\{\Xi_{k,P},\Omega_{k},\mathcal{M},\Lambda_k\}\in\Sigma_{k+1}^A$ if $k$ is even and $\{\Xi_{k,P},\Omega_{k},\mathcal{M},\Lambda_k\}\in\Pi_{k+1}^A$ if $k$ is odd (not to be confused with the notation for the Borel hierarchy).

Recall the topology $\mathcal{T}$ on $\Omega_k$ form Theorem \ref{B_equiv}. For the lower bound we note that $P$ is $\Sigma_3^0$ complete (in the literature it is known as the problem ``$S_3$'', see for example \cite[Section 23]{kechris1987descriptive}). Since $(\Omega_{k},\mathcal{T})$ is zero-dimensional, a theorem of Wadge implies that this means that $P$ is the indicator function of a set, also denoted by $P$, which lies in $\Sigma_3^0(\Omega_k)$ but not $\Pi_3^0(\Omega_k)$. It also follows that $\Xi_{k,P}$ is $\Sigma_{k+1}^0(\Omega_k)$ complete if $k$ is even and $\Pi_{k+1}^0(\Omega_k)$ complete otherwise. Now suppose for a contradiction that $\{\Xi_{k,P},\Omega_{k},\mathcal{M},\Lambda_k\}\in\Delta_{k+1}^G$. But then Theorem \ref{B_equiv} implies that $\Xi_{k,P}\in\mathcal{B}_k(\Omega_k,\mathcal{M})$ and hence by Theorem \ref{Baire1}, $\Xi_{k,P}$ is $\Sigma_{k+1}^0(\Omega_k)$ measurable. $\Xi_{k,P}$ is the indicator function of a set, which we denote by $\Xi_{k,P}$ with an abuse of notation, which is either $\Sigma_{k+1}^0(\Omega_k)$ or $\Pi_{k+1}^0(\Omega_k)$ complete depending on the parity of $k$. But $0$ and $1$ are separated in $\mathcal{M}$ and hence since $\Xi_{k,P}$ is $\Sigma_{k+1}^0(\Omega_k)$ measurable, $\Xi_{k,P}$ and its complement both lie in $\Sigma_{k+1}^0(\Omega_k)$. It follows that $\Xi_{k,P}\in\Sigma_{k+1}^0(\Omega_k)\cap\Pi_{k+1}^0(\Omega_k)$, contradicting the stated completeness.
\end{proof}

For our applications to spectral problems, we will use $\tilde{\Omega}$ to denote $\Omega_k$ and consider
\begin{equation}
\begin{split}
\label{canonical_problems}
\tilde{\Xi}_1=\Xi_{2,P},\quad \tilde{\Xi}_2=\Xi_{2,Q},\\
\tilde{\Xi}_3=\Xi_{3,P},\quad \tilde{\Xi}_4=\Xi_{3,Q}.
\end{split}
\end{equation}
Theorem \ref{DST_main} holds for a much wider class of decision problems, but these four are the only ones we shall use in the sequel. The decision problems $\tilde\Xi_1$ and $\tilde\Xi_2$ were shown to have $\mathrm{SCI}_G=3$ in \cite{ben2015can}, but only with regards to the discrete space $\mathcal{M}=\{0,1\}$ and the proof used a somewhat complicated Baire category argument. Theorem \ref{DST_main} is much more general, can be extended to arbitrarily large SCI, and has a much slicker proof, making clear a beautiful connection with the Baire hierarchy for well-behaved $\Omega$.

\section{Proofs Concerning Spectral Radii, Essential Spectral Radii, Capacity and Operator Norms}
\label{first_set_proofsdfhedh}

Here we prove the theorems found in \S \ref{sec:rev:spec_rad} - \ref{sec:rev:CAP}. First, we briefly recall $\Sigma_1^A$ algorithms for spectral problems presented in \cite{colb1}, that are sharp in the SCI hierarchy. The algorithms constructed in \cite{colb1} are shown as pseudocode in Appendix \ref{basic_spec22}, where we also refer the reader to a more detailed account. The following was proven in \cite{colb1} and was generalised in \cite{colbrook3} to unbounded operators:

\begin{theorem}
\label{BASIC}
For each $\Omega_f$ and $\Omega_f\cap\Omega_g$, consider the family $\Lambda$ consisting of $\Lambda_1$, together with pointwise evaluation of $f,\{c_n\}$ (and evaluation of $g$ at rational points if considering $\Omega_f\cap\Omega_g$). The algorithms presented in Appendix \ref{basic_spec22} achieve $\Sigma_1^A$ error control. In particular the following classification holds:
$$
\Delta_1^G\not\owns\{\Xi_1,\Omega_f\cap\Omega_g,\Lambda_1\}\in\Sigma_1^A,\quad
\Delta_1^G\not\owns\{\Xi_2,\Omega_f,\Lambda_1\}\in\Sigma_1^A.
$$
\end{theorem}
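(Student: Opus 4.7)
I would handle upper and lower bounds separately. For the upper bounds, the workhorse is the injection-modulus surrogate
$$
\gamma_{n}(z;A)=\min\big\{\sigma_{1}\big((A-zI)|_{P_{n}\mathcal{H}}\big),\,\sigma_{1}\big((A^{*}-\bar z I)|_{P_{n}\mathcal{H}}\big)\big\}.
$$
Three facts drive the construction: (a)~$\gamma_{n}(z;A)\searrow \|R(z,A)\|^{-1}$ monotonically and uniformly on compact subsets of $\mathbb{C}$; (b)~$\gamma_{n}(\,\cdot\,;A)$ is $1$-Lipschitz in $z$, so a grid search is clean; (c)~on $\Omega_{f}$ the knowledge of $f$ and a null sequence $c_{n}\geq D_{f,n}(A)$ allows $(A-zI)|_{P_{n}\mathcal{H}}$ to be replaced by the finite rectangular matrix $P_{f(n)}(A-zI)P_{n}$, whose entries lie in $\Lambda_{1}$, with a controlled error; the smallest singular value of such a finite matrix is then computable to arbitrary precision by finitely many arithmetic operations and comparisons.

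For $\{\Xi_{2},\Omega_{f},\Lambda_{1}\}\in\Sigma_{1}^{A}$, form a $2^{-n-2}$-dense rational grid $G_{n}$ inside a disc whose radius grows with $n$ (crude norm bounds are available from the dispersion data) and set
$$
\Gamma_{n}(A)=\{z\in G_{n}:\widetilde{\gamma}_{n}(z;A)\leq \epsilon\},
$$
where $\widetilde{\gamma}_{n}$ is an arithmetic approximation to $\gamma_{n}$ with certified error $\leq 2^{-n-2}$. By~(a) every output point has $\|R(z,A)\|^{-1}\leq \epsilon+2^{-n-1}$, which, combined with the Lipschitz step in~(b), places the output inside $\mathrm{Sp}_{\epsilon}(A)$ up to a Hausdorff error of $2^{-n}$, giving the $\Sigma_{1}$-inclusion. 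Hausdorff convergence to $\mathrm{Sp}_{\epsilon}(A)$ then follows from the uniform convergence in~(a) together with the fact that, for bounded operators on a separable Hilbert space, $\|R(\cdot,A)\|^{-1}$ is nowhere locally constant, so every point of $\mathrm{Sp}_{\epsilon}(A)$ is accumulated by points where the resolvent norm strictly exceeds $\epsilon^{-1}$. For $\{\Xi_{1},\Omega_{f}\cap\Omega_{g},\Lambda_{1}\}\in\Sigma_{1}^{A}$ the same scheme is used with $\epsilon$ replaced by a sequence $\epsilon_{n}\downarrow 0$ chosen so that $g^{-1}(2\epsilon_{n})\leq 2^{-n-1}$; the hypothesis $\|R(z,A)\|^{-1}\geq g(\mathrm{dist}(z,\mathrm{Sp}(A)))$, evaluated at rational points via the auxiliary data for $g$, converts a small value of $\widetilde{\gamma}_{n}(z;A)$ into a small value of $\mathrm{dist}(z,\mathrm{Sp}(A))$, yielding the $\Sigma_{1}$-inclusion, and convergence to $\mathrm{Sp}(A)$ is again handled by~(a) and the continuity of $g$.

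For the lower half I would run the standard adversary argument, restricted to the diagonal sub-class, which sits inside $\Omega_{f}\cap\Omega_{g}$ via the trivial choices $f(n)=n+1$, $c_{n}=0$, $g(x)=x$ (using $\|R(z,A)\|^{-1}=\mathrm{dist}(z,\mathrm{Sp}(A))$ for normal operators). Suppose a general algorithm $\Gamma_{n}$ achieves $d_{\mathrm{H}}(\Gamma_{n}(A),\Xi_{1}(A))\leq 2^{-n}$ on the whole class. Fix $n\geq 2$, apply $\Gamma_{n}$ to $A=0$, and let $S\subset\mathbb{N}^{2}$ be the finite index set read. Pick $k$ with $(k,k)\notin S$ and let $A'$ be the diagonal operator with entry $1$ at $(k,k)$ and zeros elsewhere. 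Then $A'\in\Omega_{f}\cap\Omega_{g}$, $\mathrm{Sp}(A)=\{0\}$, $\mathrm{Sp}(A')=\{0,1\}$, while property~(iii) of Definition~\ref{Gen_alg} forces $\Gamma_{n}(A')=\Gamma_{n}(A)$; so at least one Hausdorff error exceeds $1/2>2^{-n}$, a contradiction. Tuning the single non-zero entry to any value outside $B_{\epsilon+2^{-n}}(0)$ adapts the argument to the pseudospectrum case.

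The main obstacle is quantitative: in the upper bound three error sources---the $\gamma_{n}$-versus-$\|R(\cdot,A)\|^{-1}$ gap, the discretisation error from $G_{n}$, and the arithmetic error in computing the smallest singular value of a finite matrix from finitely many entries of $A$---must be driven simultaneously below $2^{-n}$ while the algorithm remains arithmetic and reads only $\Lambda_{1}$ together with the auxiliary data $f,\{c_{n}\}$ and, where applicable, rational evaluations of $g$. The uniform-on-compacts convergence of $\gamma_{n}$, its Lipschitz dependence on $z$, and the strict monotonicity of $g$ make the bookkeeping routine; an explicit pseudocode realisation is the one given in \S\ref{basic_spec}.
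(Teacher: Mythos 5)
The paper does not prove this theorem in the present manuscript -- it is quoted from \cite{colb1} -- and what is supplied here is only the pseudocode in Appendix~\ref{basic_spec}. Your plan correctly isolates the ingredients underlying that pseudocode (the surrogate $\gamma_n$, its monotone uniform-on-compacts convergence to $\|R(\cdot,A)\|^{-1}$, its $1$-Lipschitzness in $z$, computability of $\gamma_n$ from the rectangular truncations controlled by $f$ and $\{c_n\}$, and Shargorodsky's non-constancy theorem), and your adversary construction for the lower bound is the standard one and is sound. There are, however, two gaps in the upper-bound constructions, one recoverable and one structural.

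For $\Xi_2$: with a two-sided $2^{-n-2}$-approximation $\widetilde\gamma_n$, accepting $z$ whenever $\widetilde\gamma_n(z;A)\le\epsilon$ only gives $\|R(z,A)\|^{-1}\le\epsilon+2^{-n-1}$, and you then invoke the $1$-Lipschitzness of $\gamma_n$ in $z$ to place $z$ within $2^{-n}$ of $\mathrm{Sp}_\epsilon(A)$. That step does not follow: the Lipschitz bound only controls how fast $\gamma$ can \emph{decrease}, not that it \emph{does} reach $\epsilon$ within distance $2^{-n}$, and the map $\epsilon\mapsto\mathrm{Sp}_\epsilon(A)$ is not Hausdorff-Lipschitz in $\epsilon$ (for a nilpotent Jordan block of size $k$, padded with zeros so as to sit in $\Omega_f$, the pseudospectral radius scales like $\epsilon^{1/k}$, so an increment $\delta$ in $\epsilon$ moves the boundary by much more than $\delta$ when $\epsilon$ is small and $k$ large). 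The paper avoids this entirely: $F_n(z)=\texttt{DistSpec}(A,n,f(n),z)+c_n$ is built as a one-sided \emph{upper} bound for $\|R(z,A)\|^{-1}$, and \texttt{PseudoSpec} accepts $z$ only when the truncated singular value falls below $\epsilon-c_m$, which forces $\|R(z,A)\|^{-1}\le\epsilon$ and hence output lying inside $\mathrm{Sp}_\epsilon(A)$ with no slack to account for. Your scheme is repaired by doing the same.

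For $\Xi_1$ the gap cannot be closed by tightening constants. Your threshold schedule $\epsilon_n\downarrow 0$ with $g^{-1}(2\epsilon_n)\le 2^{-n-1}$ is fixed in advance, independently of $A$, but the quantities it must dominate --- the gap $\gamma_N(\cdot;A)-\|R(\cdot,A)\|^{-1}$ and the rectangular-truncation error bounded only by $c_N$ --- decay at rates that are not uniform over $\Omega_f\cap\Omega_g$. Take $A=vv^*$ for a unit vector $v$ whose tail mass $\eta_N=\sum_{k>N}|v_k|^2$ decays like $1/\log N$. Then $A$ is self-adjoint (so lies in $\Omega_g$ with $g(x)=x$), lies in $\Omega_f$ for any $f$ with $c_n$ of order $\sqrt{\eta_{f(n)}}$, and $\mathrm{Sp}(A)=\{0,1\}$ with $1$ isolated. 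A direct computation gives $\gamma_N(z;A)^2\ge \eta_N/2$ for all real $z$ within $1/4$ of $1$, so $\gamma_N$ decays near $1$ only like $1/\sqrt{\log N}$. Once $\sqrt{\eta_n/2}>\epsilon_n$, which holds for all large $n$, no grid point within $1/4$ of $1$ passes your threshold test, $\Gamma_n(A)$ permanently omits the component $\{1\}$, and Hausdorff convergence fails; the scheme is not even in $\Delta_2$. This is precisely why \texttt{CompSpec} is organised the other way round: a fixed detection threshold ($\le 1/2$) identifies candidate regions, the output consists of the local minimisers of $\texttt{DistSpec}$ over the grid, and the algorithm \emph{separately reports} a certified a posteriori bound $E=\max_{z\in\Gamma}\texttt{CompInvg}(n,\texttt{DistSpec}(A,n,f(n),z)+c_n,g)$, which dominates $\max_{z\in\Gamma}\mathrm{dist}(z,\mathrm{Sp}(A))$ and tends to $0$ at an $A$-dependent rate. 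The $\Sigma_1^A$ tower is then built adaptively: increase the internal index until the returned $E\le 2^{-n}$ and halt. That a posteriori stopping rule, driven by a certified error estimate rather than by a pre-scheduled threshold, is the missing step in your proposal.
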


We now turn to the proof of Theorem \ref{spec_rad_thm}, dealing first with the evaluation set $\Lambda_1$. Suppose that $\{\tilde{\Gamma}_{n_k,\ldots,n_1}\}$ is a $\Pi^A_k$ tower of algorithms to compute the spectrum of a class of operators, where the output is a finite set for each $n_1,\ldots,n_k$. It is then clear that 
$$
\Gamma_{n_k,\ldots,n_1}(A)=\sup_{z\in\tilde{\Gamma}_{n_k,\ldots,n_1}(A)}\left|z\right|+\frac{1}{2^{n_k}}
$$
provides a $\Pi^A_k$ tower of algorithms for the spectral radius. Strictly speaking, the above may not be an arithmetic tower owing to the absolute value. But it can be approximated to arbitrary precision (from above say), the error of which can be absorbed in the first limit. In what follows, we always assume this is done without further comment. Similarly if $\{\tilde{\Gamma}_{n_k,\ldots,n_1}\}$ provides a $\Sigma^A_k$ tower of algorithms for the spectrum (and outputs a finite set for each $n_1,\ldots,n_k$),
$$
\Gamma_{n_k,\ldots,n_1}(A)=\sup_{z\in\tilde{\Gamma}_{n_k,\ldots,n_1}(A)}\left|z\right|-\frac{1}{2^{n_k}}
$$
provides a $\Sigma^A_k$ tower of algorithms for the spectral radius. If we only have a height $k$ tower with no $\Sigma_k$ or $\Pi_k$ type error control for the spectrum, then taking the supremum of absolute values shows that we get a height $k$ tower for the spectral radius.

The fact that $\{\Xi_r,\Omega_\mathrm{D}\} \in \Sigma^A_1$, $\{\Xi_r,\Omega_f\cap\Omega_g\} \in \Sigma^A_1$, $\{\Xi_r,\Omega_g\} \in \Sigma^A_2$, $\{\Xi_r,\Omega_f\} \in \Pi^A_2$ and $\{\Xi_r,\Omega_{\mathrm{B}}\} \in \Pi^A_3$ hence follow from Theorems \ref{BASIC} and the results of \cite{ben2015can}. It is clear that $\{\Xi_r,\Omega_\mathrm{D}\}\notin\Delta^G_1$ and this also shows that $\{\Xi_r,\Omega_\mathrm{N}\}\notin\Delta^G_1$ and $\{\Xi_r,\Omega_f\cap\Omega_g\}\notin\Delta^G_1$. Hence, we must show the positive result that $\{\Xi_r,\Omega_\mathrm{N}\} \in \Sigma^A_1$ and prove the lower bounds $\{\Xi_r,\Omega_g\}\notin\Delta^G_2$, $\{\Xi_r,\Omega_f\}\notin\Delta^G_2$ and $\{\Xi_r,\Omega_{\mathrm{B}}\}\notin\Delta^G_3$.

\begin{proof}[Proof of Theorem \ref{spec_rad_thm} for $\Lambda_1$]
Throughout this proof we use the evaluation set $\Lambda_1$, which we drop from the notation for convenience.

\textbf{Step 1:} $\{\Xi_r,\Omega_\mathrm{N}\} \in \Sigma^A_1$. Recall that the spectral radius of a normal operator $A\in\Omega_{\mathrm{B}}$ is equal to its operator norm. Consider the finite section matrices $P_nAP_n\in\mathbb{C}^{n\times n}$. It is straightforward to show that
$$
\left\|P_nAP_n\right\|\uparrow \left\|A\right\|\quad\text{ as }n\rightarrow\infty.
$$
The norm $\left\|P_nAP_n\right\|$ is the square root of the largest eigenvalue of the semi-positive definite self-adjoint matrix $(P_nAP_n)^*(P_nAP_n)$. This can be estimated from below to an accuracy of $1/n$ using Corollary 6.9 of \cite{colbrook3}, which then yields a $\Sigma_1^A$ algorithm for $\{\Xi_r,\Omega_\mathrm{N}\}$.

\textbf{Step 2:} $\{\Xi_r,\Omega_g\}\notin\Delta^G_2$. Recall that we assumed the existence of a $\delta\in(0,1)$ such that $g(x)\leq (1-\delta)x$. Let $\epsilon>0$, then it is easy to see that the matrices
$$
S_{\pm}(\epsilon)=\begin{pmatrix}
1 & 0 \\
\pm \epsilon & 1
\end{pmatrix}
$$
have norm bounded by $1+\epsilon+\epsilon^2$ and are clearly inverse of each other. Choose $\epsilon$ small such that $(1+\epsilon+\epsilon^2)^2\leq1/(1-\delta)$. If $B\in\mathbb{C}^{2\times 2}$ is normal, it follows that $\hat{B}:=S_{+}(\epsilon)BS_{-}(\epsilon)$ lies in $\Omega_g$ and has the same spectrum as $B$. We choose
$$
\hat{B}=S_{+}(\epsilon)\begin{pmatrix}
1 & -\epsilon \\
-\epsilon & 0
\end{pmatrix}S_{-}(\epsilon)=\begin{pmatrix}
1+\epsilon^2 & -\epsilon \\
\epsilon^3 & -\epsilon^2
\end{pmatrix}.
$$
The crucial property of $\hat{B}$ is that the first entry $1+\epsilon^2$ is strictly greater in magnitude than the two eigenvalues $(1\pm\sqrt{1+4\epsilon^2})/2$.

Now suppose for a contradiction that a height one tower, $\{\Gamma_n\}$, solves the problem. We will gain a contradiction by showing that $\Gamma_n(A)$ does not converge for an operator of the form,
$$
A=\bigoplus_{r=1}^\infty A_{l_r},\quad A_{m}:=\begin{pmatrix}
1+\epsilon^2& & & &-\epsilon\\
 &0& & & \\
 & &\ddots& & \\
 & & &0& \\
\epsilon^3& & & &-\epsilon^2\\
\end{pmatrix}
\in\mathbb{C}^{m\times m},
$$
where we only consider $l_k\geq 3$. Each $A_m$ is unitarily equivalent to the matrix $\hat{B}\oplus 0\in\mathbb{C}^{m\times m}$ and has spectrum equal to $\{0,(1\pm\sqrt{1+4\epsilon^2})/2\}$. Any $A$ of the above form is unitarily equivalent to a direct sum of an infinite number of $\hat{B}$'s and the zero operator and hence lies in $\Omega_g$. Now suppose that $l_1,\ldots,l_k$ have been chosen and consider the operator
$$
B_k=A_{l_1}\oplus\dots\oplus A_{l_k}\oplus C,\quad C=\mathrm{diag}\{1+\epsilon^2,0,\ldots\}.
$$
The spectrum of $B_k$ is $\{0,(1\pm\sqrt{1+4\epsilon^2})/2,1+\epsilon^2\}$ and hence there exists $\eta>0$ and $n(k)\geq k$ such that $\Gamma_{n(k)}(B_k)>(1+\sqrt{1+4\epsilon^2})/2+\eta$. But $\Gamma_{n(k)}(B_k)$ can only depend on the evaluations of the matrix entries $\{B_k\}_{ij}=\langle B_ke_j,e_i \rangle$ with $i,j\leq N(B_k,n(k))$ (as well as evaluations of the function $g$) into account. If we choose $l_{k+1}>N(B_k,n(k))$ then by the assumptions in Definition \ref{Gen_alg}, $\Gamma_{n(k)}(A)=\Gamma_{n(k)}(B_k)>(1+\sqrt{1+4\epsilon^2})/2+\eta$. But $\Gamma_n(A)$ must converge to $(1+\sqrt{1+4\epsilon^2})/2$, a contradiction.

\textbf{Step 3:} $\{\Xi_r,\Omega_f\}\notin\Delta^G_2$. Suppose for a contradiction that a height one tower, $\{\Gamma_n\}$, solves the problem. We will gain a contradiction by showing that $\Gamma_n(A)$ does not converge for an operator of the form
$$
A=\bigoplus_{r=1}^\infty C_{l_r}\oplus A_{l_r},\quad A_{m}:=\begin{pmatrix}
0& 1& & &\\
 &0& 1& & \\
 & &\ddots& \ddots& \\
 & & && 1\\
& & & &0\\
\end{pmatrix}
\in\mathbb{C}^{m\times m},\quad C_m=\mathrm{diag}\{0,0,\ldots,0\}\in\mathbb{C}^{m\times m},
$$
where we assume that $l_r\geq r$ to ensure that the spectrum of $A$ is equal to the unit disc $B_1(0)$. Note that the function $f(n)=n+1$ will do for the bounded dispersion with $c_n=0$. Now suppose that $l_1,\ldots,l_k$ have been chosen and consider the operator
$$
B_k=\big(C_{l_1}\oplus A_{l_1}\big)\oplus\cdots\oplus \big(C_{l_k}\oplus A_{l_k}\big)\oplus C,\quad C=\mathrm{diag}\{0,0,\ldots\}.
$$
The spectrum of $B_k$ is $\{0\}$ and hence there exist $n(k)\geq k$ such that $\Gamma_{n(k)}(B_k)<1/4$. But $\Gamma_{n(k)}(B_k)$ can only depend on the evaluations of the matrix entries $\{B_k\}_{ij}=\langle B_ke_j,e_i \rangle$ with $i,j\leq N(B_k,n(k))$ (as well as evaluations of the function $f$) into account. If we choose $l_{k+1}>N(B_k,n(k))$ then by the assumptions in Definition \ref{Gen_alg}, $\Gamma_{n(k)}(A)=\Gamma_{n(k)}(B_k)<1/4$. But $\Gamma_n(A)$ must converge to $1$, a contradiction.

\textbf{Step 4:} $\{\Xi_r,\Omega_{\mathrm{B}}\}\notin\Delta^G_3$. Suppose for a contradiction that $\{\Gamma_{n_2,n_1}\}$ is a height two (general) tower and without loss of generality, assume it to be non-negative. We use the results of \S\ref{bigHth}. Let $(\mathcal{M},d)$ be the space $[0,1]$ with the usual metric (note in particular this is not discrete so we use Remark \ref{cts_remark}), let $\tilde\Omega$ denote the collection of all infinite matrices $\{a_{i,j}\}_{i,j\in\mathbb{N}}$ with entries $a_{i,j}\in\{0,1\}$ and recall the problem function
\begin{equation*}
\tilde\Xi_1(\{a_{i,j}\}):\text{ Does }\{a_{i,j}\}\text{ have a column containing infinitely many non-zero entries?}
\end{equation*}
Theorem \ref{DST_main} in \S\ref{bigHth} shows that $\mathrm{SCI}(\tilde\Xi_1,\tilde\Omega)_{G} = 3$. We will gain a contradiction by using the supposed height two tower to solve $\{\tilde\Xi_1,\tilde\Omega\}$.

Without loss of generality, identify $\Omega_{\mathrm{B}}$ with $\mathcal{B}(X)$ where $X=\bigoplus_{j=1}^{\infty}X_j$ in the $l^2$-sense with $X_j=l^2(\mathbb{N})$. Now let $\{a_{i,j}\}\in\tilde\Omega$ and define $B_j\in\mathcal{B}(X_j)$ with the matrix representation
$$
(B_{j})_{k,i}= \begin{cases}
        1, & \text{if } k=i\text{ and }a_{k,j}=0\\
        1, & \text{if } k<i\text{ and }a_{l,j}=0\text{ for }k<l<i\\
        0, & \text{otherwise } 0\leq n\leq 1.
        \end{cases}
$$
Let $\mathcal{I}_j$ be the index set of all $i$ where $a_{i,j}=1$. $B_j$ acts as a unilateral shift on $\overline{\mathrm{span}}\{e_k:k\in \mathcal{I}_j\}$ and the identity on its orthogonal complement. It follows that
$$
\mathrm{Sp}(B_j)= \begin{cases}
        1, & \text{if } \mathcal{I}_j=\emptyset\\
        \{0,1\}, & \text{if } \mathcal{I}_j\text{ is finite and non-empty}\\
        \mathbb{D}\quad(\text{the unit disc}),& \text{if } \mathcal{I}_j\text{ is infinite}.
        \end{cases}
$$
For the matrix $\{a_{i,j}\}$ define $A\in\Omega_{\mathrm{B}}$ by
$$
A=\bigoplus_{j=1}^{\infty}(B_j-\frac{1}{2}I_j),
$$
where $I_j$ denotes the identity operator on $\mathbb{C}^{j\times j}$, then $\mathrm{Sp(A)}=\overline{\cup_{j=1}^\infty\mathrm{Sp}(B_j)}-\frac{1}{2}$.

Hence we see that
$$
\Xi_r(A)= \begin{cases}
        \frac{1}{2}, & \text{if }\tilde\Xi_1(\{a_{i,j}\})=0\\
        \frac{3}{2}, & \text{if }\tilde\Xi_1(\{a_{i,j}\})=1.
        \end{cases}
$$
We then set $\tilde{\Gamma}_{n_2,n_1}(\{a_{i,j}\})=\min\{\max\{\Gamma_{n_2,n_1}(A)-1/2,0\},1\}$. It is clear that this defines a generalised algorithm mapping into $[0,1]$. In particular, given $N$ we can evaluate $\{A_{k,l}:k,l\leq N\}$ using only finitely many evaluations of $\{a_{i,j}\}$, where we can use a bijection between canonical bases of $l^2(\mathbb{N})$ and $\bigoplus_{j=1}^{\infty}X_j$ to view $A$ as acting on $l^2(\mathbb{N})$. But then $\{\tilde\Gamma_{n_2,n_1}\}$ provides a height two tower for $\{\tilde\Xi_1,\tilde\Omega\}$, a contradiction.
\end{proof}

\begin{remark}
The algorithm in step 1 of the above proof works for any operator whose operator norm is equal to the spectral radius. If, instead, the operator is spectraloid, meaning the spectral radius is equal to the numerical radius
$$
w(A):=\sup\{\left|\left\langle Ax,x\right\rangle\right|:\left\|x\right\|=1\},
$$
then a similar argument will hold by estimating $w(P_nAP_n)$. To do this, we need a way of computing $w(A)$ to a given accuracy using finitely many arithmetic operations and comparisons (e.g., Lemma \ref{num_ran_fs} below).\hfill$\boxtimes$
\end{remark}

\begin{proof}[Proof of Theorem \ref{spec_rad_thm} for $\Lambda_2$]
Here we prove the changes for $\Xi_r$ when we consider the evaluation set $\Lambda_2$. It is clear that the classifications in $\Sigma_1^A$ do not change. It is also easy to use the algorithm in Theorem \ref{BASIC} (now using $\Lambda_2$ to collapse the first limit and approximate $\gamma_n$ - see Appendix \ref{basic_spec22}) to prove $\{\Xi_r,\Omega_g,\Lambda_2\}\in\Sigma_1^A$. Similarly we can use the algorithm for the spectrum of operators in  $\Omega_f$ for $\Omega_{\mathrm{B}}$ using $\Lambda_2$ to collapse the first limit and hence $\{\Xi_r,\Omega_{\mathrm{B}},\Lambda_2\}\in\Pi_2^A$. Since $\Omega_f\subset \Omega_{\mathrm{B}}$, it follows that we only need to prove $\{\Xi_r,\Omega_f,\Lambda_2\}\not\in\Delta_2^G$. This can be proven using exactly the same example and a similar argument to step 3 of the proof of Theorem \ref{spec_rad_thm} (hence omitted).
\end{proof}

\begin{proof}[Proof of Theorem \ref{spec_rad_thm2}]
We begin by proving the results for $\Lambda_1$. For the lower bounds, it is enough to show that $\{\Xi_{er},\Omega_\mathrm{D},\Lambda_1\}\not\in\Delta_2^G$ and $\{\Xi_{er},\Omega_{\mathrm{B}},\Lambda_1\}\not\in\Delta_3^G$. For the upper bounds, we must show that $\{\Xi_{er},\Omega_f,\Lambda_1\}\in\Pi_2^A$, $\{\Xi_{er},\Omega_{\mathrm{B}},\Lambda_1\}\in\Pi_3^A$ and $\{\Xi_{er},\Omega_\mathrm{N},\Lambda_1\}\in\Pi_2^A$. The lower bounds for $\Lambda_2$ follow from $\{\Xi_{er},\Omega_\mathrm{D},\Lambda_1\}\not\in\Delta_2^G$ and for the upper bounds it is enough to prove $\{\Xi_{er},\Omega_{\mathrm{B}},\Lambda_2\}\in\Pi_2^A$.

\textbf{Step 1:} $\{\Xi_{er},\Omega_\mathrm{D},\Lambda_1\}\not\in\Delta_2^G$. This is the same argument as in step 3 of the proof of Theorem \ref{spec_rad_thm}, however now we replace $A_m$ by $A_m=\mathrm{diag}\{1,1,\ldots,1\}\in\mathbb{C}^{m\times m}$ and use the fact that $\Xi_{er}(B_k)=0$. It follows that given the proposed height one tower $\{\Gamma_n\}$ and the constructed $A$, $\Xi_{er}(A)=1$ but $\Gamma_{n(k)}(A)<1/4$, the required contradiction.

\textbf{Step 2:} $\{\Xi_{er},\Omega_{\mathrm{B}},\Lambda_1\}\not\in\Delta_3^G$. This is the same argument as step 4 of the proof of Theorem \ref{spec_rad_thm}.

\textbf{Step 3:} $\{\Xi_{er},\Omega_f,\Lambda_1\}\in\Pi_2^A$, $\{\Xi_{er},\Omega_{\mathrm{B}},\Lambda_1\}\in\Pi_3^A$ and $\{\Xi_{er},\Omega_{\mathrm{B}},\Lambda_2\}\in\Pi_2^A$. $\{\Xi_{er},\Omega_f,\Lambda_1\}\in\Pi_2^A$ follows immediately from the existence of a $\Pi_2^A$ tower of algorithms for the essential spectrum of operators in $\Omega_f$ proven in \cite{ben2015can}. The output of this tower is a finite collection of rectangles with complex rational vertices, hence we can gain an approximation of the maximum absolute value over this output to any given precision. This can be used to construct a $\Pi_2^A$ tower for $\{\Xi_{er},\Omega_f,\Lambda_1\}$. Similarly, $\{\Xi_{er},\Omega_{\mathrm{B}},\Lambda_1\}\in\Pi_3^A$ follows from the $\Pi_3^A$ tower of algorithms for $\{\mathrm{Sp}_{\mathrm{ess}},\Omega_{\mathrm{B}},\Lambda_1\}$ constructed in \cite{ben2015can}. Finally, we can use $\Lambda_2$ to collapse the first limit of the algorithm for the essential spectrum in \cite{ben2015can}, giving a $\Pi_2^A$ algorithm and this can be used to show $\{\Xi_{er},\Omega_{\mathrm{B}},\Lambda_2\}\in\Pi_2^A$.

\textbf{Step 4:} $\{\Xi_{er},\Omega_\mathrm{N},\Lambda_1\}\in\Pi_2^A$.
A $\Pi_2^A$ tower is constructed in the proof of Theorem \ref{spec_poll_hard} for the essential numerical range, $W_e(A)$, of normal operators (using $\Lambda_1$) and this outputs a finite collection of points. For normal operators $A$, $W_e(A)$ is the convex hull of the essential spectrum and hence $\sup_{z\in W_e(A)}\left|z\right|$ is equal to $\Xi_{er}(A)$. Hence a $\Pi_2^A$ tower for $\{\Xi_{er},\Omega_\mathrm{N},\Lambda_1\}$ follows by taking the maximum absolute value over the tower for $W_e(A)$.
\end{proof}

\begin{proof}[Proof of Theorem \ref{spec_rad_thm3}]
Note that given a height $k$ arithmetical tower $\{\widehat\Gamma_{n_k,\ldots,n_1}(\cdot,p)\}$ for $\Xi_{r,p}$ and a class $\Omega'$, we can build a $\Pi_{k+1}^A$ tower for $\{\Xi_{cap},\Omega'\}$ as follows. Let $p_1,p_2,\ldots$ be an enumeration of the monic polynomials with rational coefficients and $\tilde\Gamma_{n_k,\ldots,n_1}(\cdot,p)$ be an approximation to $|\widehat\Gamma_{n_k,\ldots,n_1}(\cdot,p)|^{1/\mathrm{deg}(p)}$ to accuracy $1/n_1$ using finitely many arithmetic operations and comparisons. Define
$$
\Gamma_{n_{k+1},\ldots,n_1}(A)=\min_{1\leq m\leq n_{k+1}}\tilde\Gamma_{n_k,\ldots,n_1}(A,p_m).
$$
The fact that this is a convergent $\Pi_{k+1}^A$ tower is clear. This, together with inclusions of the considered classes of operators, means that to prove the positive results we only need to prove $\{\Xi_{r,p},\Omega_f,\Lambda_1\}\in\Sigma_1^A$, $\{\Xi_{r,p},\Omega_{\mathrm{B}},\Lambda_1\}\in\Sigma_2^A$ and $\{\Xi_{r,p},\Omega_{\mathrm{B}},\Lambda_2\}\in\Sigma_1^A$. Likewise, for the negative results we only need to prove $\{\Xi_{cap},\Omega_{\mathrm{D}},\Lambda_2\}\not\in\Delta^G_2$ (the fact that $\{\Xi_{r,p},\Omega_{\mathrm{D}},\Lambda_2\}\not\in\Delta^G_1$ is obvious), $\{\Xi_{cap},\Omega_{\mathrm{N}},\Lambda_1\}\not\in\Delta^G_3$ and $\{\Xi_{r,p},\Omega_{\mathrm{N}},\Lambda_2\}\not\in\Delta^G_2$. We shall prove these results with $\Omega_{\mathrm{N}}$ replaced by the class of self-adjoint bounded operators denoted by $\Omega_{\mathrm{SA}}$.

\textbf{Step 1:} $\{\Xi_{r,p},\Omega_f,\Lambda_1\}\in\Sigma_1^A$. The function $f$ and sequence $\{c_n\}$ allows us to compute the matrix elements of $p(A)$ for any $A\in\Omega_f$ and polynomial $p$ to arbitrary accuracy. We can then use the same argument as step 1 of the proof of Theorem \ref{spec_rad_thm}, approximating $\|P_np(A)P_n\|$ instead of $\left\|P_nAP_n\right\|$.

\textbf{Step 2:} $\{\Xi_{r,p},\Omega_{\mathrm{B}},\Lambda_1\}\in\Sigma_2^A$ and $\{\Xi_{r,p},\Omega_{\mathrm{B}},\Lambda_2\}\in\Sigma_1^A$. For the first result, we note that
$$
\lim_{m\rightarrow\infty}\|P_np(P_{m}AP_m)P_n\|=\|P_np(A)P_n\|
$$
and let $\Gamma_{n,m}(A,p)$ be an approximation of $\|P_np(P_{m}AP_m)P_n\|$ to accuracy $1/m$, which can be computed in finitely many arithmetic operations and comparisons. To prove $\{\Xi_{r,p},\Omega_{\mathrm{B}},\Lambda_2\}\in\Sigma_1^A$, for any given $A\in\Omega_{\mathrm{B}}$ we can use $\Lambda_2$ to compute a function $f_A$ and sequence $\{c_n(A)\}$ bounding the dispersion such that $A\in\Omega^{f_A}$ and use step 1.

\textbf{Step 3}: $\{\Xi_{cap},\Omega_\mathrm{SA},\Lambda_1\}\notin\Delta^G_3$. Suppose for a contradiction that $\{\Gamma_{n_2,n_1}\}$ is a height two (general) tower for the problem and without loss of generality, assume it to be non-negative. Our strategy will be as in the proof of Theorem \ref{spec_rad_thm} (recall also the results of \S\ref{bigHth}). Let $(\mathcal{M},d)$ be the space $[0,1]$ with the usual metric (note in particular this is not discrete so we use remark \ref{cts_remark}), let $\tilde{\Omega}$ denote the collection of all infinite matrices $\{a_{i,j}\}_{i,j\in\mathbb{N}}$ with entries $a_{i,j}\in\{0,1\}$ and consider the problem function
\begin{equation*}
\tilde{\Xi}_2(\{a_{i,j}\}):\text{ Does }\{a_{i,j}\}\text{ have (only) finitely many columns with (only) finitely many 1's?}
\end{equation*}
Recall that it was shown in \S \ref{bigHth} that $\mathrm{SCI}(\tilde{\Xi}_2,\tilde{\Omega})_{G} = 3$. We will gain a contradiction by using the supposed height two tower to solve $\{\tilde{\Xi}_2,\tilde{\Omega}\}$. Without loss of generality, identify $\Omega_\mathrm{SA}$ with self adjoint operators in $\mathcal{B}(X)$ where $X=\bigoplus_{j=1}^{\infty}X_j$ in the $l^2$-sense with $X_j=l^2(\mathbb{N})$. To proceed, we need the following elementary lemma, which will be useful in constructing examples of spectral pollution.

\begin{lemma}
\label{useful_spec_poll}
Let $z_1,z_2,\ldots,z_k\in[-1,1]$ and let $a_{j}=\sqrt{1-z_j^2}$ (say positive square root). Then the symmetric matrix
$$
B(z_1,\ldots,z_k)=\left(\begin{array}{ccccc|ccccc}
z_1& 0& \cdots& &  &        a_1& 0& \cdots& &                                 \\
0 &z_2& 0 & \cdots& &          0 &a_2& 0 & \cdots& \\
\vdots& 0&\ddots& & &          \vdots& 0&\ddots& & \\
&\vdots& & &   &                   &\vdots& & &\\
& & & &z_k                &  & & & &a_k\\
\hline
a_1& 0& \cdots& &   &         -z_1& 0& \cdots& &                                 \\
0 &a_2& 0 & \cdots&  &        0 &-z_2& 0 & \cdots& \\
\vdots& 0&\ddots& &    &      \vdots& 0&\ddots& & \\
&\vdots& & &   &                   &\vdots& & &\\
& & & &a_k                   & & & & &-z_k
\end{array}\right)
\in\mathbb{C}^{2k\times 2k}
$$
has eigenvalues $\pm 1$ (repeated $k$ times).
\end{lemma}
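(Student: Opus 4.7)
My plan is to observe that after a simultaneous permutation of rows and columns, $B(z_1,\ldots,z_k)$ becomes a block-diagonal matrix consisting of $k$ independent $2 \times 2$ blocks, each of the form
$$ M_j = \begin{pmatrix} z_j & a_j \\ a_j & -z_j \end{pmatrix}. $$
Specifically, I would apply the permutation sending $(1,2,\ldots,k,k+1,\ldots,2k) \mapsto (1,k+1,2,k+2,\ldots,k,2k)$ (pairing index $j$ with index $k+j$), which is a similarity transformation by a permutation matrix and therefore preserves the spectrum. Since the original matrix has diagonal blocks and diagonal off-diagonal blocks, after this permutation each pair $(j,k+j)$ decouples from the rest.

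Then the spectrum of $B$ is the union, with multiplicity, of the spectra of the $M_j$. For each $j$, one computes $\operatorname{tr}(M_j) = z_j - z_j = 0$ and $\det(M_j) = -z_j^2 - a_j^2 = -1$ using the definition $a_j^2 = 1 - z_j^2$. Hence the characteristic polynomial of $M_j$ is $\lambda^2 - 1$, so $M_j$ has eigenvalues $+1$ and $-1$. Summing over $j$ gives exactly $k$ copies of $+1$ and $k$ copies of $-1$, as claimed.

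As an alternative more uniform verification (useful if one prefers not to invoke permutations), one can directly compute $B^2$ in block form. Writing $D = \operatorname{diag}(z_1,\ldots,z_k)$ and $A = \operatorname{diag}(a_1,\ldots,a_k)$, since $D$ and $A$ are both diagonal they commute, so
$$ B^2 = \begin{pmatrix} D & A \\ A & -D \end{pmatrix}^2 = \begin{pmatrix} D^2 + A^2 & DA - AD \\ AD - DA & A^2 + D^2 \end{pmatrix} = \begin{pmatrix} I & 0 \\ 0 & I \end{pmatrix}, $$
using $D^2 + A^2 = I$. Thus every eigenvalue of $B$ satisfies $\lambda^2 = 1$, forcing $\lambda \in \{+1,-1\}$, and the trace condition $\operatorname{tr}(B) = 0$ pins down the multiplicities as $k$ each.

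I expect no real obstacle here; the lemma is essentially a routine block-matrix computation, and the only mild care needed is in being explicit about the permutation that decouples the $2 \times 2$ blocks so that the reader can verify the block structure from the displayed matrix in the statement.
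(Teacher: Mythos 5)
Your argument is correct and is essentially the paper's own: both reduce $B$ by a permutation (change of basis) to a block-diagonal matrix with $2\times 2$ blocks $\begin{pmatrix} z_j & a_j \\ a_j & -z_j \end{pmatrix}$, each having eigenvalues $\pm 1$. The additional $B^2 = I$ observation is a pleasant alternative check but does not change the substance.
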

\begin{proof}
By a change of basis, the above matrix is equivalent to a block diagonal matrix with blocks
$$
\begin{pmatrix}
z_j & a_j\\
a_j& -z_j
\end{pmatrix}.
$$
These blocks have eigenvalues $\{-1,1\}$.
\end{proof}

Now choose a sequence of rational numbers $\{z_j\}_{j\in\mathbb{N}}\in[-1,1]$ that is also dense in $[-1,1]$ and let $B_j=B(z_1,\ldots,z_j)$. For each column of a given $\{a_{i,j}\}\in\tilde\Omega$, let the infinite matrix $C^{(j)}$ be defined as follows. If $k,l<j+1$ then $C^{(j)}_{kl}=z_{k}\delta_{k,l}$. Let $r(i)$ denote the row of the $i$th one of the column $\{a_{i,j}\}_{i\in\mathbb{N}}$ (with $r(i)=\infty$ if $\sum_{m}a_{m,j}<i$ and $r(0)=0$). If $r(i)<\infty$ then for $k\leq l$ define
$$
C^{(j)}_{kl}=\begin{cases}
        a_p\delta_{k,l-(r(i)-r(i-1)-1)}, & p=1,\ldots,j,l=r(i)+j\cdot (2i-1)+p-1\\
        -z_p\delta_{k,l}, & p=1,\ldots,j,l=r(i)+j\cdot (2i-1)+p-1\\
				z_p\delta_{k,l}, & p=1,\ldots,j,l=r(i)+2j\cdot i+p-1\\
				0, & \text{otherwise},
        \end{cases}
$$
and extend $C^{(j)}_{kl}$ below the diagonal to a symmetric matrix. The key property of this matrix is that if the column $\{a_{i,j}\}_{i\in\mathbb{N}}$ has infinitely many 1s, then its is unitarily equivalent to an infinite direct sum of infinitely many $B_j$ together with the zero operator acting on some subspace (whose dimension is equal to the number of zeros in the column). In this case $\mathrm{Sp}(C^{(j)})=\{-1,1,0\}$ or $\{-1,1\}$. On the other hand, if $\{a_{i,j}\}_{i\in\mathbb{N}}$ has finitely many 1s, then $C^{(j)}$ is unitarily equivalent the direct sum of a finite number of $B_j$, the diagonal operator $\mathrm{diag}\{z_1,\ldots,z_k\}$ and the zero operator acting on some subspace. In this case $\{z_1,\ldots,z_j\}\subset\mathrm{Sp}(C^{(j)})$. Let $A=\bigoplus_{j=1}^{\infty}C^{(j)}$, then it is clear that if $\tilde\Xi_2(\{a_{i,j}\})=1$, then $\mathrm{Sp}(A)$ is a finite set, otherwise it is the entire interval $[-1,1]$.

Now we use the following facts for bounded self-adjoint operators $A$. If $\mathrm{Sp}(A)$ is a finite set then $\Xi_{cap}(A)=0$ whereas if $\mathrm{Sp}(A)=[-1,1]$ then $\Xi_{cap}(A)=1/2$ (this can be proven easily using the minimal $l^\infty$ norm property of monic Chebyshev polynomials). We then define $\tilde{\Gamma}_{n_2,n_1}(\{a_{i,j}\})=\min\{\max\{1-2\Gamma_{n_2,n_1}(A),0\},1\}$. It is clear that this defines a generalised algorithm. In particular, given $N$ we can evaluate $\{A_{k,l}:k,l\leq N\}$ using only finitely many evaluations of $\{a_{i,j}\}$, where we can use a bijection between canonical bases of $l^2(\mathbb{N})$ and $\bigoplus_{j=1}^{\infty}X_j$ to view $A$ as acting on $l^2(\mathbb{N})$. We also have the convergence $\lim_{n_2\rightarrow\infty}\lim_{n_1\rightarrow\infty}\tilde{\Gamma}_{n_2,n_1}(\{a_{i,j}\})=\tilde\Xi_2(\{a_{i,j}\})$, a contradiction.

\textbf{Step 4:} $\{\Xi_{cap},\Omega_{\mathrm{D}},\Lambda_2\}\not\in\Delta^G_2$. This is the same argument as in step 3 of the proof of Theorem \ref{spec_rad_thm}. However, we now replace $A_m$ by $A_m=\mathrm{diag}\{d_1,d_2,\ldots,d_m\}\in\mathbb{C}^{m\times m}$, where $\{d_m\}$ is a dense subsequence of $[-1,1]$, and use the fact that $\Xi_{cap}(B_k)=0$. It follows that given the proposed height one tower $\{\Gamma_n\}$ and the constructed $A$, $\Xi_{cap}(A)=1/2$ but $\Gamma_{n(k)}(A)<1/4$, the required contradiction.

\textbf{Step 5:} $\{\Xi_{r,p},\Omega_{\mathrm{SA}},\Lambda_2\}\not\in\Delta^G_2$. Recall that we are given some polynomial $p$ of degree at least two. We assume without loss of generality that the zeros of $p$ are $\pm1$ and $\left|p(0)\right|>1$ (the more general case is similar). The argument is similar to step 3 of the proof of Theorem \ref{spec_rad_thm}, but we spell it out since it uses Lemma \ref{useful_spec_poll}. Suppose for a contradiction that a height one tower, $\{\Gamma_n\}$, solves the problem. We will gain a contradiction by showing that $\Gamma_n(A)$ does not converge for an operator of the form,
$$
A=\bigoplus_{r=1}^\infty B(z_1,\ldots,z_{l_r}),
$$
and define
$$
C=\mathrm{diag}\{z_1,z_2,\ldots\}\in\Omega_{\mathrm{B}}.
$$
We assume that $l_r\geq r$ to ensure that the spectrum of $A$ is equal to $\{-1,1\}$ and hence $\Xi_{r,p}(A)=0$. Now suppose that $l_1,\ldots,l_k$ have been chosen and consider the operator
$$
B_k=B(z_1)\oplus\cdots\oplus B(z_1,\ldots,z_{l_k})\oplus C.
$$
The spectrum of $B_k$ is $[-1,1]$ so that $\Xi_{r,p}(B_k)>1$ and hence there exists $n(k)\geq k$ such that $\Gamma_{n(k)}(B_k)>1/4$. But $\Gamma_{n(k)}(B_k)$ can only depend on the evaluations of the matrix entries $\{B_k\}_{ij}=\langle B_ke_j,e_i \rangle$ with $i,j\leq N(B_k,n(k))$ (as well as evaluations of the function $f$) into account. If we choose $l_{k+1}>N(B_k,n(k))$ then by the assumptions in Definition \ref{Gen_alg}, $\Gamma_{n(k)}(A)=\Gamma_{n(k)}(B_k)>1/4$. But $\Gamma_n(A)$ must converge to $0$, a contradiction.
\end{proof}

\begin{remark}[Efficiently computing the capacity]
\label{capacity_efficiency}
Listing the monic polynomials with rational coefficients in the above proof is very inefficient. In practice, it is much better to split the domain of interest into intervals (or squares if in the complex plane, but we stick to the self-adjoint case in the following discussion). Suppose that each interval has dyadic endpoints and a diameter of $2^{-n_2}$ and that our operator is self-adjoint with known bounded dispersion. One can then apply Lemma \ref{halt_test} (denoting the index of that tower by $n_1$) to obtain an interval covering of the spectrum which will converge as $n_1\rightarrow\infty$, modulo the possibility of isolated points of the spectrum located at the endpoints of the intervals. Since the capacity of a compact set is unaltered by adding finitely many points, we do not have to worry about the endpoints - the limit of the capacity of this covering as $n_1\rightarrow\infty$ will be the capacity of a covering of the spectrum. As $n_2\rightarrow\infty$, we can use the fact that capacity is right-continuous as a set function (for compact sets $E_n,E$ with $E_n\downarrow E$, one has $\mathrm{cap}(E_n)\downarrow\mathrm{cap}(E)$) to obtain a $\Pi_2^A$ algorithm. The point of this is that it reduces the computation of the resulting tower $\{\Gamma_{n_2,n_1}\}$ to computing the capacity of finite unions of disjoint closed intervals in $\mathbb{R}$. In our computational examples, we made use of the method in \cite{liesen2017fast}, which uses conformal mappings and can deal with thousands of intervals.\hfill$\boxtimes$
\end{remark}

\section{Proofs Concerning Essential Numerical Ranges, Essential Spectra and Spectral Pollution}
\label{gowgknk}

\begin{proof}[Proof of Theorem \ref{spec_poll_hard} for $\Xi_{we}$] For the lower bounds, it is enough to note that $\{\Xi_{we},\Omega_\mathrm{D},\Lambda_2\}\not\in\Delta_2^G$ by the same argument as step 1 of the proof of Theorem \ref{spec_rad_thm2}. The construction is exactly the same but yields $d_{\mathrm{H}}(\Gamma_{n(k)}(A),\{0\})\leq1/2$, whereas $\Xi_{we}(A)=[0,1]$. Hence the proposed height one tower cannot converge. To construct a $\Pi_2^A$ tower for general operators, we need the following Lemma:

\begin{lemma}
\label{num_ran_fs}
Let $B\in\mathbb{C}^{n\times n}$ and $\epsilon>0$. Then using finitely many arithmetic operations and comparisons, we can compute points $z_1,\ldots,z_k\in\mathbb{Q}+i\mathbb{Q}$ such that
$$
d_{\mathrm{H}}(\{z_1,\ldots,z_k\},W(B))\leq \epsilon.
$$
\end{lemma}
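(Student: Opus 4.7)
The plan is to use the Toeplitz--Hausdorff theorem (so $W(B)$ is compact and convex) together with the explicit support--function formula
$$
h_B(\theta):=\sup_{w\in W(B)}\mathrm{Re}(e^{-i\theta}w)=\lambda_{\max}(H_\theta),\qquad H_\theta:=\tfrac12(e^{-i\theta}B+e^{i\theta}B^{*}),
$$
which follows from $\mathrm{Re}(e^{-i\theta}\langle Bx,x\rangle)=\langle H_\theta x,x\rangle$ and the fact that $H_\theta$ is Hermitian. The key point is that $\lambda_{\max}$ of a finite Hermitian matrix with complex rational entries can be computed to any prescribed rational accuracy in finitely many arithmetic operations and comparisons (via e.g.\ bisection combined with Sturm sequences on the characteristic polynomial, or the argument behind Corollary~\ref{eigs_fin_mat} in Appendix~\ref{basic_spec}); if the entries of $B$ are not rational, first round them to nearby complex rationals, introducing at most an operator-norm perturbation controlled by Weyl's inequality.

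Next I would fix a rational upper bound $R\geq\|B\|$, for instance the Frobenius norm, so that $W(B)\subset\overline{D_R(0)}$. A direct application of Weyl's inequality gives the Lipschitz estimate $|h_B(\theta)-h_B(\theta')|\leq\|H_\theta-H_{\theta'}\|\leq R|\theta-\theta'|$. Choosing $N$ sufficiently large (in terms of $R$ and $\epsilon$), setting $\theta_j=2\pi j/N$ for $j=0,\dots,N-1$, and computing rational approximations $\tilde h_j$ to $h_B(\theta_j)$ with $|\tilde h_j-h_B(\theta_j)|\leq\epsilon/10$, I form the rational polygon
$$
P:=\bigl\{z\in\mathbb{C}:\ \mathrm{Re}(e^{-i\theta_j}z)\leq\tilde h_j+\epsilon/10\ \text{ for all }j=0,\dots,N-1\bigr\}.
$$
A routine circumscribed-polygon-versus-inscribed-disc argument, together with the Lipschitz bound and the $\epsilon/10$ slack in each half-plane, yields the two-sided sandwich $W(B)\subset P\subset W(B)+\overline{D_{\epsilon/2}(0)}$, and in particular $d_H(P,W(B))\leq\epsilon/2$.

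Finally, I would lay down a rational grid on the bounding square $[-R,R]^2$ of mesh at most $\epsilon/(2\sqrt 2)$, test the finitely many linear inequalities defining $P$ at each grid point (a finite computation in $\mathbb{Q}+i\mathbb{Q}$), and output the subset $\{z_1,\dots,z_k\}$ of grid points that lie in $P$. This produces an $(\epsilon/2)$-net of $P$ consisting of complex rationals, so by the triangle inequality for the Hausdorff metric $d_H(\{z_1,\dots,z_k\},W(B))\leq\epsilon/2+\epsilon/2=\epsilon$, as required. The main obstacle is the quantitative polygonal sandwich in the second step: verifying that the chosen $N$ and slack really force $P$ to sit within $\epsilon/2$ of $W(B)$ in Hausdorff distance, which reduces to a standard convex-geometry estimate of the form $R(\sec(\pi/N)-1)\leq\epsilon/4$. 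All other ingredients are routine and rely only on tools already used elsewhere in the paper.
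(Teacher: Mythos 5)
Your argument is a genuinely different route from the paper's. The paper proves the lemma directly, without any appeal to convexity: it computes a rational upper bound $M$ for $\|B\|$, constructs a finite set of rational vectors $x_1,\dots,x_k$ whose Hausdorff distance to the unit sphere of $\mathbb{C}^n$ is less than $\epsilon/(3M)$, evaluates $\langle Bx_j,x_j\rangle$, and finishes by taking rational approximations; the bound $d_{\mathrm{H}}(\{\langle Bx_j,x_j\rangle\},W(B))\le 2\epsilon/3$ follows from the elementary estimate $|\langle Bx,x\rangle-\langle By,y\rangle|\le 2M\|x-y\|$ on the sphere. You instead invoke the Toeplitz--Hausdorff theorem and the support-function identity $h_B(\theta)=\lambda_{\max}(H_\theta)$, which is the basis of Johnson's algorithm. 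Both yield the lemma; your version exploits convexity to reduce the covering from the $(2n-1)$-sphere to the circle of directions $\theta$, which is much better computationally, at the cost of needing more machinery. Two small technical points are glossed over and both are easily repaired: the directions $e^{-i\theta_j}$ and the matrices $H_{\theta_j}$ have irrational entries, so you must first replace $e^{-i\theta_j}$ by nearby complex rationals (absorbable into the $\epsilon/10$ budget via Weyl), and the entries of $B$ need not be rounded at all since Corollary~\ref{eigs_fin_mat} already allows arithmetic on the given entries.

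There is, however, a genuine gap in the final step. You claim that the grid points that happen to lie in $P$ form an $(\epsilon/2)$-net of $P$, but a mesh of width $\epsilon/(2\sqrt 2)$ only guarantees that \emph{every point of the plane} has a grid point within $\epsilon/4$; it does not guarantee that such a grid point lies \emph{inside} $P$. If $W(B)$ degenerates to a single point or a thin segment (e.g.\ $B=cI$ or $B$ normal with collinear spectrum), $P$ can be a polygon whose inradius is far below the grid spacing, in which case the set of grid points inside $P$ may be empty or may miss large portions of $P$. The fix is routine: either fatten $P$ before testing membership (keep grid points $z$ with, say, $d(z,P)\le\epsilon/4$, which is a finite rational test once you have the vertices of $P$), or always append a rational approximation of a guaranteed element of $W(B)$ such as $B_{11}=\langle Be_1,e_1\rangle$. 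With either repair the triangle inequality closes and the proof goes through, but as written the last step does not establish the claimed net property.
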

\begin{proof}
Recall from step 1 of the proof of Theorem \ref{spec_rad_thm} that we can compute an upper bound $M\in\mathbb{Q}_+$ for $\|B\|$ in finitely many arithmetic operations and comparisons. Now choose points $x_1,\ldots,x_k\in\mathbb{Q}^n$, each of norm at most $1$, such that $d_{\mathrm{H}}(\{x_1,\ldots,x_k\},\{x\in\mathbb{C}^n:\|x\|=1\})<\epsilon/(3M)$. These can be computed in finitely many arithmetic operations and comparisons using generalised polar coordinates and approximations of trigonometric identities. It follows that
$$
d_{\mathrm{H}}(\{\langle Bx_1,x_1\rangle,\ldots,\langle Bx_k,x_k\rangle\},W(B))\leq2\epsilon/3.
$$
We then let each $z_j\in \mathbb{Q}+i\mathbb{Q}$ be a $\epsilon/4$ approximation of $\langle Bx_j,x_j\rangle$, which can be computed in finitely many arithmetic operations and comparisons.
\end{proof}

\begin{remark}[Efficient computation]
In practice, there are much more efficient methods of computation. For example, the method of Johnson \cite{johnson1978numerical}, reduces the computation of $W(A)$ for $A\in\mathbb{C}^{n\times n}$ to a series of $n\times n$ Hermitian eigenvalue problems.\hfill$\boxtimes$
\end{remark}

It is well-known that for $A\in\Omega_{\mathrm{B}}$,
\begin{align*}
\overline{W(P_nA|_{P_n\mathcal{H}})}&\uparrow \overline{W(A)},\\
\overline{W((I-P_n)A|_{(I-P_n)\mathcal{H}})}&\downarrow W_{e}(A).
\end{align*}
Given $A$, let $\Gamma_{n_2,n_1}(A)$ be a finite collection of points produced by the algorithm in Lemma \ref{num_ran_fs} applied to $B=(I-P_{n_2})P_{n_1+n_2+1}A|_{P_{n_1+n_2+1}(I-P_{n_2})\mathcal{H}}$ and $\epsilon=1/n_1$. The above limits show that $\{\Gamma_{n_2,n_1}\}$ provides a $\Pi_2^A$ tower for $\{\Xi_{er},\Omega_{\mathrm{B}},\Lambda_1\}$.
\end{proof}

\begin{proof}[Proof of Theorem \ref{spec_poll_hard} for $\Xi_{poll}^{\mathbb{F}}$] We will prove that $\{\Xi_{poll}^{\mathbb{R}},\Omega_\mathrm{D},\Lambda_i\}\not\in\Delta^G_3$ and $\{\Xi_{poll}^{\mathbb{C}},\Omega_{\mathrm{B}},\Lambda_1\}\in\Sigma^A_3$. The construction of towers for $\Xi_{poll}^{\mathbb{R}}$ are similar, as are the arguments for lower bounds.

\textbf{Step 1:} $\{\Xi_{poll}^{\mathbb{C}},\Omega_{\mathrm{B}},\Lambda_1\}\in\Sigma^A_3$. Let $\{\tilde \Gamma_{n_2,n_1}\}$ be the $\Pi_2^A$ tower for $\{\Xi_{er},\Omega_{\mathrm{B}},\Lambda_1\}$ constructed above. Recall the definition
$$
\gamma_{n_2,n_1}(z;A)=\min\{\sigma_{\mathrm{inf}}(P_{n_1}(A-zI){|_{P_{n_2}\mathcal{H}}}),\sigma_{\mathrm{inf}}(P_{n_1}(A^*-\bar{z}I){|_{P_{n_2}\mathcal{H}}})\}
$$ 
and that this can be approximated to any given accuracy in finitely many arithmetic operations and comparisons (see also Appendix \ref{basic_spec22}). We assume that we approximate from below to an accuracy of $1/n_1$ and call this approximation $\tilde \gamma_{n_2,n_1}$. The function $\gamma_{n_2,n_1}(z;A)$ is Lipschitz continuous with Lipschitz constant bounded by $1$. Define the set
$$
V_{n_1}=\bigcup_{m=1}^{n_1}U_{m},
$$
where $U_{m}$ are the approximations to the open set $U$. By taking squares of distances to ball centres, we can decide whether a point $z\in\mathbb{Q}+i\mathbb{Q}$ has $\mathrm{dist}(z,V_{n_1})<\eta$ for any given $\eta\in\mathbb{Q}_+$. Let $\Upsilon_{n_2,n_1}(A,U)$ be the finite collection of all $z\in\tilde \Gamma_{n_2,n_1}(A)$ with $\mathrm{dist}(z,V_{n_1})<1/n_2-1/n_1$. If $\Upsilon_{n_2,n_1}(A,U)$ is empty then set $Q_{n_2,n_1}(A,U)=0$, otherwise set
$$
Q_{n_2,n_1}(A,U):=\sup_{z\in\Upsilon_{n_2,n_1}(A,U)} \tilde\gamma_{n_2,n_1}(z;A)-\frac{1}{n_1}.
$$
The above remarks show that this can be computed using finitely many arithmetic operations and comparisons.

Let $W_{n_2}=\overline{W((I-P_{n_2})A|_{(I-P_{n_2})\mathcal{H}})}$ and $W_{n_2,n_1}=W((I-P_{n_2})P_{n_1+n_2+1}A|_{P_{n_1+n_2+1}(I-P_{n_2})\mathcal{H}})$. We claim that the set $\Upsilon_{n_2,n_1}(A,U)$ converges to
$$
\Upsilon_{n_2}(A,U):=\overline{\left\{z\in W_{n_2}:\mathrm{dist}(z,\overline{U})<\frac{1}{n_2}\right\}},
$$
as $n_1\rightarrow\infty$, meaning also if $\Upsilon_{n_2}(A,U)$ is empty then $\Upsilon_{n_2,n_1}(A,U)$ is empty for large $n_1$. If $z\in \Upsilon_{n_2,n_1}(A,U)$, then there exists $\hat{z}\in W_{n_2,n_1}\subset W_{n_2}$ with $\left|z-\hat{z}\right|\leq1/{n_1}$. Since
$$
\mathrm{dist}(z,\overline{U})\leq\mathrm{dist}(z,V_{n_1})<1/n_2-1/n_1,
$$
it follows that $\mathrm{dist}(\hat{z},\overline{U})<{1}/{n_2}$ and hence $\Upsilon_{n_2}(A,U)$ is non-empty. So to prove convergence, we only need to deal with the case $\Upsilon_{n_2}(A,U)\neq\emptyset$. The above argument also shows that any limit point of a subsequence $z_{m(j)}\in\Upsilon_{n_2,m(j)}(A,U)$ must lie in $\Upsilon_{n_2}(A,U)$. Hence to prove the claim, we need to only prove that for any $z\in\Upsilon_{n_2}(A,U)$, there exists $z_{n_1}$ that are contained in $\Upsilon_{n_2,n_1}(A,U)$ for large $n_1$ and converge to $z$.

Let $z\in W_{n_2} $ with $\mathrm{dist}(z,\overline{U})<{1}/{n_2}$, then there exists $\epsilon>0$ and $j>0$ such that $\mathrm{dist}(z,U_j)<{1}/{n_2}-\epsilon$. There also exists $z_{n_1}\in\tilde\Gamma_{n_2,n_1}(A)$ with $z_{n_1}\rightarrow z$. It must hold for $n_1>j$ that
\begin{align*}
\mathrm{dist}({z}_{n_1},V_{n_1})\leq \mathrm{dist}({z}_{n_1},V_j)&\leq \left|z_{n_1}-z\right|+\mathrm{dist}(z,U_j)\\
&<\left|z_{n_1}-z\right|+\frac{1}{n_2}-\epsilon.
\end{align*}
This last quantity is smaller than $1/n_2-1/n_1$ for large $n_1$ and hence $z_{n_1}\in\Upsilon_{n_2,n_1}(A,U)$ for large $n_1$. It follows for any $z\in\Upsilon_{n_2}(A,U)$, there exists $z_{n_1}$ that are contained in $\Upsilon_{n_2,n_1}(A,U)$ for large $n_1$ and converge to $z$.

Define
$$
Q_{n_2}(A,U):=\sup_{z\in\Upsilon_{n_2}(A,U)} \gamma_{n_2}(z;A),
$$
where we recall that $\gamma_{n_2}(z;A)=\min\{\sigma_{\mathrm{inf}}((A-zI){|_{P_{n_2}\mathcal{H}}}),\sigma_{\mathrm{inf}}((A^*-\bar{z}I){|_{P_{n_2}\mathcal{H}}})\}.$ If $z\in\Upsilon_{n_2,n_1}(A,U)$, then the above shows that there exists $\hat{z}\in\Upsilon_{n_2}(A,U)$ with $\left|z-\hat{z}\right|\leq1/{n_1}$. It follows that
\begin{align*}
\tilde\gamma_{n_2,n_1}(z;A)-\frac{1}{n_1}&\leq \gamma_{n_2,n_1}(z;A)-\frac{1}{n_1}\\
&\leq \gamma_{n_2,n_1}(\hat{z};A)\leq \gamma_{n_2}(z;A),
\end{align*}
where we have used the bound on the Lipschitz constant and the fact that $\gamma_{n_2,n_1}$ converge up to $\gamma_{n_2}$ (and uniformly on compact subsets of $\mathbb{C}$). It follows that $Q_{n_2,n_1}(A,U)\leq Q_{n_2}(A,U)$ and this also covers the case that $\Upsilon_{n_2}(A,U)=\emptyset$ if we define the supremum over the empty set to be $0$. The set convergence proven above and uniform convergence of $\tilde\gamma_{n_2,n_1}$ implies that $Q_{n_2,n_1}(A,U)$ converges to $Q_{n_2}(A,U)$. It is also clear that the $\Upsilon_{n_2}(A,U)$ are nested and converge down to $W_e(A)\cap\overline{U}$ since $W_{n_2}$ converges down to $W_{e}(A)$. The functions $\gamma_{n_2}$ also converge down to
$$
\gamma(z;A)=\left\|R(z,A)\right\|^{-1}
$$
uniformly on compact subsets of $\mathbb{C}$ and hence $Q_{n_2}(A,U)$ converges down to
$$
Q(A,U)=\sup_{z\in W_e(A)\cap\overline{U}} \left\|R(z,A)\right\|^{-1}.
$$

Define
$$
\Gamma_{n_3,n_2,n_1}(A,U)=1-\chi_{[0,1/{n_3}]}(Q_{n_2,n_1}(A,U))\in\{0,1\}.
$$
The above show that
$$
\lim_{n_1\rightarrow\infty} \Gamma_{n_3,n_2,n_1}(A,U)=1-\chi_{[0,1/{n_3}]}(Q_{n_2}(A,U))=:\Gamma_{n_3,n_2}(A,U).
$$
Since $\chi_{[0,1/n_3]}$ has right limits and $Q_{n_2}(A,U)$ are non-increasing,
$$
\lim_{n_2\rightarrow\infty} \Gamma_{n_3,n_2}(A,U)=1-\chi_{[0,1/{n_3}]}(Q(A,U)\pm):=\Gamma_{n_3}(A,U),
$$
where $\pm$ denotes one of the right or left limits (it is possible to have either). Now if $\Xi_{poll}^{\mathbb{C}}(A,U)=0$, then $\Gamma_{n_3}(A,U)=0$ for all $n_3$. But if $\Xi_{poll}^{\mathbb{C}}(A,U)=1$, then for large $n_3$, $\Gamma_{n_3}(A,U)=1$. Moreover, in this latter case, $\Gamma_{n_3}(A,U)=1$ signifies the existence of $z\in W_e(A)\cap\overline{U}$ with $\gamma(z;A)>0$ and hence $z\not\in\mathrm{Sp}(A)$. Hence $\{\Gamma_{n_3,n_2,n_1}\}$ provides a $\Sigma_3^A$ tower.

\textbf{Step 2:} $\{\Xi_{poll}^{\mathbb{R}},\Omega_{\mathrm{D}},\Lambda_2\}\not\in\Delta^G_3$. We will argue for the case that $U=U_1=\mathbb{R}$ and the restricted case is similar. Assume for a contradiction that this is false and that $\{\widehat{\Gamma}_{n_2,n_1}\}$ is a general height two tower for $\{\Xi_{poll}^{\mathbb{R}},\Omega_{\mathrm{D}},\Lambda_2\}$. We follow the same strategy as the proof of Theorem \ref{spec_rad_thm} step 4 (recall also the results of \S\ref{bigHth}). Let $(\mathcal{M},d)$ be discrete space $\{0,1\}$ and $\tilde\Omega$ denote the collection of all infinite matrices $\{a_{i,j}\}_{i,j\in\mathbb{N}}$ with entries $a_{i,j}\in\{0,1\}$ and consider the problem function
\begin{equation*}
\tilde\Xi_1(\{a_{i,j}\}):\text{ Does }\{a_{i,j}\}\text{ have a column containing infinitely many non-zero entries?}
\end{equation*}

For $j\in\mathbb{N}$, let $\{b_{i,j}\}_{i\in\mathbb{N}}$ be a dense subset of $I_j:=[1-1/2^{2j-1},1-1/{2^{2j}}]$. Given a matrix $\{a_{i,j}\}_{i,j\in\mathbb{N}}\in\tilde\Omega$, construct a matrix $\{c_{i,j}\}_{i,j\in\mathbb{N}}$ by letting $c_{i,j}=a_{i,j}b_{r(i,j),j}$ where
$$
r(i,j)=\max\left\{1,\sum_{k=1}^{i}a_{k,j}\right\}.
$$
Now consider any bijection $\phi:\mathbb{N}\rightarrow\mathbb{N}^2$ and define the diagonal operator
\begin{equation*}
A=\mathrm{diag}(c_{\phi(1)},c_{\phi(2)},c_{\phi(3)},\ldots).
\end{equation*}
The algorithm $\widehat{\Gamma}_{n_2,n_1}$ thus translates to an algorithm $\Gamma'_{n_2,n_1}$ for $\{\tilde\Xi_1,\tilde\Omega\}$. Namely, set $\Gamma'_{n_2,n_1}(\{a_{i,j}\}_{i\in\mathbb{N}})=\widehat{\Gamma}_{n_2,n_1}(A)$. The fact that $\phi$ is a bijection shows that the lowest level $\Gamma'_{n_2,n_1}$ are generalised algorithms (and are consistent). In particular, given $N$, we can find $\{A_{i,j}:i,j\leq N\}$ using finitely many evaluations of the matrix values $\{c_{k,l}\}$ (the same is true for $A^*A$ and $AA^*$ since the operator is diagonal). But for any given $c_{k,l}$ we can evaluate this entry using only finitely many evaluations of the matrix values $\{a_{m,n}\}$ by the construction of $r$. Finally, note that
\begin{equation*}
\mathrm{Sp}(A)=\{1\}\cup\left(\bigcup_{j:\{a_{i,j}\}_{i\in\mathbb{N}}\text{ has infinitely many 1s}}I_j\right)\cup Q,
\end{equation*}
where $Q$ lies in the discrete spectrum. The intervals $I_j$ are also separated. It follows that there is a gap in the essential spectrum if and only if there exists a column $\{a_{i,j}\}_{i\in\mathbb{N}}$ with infinitely many 1s. Otherwise the essential spectrum is $\{1\}$. It follows that $\tilde \Xi(\{a_{i,j}\})=\Xi_{poll}^{\mathbb{R}}(A,\mathbb{R})$, and hence we get a contradiction.
\end{proof}

\subsection{Essential numerical range for unbounded operators}
\label{append_UB_ENR}

The essential numerical range (see \eqref{ess_num_RR_deff}) was first introduced for a bounded operator $A$ in \cite{stampfli1968growth}, as the closure of the numerical range of the image of $A$ in the Calkin algebra:
$$
W_e(A)=\bigcap_{B\in\Omega_K}\overline{W(A+B)}.
$$
Other equivalent characterisations were then given in \cite{fillmore1972essential}. The unbounded case is significantly different from the bounded case, and definitions that are equivalent in the bounded case may yield very different sets in the unbounded case. The definition for unbounded operators appeared in \cite{bogli2020essential}, and required the development of several new ideas and tools. In this section, we let $\Omega_{\mathcal{C}}$ denote the set of closed operators $T$ with domain $\mathcal{D}(T)\subset l^2(\mathbb{N})$ such that the linear span of the canonical basis forms a core of $T$. This latter condition ensures that we can use the usual matrix representation of the operator $T$ and hence the evaluation functions $\Lambda_1$. We follow \cite{bogli2020essential} and define
\begin{equation}
\label{unbounded_num_R_def}
W_e(T)=\left\{\lambda\in\mathbb{C}:\exists\{x_n\}_{n\in\mathbb{N}}\subset\mathcal{D}(T),\|x_n\|=1,x_n\xrightarrow{w}0,\lim_{n\rightarrow\infty}\langle Tx_n,x_n\rangle=\lambda\right\}.
\end{equation}
In \cite{bogli2020essential}, it was shown that for any $T\in\Omega_{\mathcal{C}}$, $W_e(T)$ consists precisely of the essential spectrum of $T$ together with all possible spectral pollution that may arise by applying projection methods to find the spectrum of $T$ numerically. This result therefore generalises Theorems \ref{polish} and \ref{t:conv}. The set $W_e(T)$ is closed and convex, but, unlike the case when $T$ is bounded, $W_e(T)$ may be empty. We first need two simple lemmas.

\begin{lemma}
\label{fhwif1}
Let $T\in\Omega_{\mathcal{C}}$, then $W(P_nT|_{P_n\mathcal{H}})\uparrow \overline{W(T)}$ in the Attouch--Wets topology as $n\rightarrow\infty$.
\end{lemma}

\begin{proof}
It is clear that
$$
W(P_nT|_{P_n\mathcal{H}})\subset W(T):=\{\langle Tx,x\rangle:x\in\mathcal{D}(T),\|x\|=1\},
$$
and that the sets $W(P_nT|_{P_n\mathcal{H}})$ are increasing with $n$. Now let $\lambda\in\overline{W(T)}$ be arbitrary. It is enough to show that there exists $\lambda_n\in W(P_nT|_{P_n\mathcal{H}})$ such that $\lambda_n\rightarrow\lambda$ as $n\rightarrow\infty$. By assumption, there exists $x_n\in\mathcal{D}(T)$ such that $\|x_n\|=1$ and $\lim_{n\rightarrow\infty}\langle Tx_n,x_n\rangle=\lambda$. Since the linear span of the canonical basis forms a core of $T$, we can assume without loss of generality that each $x_n$ has finite support with respect to the canonical basis. By taking subsequences if necessary, we may assume that $P_nx_n=x_n$ and hence $\langle Tx_n,x_n\rangle\in W(P_nT|_{P_n\mathcal{H}})$. The result now follows.
\end{proof}

\begin{lemma}
\label{fhwif2}
Let $T\in\Omega_{\mathcal{C}}$. If $W_e(T)\neq\emptyset$, then $\overline{W((I-P_n)T|_{(I-P_n)\mathcal{H}})}\downarrow W_{e}(T)$ in the Attouch--Wets topology as $n\rightarrow\infty$. If $W_e(T)=\emptyset$, then for any compact set $K$, $K\cap\overline{W((I-P_n)T|_{(I-P_n)\mathcal{H}})}=\emptyset$ for large $n$.
\end{lemma}

\begin{proof}
We clearly have that $\overline{W((I-P_n)T|_{(I-P_n)\mathcal{H}})}$ are non-empty and decreasing in $n$. It is enough to show the following two results:
\begin{enumerate}
	\item If $\lambda \in W_e(T)$, then $\lambda\in\overline{W((I-P_n)T|_{(I-P_n)\mathcal{H}})}$ for all $n$.
	\item If $\lambda \notin W_e(T)$, then $\liminf_{n\rightarrow\infty} \mathrm{dist}(\lambda,\overline{W((I-P_n)T|_{(I-P_n)\mathcal{H}})})>0$.
\end{enumerate}

We first prove (1), so assume that $\lambda \in W_e(T)$. Then, since the linear span of the canonical basis functions form a core of $T$, we can assume that there exists $x_n$ with $\|x_n\|=1$ such that each $x_n$ has finite support with respect to the canonical basis, $x_n\xrightarrow{w}0$ and $\lim_{n\rightarrow\infty}\langle Tx_n,x_n\rangle=\lambda$. It follows that for any fixed $m$, $\lim_{n\rightarrow\infty}P_mx_n=0$ and hence $\lambda\in \overline{W((I-P_m)T|_{(I-P_m)\mathcal{H}})}$.

Finally, to see (2), suppose that this were false for some $\lambda \notin W_e(T)$. We may then choose $\lambda_n\in  \overline{W((I-P_n)T|_{(I-P_n)\mathcal{H}})}$ such that $\liminf_{n\rightarrow\infty} |\lambda-\lambda_n|=0$. By taking subsequences if necessary, we may assume that $\lambda_n\rightarrow \lambda$ and that there exists $x_n$ with $\|x_n\|=1$, $P_{n}x_n=0$ and $|\langle Tx_n,x_n\rangle-\lambda_n|\rightarrow 0$. But this implies that $x_n\xrightarrow{w}0$ and $\lim_{n\rightarrow\infty}\langle Tx_n,x_n\rangle=\lambda$. Therefore $\lambda \in W_e(T)$, the required contradiction.
\end{proof}

We have the following corollary, which shows that the SCI classification of computing $W_e(T)$ for $T\in\Omega_{\mathcal{C}}$ remains $\Pi_2^A$ (one can make this precise by adding the empty set to the Attouch--Wets topology, but we omit the details).

\begin{corollary}
\label{gojoknb}
There exists a height two tower of arithmetic algorithms $\{\Gamma_{n_2,n_1}\}$, using $\Lambda_1$ (the matrix values with respect to the canonical basis) and $\Delta_1-$information (see Definition \ref{inexact_def_need}), such that for any $T\in\Omega_{\mathcal{C}}$, the following hold with respect to the Attouch--Wets topology:
\begin{itemize}
	\item $\Gamma_{n_2,n_1}(T)\uparrow \Gamma_{n_2}(T)\subset\overline{W(T)}$ as $n_1\rightarrow\infty$.
	\item If $W_e(T)\neq\emptyset$, then $\Gamma_{n_2}(T)\downarrow W_{e}(T)$ as $n_2\rightarrow\infty$. If $W_e(T)=\emptyset$, then for any compact set $K$, $K\cap\Gamma_{n_2}(T)=\emptyset$ for large $n_2$.
\end{itemize}
\end{corollary}

\begin{proof}
We simply let $\Gamma_{n_2,n_1}(T)$ be an approximation of
$$
W\left((I-P_{n_2})P_{n_1+n_2+1}T|_{P_{n_1+n_2+1}(I-P_{n_2})\mathcal{H}}\right)
$$
that can be computed in finitely many arithmetic operations and comparisons, even when using inexact input (see Definition \ref{inexact_def_need} and Remark \ref{fdjojosr}), using the arguments in \S \ref{gowgknk}. The results now follow from Lemmas \ref{fhwif1} and \ref{fhwif2}.
\end{proof}

\section{Proofs Concerning Lebesgue Measure}
\label{pf_leb}

We use the function $\texttt{DistSpec}$ in Appendix \ref{basic_spec22}. For ease of notation, we suppress the dispersion function $f$ in calling $\texttt{DistSpec}$, but assume that we know $\{c_n\}$ with $D_{f,n}(A)\leq c_n$ and $c_n\rightarrow 0$ as $n\rightarrow\infty$. However, the proof of convergence also works when using $c_n=0$ (which does not necessarily bound $D_{f,n}(A)$). The key observation is the following:

\vspace{2mm}
\textbf{\textit{Observation:}} If $A\in\Omega_f$, then the function $F_n(z):=\texttt{DistSpec}(A,n,z,f(n))+c_n$ converges uniformly to $\left\|R(z,A)\right\|^{-1}$ from above on compact subsets of $\mathbb{C}$. By taking successive minima, we can assume without loss of generality that $F_n$ is non-increasing in $n$.
\vspace{2mm}

The other ingredient needed is the following proposition
\begin{proposition}
\label{comp_prop}
Given a finite union of disks in the complex plane, the Lebesgue measure of their intersection with the interior of a rectangle can be computed within arbitrary precision, using finitely many arithmetical operations and comparisons on the centres and radii of the discs, as well as the position of the rectangle.
\end{proposition}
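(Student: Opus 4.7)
The plan is to approximate $\mathrm{Leb}(R \cap \bigcup_{i=1}^N D_i)$ by a grid-based discretization on $R$, classifying each small square by finitely many arithmetic comparisons, and bounding the error by the area of the boundary squares.

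First I would align a rational grid of mesh $\delta > 0$ with the rectangle $R = [a_1,b_1] \times [a_2,b_2]$: pick integers $N_1,N_2$ with $\delta := (b_1-a_1)/N_1 = (b_2-a_2)/N_2$, which partitions $R$ into $N_1 N_2$ closed squares of side $\delta$. For each such square $S$ with corners $v_1,\dots,v_4$, I would classify it as follows, using only arithmetic operations and comparisons on the input data:
\emph{Inside} if there exists $i \in \{1,\dots,N\}$ with $|v_j - c_i|^2 \leq r_i^2$ for all $j = 1,\dots,4$; by convexity of $D_i$ this implies $S \subseteq D_i \subseteq \bigcup_k D_k$;
\emph{Outside} if, for every $i$, the closest point $p_i \in S$ to $c_i$ (whose coordinates are explicit componentwise projections onto the intervals of $S$) satisfies $|p_i - c_i|^2 > r_i^2$, so that $S$ is disjoint from every $D_i$;
\emph{Boundary} otherwise.

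Letting $\Gamma_\delta$ be $\delta^2$ times the number of Inside squares, one has
$$
\Gamma_\delta \;\leq\; \mathrm{Leb}\Big(R \cap \bigcup\nolimits_k D_k\Big) \;\leq\; \Gamma_\delta + \mathcal{E}_\delta,
$$
where $\mathcal{E}_\delta$ is the total area of the Boundary squares. The key combinatorial observation is that every Boundary square $S$ must meet some circle $\partial D_i$. Indeed, if $S$ is neither Inside nor Outside, then some corner of $S$ lies in some disc but no single disc contains all four corners; hence one can pick an index $i$ and two corners $v, v'$ with $v \in D_i$ and $v' \notin D_i$, and the segment $[v,v'] \subset S$ must cross $\partial D_i$. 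Since any circle of radius $r_i$ is met by at most $C(r_i/\delta + 1)$ squares of side $\delta$ for an absolute constant $C$, this yields the bound
$$
\mathcal{E}_\delta \;\leq\; C\delta \sum_{i=1}^{N} r_i + C N \delta^2,
$$
a rational expression in the inputs that tends to $0$ as $\delta \downarrow 0$.

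Given a target precision $\epsilon > 0$, I would then choose $\delta$ rational and small enough (a finite arithmetic test) so that the above bound is below $\epsilon$, and output $\Gamma_\delta$. The whole procedure consists of a bounded grid loop in which each iteration is a finite sequence of arithmetic operations and comparisons on the data, establishing the claim. The main obstacle is the classification step: verifying that the algorithmic notion of ``Boundary'' really coincides with squares meeting one of the circles $\partial D_i$, so that the perimeter bound controls $\mathcal{E}_\delta$. Once that elementary case analysis (resting on convexity of discs and the intermediate-value property along a segment) is in hand, the remaining ingredients are routine.
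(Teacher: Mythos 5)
Your proposal takes essentially the same route as the paper: discretize the rectangle by a fine rational grid, classify cells by finitely many arithmetic comparisons, and control the error by bounding the number of cells that meet the circle boundaries (the paper tests cell centres rather than doing a three-way corner/projection classification, but the underlying idea and the perimeter estimate are the same). The Inside/Outside/Boundary scheme with the lower bound $\Gamma_\delta$ and the error term $\mathcal{E}_\delta \leq C\delta\sum_i r_i + CN\delta^2$ is sound and gives explicit precision control, which is what the proposition needs.

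There is, however, one false intermediate claim in your Boundary analysis. You assert that if $S$ is neither Inside nor Outside then ``some corner of $S$ lies in some disc.'' This does not follow: ``not Outside'' only gives you $S \cap D_i \neq \emptyset$ for some $i$, and a disc $D_i$ strictly contained in the interior of $S$ produces a Boundary square with no corner in any disc. The conclusion you want --- that a Boundary square must meet some $\partial D_i$ --- is still true, and the repair is immediate: pick any $p \in S \cap D_i$ (not necessarily a corner); since $S$ is not Inside, some corner $v$ of $S$ lies outside $D_i$, and the segment $[p,v] \subset S$ (convexity of $S$) crosses $\partial D_i$ by the intermediate value theorem. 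Also note that when $D_i \subset S$, the entire circle $\partial D_i$ lies in $S$, so the bound that $\partial D_i$ is covered by $O(r_i/\delta + 1)$ cells still applies. With that repair the argument is complete and matches the paper's.
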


\begin{proof}
Without loss of generality, we assume that the rectangle is $\{x+iy:x,y\in[0,1]\}$. Consider dividing the rectangle into $n^2$ subrectangles using the division of $[0,1]$ into $n$ equal intervals. Given such a subrectangle, we can easily test via a finite number of arithmetic operations and comparisons whether the centre is in the union of the circles. Let $r(n)$ denote the number of subrectangles whose centre lies in the union. Then, since the boundary of the union of the circles has measure zero, it is easy to see that $r(n)/n^2$ converges to the desired Lebesgue measure. Moreover, we can bound the number of subrectangles that intersect the boundary of any of the circles, and this can be used to obtain any desired precision. 
\end{proof}

\begin{proof}[Proof of Theorem \ref{Leb_1}]

\textbf{Step 1:} $\{\Xi_1^L,\Omega_f,\Lambda_i\},\{\Xi_1^L,\Omega_\mathrm{D},\Lambda_i\}\in\Pi^A_2$. It is enough to consider $\Lambda_1$. We will estimate $\mathrm{Leb}(\mathrm{Sp}(A))$ by estimating the Lebesgue measure of the resolvent set on the closed square $[-C,C]^2$, where $\left\|A\right\|\leq C$. We do not assume $C$ is known. For $n_1,n_2\in\mathbb{N}$, let 
\begin{equation*}
\texttt{Grid}(n_1,n_2)=\left(\frac{1}{2^{n_2}}\mathbb{Z}+\frac{1}{2^{n_2}}i\mathbb{Z}\right)\cap[-n_1,n_1]^2.
\end{equation*}
Letting $B(x,r)$ and $D(x,r)$ denote the closed and open balls of radius $r$ around $x$, respectively\footnote{We set $D(x,0)=\emptyset$.}, in $\mathbb{C}$ (or $\mathbb{R}$ where appropriate), we define
\begin{equation*}
U(n_1,n_2,A)=[-n_1,n_1]\times[-n_1,n_1]\cap(\cup_{z\in\texttt{Grid}(n_1,n_2)}B(z,F_{n_1}(z))).
\end{equation*}
Note that $\mathrm{Leb}(U(n_1,n_2,A))$ can be computed up to arbitrary predetermined precision using only arithmetic operations and comparisons by Proposition \ref{comp_prop}. Using this we can define
$$
\Gamma_{n_2,n_1}(A)=4n_1^2-\mathrm{Leb}(U(n_1,n_2,A))
$$
where, without loss of generality, we assume that we have computed the exact value of the Lebesgue measure (since we can absorb this error in the first limit). $\Gamma_{n_2,n_1}$ are arithmetical algorithms using the fact that $\texttt{DistSpec}$ is and the above proposition. The only non-trivial part is convergence. The algorithm is summarised in the routine \texttt{LebSpec} in \S \ref{leb_num}.

We now show that the algorithm \texttt{LebSpec} converges and realises the $\Pi_2^A$ classification. There exists a compact set $K$ such that $\left\|R(z,A)\right\|^{-1}> 1$ on $K^c$ and without loss of generality we can make $C$ larger, $C\in\mathbb{N}$ and take $K=[-C,C]^2$. For $n_1\geq C$
$$
U(n_1,n_2,A)=([-C,C]^2\cap(\cup_{z\in\texttt{Grid}(n_1,n_2)}B(z,F_{n_1}(z))))\cup([-n_1,n_1]^2\backslash[-C,C]^2),
$$
since $F_{n}(z)\geq \left\|R(z,A)\right\|^{-1}$. It follows that for large $n_1$
$$
\Gamma_{n_2,n_1}(A)=4C^2-\mathrm{Leb}([-C,C]^2\cap(\cup_{z\in\texttt{Grid}(n_1,n_2)}B(z,F_{n_1}(z)))).
$$
As $n_1\rightarrow\infty$, $[-C,C]^2\cap(\cup_{z\in\texttt{Grid}(n_1,n_2)}B(z,F_{n_1}(z)))$ converges to the closed set
\begin{equation*}
K(n_2,A)=[-C,C]^2\cap(\cup_{z\in\texttt{Grid}(C,n_2)}B(z,\left\|R(z,A)\right\|^{-1}))
\end{equation*}
from above and hence
\begin{equation*}
\lim_{n_1\rightarrow\infty}\Gamma_{n_2,n_1}(A)=4C^2-\mathrm{Leb}(K(n_2,A)),
\end{equation*}
from below. Consider the relatively open set
\begin{equation*}
V(n_2,A)=[-C,C]^2\cap(\cup_{z\in\texttt{Grid}(C,n_2)}D(z,\left\|R(z,A)\right\|^{-1})).
\end{equation*}
Clearly, $\mathrm{Leb}(K(n_2,A))=\mathrm{Leb}(V(n_2,A))$ since the sets differ by a finite collection of circular arcs or points (recall we defined the open ball of radius zero to be the empty set). Hence we must show that 
\begin{equation*}
\lim_{n_2\rightarrow\infty}\mathrm{Leb}(V(n_2,A))=\mathrm{Leb}(\rho_{C}(A)),
\end{equation*}
where $\rho_{C}(A)=[-C,C]^2\backslash\mathrm{Sp}(A)$. For $z\in\rho_C(A)$,
$$
\mathrm{dist}(z,\mathrm{Sp}(A))\geq\left\|R(z,A)\right\|^{-1}
$$
and hence we get $V(n_2,A)\subset\rho_{C}(A)$. Since $\rho_C(A)$ is relatively open, a simple density argument using the continuity of $\left\|R(z,A)\right\|^{-1}$ yields $V(n_2,A)\uparrow\rho_{C}(A)$ as $n_2\rightarrow\infty$ since the grid refines itself. So we get
$$
\mathrm{Leb}(V(n_2,A))\uparrow\mathrm{Leb}(\rho_{C}(A)).
$$
This proves the convergence and also shows that $\Gamma_{n_2}(A)\downarrow \Xi_1^L(A)$, thus yielding the $\Pi_2^A$ classification. The same argument works in the one-dimensional case when considering self-adjoint operators $\Omega_{\mathrm{D}}$ and $\mathrm{Leb}_{\mathbb{R}}$. We simply restrict everything to the real line and consider the interval $[-C,C]$ rather than a square.

\textbf{Step 2:} $\{\Xi_1^L,\Omega_f,\Lambda_i\},\{\Xi_1^L,\Omega_\mathrm{D},\Lambda_i\}\notin\Delta^G_2$. It is enough to consider $\Lambda_2$. We will only show that $\mathrm{SCI}(\Xi_1^L,\Omega_\mathrm{D},\Lambda_2)_{G} \geq 2$ for which we use $\mathrm{Leb}_{\mathbb{R}}$ and the two-dimensional case is similar. Suppose for a contradiction that there exists a height one tower $\{\Gamma_n\}$, then $\Lambda_{\Gamma_n}(A)$ is finite for each $A\in\Omega_\mathrm{D}$. Hence, for every $A$ and $n$ there exists a finite number $N(A,n)\in\mathbb{N}$ such that the evaluations from $\Lambda_{\Gamma_n}(A)$ only take the matrix entries $A_{ij} = \left\langle Ae_j , e_i\right\rangle$ with $i,j\leq N(A,n)$ into account.

Pick any sequence $a_1,a_2,\ldots$ that is dense in the unit interval $[0,1]$. Consider the matrix operators $A_m=\mathrm{diag}\{a_1,a_2,\ldots,a_m\}\in\mathbb{C}^{m\times m}$, $B_m=\mathrm{diag}\{0,0,\ldots,0\}\in\mathbb{C}^{m\times m}$ and $C=\mathrm{diag}\{0,0,\ldots\}$. Set $A=\bigoplus_{m=1}^{\infty}(B_{k_m}\oplus A_{k_m})$, where we choose an increasing sequence $k_m$ inductively as follows. Set $k_1=1$ and suppose that $k_1,\ldots,k_m$ have been chosen. $\mathrm{Sp}(B_{k_1}\oplus A_{k_1}\oplus\cdots\oplus B_{k_m}\oplus A_{k_m}\oplus C)=\{0,a_1,a_2,\ldots,a_{k_m}\}$ and hence $\mathrm{Leb}(\mathrm{Sp}(B_{k_1}\oplus A_{k_1}\oplus\cdots\oplus B_{k_m}\oplus A_{k_m}\oplus C))=0$ so there exists some $n_m\geq m$ such that if $n\geq n_m$ then
\begin{equation*}
\Gamma_n(B_{k_1}\oplus A_{k_1}\oplus\cdots\oplus B_{k_m}\oplus A_{k_m}\oplus C)\leq \frac{1}{2}.
\end{equation*}
Now let $k_{m+1}\geq \max\{N(B_{k_1}\oplus A_{k_1}\oplus\cdots\oplus B_{k_m}\oplus A_{k_m}\oplus C,n_m),k_m+1\}$. Any evaluation function $f_{i,j}\in\Lambda$ is simply the $(i,j)^{\rm th}$ matrix entry and hence by construction
\begin{equation*}
f_{i,j}(B_{k_1}\oplus A_{k_1}\oplus\cdots\oplus B_{k_m}\oplus A_{k_m}\oplus C)=f_{i,j}(A),
\end{equation*}
for all $f_{i,j}\in\Lambda_{\Gamma_{n_m}}(B_{k_1}\oplus A_{k_1}\oplus\cdots\oplus B_{k_m}\oplus A_{k_m}\oplus C)$. By assumption (iii) in Definition \ref{Gen_alg}, it follows that $\Lambda_{\Gamma_{n_m}}(B_{k_1}\oplus A_{k_1}\oplus\cdots\oplus B_{k_m}\oplus A_{k_m}\oplus C)=\Lambda_{\Gamma_{n_m}}(A)$ and hence by assumption (ii) in the same definition that $\Gamma_{n_m}(A)=\Gamma_{n_m}(B_{k_1}\oplus A_{k_1}\oplus\cdots\oplus B_{k_m}\oplus A_{k_m}\oplus C)\leq 1/2$. But $\lim_{n\rightarrow\infty}(\Gamma_n(A))=\mathrm{Leb}(\overline{\{0,a_1,a_2,\ldots\}})=1$, a contradiction.

\textbf{Step 3}: $\{\Xi_1^L,\Omega,\Lambda_1\}\in\Pi^A_3$ for $\Omega=\Omega_{\mathrm{B}},\Omega_\mathrm{SA}$, $\Omega_\mathrm{N}$ or $\Omega_g$. We will deal with the case of $\Omega_{\mathrm{B}}$. The cases of $\Omega_\mathrm{N}$ and $\Omega_g$ then follow via $\Omega_{\mathrm{N}}\subset\Omega_{g}\subset \Omega_{\mathrm{B}}$ and the one-dimensional Lebesgue measure case for $\Omega_\mathrm{SA}$ is similar. A careful analysis of the proof in step 1 yields that
\begin{itemize}
	\item $\Gamma_{n_2,n_1}(A)$ converges to $\Gamma_{n_2}(A)$ from below as $n_1\rightarrow\infty$.
	\item $\Gamma_{n_2}(A)$ converges to $\mathrm{Leb}(\mathrm{Sp}(A))$ monotonically from above as $n_2\rightarrow\infty$.
\end{itemize}
We can ensure that the first limit converges from below by always slightly overestimating the Lebesgue measure of $U(n_1,n_2)$ (with error converging to zero) and using Proposition \ref{comp_prop}. These observations will be used later to answer question 3. We do not need to know $c_n$ for the above proof to work, but we will need it for the first of the above facts. A slight alteration of the proof/algorithm by inserting an additional successive limit deals with the general case.

Define the function
$$
\gamma_{n,m}(z;A)=\min\{\sigma_{\mathrm{inf}}(P_m(A-zI){|_{P_n\mathcal{H}}}),\sigma_{\mathrm{inf}}(P_m(A^*-\bar{z}I){|_{P_n\mathcal{H}}})\},
$$
where $\sigma_{\mathrm{inf}}$ denotes the injection modulus/smallest singular value (see also Appendix \ref{basic_spec22}). One can show that $\gamma_{n,m}$ converges uniformly on compact subsets to
$$
\gamma_{n}(z;A)=\min\{\sigma_{\mathrm{inf}}((A-zI){|_{P_n\mathcal{H}}}),\sigma_{\mathrm{inf}}((A^*-\bar{z}I){|_{P_n\mathcal{H}}})\},
$$
as $m\rightarrow\infty$ and that this converges uniformly down to $\left\|R(z,A)\right\|^{-1}$ on compact subsets as $n\rightarrow\infty$ \cite{hansen2011solvability}. With a slight abuse of notation, we can approximate $\gamma_{n,m}(z;A)$ to within $1/m$ by $\texttt{DistSpec}(A,n,z,m)$ (where the spacing of the search routine is $1/m$, see also Appendix \ref{basic_spec22}) so that this converges uniformly on compact subsets to $\gamma_{n}(z;A)$. In exactly the same manner as before, define
\begin{align*}
U(n_1,n_2,n_3,A)&=[-n_2,n_2]^2\cap(\cup_{z\in\texttt{Grid}(n_2,n_3)}B(z,\gamma_{n_2,n_1}(z;A))),\\
\Gamma_{n_3,n_2,n_1}(A)&=(2n_2)^2-\mathrm{Leb}(U(n_1,n_2,n_3,A)).
\end{align*}
The stated uniform convergence means that the argument in step 1 carries through and we have a height three tower, realising the $\Pi^A_3$ classification.

\textbf{Step 4}: $\{\Xi_1^L,\Omega_\mathrm{SA},\Lambda_1\}\notin\Delta^G_3$. The proof is exactly the same argument as the proof of step 3 of Theorem \ref{spec_rad_thm3}. However, in this case to gain the contradiction, we then define $\tilde{\Gamma}_{n_2,n_1}(\{a_{i,j}\})=\min\{\max\{1-\Gamma_{n_2,n_1}(A)/2,0\},1\}$ where $\{\Gamma_{n_2,n_1}\}$ is the supposed height two tower for $\{\Xi_1^L,\Omega_\mathrm{SA},\Lambda_1\}$.

\textbf{Step 5}: $\{\Xi_1^L,\Omega,\Lambda_1\}\notin\Delta^G_3$ for $\Omega=\Omega_{\mathrm{B}},\Omega_{\mathrm{N}}$, or $\Omega_g$. Since $\Omega_\mathrm{N}\subset\Omega_g\subset\Omega_{\mathrm{B}}$, we only need to deal with $\Omega_\mathrm{N}$. We can use a similar argument as in step 4, but now replacing each $C^{(j)}$ by
$$
D^{(j)}=\bigoplus_{k=1}^j ih_k C^{(j)},
$$
where $h_1,h_2,\ldots$ is a dense sequence in $[0,1]$ and this operators acts on $X_j=\bigoplus_{k=1}^j l^2(\mathbb{N})$. This ensures that the spectrum of the operator yields a positive two-dimensional Lebesgue measure if and only if $\tilde\Xi_2(\{a_{i,j}\})=0$. The rest of the argument is entirely analogous.

\textbf{Step 6}: $\Delta^G_2 \not\owns\{\Xi_1^L,\Omega,\Lambda_2\}\in\Pi^A_2$ for $\Omega=\Omega_{\mathrm{B}},\Omega_\mathrm{SA}$, $\Omega_\mathrm{N}$ or $\Omega_g$. The impossibility result follows by considering diagonal operators. For the existence of $\Pi^A_2$ algorithms, we can use the construction in step 3, but the knowledge of matrix values of $A^*A$ allows us to skip the first limit and approximate $\gamma_n$ directly.
\end{proof}

\begin{proof}[Proof of Theorem \ref{Leb_2}]
Using the convergence
$$
\lim_{\epsilon\downarrow 0}\mathrm{Leb}(\widehat{\mathrm{Sp}}_{\epsilon}(A))=\mathrm{Leb}(\mathrm{Sp}(A)),
$$
the lower bounds in Theorem \ref{Leb_1} immediately imply the lower bounds in Theorem \ref{Leb_2}. Hence we only need to construct the appropriate algorithms.

\textbf{Step 1}: $\{\Xi_2^L,\Omega_f,\Lambda_1\},\{\Xi_2^L,\Omega_{\mathrm{D}},\Lambda_1\}\in\Sigma_1^A$. Let $A\in\Omega_f$ and
$$
E_n=\frac{1}{n}\left(\mathbb{Z}+i\mathbb{Z}\right)\cap\{z\in\mathbb{C}:F_n(z)\leq\epsilon\}\cap[-n,n]^2.
$$
Clearly, we can compute $E_n$ with finitely many arithmetic operations and comparisons and we set
$$
\Gamma_n(A)=\mathrm{Leb}\left(\cup_{z\in E_n}D(z,\max\{0,\epsilon-F_n(z)\})\right).
$$
Proposition \ref{comp_prop} shows that, without loss of generality, we can assume $\Gamma_n(A)$ can be computed exactly using finitely many arithmetic operations and comparisons. The algorithm is presented in the \texttt{LebPseudoSpec} routine in \S \ref{leb_num} and the following shows that this algorithm is sharp in the SCI hierarchy.

Suppose that $F_n(z)<\epsilon$ and that $\left|w\right|<\epsilon-F_n(z)$. If $z\in\mathrm{Sp}(A)$ then clearly 
$$
\left\|R(z+w,A)\right\|^{-1}\leq\left|w\right|<\epsilon-F_n(z)\leq \epsilon,
$$
and this holds trivially if $z+w\in\mathrm{Sp}(A)$. So assume that neither of $z,z+w$ are in the spectrum. The resolvent identity yields
$$
\left\|R(z+w,A)\right\|\geq\left\|R(z,A)\right\|-\left|w\right|\left\|R(z+w,A)\right\|\left\|R(z,A)\right\|,
$$
which rearranges to
$$
\left\|R(z+w,A)\right\|^{-1}\leq\left\|R(z,A)\right\|^{-1}+\left|w\right|<\epsilon.
$$
It follows that $\cup_{z\in E_n}D(z,\max\{0,\epsilon-F_n(z)\})$ is in $\widehat{\mathrm{Sp}}_{\epsilon}(A)$ and hence that $\Gamma_{n}(A)\leq \Xi_2^L(A)$. Without loss of generality by taking successive maxima we can assume that $\Gamma_{n}(A)$ is increasing. Together, these will yield the $\Sigma^A_1$ classification once convergence is shown. Using the uniform convergence of $F_n$ and density of $1/n(\mathbb{Z}+i\mathbb{Z})\cap[-n,n]^2$, we see that pointwise convergence holds:
$$
\chi_{\cup_{z\in E_n}D(z,\max\{0,\epsilon-F_n(z)\}}\rightarrow\chi_{\widehat{\mathrm{Sp}}_{\epsilon}(A)},
$$
where $\chi_E$ denotes the indicator function of a set $E$. It follows by the dominated convergence theorem that $\Gamma_n(A)\rightarrow\mathrm{Leb}(\widehat{\mathrm{Sp}}_{\epsilon}(A))$. The proof for $\Omega_{\mathrm{D}}$ is similar by restricting everything to the real line.

\textbf{Step 2}: $\{\Xi_2^L,\Omega,\Lambda_1\}\in\Sigma_2^A$ for $\Omega=\Omega_{\mathrm{B}},\Omega_\mathrm{SA}$, $\Omega_\mathrm{N}$ or $\Omega_g$. To prove this, we simply replace $F_{n_1}$ by the functions $\gamma_{n_2,n_1}$ and set
$$
\Gamma_{n_2,n_1}(A)=\mathrm{Leb}\left(\cup_{z\in E_{n_2}}D(z,\max\{0,\epsilon-\gamma_{n_2,n_1}(z;A)\})\right).
$$

\textbf{Step 3}: $\{\Xi_2^L,\Omega,\Lambda_2\}\in\Sigma_1^A$ for $\Omega=\Omega_{\mathrm{B}},\Omega_\mathrm{SA}$, $\Omega_\mathrm{N}$ or $\Omega_g$. The knowledge of matrix values of $A^*A$ allows us to skip the first limit in the construction of step 2 and approximate $\gamma_n$ directly.
\end{proof}

\begin{proof}[Proof of Proposition \ref{leb_cty}]
We begin with the proof of 1. Suppose $A\in\Omega_\mathrm{D}$ has $\mathrm{Leb}_{\mathbb{R}}(\mathrm{Sp}(A))=0$ and let $A_n\in\Omega_\mathrm{D}$ be such that $\left\|A-A_n\right\|\rightarrow 0$ as $n\rightarrow\infty$. This implies that $\mathrm{Sp}(A_n)\rightarrow\mathrm{Sp}(A)$ since all our operators are normal. To prove that $\mathrm{Leb}_{\mathbb{R}}(\mathrm{Sp}(A_n))\rightarrow0$, it is enough to prove that
\begin{equation} 
\label{fn}
\mathrm{Leb}(F_n)\downarrow0,
\end{equation}
where $F_n=\mathrm{Sp}(A)\cup(\cup_{m\geq n}\mathrm{Sp}(A_m))$. But $F_n$ decreases to $\mathrm{Sp}(A)$ and is bounded in measure, so \eqref{fn} holds. For the converse, let $\mathrm{Leb}_{\mathbb{R}}(\mathrm{Sp}(A))>0$. Without loss of generality, assume that all of $A$'s entries lie in $[0,1]$. Let $\mathbb{D}_n$ denote the set $\{j/2^n\}_{j=1}^n$ and let us consider the map
$
\phi_n:x\hookrightarrow2^{-n}\left\lceil x2^n\right\rceil
$
on $[0,1]$. Let $A_n$ be the diagonal operator obtained by applying $\phi_n$ to each of $A$'s entries. We clearly have that $\left\|A-A_n\right\|\rightarrow 0$ as $n\rightarrow\infty$ but note that $\mathrm{Sp}(A_n)$ is finite so has Lebesgue measure $0$. Hence $\Xi_1^L$ is discontinuous at $A$.

To prove 2, note that for $A\in\Omega_\mathrm{D}$, $\mathrm{Leb}_{\mathbb{R}}(S_{\epsilon}(A))=0$. Let $A_n\in\Omega_\mathrm{D}$ have $\left\|A-A_n\right\|\rightarrow 0$. Then given some $0<\delta<\epsilon$ it holds for large $n$ that
$
\mathrm{Sp}_{\epsilon-\delta}(A)\subset\mathrm{Sp}_{\epsilon}(A_n)\subset\mathrm{Sp}_{\epsilon+\delta}(A)
$
and hence that
\begin{align*}
\limsup_{n\rightarrow\infty}\mathrm{Leb}_{\mathbb{R}}(\mathrm{Sp}_{\epsilon}(A_n))&\leq\mathrm{Leb}_{\mathbb{R}}(\mathrm{Sp}_{\epsilon+\delta}(A))\\
\liminf_{n\rightarrow\infty}\mathrm{Leb}_{\mathbb{R}}(\mathrm{Sp}_{\epsilon}(A_n))&\geq\mathrm{Leb}_{\mathbb{R}}(\mathrm{Sp}_{\epsilon-\delta}(A)).
\end{align*}
Now let $\delta\downarrow0$ and use the fact that $\Xi_2^L$ is continuous in $\epsilon$. 
\end{proof}

Finally, we deal with the question of determining if the Lebesgue measure is zero. Recall that for this problem, $(\mathcal{M},d)$ denotes the set $\{0,1\}$ endowed with the discrete topology and we consider the problem function
$$
\Xi_3^L(A)=
\begin{cases}
0,\quad\text{ if }\mathrm{Leb}(\mathrm{Sp}(A))>0\\
1,\quad\text{ otherwise.}
\end{cases}
$$

\begin{proof}[Proof of Theorem \ref{Leb_3}] We will show that $\{\Xi_3^L,\Omega_f,\Lambda_1\}\in \Pi_3^A$ and $\{\Xi_3^L,\Omega_\mathrm{D},\Lambda_2\}\notin\Delta_3^G$. The analogous statements $\{\Xi_3^L,\Omega_\mathrm{D},\Lambda_1\}\in \Pi_3^A$ and $\{\Xi_3^L,\Omega_f,\Lambda_2\}\notin\Delta_3^G $ follow from similar arguments.

The lower bound argument can also be used when considering $\Lambda_2$ and $\Omega=\Omega_{\mathrm{B}},\Omega_\mathrm{SA}$, $\Omega_\mathrm{N}$ or $\Omega_g$. We will also prove the lower bound $\{\Xi_3^L,\Omega_\mathrm{SA},\Lambda_1\}\notin \Delta_4^G$. The remaining lower bounds for $\Lambda_1$ follow from a similar argument and construction as in step 5 of the proof of Theorem \ref{Leb_1} to ensure we are dealing with two-dimensional Lebesgue measure. Finally, we prove that $\{\Xi_3^L,\Omega_{\mathrm{B}},\Lambda_1\}\in\Pi_4^A$. The upper bounds for $\Omega=\Omega_\mathrm{SA}$, $\Omega_\mathrm{N}$ or $\Omega_g$ and $\Lambda_1$ follow an almost identical argument. When considering $\Lambda_2$, we can collapse the first limit in the same manner as we did for solving $\Xi_1^L$.

\textbf{Step 1}: $\{\Xi_3^L,\Omega_f,\Lambda_1\}\in \Pi_3^A$. First we use the algorithm used to compute $\Xi_1^L$ in Theorem \ref{Leb_1}, which we shall denote by $\widetilde{\Gamma}$, to build a height 3 tower for $\{\Xi_3^L,\Omega_f\}$. As above, $\Omega_f$ denotes the set of bounded operators with the usual assumption of bounded dispersion (now with known bounds $c_n$). Recall that we observed
\begin{itemize}
	\item $\widetilde{\Gamma}_{n_2,n_1}(A)$ converges to $\widetilde{\Gamma}_{n_2}(A)$ from below as $n_1\rightarrow\infty$.
	\item $\widetilde{\Gamma}_{n_2}(A)$ converges to $\mathrm{Leb}(\mathrm{Sp}(A))$ monotonically from above as $n_2\rightarrow\infty$.
\end{itemize}
We can alter our algorithms, by taking maxima, so that we can assume without loss of generality that $\widetilde{\Gamma}_{n_2,n_1}(A)$ converges to $\widetilde{\Gamma}_{n_2}(A)$ \textit{monotonically} from below as $n_1\rightarrow\infty$. Now let
$$
\Gamma_{n_3,n_2,n_1}(A)=\chi_{[0,1/n_3]}(\widetilde{\Gamma}_{n_2,n_1}(A)).
$$
Note that $\chi_{[0,1/n_3]}$ is left continuous on $[0,\infty)$ with right limits. Hence by the assumed monotonicity
\begin{equation*}
\lim_{n_1\rightarrow\infty}\Gamma_{n_3,n_2,n_1}(A)=\chi_{[0,1/n_3]}(\widetilde{\Gamma}_{n_2}(A)).
\end{equation*}
It follows that
\begin{equation*}
\lim_{n_2\rightarrow\infty}\lim_{n_1\rightarrow\infty}\Gamma_{n_3,n_2,n_1}(A)=\chi_{[0,1/n_3]}(\mathrm{Leb}(\mathrm{Sp}(A))\pm),
\end{equation*}
where $\pm$ denotes one of the right or left limits (it is possible to have either). It is then easy to see that
\begin{equation*}
\lim_{n_3\rightarrow\infty}\lim_{n_2\rightarrow\infty}\lim_{n_1\rightarrow\infty}\Gamma_{n_3,n_2,n_1}(A)=\Xi_3^L(A).
\end{equation*}
It is also clear that the answer to the question is ``No'' if $\Gamma_{n_3}(A)=0$, which yields the $\Pi_3^A$ classification.

\textbf{Step 2}: $\{\Xi_3^L,\Omega_\mathrm{D},\Lambda_1\}\notin\Delta_3^G$.
Assume for a contradiction that this is false and $\{\widehat{\Gamma}_{n_2,n_1}\}$ is a general height two tower for $\{\Xi_3^L,\Omega_\mathrm{D}\}$. Let $(\mathcal{M},d)$ be discrete space $\{0,1\}$ and $\tilde\Omega$ denote the collection of all infinite matrices $\{a_{i,j}\}_{i,j\in\mathbb{N}}$ with entries $a_{i,j}\in\{0,1\}$ and consider the problem function
\begin{equation*}
\tilde\Xi_1(\{a_{i,j}\}):\text{ Does }\{a_{i,j}\}\text{ have a column containing infinitely many non-zero entries?}
\end{equation*}

For $j\in\mathbb{N}$, let $\{b_{i,j}\}_{i\in\mathbb{N}}$ be a dense subset of $I_j:=[1-1/2^{j-1},1-1/{2^j}]$. Given a matrix $\{a_{i,j}\}_{i,j\in\mathbb{N}}\in\tilde\Omega$, construct a matrix $\{c_{i,j}\}_{i,j\in\mathbb{N}}$ by letting $c_{i,j}=a_{i,j}b_{r(i,j),j}$ where
$$
r(i,j)=\max\left\{1,\sum_{k=1}^{i}a_{k,j}\right\}.
$$
Now consider any bijection $\phi:\mathbb{N}\rightarrow\mathbb{N}^2$ and define the diagonal operator
\begin{equation*}
A=\mathrm{diag}(c_{\phi(1)},c_{\phi(2)},c_{\phi(3)},\ldots).
\end{equation*}
The algorithm $\widehat{\Gamma}_{n_2,n_1}$ thus translates to an algorithm $\Gamma'_{n_2,n_1}$ for $\{\tilde\Xi_1,\tilde\Omega\}$. Namely, set $\Gamma'_{n_2,n_1}(\{a_{i,j}\}_{i\in\mathbb{N}})=\widehat{\Gamma}_{n_2,n_1}(A)$. The fact that $\phi$ is a bijection shows that the lowest level $\Gamma'_{n_2,n_1}$ are generalised algorithms (and are consistent). In particular, given $N$, we can find $\{A_{i,j}:i,j\leq N\}$ using finitely many evaluations of the matrix values $\{c_{k,l}\}$. But for any given $c_{k,l}$, we can evaluate this entry using only finitely many evaluations of the matrix values $\{a_{m,n}\}$ by the construction of $r$. Finally note that
\begin{equation*}
\mathrm{Sp}(A)=\left(\bigcup_{j:\sum_ia_{i,j}=\infty}I_j\right)\cup Q,
\end{equation*}
where $Q$ is at most countable. Hence
\begin{equation*}
\mathrm{Leb}_{\mathbb{R}}(\mathrm{Sp}(A))=\sum_{j:\sum_ia_{i,j}=\infty}\frac{1}{2^j}.
\end{equation*}
It follows that $\tilde \Xi_1(\{a_{i,j}\})=\Xi_3^L(A)$ and hence we get a contradiction.

\textbf{Step 3}: $\{\Xi_3^L,\Omega_\mathrm{SA},\Lambda_1\}\notin\Delta_4^G$. Suppose for a contradiction that $\{\Gamma_{n_3,n_2,n_1}\}$ is a height three tower of general algorithms for the problem $\{\Xi_3^L,\Omega_\mathrm{SA},\Lambda_1\}$. Let $(\mathcal{M},d)$ be the space $\{0,1\}$ with the discrete metric, let $\tilde{\Omega}$ denote the collection of all infinite arrays $\{a_{m,i,j}\}_{m,i,j\in\mathbb{N}}$ with entries $a_{m,i,j}\in\{0,1\}$ and consider the problem function
\begin{equation*}
\begin{split}
\tilde{\Xi}_4(\{a_{m,i,j}\}):\text{ For every $m$, does }\{a_{m,i,j}\}_{i,j}&\text{ have (only) finitely many columns}\\ &\quad\quad\quad\quad\quad\quad\quad\text{with (only) finitely many 1's?}
\end{split}
\end{equation*}
Recall that it was shown in \S \ref{bigHth} that $\mathrm{SCI}(\tilde{\Xi}_4,\tilde{\Omega})_{G} = 4$. We will gain a contradiction by using the supposed height three tower to solve $\{\tilde{\Xi}_4,\tilde{\Omega}\}$.

The construction follows step 3 of the proof of Theorem \ref{spec_rad_thm3} closely. For fixed $m$, recall the construction of the operator $A_m:=A(\{a_{m,i,j}\}_{i,j})$ from that proof, the key property being that if $\{a_{m,i,j}\}_{i,j}$ has (only) finitely many columns with (only) finitely many 1's then $\mathrm{Sp}(A_m)$ is a finite subset of $[-1,1]$, otherwise it is the whole interval $[-1,1]$. Now consider the intervals $I_m=[1-2^{m-1},1-2^{m}]$ and affine maps, $\alpha_m$, that act as a bijection from $[-1,1]$ to $I_m$. Without loss of generality, identify $\Omega_\mathrm{SA}$ with self adjoint operators in $\mathcal{B}(X)$ where $X=\bigoplus_{i=1}^{\infty}\bigoplus_{j=1}^{\infty}X_{i,j}$ in the $l^2$-sense with $X_{i,j}=l^2(\mathbb{N})$. We then consider the operator
$$
T(\{a_{m,i,j}\}_{m,i,j})=\bigoplus_{m=1}^\infty \alpha_{m}(A_m).
$$
The same arguments in the proof of Theorem \ref{spec_rad_thm3} show that the map
$$
\tilde\Gamma_{n_3,n_2,n_1}(\{a_{m,i,j}\}_{m,i,j})=\Gamma_{n_3,n_2,n_1}(T(\{a_{m,i,j}\}_{m,i,j}))
$$
defines a general tower using the relevant pointwise evaluation functions of the array $\{a_{m,i,j}\}_{m,i,j}$. If it holds that $\tilde{\Xi}_4(\{a_{m,i,j}\})=1$, then $\mathrm{Sp}(T(\{a_{m,i,j}\}_{m,i,j}))$ is countable and hence $\Xi_3^L(T(\{a_{m,i,j}\}_{m,i,j}))=1$. On the other hand, if $\tilde{\Xi}_4(\{a_{m,i,j}\})=0$, then there exists $m$ with $\mathrm{Sp}(A_m)=[-1,1]$ and hence $I_m\subset \mathrm{Sp}(T(\{a_{m,i,j}\}_{m,i,j}))$ so that $\Xi_3^L(T(\{a_{m,i,j}\}_{m,i,j}))=0$. It follows that $\{\tilde\Gamma_{n_3,n_2,n_1}\}$ provides a height three tower for $\{\tilde{\Xi}_4,\tilde{\Omega}\}$, a contradiction.

\textbf{Step 4}: $\{\Xi_3^L,\Omega_{\mathrm{B}},\Lambda_1\}\in\Pi_4^A$. Recall the tower of algorithms to solve $\{\Xi_1^L,\Omega_{\mathrm{B}},\Lambda_1\}$, and denote it by $\widetilde{\Gamma}$. Our strategy will be the same as in step 1 but with an additional successive limit. It is easy to show that
\begin{itemize}
	\item $\widetilde{\Gamma}_{n_3,n_2,n_1}(A)$ converges to $\widetilde{\Gamma}_{n_3,n_2}(A)$ from above as $n_1\rightarrow\infty$.
	\item $\widetilde{\Gamma}_{n_3,n_2}(A)$ converges to $\widetilde{\Gamma}_{n_3}(A)$ from below as $n_2\rightarrow\infty$.
	\item $\widetilde{\Gamma}_{n_3}(A)$ converges to $\mathrm{Leb}(\mathrm{Sp}(A))$ from above as $n_3\rightarrow\infty$.
\end{itemize}
Again, by taking successive maxima or minima where appropriate, we can assume that all of these are monotonic. Now let
$$
\Gamma_{n_4,n_3,n_2,n_1}(A)=\chi_{[0,1/n_4]}(\widetilde{\Gamma}_{n_3,n_2,n_1}(A)).
$$
Note that $\chi_{[0,1/n_4]}$ is left continuous on $[0,\infty)$ with right limits. Hence by the assumed monotonicity and arguments as in step 1, it is easy to see that
\begin{equation*}
\lim_{n_4\rightarrow\infty}\lim_{n_3\rightarrow\infty}\lim_{n_2\rightarrow\infty}\lim_{n_1\rightarrow\infty}\Gamma_{n_4,n_3,n_2,n_1}(A)=\Xi_3^L(A).
\end{equation*}
It is also clear that the answer to the question is ``No'' if $\Gamma_{n_4}(A)=0$, which yields the $\Pi_4^A$ classification.
\end{proof}

\section{Proofs Concerning Fractal Dimensions}
\label{pf_frac}

We begin with the box-counting dimension. For the construction of towers of algorithms, it is useful to use a slightly different but equivalent \cite{falconer2004fractal} definition of the upper and lower box-counting dimensions. Let $F\subset\mathbb{R}$ be bounded and $N'_\delta(F)$ denote the number of $\delta$-mesh intervals that intersect $F$. A $\delta$-mesh interval is an interval of the form $[m\delta,(m+1)\delta]$ for $m\in\mathbb{Z}$. Then
$$
\overline{\mathrm{dim}}_B(F)=\limsup_{\delta\downarrow{}0}\frac{\log(N'_{\delta}(F))}{\log(1/\delta)},\quad
\underline{\mathrm{dim}}_B(F)=\liminf_{\delta\downarrow{}0}\frac{\log(N'_{\delta}(F))}{\log(1/\delta)}.
$$

\begin{proof}[Proof of Theorem \ref{fractal_theoremhhhjh} for box-counting dimension] Since $\Omega_{BD}^\mathrm{D}\subset\Omega^{BD}_f\subset\Omega^{BD}_{\mathrm{SA}}$, it is enough to prove that $\{\Xi_B,\Omega^{BD}_f,\Lambda_1\}\in\Pi_2^A$, $\{\Xi_B,\Omega_{\mathrm{SA}}^{BD},\Lambda_2\}\in\Pi_2^A$, $\{\Xi_B,\Omega_{\mathrm{SA}}^{BD},\Lambda_1\}\in\Pi_3^A$, $\{\Xi_B,\Omega_{\mathrm{SA}}^{BD},\Lambda_1\}\not\in\Delta_3^A$ and $\{\Xi_B,\Omega^{BD}_\mathrm{D},\Lambda_2\}\not\in\Delta_2^A$.

\textbf{Step 1}: $\{\Xi_B,\Omega^{BD}_f,\Lambda_1\}\in\Pi_2^A$. Recall the existence of a height one tower, $\{\tilde\Gamma_n\}$, using $\Lambda_1$ for $\mathrm{Sp}(A)$, $A\in\Omega^{BD}_f$ from Appendix \ref{basic_spec22}. Furthermore, $\tilde\Gamma_n(A)$ outputs a finite collection $\{z_{1,n},\ldots,z_{k_n,n}\}\subset \mathbb{Q}$ such that $\mathrm{dist}(z_{j,n},\mathrm{Sp}(A))\leq2^{-n}$. Define the intervals
$$
I_{j,n}=[z_{j,n}-2^{-n},z_{j,n}+2^{-n}]
$$
and let $\mathcal{I}_{m}$ denote the collection of all $2^{-m}$-mesh intervals. Let $\Upsilon_{m,n}(A)$ be any union of finitely many such mesh intervals with minimal length $\left|\Upsilon_{m,n}(A)\right|$ (``length'' being the number of intervals $\in\mathcal{I}_{m}$ that make up $\Upsilon_{m,n}(A)$) such that
$$
\Upsilon_{m,n}(A)\cap I_{j,l}\neq\emptyset,\quad\text{for }1\leq l\leq n,1\leq j\leq k_l.
$$
There may be more than one such collection, so we can gain a deterministic algorithm by enumerating each $\mathcal{I}_{m}$ and choosing the first such collection in this enumeration. It is then clear that $\left|\Upsilon_{m,n}(A)\right|$ is increasing in $n$. Furthermore, to determine $\Upsilon_{m,n}(A)$, there are only finitely many intervals in $\mathcal{I}_{m}$ to consider, namely those that have non-empty intersection with at least one $I_{j,l}$ with $1\leq l\leq n,1\leq j\leq k_l$. It follows that $\Upsilon_{m,n}(A)$ and hence $\left|\Upsilon_{m,n}(A)\right|$ can be computed in finitely may arithmetic operations and comparisons using $\Lambda_1$. 

Suppose that $I=[a,b]\in\mathcal{I}_{m}$ has $(a,b)\cap\mathrm{Sp}(A)\neq\emptyset$. Then for large $n$ there exists $z_{j,n}\in I$ such that $I_{j,n}\subset I$ and hence $I\subset\Upsilon_{m,n}(A)$ for large $n$. If $z\in\mathrm{Sp}(A)\cap 2^{-m}\mathbb{Z}$, then a similar argument shows that $z\subset\Upsilon_{m,n}(A)$ for large $n$. Since $\mathrm{Sp}(A)$ is bounded and $\mathrm{Sp}(A)\cap 2^{-m}\mathbb{Z}$ finite, it follows that $\mathrm{Sp}(A)\subset\Upsilon_{m,n}(A)$ for large $n$ and hence
$$
N_{2^{-m}}(\mathrm{Sp}(A))\leq \liminf_{n\rightarrow\infty}\left|\Upsilon_{m,n}(A)\right|.
$$
Let $W_m(A)$ be the union of all intervals in $\mathcal{I}_m$ that intersect $\mathrm{Sp}(A)$. It is clear that $W_m(A)\cap I_{j,l}\neq\emptyset$ for $1\leq l\leq n,1\leq j\leq k_l$ and hence $\left|\Upsilon_{m,n}(A)\right|\leq N'_{2^{-m}}(\mathrm{Sp}(A))$. It follows that $\lim_{n\rightarrow\infty}\left|\Upsilon_{m,n}(A)\right|=\delta_{m}(A)$ exists with
\begin{equation}
\label{delta_bd}
N_{2^{-m}}(\mathrm{Sp}(A))\leq\delta_{m}(A)\leq N'_{2^{-m}}(\mathrm{Sp}(A)).
\end{equation}

For $n_2>n_1$ set $\Gamma_{n_2,n_1}(A)=0$, otherwise set
$$
\Gamma_{n_2,n_1}(A)=\max_{n_2\leq k \leq n_1} \max_{1\leq j\leq n_1}\frac{\log(\left|\Upsilon_{k,j}(A)\right|)}{k\log(2)}.
$$
The above monotone convergence and \eqref{delta_bd} shows that
\begin{align*}
\lim_{n_1\rightarrow\infty}\Gamma_{n_2,n_1}(A)&=\Gamma_{n_2}(A)=\sup_{k\geq n_2}\frac{\log(\delta_k(A))}{k\log(2)}\geq \limsup_{k\rightarrow\infty}\frac{\log(\delta_k(A))}{k\log(2)},\\
\lim_{n_2\rightarrow\infty}\Gamma_{n_2}(A)&=\limsup_{k\rightarrow\infty}\frac{\log(\delta_k(A))}{k\log(2)}.
\end{align*}
Hence, by the assumption that the box-counting dimension exists, we have constructed a $\Pi_2^A$ tower.

\textbf{Step 2}: $\{\Xi_B,\Omega_{\mathrm{SA}}^{BD},\Lambda_2\}\in\Pi_2^A$ and $\{\Xi_B,\Omega_{\mathrm{SA}}^{BD},\Lambda_1\}\in\Pi_3^A$. The first of these is exactly as in step 1, using $\Lambda_2$ to construct the relevant $\Sigma_1^A$ tower for the spectrum. The proof that $\{\Xi_B,\Omega_{\mathrm{SA}}^{BD},\Lambda_1\}\in\Pi_3^A$ uses a height two tower, $\{\tilde\Gamma_{n_2,n_1}\}$, using $\Lambda_1$ for $\mathrm{Sp}(A)$, $A\in\Omega_{\mathrm{SA}}^{BD}$ (or any self-adjoint $A$) constructed in \cite{ben2015can}. This tower has the property that each $\tilde\Gamma_{n_2,n_1}(A)$ is a finite subset of $\mathbb{Q}$ and, for fixed $n_2$, is constant for large $n_1$. Moreover, if $z\in\lim_{n_1\rightarrow\infty}\tilde\Gamma_{n_2,n_1}(A)$, then $\mathrm{dist}(z,\mathrm{Sp}(A))\leq2^{-n_2}$. It follows that we can use the same construction as step 1 with an additional limit at the start to reach the finite set $\lim_{n_1\rightarrow\infty}\tilde\Gamma_{n_2,n_1}(A)$.

\textbf{Step 3}: $\{\Xi_B,\Omega^{BD}_\mathrm{D},\Lambda_2\}\not\in\Delta_2^A$. This is exactly the same argument as step 2 of the proof of Theorem \ref{Leb_1} with Lebesgue measure replaced by box-counting dimension.

\textbf{Step 4}: $\{\Xi_B,\Omega_{\mathrm{SA}}^{BD},\Lambda_1\}\not\in\Delta_3^A$. This is exactly the same argument as step 4 of the proof of Theorem \ref{Leb_1} with Lebesgue measure replaced by box-counting dimension.
\end{proof}

We now turn to the Hausdorff dimension. Recall Lemma \ref{halt_test} on the problem of determining whether $\mathrm{Sp}(A)\cap(a,b)\neq\emptyset$.

\begin{proof}[Proof of Lemma \ref{halt_test}]
We start with the class $\Omega_f\cap\Omega_\mathrm{SA}$. We can interpret this problem as a decision problem and the following algorithm as one that halts on output ``Yes''. Let $c=(a+b)/2$ and $\delta=(b-a)/2$, then the idea is to simply test whether $\texttt{DistSpec}(A,n,c,f(n))+c_n<\delta$. If the answer is yes, then we output ``Yes'', otherwise we output ``No'' and increase $n$ by one. Note that $\mathrm{Sp}(A)\cap(a,b)\neq\emptyset$ if and only if $\left\|R(c,A)\right\|^{-1}<\delta$ and hence as $\texttt{DistSpec}(A,n,c,f(n))+c_n$ converges down to $\left\|R(c,A)\right\|^{-1}$ we see that this provides a convergent algorithm. For $\Omega_\mathrm{SA}$ we require an additional successive limit by replacing $\texttt{DistSpec}(A,n,c,f(n))+c_n$ with the function $\gamma_{n_2,n_1}(z;A)$. If we have access to $\Lambda_2$, then this can be avoided in the usual way.
\end{proof}

To build our algorithm for the Hausdorff dimension, we use an alternative, equivalent definition for compact sets. We consider the case of subsets of $\mathbb{R}$. Let $\rho_k$ denote the set of all closed binary intervals of the form
$
[2^{-k}m,2^{-k}(m+1)], m\in\mathbb{Z}.
$
Set
$$
\mathcal{A}_k(F)=\left\{\{U_i\}_{i\in I}:I\text{ is finite },F\subset\cup_{i\in I}U_i,U_i\in\cup_{l\geq k}\rho_l\right\}
$$
and define
$$
\tilde{\mathcal{H}}^{d}_{k}(F)=\inf\left\{\sum_i\mathrm{diam}(U_i)^d:\{U_i\}_{i\in I}\in\mathcal{A}_{k}(F)\right\},\quad\tilde{\mathcal{H}}^{d}(F)=\lim_{k\rightarrow\infty}\tilde{\mathcal{H}}^{d}_{k}(F).
$$
The following can be found in \cite{fernandez2014fractal} (Theorem 3.13):

\begin{theorem}
\label{new_haus}
Let $F$ be a bounded subset of $\mathbb{R}$. Then there exists a unique $d'=\mathrm{dim}_{H'}(F)$ such that $\tilde{\mathcal{H}}^{d}(F)=0$ for $d>d'$ and $\tilde{\mathcal{H}}^{d}(F)=\infty$ for $d<d'$. Furthermore, $d'=\mathrm{dim}_H(\overline{F})$.
\end{theorem}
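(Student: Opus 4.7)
The plan is to prove Theorem \ref{new_haus} in two parts: first establishing the existence of a critical exponent $d'$ for $\tilde{\mathcal{H}}^{d}$ via a standard monotonicity argument, then identifying $d'$ with $\dim_H(\overline{F})$ by a two-sided comparison between $\tilde{\mathcal{H}}^d$ and the usual Hausdorff measure $\mathcal{H}^d$. The passage from $F$ to $\overline{F}$ will be handled by exploiting the fact that every cover in $\mathcal{A}_k(F)$ is a \emph{finite} union of \emph{closed} binary cubes, hence automatically covers the closure.

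For the first part, I would show that if $0 \leq d_1 < d_2$ and $\tilde{\mathcal{H}}^{d_1}(F) < \infty$, then $\tilde{\mathcal{H}}^{d_2}(F) = 0$. Indeed, any cover $\{U_i\}_{i\in I} \in \mathcal{A}_k(F)$ consists of binary cubes with $\mathrm{diam}(U_i) \leq 2^{-k}$, so
\begin{equation*}
\sum_i \mathrm{diam}(U_i)^{d_2} \leq 2^{-k(d_2-d_1)} \sum_i \mathrm{diam}(U_i)^{d_1},
\end{equation*}
giving $\tilde{\mathcal{H}}^{d_2}_k(F) \leq 2^{-k(d_2-d_1)} \tilde{\mathcal{H}}^{d_1}_k(F)$. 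Letting $k \to \infty$ on the right, eventual finiteness of $\tilde{\mathcal{H}}^{d_1}_k(F)$ forces $\tilde{\mathcal{H}}^{d_2}(F) = 0$. Defining $d' := \inf\{d \geq 0 : \tilde{\mathcal{H}}^d(F) = 0\}$ then yields the critical exponent structure with $\tilde{\mathcal{H}}^d(F) = \infty$ for $d < d'$ and $\tilde{\mathcal{H}}^d(F) = 0$ for $d > d'$.

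For the identification $d' = \dim_H(\overline{F})$, the first observation is that $\tilde{\mathcal{H}}^d_k(F) = \tilde{\mathcal{H}}^d_k(\overline{F})$: every element of $\mathcal{A}_k(F)$ is a finite union of closed sets, hence closed, hence contains $\overline{F}$, and conversely every cover of $\overline{F}$ covers $F$. So without loss of generality I may replace $F$ by $\overline{F}$, which is compact. For the upper direction, any finite binary-cube cover at levels $\geq k$ is a $2^{-k}$-cover in the usual Hausdorff sense, giving $\mathcal{H}^d_{2^{-k}}(\overline{F}) \leq \tilde{\mathcal{H}}^d_k(\overline{F})$ and hence $\mathcal{H}^d(\overline{F}) \leq \tilde{\mathcal{H}}^d(F)$; thus $d > d' \Rightarrow \mathcal{H}^d(\overline{F}) = 0$, so $d' \geq \dim_H(\overline{F})$.

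The reverse comparison is the only delicate point, because $\mathcal{H}^d$ uses arbitrary countable covers while $\tilde{\mathcal{H}}^d$ demands finite binary-cube covers. I would start with a countable \emph{open} cover $\{V_j\}$ of $\overline{F}$ with $\mathrm{diam}(V_j) \leq 2^{-k}$ and $\sum_j \mathrm{diam}(V_j)^d \leq \mathcal{H}^d_{2^{-k}}(\overline{F}) + \epsilon$ (allowed since $\mathcal{H}^d_\delta$ is equivalently defined via open covers). For each $V_j$ with $\delta_j := \mathrm{diam}(V_j)$, pick $n_j \geq k$ with $2^{-n_j-1} < \delta_j \leq 2^{-n_j}$; in one dimension $V_j$ meets at most two binary cubes of level $n_j$, each of diameter $2^{-n_j} \leq 2\delta_j$. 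Collecting these cubes over all $j$ gives a countable binary-cube cover of $\overline{F}$ with total $d$-mass at most $2^{d+1}\sum_j \delta_j^d$; compactness of $\overline{F}$ then extracts a finite subcover, yielding
\begin{equation*}
\tilde{\mathcal{H}}^d_k(\overline{F}) \leq 2^{d+1}\bigl(\mathcal{H}^d_{2^{-k}}(\overline{F}) + \epsilon\bigr).
\end{equation*}
Sending $\epsilon \to 0$ and $k \to \infty$ gives $\tilde{\mathcal{H}}^d(F) \leq 2^{d+1}\mathcal{H}^d(\overline{F})$. Consequently $d > \dim_H(\overline{F}) \Rightarrow \tilde{\mathcal{H}}^d(F) = 0 \Rightarrow d \geq d'$, so $d' \leq \dim_H(\overline{F})$, completing the proof. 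The main obstacle is precisely extracting the finite subcover while preserving the $d$-sum bound; the open-cover refinement combined with compactness of $\overline{F}$ is what makes this work and explains why the identification holds for $\overline{F}$ rather than $F$ itself.
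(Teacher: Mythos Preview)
The paper does not prove this theorem; it is quoted from \cite{fernandez2014fractal} (their Theorem~3.13) and used as a black box in building the tower for the Hausdorff dimension. Your argument is essentially correct and supplies what the paper omits.

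One small point of order in the reverse comparison: as written, you first replace each open $V_j$ by closed binary cubes and \emph{then} invoke compactness of $\overline{F}$ to extract a finite subcover. Closed covers of compact sets need not admit finite subcovers, so this step as stated is not justified. The fix is simply to reverse the order: first use compactness of $\overline{F}$ to pass to a finite subfamily $V_{j_1},\dots,V_{j_N}$ of the open cover (this is precisely why you chose open $V_j$), and only then replace each $V_{j_m}$ by its at most two binary cubes of level $n_{j_m}\ge k$. The resulting cover is finite, lies in $\mathcal{A}_k(\overline{F})$, and has $d$-mass at most $2^{d+1}\sum_{m}\delta_{j_m}^d \le 2^{d+1}\bigl(\mathcal{H}^d_{2^{-k}}(\overline{F})+\epsilon\bigr)$, which is exactly the inequality you want. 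With this reordering your proof is complete.
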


Denoting the dyadic rationals by $\mathbb{D}$, we shall compute $\mathrm{dim}_H(\mathrm{Sp}(A))$ via approximating the above applied to $F=\mathrm{Sp}(A)\cap\mathbb{D}^c$ and using the lemma \ref{halt_test}.

\begin{proof}[Proof of Theorem \ref{fractal_theoremhhhjh} for Hausdorff dimension]
It is enough to prove the lower bounds $\{\Xi_H,\Omega_\mathrm{D},\Lambda_2\}\notin\Delta^G_3$, $\{\Xi_H,\Omega_\mathrm{SA},\Lambda_1\}\notin\Delta^G_4$ and construct the towers of algorithms for the inclusions $\{\Xi_H,\Omega_f\cap\Omega_\mathrm{SA},\Lambda_1\}\in\Sigma^A_3$, $\{\Xi_H,\Omega_\mathrm{SA},\Lambda_1\}\in\Sigma_4^A$ and $\{\Xi_H,\Omega_\mathrm{SA},\Lambda_2\}\in\Sigma_3^A$.

\textbf{Step 1}: $\{\Xi_H,\Omega_\mathrm{D},\Lambda_2\}\notin\Delta^G_3$. Suppose for a contradiction that a height two tower, $\{\Gamma_{n_2,n_1}\}$, exists for $\{\Xi_H,\Omega_\mathrm{D}\}$ (taking values in $[0,1]$ without loss of generality). We repeat the argument in the proof of Theorem \ref{Leb_3}. Consider the same problem 
\begin{equation*}
\tilde\Xi_1(\{a_{i,j}\}):\text{ Does }\{a_{i,j}\}\text{ have a column containing infinitely many non-zero entries?}
\end{equation*}
However, now we consider the above mapping to $[0,1]$ with the usual metric. We consider the same operator
$
A=\mathrm{diag}(c_{\phi(1)},c_{\phi(2)},c_{\phi(3)},\ldots)
$
with
\begin{equation*}
\mathrm{Sp}(A)=\left(\bigcup_{j:\sum_{i}a_{i,j}=\infty}I_j\right)\cup Q,
\end{equation*}
where $Q$ is at most countable. We use the fact that the Hausdorff dimension satisfies
$$
\mathrm{dim}_H(\cup_{j=1}^\infty X_j)=\sup_{j\in\mathbb{N}}\mathrm{dim}_H(X_j)
$$
and that $\mathrm{dim}_H(Q)=0$ for any countable $Q$ to note that $\Xi_H(A)=\tilde\Xi_1(\{a_{i,j}\})$. We set $\tilde{\Gamma}_{n_2,n_1}(\{a_{i,j}\}_{i,j})=\Gamma_{n_2,n_1}(A)$ to provide a height two tower for $\tilde\Xi_1$. But this contradicts Theorem \ref{DST_main}.

\textbf{Step 2}: $\{\Xi_H,\Omega_\mathrm{SA},\Lambda_1\}\notin\Delta^G_4$. Suppose for a contradiction that $\{\Gamma_{n_3,n_2,n_1}\}$ is a height three tower of general algorithms for the problem $\{\Xi_H,\Omega_\mathrm{SA},\Lambda_1\}$ (taking values in $[0,1]$ without loss of generality). Let $(\mathcal{M},d)$ be the space $[0,1]$ with the usual metric, let $\tilde{\Omega}$ denote the collection of all infinite arrays $\{a_{m,i,j}\}_{m,i,j\in\mathbb{N}}$ with entries $a_{m,i,j}\in\{0,1\}$ and consider the problem function

\begin{equation*}
\begin{split}
\tilde{\Xi}_4(\{a_{m,i,j}\}):\text{ For every $m$, does }\{a_{m,i,j}\}_{i,j}&\text{ have (only) finitely many columns}\\ &\quad\quad\quad\quad\quad\quad\quad\text{with (only) finitely many 1's?}
\end{split}
\end{equation*}
Recall that it was shown in \S \ref{bigHth} that $\mathrm{SCI}(\tilde{\Xi}_4,\tilde{\Omega})_{G} = 4$. We will gain a contradiction by using the supposed height three tower to solve $\{\tilde{\Xi}_4,\tilde{\Omega}\}$. We use the same construction as in step 3 of the proof of Theorem \ref{Leb_3}. If $\tilde{\Xi}_4(\{a_{m,i,j}\})=1$, then $\mathrm{Sp}(T(\{a_{m,i,j}\}_{m,i,j}))$ is countable and hence $\Xi_H(T(\{a_{m,i,j}\}_{m,i,j}))=0$. On the other hand, if $\tilde{\Xi}_4(\{a_{m,i,j}\})=0$, then there exists $m$ with $\mathrm{Sp}(A_m)=[-1,1]$ and hence $I_m\subset \mathrm{Sp}(T(\{a_{m,i,j}\}_{m,i,j}))$ so that $\Xi_H(T(\{a_{m,i,j}\}_{m,i,j}))=1$. It follows that $\tilde\Gamma_{n_3,n_2,n_1}(\{a_{m,i,j}\}_{m,i,j})=1-\Gamma_{n_3,n_2,n_1}(T(\{a_{m,i,j}\}_{m,i,j}))$ provides a height three tower for $\{\tilde{\Xi}_4,\tilde{\Omega}\}$, a contradiction.

\textbf{Step 3}: $\{\Xi_H,\Omega_f\cap\Omega_\mathrm{SA},\Lambda_1\}\in\Sigma_3^A$. To construct a height three tower for $A\in\Omega_f\cap\Omega_\mathrm{SA}$, if $n_2<n_3$ set $\Gamma_{n_3,n_2,n_1}(A)=0$. Otherwise, consider the set
$$
\mathcal{A}_{n_3,n_2,n_1}(A)=\left\{\{U_i\}_{i\in I}:I\text{ is finite },S_{n_1,n_2}(A)\subset\cup_{i\in I}U_i,U_i\in\cup_{n_3\leq l\leq n_2}\rho_l\right\}
$$
where $S_{n_1,n_2}(A)$ is the union of all $S\in\rho_{n_2}$ with $S\subset[-n_1,n_1]$ and such that the algorithm discussed in Lemma \ref{halt_test} outputs ``Yes'' for the interior of $S$ and input parameter $n_1$. We then define
$$
h_{n_3,n_2,n_1}(A,d)=\inf\left\{\sum_i\mathrm{diam}(U_i)^d:\{U_i\}\in\mathcal{A}_{n_3,n_2,n_1}(A)\right\}.
$$
If $S_{n_1,n_2}(A)$ is empty then we interpret the infinum as $0$. There are only finitely many sets to check and hence the infinum is a minimisation problem over finitely many coverings (see \S \ref{need_append_frac} for a discussion of efficient implementation). It follows that $h_{n_3,n_2,n_1}(A,d)$ defines a general algorithm computable in finitely many arithmetic operations and comparisons. Furthermore, it is easy to see that
$$
\lim_{n_1\rightarrow\infty}h_{n_3,n_2,n_1}(A,d)=\inf\left\{\sum_i\mathrm{diam}(U_i)^d:\{U_i\}\in\mathcal{C}_{n_3,n_2}(A)\right\}=:h_{n_3,n_2}(A,d)
$$
from below (since we are covering larger sets as $n_1$ increases). Here
$$
\mathcal{C}_{n_3,n_2}(A)=\left\{\{U_i\}_{i\in I}:I\text{ is finite },\mathrm{Sp}(A)\cap\mathbb{D}_{n_2}^c\subset\cup_{i\in I}U_i,U_i\in\cup_{n_3\leq l\leq n_2}\rho_l\right\}
$$
and $\mathbb{D}_k:=1/{2^k}\cdot\mathbb{Z}$ denotes the dyadic rationals of resolution $k$.
We now use the property that $\mathcal{A}_k(F)$ consists of collections of finite coverings. As $n_2\rightarrow\infty$, $h_{n_3,n_2}(A,d)$ is non-increasing (since we take infinum over a larger class of coverings and the sets $\mathrm{Sp}(A)\cap\mathbb{D}_{n_2}^c$ decrease) and hence converges to some number. Clearly
\begin{equation*}
\label{sandwich1}
\lim_{n_2\rightarrow\infty}h_{n_3,n_2}(A,d)=:h_{n_3}(A,d)\geq \tilde{\mathcal{H}}^d_{n_3}(\mathrm{Sp}(A)\cap\mathbb{D}^c).
\end{equation*}
For $\epsilon>0$, let $l\in\mathbb{N}$ and $\{U_i\}\in\mathcal{A}_{n_3}(\mathrm{Sp}(A)\cap\mathbb{D}_l^c)\}$ with
$$
\sum_{i}\mathrm{diam}(U_i)^d\leq\epsilon+\tilde{\mathcal{H}}^d_{n_3}(\mathrm{Sp}(A)\cap\mathbb{D}_l^c).
$$
For large enough $n_2$, $\{U_i\}\in\mathcal{C}_{n_3,n_2}(A)$ and hence since $\epsilon>0$ was arbitrary,
\begin{equation*}
\label{sandwich2}
h_{n_3}(A,d)\leq \tilde{\mathcal{H}}^d_{n_3}(\mathrm{Sp}(A)\cap\mathbb{D}_l^c)
\end{equation*}
for all $l$. For a fixed $A$ and $d$, $h_{n_3}(A,d)$ is non-decreasing in $n_3$ and hence converges to a function of $d$, $h(A,d)$ (possibly taking infinite values). Furthermore,
$$
\tilde{\mathcal{H}}^{d}(\mathrm{Sp}(A)\cap\mathbb{D}^c)\leq h(A,d) \leq \tilde{\mathcal{H}}^{d}(\mathrm{Sp}(A)\cap\mathbb{D}_l^c).
$$
Since the set $\mathrm{Sp}(A)\cap\mathbb{D}$ is countable, its Hausdorff dimension is zero. Using sub-additivity of Hausdorff dimension and Theorem \ref{new_haus},
\begin{align*}
\mathrm{dim}_H(\mathrm{Sp}(A))&\leq \mathrm{dim}_H(\mathrm{Sp}(A)\cap\mathbb{D}^c)\\
&\leq \mathrm{dim}_H(\overline{\mathrm{Sp}(A)\cap\mathbb{D}^c})=\mathrm{dim}_{H'}(\mathrm{Sp}(A)\cap\mathbb{D}^c)\\
&\leq \mathrm{dim}_H(\overline{\mathrm{Sp}(A)\cap\mathbb{D}^c_l})=\mathrm{dim}_{H'}(\mathrm{Sp}(A)\cap\mathbb{D}^c_l)\\
&\leq \mathrm{dim}_H(\mathrm{Sp}(A)).
\end{align*}
It follows that $h(A,d)=0$ if $d>\mathrm{dim}_H(\mathrm{Sp}(A))$ and that $h(A,d)=\infty$ if $d<\mathrm{dim}_H(\mathrm{Sp}(A))$. Define
$$
\Gamma_{n_3,n_2,n_1}(A)=\sup_{j=1,\ldots,2^{n_3}}\left\{\frac{j}{2^{n_3}}:h_{n_3,n_2,n_1}(A,k/{2^{n_3}})+\frac{1}{n_2}>\frac{1}{2}\text{ for }k=1,\ldots,j\right\},
$$
where in this case we define the maximum over the empty set to be $0$.

Consider $n_2\geq n_3$. Since $h_{n_3,n_2,n_1}(A,d)\uparrow h_{n_3,n_2}(A,d)$, it is clear that
$$
\lim_{n_1\rightarrow\infty}\Gamma_{n_3,n_2,n_1}(A)=\sup_{j=1,\ldots,2^{n_3}}\left\{\frac{j}{2^{n_3}}:h_{n_3,n_2}(A,k/2^{n_3})+\frac{1}{n_2}>\frac{1}{2}\text{ for }k=1,\ldots,j\right\}=:\Gamma_{n_3,n_2}(A).
$$
If $h_{n_3}(A,d)\geq1/2$ then $h_{n_3,n_2}(A,d)+1/{n_2}>1/2$ for all $n_2$ otherwise $h_{n_3,n_2}(A,d)+1/{n_2}<1/2$ eventually. Hence
$$
\lim_{n_2\rightarrow\infty}\Gamma_{n_3,n_2}(A)=\sup_{j=1,\ldots,2^{n_3}}\left\{\frac{j}{2^{n_3}}:h_{n_3}(A,k/{2^{n_3}})\geq\frac{1}{2}\text{ for }k=1,\ldots,j\right\}=:\Gamma_{n_3}(A).
$$
Using the monotonicity of $h_{n_3}(A,d)$ in $d$ and the proven properties of the limit function $h$, it follows that
$$
\lim_{n_3\rightarrow\infty}\Gamma_{n_3}(A)=\mathrm{dim}_H(\mathrm{Sp}(A)).
$$
The fact that $h_{n_3}$ is non-decreasing in $n_3$, the set $\{1/2^{n_3},2/2^{n_3},\ldots,1\}$ refines itself, and the stated monotonicity collectively show that convergence is monotonic from below, and hence we get the $\Sigma_3^A$ classification.

\textbf{Step 4}: $\{\Xi_H,\Omega_\mathrm{SA},\Lambda_1\}\in\Sigma_4^A$ and $\{\Xi_H,\Omega_\mathrm{SA},\Lambda_2\}\in\Sigma_3^A$. The first of these can be proven as in step 3 by replacing $(n_1,n_2,n_3)$ by $(n_2,n_3,n_4)$ and the set $S_{n_2,n_1}(A)$ by the set $S_{n_3,n_2,n_1}(A)$ given by the union of all $S\in\rho_{n_3}$ with $S\subset[-n_2,n_2]$ and such that the $\Sigma_2^A$ tower of algorithms discussed in Lemma \ref{halt_test} outputs ``Yes'' for the interior of $S$ and input parameters $(n_2,n_1)$. To prove $\{\Xi_H,\Omega_\mathrm{SA},\Lambda_2\}\in\Sigma_3^A$, we use exactly the same construction as in step 3 now using the $\Sigma_1^A$ algorithm (which uses $\Lambda_2$) given by Lemma \ref{halt_test}.
\end{proof}

\vspace{4mm}

\textbf{Acknowledgements.} This work was supported by EPSRC grant EP/L016516/1. I am grateful to Arno Pauly for discussions regarding Definition \ref{closed_under_search} and its use in Proposition \ref{search_prop}. Finally, I would like to thank Mohamed Nasser for generously sharing the code from \cite{liesen2017fast} for the computation of the capacity of finite unions of intervals.
\vspace{-2mm}

\linespread{1.17}

\bibliographystyle{spmpsci}
\bibliography{FOCM_bib}

\linespread{1.2}

\newpage
\appendix
\section{Routines for Computing Spectra}
\label{basic_spec22}
We describe the SCI-sharp $\Sigma_1^A$ algorithms in \cite{colb1} and \cite{colbrook3}, that are used in some of our proofs. In this section, we consider the problem functions $\Xi_1(A)=\mathrm{Sp}(A)$ and $\Xi_2(A)=\mathrm{Sp}_{\epsilon}(A)$, taking values in the space of non-empty compact subsets of $\mathbb{C}$ equipped with Hausdorff metric. The definitions of the classes $\Omega_g$ and $\Omega_f$ can be found in \S \ref{sec:rev_new_sec2}. As written, the outputs of the algorithms below may be empty for small $n$ (and hence not lie in the correct metric space). This does not affect the classifications and can be avoided by computing successive $\Gamma_n(A)$ and outputting $\Gamma_{m(n)}(A)$ where $m(n)\geq n$ is minimal with $\Gamma_{m(n)}(A)\neq\emptyset$. 

The methods in \cite{colb1} and \cite{colbrook3} use the function $f$ to approximate the function
\begin{equation}
\gamma_{n}(z;A)=\min\{\sigma_{\mathrm{inf}}((A-zI){|_{P_n\mathcal{H}}}),\sigma_{\mathrm{inf}}((A^*-\bar{z}I){|_{P_n\mathcal{H}}})\},
\end{equation}
where $P_m$ denotes the orthogonal projection onto the linear span of the first $m$ basis vectors and $\sigma_{\mathrm{inf}}$ denotes the injection modulus. As $n\rightarrow\infty$, the functions $\gamma_n$ converge uniformly on compact subsets down to the continuous function $\gamma(z;A)=\left\|R(z,A)\right\|^{-1}$, which we interpret as zero if the resolvent $R(z,A)=(A-zI)^{-1}$ does not exist as a bounded operator. The function $f$ and sequence $\{c_n\}$ allow us to approximate $\gamma_n$ to any given precision. To use this to compute the spectrum, we need some control on how the resolvent norm diverges near the spectrum and this is provided by the function $g$ satisfying \eqref{control_resolvent}. At various points in this paper, we have also made use of the related functions
\begin{equation}
\label{gojomnowknf}
\gamma_{n,m}(z;A)=\min\{\sigma_{\mathrm{inf}}(P_m(A-zI){|_{P_n\mathcal{H}}}),\sigma_{\mathrm{inf}}(P_m(A^*-\bar{z}I){|_{P_n\mathcal{H}}})\}.
\end{equation}
These can be computed from the rectangular matrices $P_m(A-zI)P_n,P_m(A-zI)^*P_n$ and converge uniformly on compact subsets of $\mathbb{C}$ to $\gamma_n$ as $m\rightarrow\infty$.

\vspace{2mm}

\begin{algorithm}[H]
  \SetKwInOut{Input}{Input}\SetKwInOut{Output}{Output}
\SetKwProg{Fn}{Function}{}{end} 
 \SetKwFunction{DistSpec}{DistSpec}
  \SetKwFunction{IsPosDef}{IsPosDef}
 \SetKwData{Left}{left}\SetKwData{This}{this}\SetKwData{Up}{up}
\SetKwFunction{Union}{Union}\SetKwFunction{FindCompress}{FindCompress}
 \Fn{\DistSpec{$A$,$n$,$z$,$f(n)$}}{
  \Input{$n \in \mathbb{N},$ $f(n) \in \mathbb{N}$, matrix $A$, $z \in \mathbb{C}$}
  \Output{$y \in \mathbb{R}_+$, an approximation to the function $z \mapsto \left\|R(z,A)\right\|^{-1}$}
 \BlankLine
 $B = (A-zI)(1:f(n),1:n)$; $C = (A-zI)^*(1:f(n),1:n)$\\
 $S = B^*B$; $T = C^*C$\\
 $\nu =  1$, $l = 0$\\
 \While{$\nu = 1$}{
 $l = l+1$\\
 $p = \IsPosDef(S - \frac{l^2}{n^2})$; $q = \IsPosDef(T - \frac{l^2}{n^2})$\\
 $\nu = \min(p,q)$
     }
 $y = \frac{l}{n}$
   }
	\caption{The subroutine \texttt{IsPosDef} checks whether a matrix is positive definite and is a standard routine that can be implemented in a myriad of ways. In practice, the while loop in \texttt{DistSpec} is replaced by a much more efficient interval bisection method. An alternative method for sparse matrices (which, however, does not rigorously guarantee an error bound on the smallest singular values) is to compute the smallest singular values of the rectangular matrices using iterative methods. See the supplementary material of \cite{colb1} for further discussion on efficient numerical computation. Note also that when evaluating \texttt{DistSpec} for different $z$, the computation can be done in parallel.}
\end{algorithm}

\vspace{2mm}

Throughout, we use that \texttt{DistSpec} requires only finitely many arithmetic operations and comparisons, as proven in \cite{colbrook3} (one can perform the \texttt{IsPosDef} routine using incomplete Cholesky decompositions). Furthermore, as outlined in Remark \ref{fdjojosr}, we can make all of the algorithms in this paper and those in this appendix work using $\Delta_1$-information and restricting to arithmetical operations over the rationals.

\vspace{2mm}

 \begin{algorithm}[H]
 \SetKwInOut{Input}{Input}\SetKwInOut{Output}{Output}
\SetKwProg{Fn}{Function}{}{end} 
 \SetKwFunction{CompInvg}{CompInvg}
 \SetKwData{Left}{left}\SetKwData{This}{this}\SetKwData{Up}{up}
 \Fn{\CompInvg{$n$,$y$,$g$}}{
  \Input{$n \in \mathbb{N},$ $y \in \mathbb{R}_+$, $g: \mathbb{R}_+ \rightarrow \mathbb{R}_+$}
  \Output{$m \in \mathbb{R}_+$, an approximation to $g^{-1}(y)$}
 \BlankLine
$m = \min\{k/n:k\in\mathbb{N},g(k/n)>y\}$ 
 \BlankLine
   }
 
   \SetKwInOut{Input}{Input}\SetKwInOut{Output}{Output}
 \BlankLine
\SetKwProg{Fn}{Function}{}{end} 
 \SetKwFunction{CompSpec}{CompSpec}
  \SetKwFunction{CompInvg}{CompInvg}
\SetKwFunction{Grid}{Grid}
 \SetKwData{Left}{left}\SetKwData{This}{this}\SetKwData{Up}{up}
  \SetKwFunction{DistSpec}{DistSpec}
\SetKwFunction{Union}{Union}\SetKwFunction{FindCompress}{FindCompress}
 \Fn{\CompSpec{$A$,$n$,$g$,$f(n)$,$c_n$}}{
  \Input{$n\in\mathbb{N}$, $f(n) \in \mathbb{N}$, $c_n\in\mathbb{R}_+$ (bound on dispersion), $g: \mathbb{R}_+ \rightarrow \mathbb{R}_+$, $A \in \Omega_f\cap\Omega_g$}
  \Output{$\Gamma_n(A) \subset \mathbb{C}$, an approximation to $\mathrm{Sp}(A)$, $E_n(A) \in \mathbb{R}_+$, the error estimate}
  \BlankLine
$G =\frac{1}{n}(\mathbb{Z}+i\mathbb{Z})\cap B_n(0)$\\

\For{$z \in G$}{$F(z) = \DistSpec{$A$,$n$,$z$,$f(n)$}$\\
 \eIf{$F(z) \leq (\left|z\right|^2+1)^{-1}$}{
 
\For{$w_j \in B_{\CompInvg{$n$,$F(z)$,$g$}}(z)\cap{}G=\{w_1,\ldots,w_k\}$}{ 
 $F_{j} = \DistSpec{$A$,$n$,$w_j$,$f(n)$}$
 }
 $M_z = \{w_{j} : F_{j} = \min_{q}\{F_{q}\}\}$
 }
{
$M_z = \emptyset$
}
 }
 $\Gamma_n(A) = \cup_{z \in G} M_z$\\
 $E_n(A) = \max_{z \in \Gamma_n(A)} \{\CompInvg{$n$,\DistSpec{$A$,$n$,$z$,$f(n)$}$+c_n$, $g$}\}$
 }
 \centering
\caption{The routine \texttt{CompSpec} computes spectra of bounded operators (see \cite{colbrook3} for extensions to unbounded operators) on $l^2(\mathbb{N})$ (or, more generally, graphs) using the subroutines \texttt{CompInvg} and \texttt{DistSpec} described above, and provides $\Sigma_1^A$ error control (without loss of generality by taking subsequences until the computed error is below a user specified tolerance).}\label{alg__1}
 \end{algorithm}

\vspace{2mm}

\begin{algorithm}[H]
 \SetKwInOut{Input}{Input}\SetKwInOut{Output}{Output}
 \BlankLine
\SetKwProg{Fn}{Function}{}{end} 
 \SetKwFunction{PseudoSpec}{PseudoSpec}
\SetKwFunction{Grid}{Grid}
 \SetKwData{Left}{left}\SetKwData{This}{this}\SetKwData{Up}{up}
  \SetKwFunction{DistSpec}{DistSpec}
\SetKwFunction{Union}{Union}\SetKwFunction{FindCompress}{FindCompress}
 \Fn{\PseudoSpec{$A$,$n$,$f(n)$,$c_n$, $\epsilon$}}{
  \Input{$n, f(n) \in \mathbb{N}$, $c_n \in \mathbb{R}_{+}^{\mathbb{N}}$, $A \in \Omega_f$, $\epsilon > 0$}
  \Output{$\Gamma \subset \mathbb{C}$, an approximation to $\mathrm{Sp}_{\epsilon}(A)$}
  \BlankLine
$G =$ \Grid{$n$}\\
$m  = \min\{k\geq n \, \vert \, c_k < \epsilon \}$\\
\For{$z \in G$}{

 $B = (A-zI)(1:f(m),1:m)$; $C = (A-zI)^*(1:f(m),1:m)$\\
 $S = B^*B$; $T = C^*C$\\
 $p = \IsPosDef(S - (\epsilon - c_m)^{2})$; $q = \IsPosDef(T - (\epsilon - c_m)^{2})$\\
 $\nu(z) = \min(p,q)$\\
 
 }
 $\Gamma = \bigcup \{z \in G \, \vert \nu(z)=0\}$
 }
\caption{\texttt{PseudoSpec} computes $\Gamma_n(A)\subset\mathrm{Sp}_\epsilon(A)$ with $\lim_{n\rightarrow\infty}\Gamma_n(A)=\mathrm{Sp}_\epsilon(A)$.}
\end{algorithm}

\newpage
\newgeometry{left=1.15in,right=1.15in,top=1.15in,bottom=0.6in}
\section{Examples of Computational Routines}
\label{compute_rout_appendix}

We provide short and simplified routines for some of the algorithms in this paper. For example, we have ignored issues like the rigorous approximation of the function $\gamma_{n,m}$ in \eqref{gojomnowknf} using arithmetical operations. For brevity, we stick to one domain $\Omega$ and the evaluation set $\Lambda_1$ (matrix values) for each problem function $\Xi$. In each case, we have chosen the non-trivial $\Omega$ with the simplest algorithm. For the different algorithms for different classes of operators, see the proofs. In general, different classes of operators and evaluation sets have different SCI classifications and different algorithms for the same problem function.

\subsection{Spectral radii, capacity and operator norms}

For the problem functions in \S \ref{sec:rev:spec_rad} - \ref{sec:rev:CAP}, we consider $\Omega_f$ (see \eqref{bd_disp2}) and $\Omega_f\cap\Omega_{\mathrm{SA}}$ for computing the capacity of the spectrum.

\vspace{2mm}

\begin{algorithm}[H]
 \SetKwInOut{Input}{Input}\SetKwInOut{Output}{Output}
 \BlankLine
\SetKwProg{Fn}{Function}{}{end} 
 \SetKwFunction{SpecRad}{SpecRad}
 \SetKwData{Left}{left}\SetKwData{This}{this}\SetKwData{Up}{up}
\SetKwFunction{Union}{Union}\SetKwFunction{FindCompress}{FindCompress}
 \Fn{\SpecRad{$n_1,n_2,f(n_1),c_{n_1},A$}}{
  \Input{$n_1,n_2,f(n_1)\in\mathbb{N}$, $c_{n_1}\in\mathbb{R}_{+}$, $A\in\Omega_f$}
  \Output{$\Gamma_{n_2,n_1}(A)$, a $\Pi_2^A$ approximation of $r(A)$}
  \BlankLine
	$S=\texttt{PseudoSpec}(A,n_1,f(n_1),c_{n_1}, n_2^{-1})=\{z_1,\ldots,z_m\}$\\
$\Gamma_{n_2,n_1}(A)=\sup_{1\leq j\leq m}|z_j|$}
\caption{\texttt{SpecRad} computes the spectral radius of operators in $\Omega_f$ using the algorithm for computing pseudospectra, \texttt{PseudoSpec}, which is parallelisable and provides $\Sigma_1^A$ error control.}
 \end{algorithm}

\vspace{2mm}

\begin{algorithm}[H]
 \SetKwInOut{Input}{Input}\SetKwInOut{Output}{Output}
 \BlankLine
\SetKwProg{Fn}{Function}{}{end} 
 \SetKwFunction{EssSpecRad}{EssSpecRad}
 \SetKwData{Left}{left}\SetKwData{This}{this}\SetKwData{Up}{up}
\SetKwFunction{Union}{Union}\SetKwFunction{FindCompress}{FindCompress}
 \Fn{\EssSpecRad{$n_1,n_2,f(n_1),c_{n_1},A$}}{
  \Input{$n_1,n_2,f(n_1)\in\mathbb{N}$, $c_{n_1}\in\mathbb{R}_{+}$, $A\in\Omega_f$}
  \Output{$\Gamma_{n_2,n_1}(A)$, a $\Pi_2^A$ approximation of $\Xi_{er}(A)$}
  \BlankLine
	$S=\texttt{EssSpec}(A,n_1,n_2,f(n_1),c_{n_1})=\cup_{j=1}^mR_j$\\
	NB: $R_j$ are rectangles with complex rational vertices.\\
$\Gamma_{n_2,n_1}(A)=\frac{1}{2^{n_2}}+\sup_{1\leq j\leq m}\max_{z\in R_j}|z|$}
\caption{\texttt{EssSpecRad} computes the essential spectral radius of operators in $\Omega_f$ using the algorithm for computing essential spectra, \texttt{EssSpec}, from \cite{ben2015can}.}
 \end{algorithm}

\vspace{2mm}

\begin{algorithm}[H]
 \SetKwInOut{Input}{Input}\SetKwInOut{Output}{Output}
 \BlankLine
\SetKwProg{Fn}{Function}{}{end} 
 \SetKwFunction{PolyNorm}{PolyNorm}
 \SetKwData{Left}{left}\SetKwData{This}{this}\SetKwData{Up}{up}
\SetKwFunction{Union}{Union}\SetKwFunction{FindCompress}{FindCompress}
 \Fn{\PolyNorm{$p,n,f(n),c_{n},A$}}{
  \Input{polynomial $p$, $n,f(n)\in\mathbb{N}$, $c_{n}\in\mathbb{R}_{+}$, $A\in\Omega_f$}
  \Output{$\Gamma_{n}(A)$, a $\Sigma_1^A$ approximation of $\|p(A)\|$}
  \BlankLine
	Compute $\hat B_n\approx B_n=P_np(A)P_n\in\mathbb{C}^{n\times n}$ using $f$ to compute matrix entries of powers of $A$.\\
Compute an upper bound $\delta_n$ of $\|\hat B_n-B_n\|$.\\
(Do the above so that $\delta_n$ is bounded by a null sequence.)\\
	$\Gamma_{n}(A)=\|\hat B_n\|-\delta_n$}
\caption{\texttt{PolyNorm} computes the operator norm of $p(A)$ for operators $A\in\Omega_f$ and polynomials $p$. The powers of $A$ can be computed through ``lazy evaluation'' (when one computes with infinite data structures, but defers the use of the information until needed) and the function $f$.}
 \end{algorithm}

\vspace{2mm}

\begin{algorithm}[H]
 \SetKwInOut{Input}{Input}\SetKwInOut{Output}{Output}
 \BlankLine
\SetKwProg{Fn}{Function}{}{end} 
 \SetKwFunction{CapSpec}{CapSpec}
 \SetKwData{Left}{left}\SetKwData{This}{this}\SetKwData{Up}{up}
\SetKwFunction{Union}{Union}\SetKwFunction{FindCompress}{FindCompress}
 \Fn{\CapSpec{$n_1,n_2,f(n_1),c_{n_1},A$}}{
  \Input{$n_1,n_2,f(n_1)\in\mathbb{N}$, $c_{n_1}\in\mathbb{R}_{+}$, $A\in\Omega_f\cap\Omega_{\mathrm{SA}}$}
  \Output{$\Gamma_{n_2,n_1}(A)$, a $\Pi_2^A$ approximation of the capacity, $\mathrm{cap}$, of $\mathrm{Sp}(A)$}
  \BlankLine
	Form a disjoint covering of $[-n_1,n_1]$ into intervals $I_j^{n_1,n_2}$, $j=1,\ldots,n_12^{n_2+1}$, of length $2^{-n_2}$\\
	Use Lemma \ref{halt_test} with $n=n_1$ to compute $\mathcal{I}_{n_1,n_2}\uparrow\{j:\mathrm{interior}(I_j^{n_1,n_2})\cap\mathrm{Sp}(A)\neq\emptyset\}$\\
	$\Gamma_{n_2,n_1}(A)=\mathrm{cap}\left(\cup_{j\in\mathcal{I}_{n_1,n_2}}\overline{I_j^{n_1,n_2}}\right)$}
\caption{\texttt{CapSpec} computes $\mathrm{cap}(\mathrm{Sp}(A))$ for operators $A\in\Omega_f\cap\Omega_{\mathrm{SA}}$. The capacity of a finite union of intervals can be computed using conformal mappings. The computation of $\mathcal{I}_{n_1,n_2}$ requires applications of \texttt{DistSpec} which can be performed in parallel.}
 \end{algorithm}

\subsection{Essential numerical range, gaps in essential spectra and detecting algorithm failure for finite section}

For the problems in \S \ref{fs_fails}, we consider $\Omega_\mathrm{B}$.

\vspace{2mm}

\begin{algorithm}[H]
 \SetKwInOut{Input}{Input}\SetKwInOut{Output}{Output}
 \BlankLine
\SetKwProg{Fn}{Function}{}{end} 
 \SetKwFunction{EssNumRange}{EssNumRange}
 \SetKwData{Left}{left}\SetKwData{This}{this}\SetKwData{Up}{up}
\SetKwFunction{Union}{Union}\SetKwFunction{FindCompress}{FindCompress}
 \Fn{\EssNumRange{$n_1,n_2,A$}}{
  \Input{$n_1,n_2\in\mathbb{N}$, $A\in\Omega_{\mathrm{B}}$}
  \Output{$\Gamma_{n_2,n_1}(A)$, a $\Pi_2^A$ approximation of $W_e(A)$}
  \BlankLine
	$B_{n_2,n_1}=(I-P_{n_2})P_{n_1+n_2}A|_{P_{n_1+n_2}(I-P_{n_2})\mathcal{H}}\in\mathbb{C}^{n_1\times n_1}$\\
	$\Gamma_{n_2,n_1}(A)=W(B_{n_2,n_1})$}
\caption{\texttt{EssNumRange} computes the essential numerical range for operators $A\in\Omega_{\mathrm{B}}$ (see \S \ref{append_UB_ENR} for unbounded operators). The numerical range of a finite square matrix can be approximated to arbitrary accuracy using finitely many arithmetic operations and comparisons. In practice, one can use the method of Johnson \cite{johnson1978numerical}, which reduces the computation of $\partial W(B)$ for $B\in\mathbb{C}^{n\times n}$ to a series of $n\times n$ Hermitian (extremal) eigenvalue problems.}
 \end{algorithm}

\vspace{2mm}

\begin{algorithm}[H]
 \SetKwInOut{Input}{Input}\SetKwInOut{Output}{Output}
 \BlankLine
\SetKwProg{Fn}{Function}{}{end} 
 \SetKwFunction{SpecPoll}{SpecPoll}
 \SetKwData{Left}{left}\SetKwData{This}{this}\SetKwData{Up}{up}
\SetKwFunction{Union}{Union}\SetKwFunction{FindCompress}{FindCompress}
 \Fn{\SpecPoll{$n_1,n_2,n_3,A,U$}}{
  \Input{$n_1,n_2,n_3,\in\mathbb{N}$, $A\in\Omega_{\mathrm{B}}$, open set $U$}
  \Output{$\Gamma_{n_3,n_2,n_1}(A,U)$, a $\Sigma_3^A$ approximation of $\Xi_{poll}^{\mathbb{C}}(A,U)$}
  \BlankLine
	$S_{n_2,n_1}=\texttt{EssNumRange}(n_1,n_2,A)=\{z_1,\ldots,z_m\}$\\
	NB: We use the version of \texttt{EssNumRange} that outputs a finite collection of points.\\
	$V_{n_1}=\cup_{j=1}^{n_1} U_j$\\
	$\Upsilon_{n_2,n_1}=\{z\in S_{n_2,n_1}:\mathrm{dist}(z,V_{n_1})<n_2^{-1}-n_1^{-1}\}$\\
	\eIf{$\Upsilon_{n_2,n_1}\neq\emptyset$}{
	$Q_{n_2,n_1}=\max_{z\in \Upsilon_{n_2,n_1}}\gamma_{n_2,n_1}(z;A)-n_1^{-1}$
 }
{
$Q_{n_2,n_1}=0$
}
\eIf{$Q_{n_2,n_1}\leq n_3^{-1}$}{$\Gamma_{n_3,n_2,n_1}(A,U)=0$
 }
{
$\Gamma_{n_3,n_2,n_1}(A,U)=1$
}
}
\caption{\texttt{SpecPoll} computes $\Xi_{poll}^{\mathbb{C}}(A,U)$ for operators $A\in\Omega_{\mathrm{B}}$ and open sets $U$ (given as a, possibly countably infinite, union of open balls $\{U_m\}$ with rational radii and centres). The function $\gamma_{n_2,n_1}$ is the same as in \eqref{gojomnowknf}.}
 \end{algorithm}

\subsection{Lebesgue measure}
\label{leb_num}

For the problems in \S \ref{Leb_sec}, we consider $\Omega_f$.

\begin{algorithm}[H]
 \SetKwInOut{Input}{Input}\SetKwInOut{Output}{Output}
 \BlankLine
\SetKwProg{Fn}{Function}{}{end} 
 \SetKwFunction{LebSpec}{LebSpec}
 \SetKwData{Left}{left}\SetKwData{This}{this}\SetKwData{Up}{up}
\SetKwFunction{Union}{Union}\SetKwFunction{FindCompress}{FindCompress}
 \Fn{\LebSpec{$n_1,n_2,f(n_1),c_{n_1},A$}}{
  \Input{$n_1,n_2,f(n_1)\in\mathbb{N}$, $c_{n_1}\in\mathbb{R}_{+}$, $A\in\Omega_f$}
  \Output{$\Gamma_{n_2,n_1}(A)$, a $\Pi_2^A$ approximation of $\mathrm{Leb}(\mathrm{Sp}(A))$}
  \BlankLine
$G=\frac{1}{2^{n_2}}\left(\mathbb{Z}+i\mathbb{Z}\right)\cap[-n_1,n_1]^2=\{z_1,\ldots,z_{m}\}$\\
\For{$z\in G$}{$F_{n_1}(z)=\texttt{DistSpec}(A,n_1,z,f(n_1))+c_{n_1}$\\

}
NB: WLOG we adapt $F_{n_1}$ to be non-increasing in $n_1$.\\
 $U(n_2,n_1,A)=[-n_1,n_1]^2\cap(\cup_{j=1}^{m}B(z_j,F_{n_1}(z_j)))$\\
$\Gamma_{n_2,n_1}(A)=4n_1^2-\mathrm{Leb}(U(n_2,n_1,A))$}
\caption{\texttt{LebSpec} computes $\mathrm{Leb}(\mathrm{Sp}(A))$ for operators $A\in\Omega_{f}$. It can be easily adapted to self-adjoint operators and computing the Lebesgue measure of the spectrum as a subset of the real line, by restricting the rectangles and balls to intervals. Again, the computation of \texttt{DistSpec} can be performed in parallel.}
 \end{algorithm}

\vspace{2mm}

\begin{algorithm}[H]
 \SetKwInOut{Input}{Input}\SetKwInOut{Output}{Output}
 \BlankLine
\SetKwProg{Fn}{Function}{}{end} 
 \SetKwFunction{LebPseudoSpec}{LebPseudoSpec}
 \SetKwData{Left}{left}\SetKwData{This}{this}\SetKwData{Up}{up}
\SetKwFunction{Union}{Union}\SetKwFunction{FindCompress}{FindCompress}
 \Fn{\LebPseudoSpec{$n,A,\epsilon$}}{
  \Input{$n\in\mathbb{N}$, $A\in\Omega_\epsilon^L$, $\epsilon>0$}
  \Output{$\Gamma_{n}(A)$, a $\Sigma_1^A$ approximation of $\mathrm{Leb}(\mathrm{Sp}_{\epsilon}(A))$}
  \BlankLine
	$G=\frac{1}{n}\left(\mathbb{Z}+i\mathbb{Z}\right)\cap[-n,n]^2=\{z_1,\ldots,z_{m}\}$\\
	\For{$z\in G$}{$F_n(z)=\texttt{DistSpec}(A,n,z,f(n))+c_{n}$}
	NB: WLOG we adapt $F_{n}$ to be non-increasing in $n$.\\
$S=\{z\in G:F_n(z)\leq\epsilon\}$\\
$\Gamma_n(A)=\mathrm{Leb}(\cup_{z\in S}D(z,\max\{0,\epsilon-F_n(z)\})$}
\caption{\texttt{LebPseudoSpec} computes $\mathrm{Leb}(\mathrm{Sp}_{\epsilon}(A))$ for operators $A\in\Omega_{f}$. It can be easily adapted to self-adjoint operators and computing the Lebesgue measure of the pseudospectrum restricted to the real line, by restricting the rectangles and balls to intervals. Again, the computation of \texttt{DistSpec} can be performed in parallel.}
 \end{algorithm}

\vspace{2mm}

\begin{algorithm}[H]
 \SetKwInOut{Input}{Input}\SetKwInOut{Output}{Output}
 \BlankLine
\SetKwProg{Fn}{Function}{}{end} 
 \SetKwFunction{NullLebSpec}{NullLebSpec}
 \SetKwData{Left}{left}\SetKwData{This}{this}\SetKwData{Up}{up}
\SetKwFunction{Union}{Union}\SetKwFunction{FindCompress}{FindCompress}
 \Fn{\NullLebSpec{$n_1,n_2,n_3,f(n_1),c_{n_1},A$}}{
  \Input{$n_1,n_2,n_3,f(n_1)\in\mathbb{N}$, $c_{n_1}\in\mathbb{R}_{+}$, $A\in\Omega_f$}
  \Output{$\Gamma_{n_3,n_2,n_1}(A)$, a $\Pi_3^A$ approximation of $\Xi_L^3(A)$}
  \BlankLine
\For{$j=1,\ldots,n_1$}{$t_j=\texttt{LebSpec}(j,n_2,f(j),c_{j},A)$
}
\eIf{$\max_{1\leq j\leq n_1}t_j\leq n_3^{-1}$}{$\Gamma_{n_3,n_2,n_1}(A)=1$
 }
{
$\Gamma_{n_3,n_2,n_1}(A)=0$
}
}
\caption{\texttt{NullLebSpec} computes $\Xi_L^3(A)$ (``Is $\mathrm{Leb}(\mathrm{Sp}(A))=0$?'') for operators $A\in\Omega_{f}$.  It can be easily adapted to self-adjoint operators, where the Lebesgue measure corresponds to that of the real line, by using the relevant adaptation of \texttt{LebSpec}.}
 \end{algorithm}

\subsection{Fractal dimensions}
\label{need_append_frac}

For the problems in \S \ref{frac_dims_sec}, we consider $\Omega_f^{BD}$ for the box-counting dimension and $\Omega_f\cap\Omega_{\mathrm{SA}}$ for the Hausdorff dimension.

\vspace{2mm}

\begin{algorithm}[H]
 \SetKwInOut{Input}{Input}\SetKwInOut{Output}{Output}
 \BlankLine
\SetKwProg{Fn}{Function}{}{end} 
 \SetKwFunction{BoxDim}{BoxDim}
 \SetKwData{Left}{left}\SetKwData{This}{this}\SetKwData{Up}{up}
\SetKwFunction{Union}{Union}\SetKwFunction{FindCompress}{FindCompress}
 \Fn{\BoxDim{$n_1,n_2,f(n_1),c_{n_1},A$}}{
  \Input{$n_1,n_2,f(n_1)\in\mathbb{N}$, $c_{n_1}\in\mathbb{R}_+$, $A\in\Omega^{BD}_f$}
  \Output{$\Gamma_{n_2,n_1}(A)$, a $\Pi_2^A$ approximation of $\mathrm{dim}_B(\mathrm{Sp}(A))$}
  \BlankLine
	\eIf{$n_1\geq n_2$}{
	\For{$l=1,\ldots,n_1$}{$S_l=\texttt{CompSpec}(A,l,f(l),c_{l},g:x\mapsto x)=\{z_{1,l},\ldots,z_{k_l,l}\}$\\
NB: WLOG we assume that $\mathrm{dist}(z_{j,l},\mathrm{Sp}(A))\leq 2^{-l}$.\\
\For{$j=1,\ldots,k_l$}{$I_{j,l}=[z_{j,l}-2^{-l},z_{j,l}+2^{-l}]$}
}
\For{$k\in\{n_2,n_2+1,\ldots,{n_1}\}$, $j\in\{1,2,\ldots,{n_1}\}$}{Let $\Upsilon_{k,j}$ be any union of $2^{-k}-$mesh intervals of minimal length $|\Upsilon_{k,j}|$ (where length is number of mesh intervals that make up the union) such that\\
\noindent$\Upsilon_{k,j}\cap I_{p,q}\neq \emptyset,\quad 1\leq q \leq j, \quad 1\leq p \leq k_q.$\\
\noindent{}Set $a_{k,j}=\frac{\log(\left|\Upsilon_{k,j}(A)\right|)}{k\log(2)}$}
$\Gamma_{n_2,n_1}(A)=\max\{a_{k,j}:n_2\leq k\leq n_1,1\leq j\leq n_1\}$ (max over empty set is zero).}
{$\Gamma_{n_2,n_1}(A)=0$}
 }
\caption{\texttt{BoxDimSpec} computes the box-counting dimension of the spectrum for operators $A\in\Omega_{f}^{BD}$. If the we enlarge the class to $\Omega_f\cap\Omega_{\mathrm{SA}}$, the result is a tower of algorithms that converges to a quantity $\Gamma(A)$ with $\underline{\mathrm{dim}}_B(\mathrm{Sp}(A))\leq \Gamma(A)\leq\overline{\mathrm{dim}}_B(\mathrm{Sp}(A))$.}
 \end{algorithm}

\vspace{2mm}

\begin{algorithm}[H]
 \SetKwInOut{Input}{Input}\SetKwInOut{Output}{Output}
 \BlankLine
\SetKwProg{Fn}{Function}{}{end} 
 \SetKwFunction{HausDimSpec}{HausDimSpec}
 \SetKwData{Left}{left}\SetKwData{This}{this}\SetKwData{Up}{up}
\SetKwFunction{Union}{Union}\SetKwFunction{FindCompress}{FindCompress}
 \Fn{\HausDimSpec{$n_1,n_2,n_3,A$}}{
  \Input{$n_1,n_2,n_3\in\mathbb{N}$, $A\in\Omega_f\cap\Omega_\mathrm{SA}$}
  \Output{$\Gamma_{n_3,n_2,n_1}(A)$, a $\Sigma_3^A$ approximation of $\mathrm{dim}_H(\mathrm{Sp}(A))$}
  \BlankLine
	Notation: $\rho_k$ denotes set of all closed intervals of form $[2^{-k}m,2^{-k}(m+1)]$, $m\in\mathbb{Z}$\\
	$S_{n_1,n_2}$ = union of all $S\in\rho_{n_2}$ with $S\subset[-n_1,n_1]$ and such that the algorithm discussed in Lemma \ref{halt_test} outputs ``Yes'' for the interior of $S$ and input parameter $n_1$.\\
$\mathcal{A}_{n_3,n_2,n_1}=\left\{\{U_i\}_{i\in I}:I\text{ is finite },S_{n_1,n_2}\subset\cup_{i\in I}U_i,U_i\in\cup_{n_3\leq l\leq n_2}\rho_l\right\}$\\
\For{$m\in\{1,\ldots,2^{n_3}\}$}{$
b_{m}=\inf\left\{\sum_i\mathrm{diam}(U_i)^{m/2^{n_3}}:\{U_i\}\in\mathcal{A}_{n_3,n_2,n_1}\right\}+n_2^{-1}$\\
 
}
$\Gamma_{n_3,n_2,n_1}(A)=\max\{m/{2^{n_3}}:b_j>1/2\text{ for }j=1,\ldots,m\}$ (max over empty set is zero).\\
 }
\caption{\texttt{HausDimSpec} computes the Hausdorff dimension of the spectrum for operators $A\in\Omega_f\cap\Omega_{\mathrm{SA}}$. An efficient way to compute the minimal covering is to use binary trees \cite{stewart2001towards}.}
 \end{algorithm}

\end{document}